\definecolor{RED}{rgb}{1,0,0}\definecolor{BLUE}{rgb}{0,0,1} 
\newcommand{\sgn}{\operatorname{sgn}}
\newcommand{\Rr}{{\mathbb{R}}}
\newcommand{\Nn}{{\mathbb{N}}}
\def\leq{\leqslant}
\def\geq{\geqslant}
\numberwithin{equation}{section}
\newtheoremstyle{thmlemcorr}{10pt}{10pt}{\itshape}{}{\bfseries}{.}{10pt}{{\thmname{#1}\thmnumber{
#2}\thmnote{ (#3)}}}
\newtheoremstyle{thmlemcorr*}{10pt}{10pt}{\itshape}{}{\bfseries}{.}\newline{{\thmname{#1}\thmnumber{
\newtheoremstyle{defi}{10pt}{10pt}{\itshape}{}{\bfseries}{.}{10pt}{{\thmname{#1}\thmnumber{
#2}\thmnote{ (#3)}}}
\newtheoremstyle{remexample}{10pt}{10pt}{}{}{\bfseries}{.}{10pt}{{\thmname{#1}\thmnumber{
#2}\thmnote{ (#3)}}}
\newtheoremstyle{ass}{10pt}{10pt}{}{}{\bfseries}{.}{10pt}{{\thmname{#1}\thmnumber{
A#2}\thmnote{ (#3)}}}
\theoremstyle{thmlemcorr}
\newtheorem{theorem}{Theorem}
\numberwithin{theorem}{section}
\theoremstyle{thmlemcorr*}
\newtheorem{theorem*}{Theorem}
\newtheorem{lemma*}[theorem]{Lemma}
\newtheorem{corollary*}[theorem]{Corollary}
\newtheorem{proposition*}[theorem]{Proposition}
\newtheorem{problem*}[theorem]{Problem}
\newtheorem{conjecture*}[theorem]{Conjecture}
\theoremstyle{defi}
\newtheorem{hyp}{Assumption}
\theoremstyle{remexample}
\newtheorem{remark}[theorem]{Remark}
\newtheorem{teo}[theorem]{Theorem}
\newtheorem{lem}[theorem]{Lemma}
\newtheorem{pro}[theorem]{Proposition}
\newtheorem{cor}[theorem]{Corollary}
\theoremstyle{ass}
\begin{document}
	
	\begin{PRE}
		Needs["MonteCarloOptimizer`", "/Users/gomesd/Dropbox (Personal)/Work/trab/MonteCarloSwarm/Guide/MonteCarloOptimizer.wl"]	
	\end{PRE}

\title[Derivative-Free Global Minimization]{Derivative-Free Global Minimization in One Dimension: Relaxation, Monte Carlo, and  Sampling}

\author{Alexandra A. Gomes}
\address[A. A. Gomes]{
	CEMSE Division, King Abdullah University of Science and Technology (KAUST), CEMSE Division , Thuwal 23955-6900. Saudi Arabia.}
\email{alexandra.gomes@kaust.edu.sa}

\author{Diogo A. Gomes}
\address[D. A. Gomes]{
        CEMSE Division, King Abdullah University of Science and Technology (KAUST), CEMSE Division , Thuwal 23955-6900. Saudi Arabia.}
\email{diogo.gomes@kaust.edu.sa}

%
\thanks{
The research reported in this publication was supported by funding from King Abdullah University of Science and Technology (KAUST).
D. Gomes was supported by King Abdullah University of Science and Technology (KAUST) baseline funds and KAUST OSR-CRG2021-4674.
}

\begin{abstract}
We introduce a derivative-free global optimization algorithm that efficiently computes minima for
 various classes of one-dimensional functions, including non-convex, and non-smooth functions. 
This algorithm numerically approximates the gradient flow of a relaxed functional,
 integrating strategies  such as  Monte Carlos methods, rejection sampling, and adaptive techniques. 
 These strategies enhance performance in solving a diverse range of optimization problems while significantly reducing the number of required function evaluations compared to established methods. We present a proof of the convergence of the algorithm and illustrate its performance by comprehensive benchmarking.
 The proposed algorithm offers a substantial potential for real-world models. It is particularly advantageous in situations requiring computationally intensive objective function evaluations.
\end{abstract}

\maketitle


\section{Introduction}

Often, real-world models lead to complex optimization problems with challenging objective functions. These functions may have unknown or difficult-to-compute formulas, hard-to-determine derivatives, or could be non-differentiable or discontinuous. As a result, there is a demand for algorithms for approximating a global minimizer
using a limited number of objective function evaluations. Such algorithms are crucial when evaluating the objective function is computationally expensive and time-consuming. For recent accounts on derivative-free optimization algorithms, see \cite{conn2009introduction}, \cite{MR3070154}, or \cite{larson2019derivative}, and Section \ref{prior} below.

The main contribution of this paper is a new  derivative-free global minimization algorithm capable of achieving high success rates for both convex and non-convex functions in $\mathbb{R}$ with few objective function evaluations. 
Our algorithm integrates three main ideas: the relaxation of the optimization problem, the use of Monte Carlo methods and rejection sampling, and careful error control to devise a time-stepping strategy. 
We rigorously prove the algorithm's convergence and demonstrate its performance through benchmarking against multiple algorithms.

\subsection{Relaxation and gradient flows}

Our algorithm utilizes the gradient flow of a relaxed functional. Here, 
we introduce and motivate this relaxed functional approach.
Let $f: \mathbb{R} \rightarrow \mathbb{R}$ be an objective function with a global minimum $\bar{x}$, which may not be unique. Assume that $f$ is continuous and satisfies the polynomial growth conditions stated in \eqref{pgc} and Assumption \ref{a2} in Section \ref{mathprosec}.
Let $\Gamma_{\mu, \sigma}(x)$ be the Gaussian probability distribution function with mean $\mu \in \mathbb{R}$ and standard deviation $\sigma \in \mathbb{R}^+$,
\[
\Gamma_{\mu, \sigma}(x)=\frac{e^{-(x-\mu )^2/(2 \sigma^2)}}{\sqrt{2\pi}  \sigma}.
\]
We consider the relaxed functional
\begin{equation}
	\label{Fdef}
	F(\mu, \sigma) = \int_{\mathbb{R}} f(x) \Gamma_{\mu, \sigma}(x) dx.
\end{equation}
As shown in Proposition \ref{relaxpro}, we have
\begin{equation}
	\label{relaxid}
	\inf_{\mu, \sigma} F = \min_x f.
\end{equation}
Thus,  the problem of minimizing $f$ 
can be transformed into an equivalent problem of minimizing 
 $F$, albeit at the cost of doubling the number of variables.  
 As discussed in Section \ref{mathprosec}, one advantage of this method is that calculating the gradient of $F$ does not require evaluating the derivative of $f$. Hence, minimizing $F$ using gradient flow can be achieved without computing derivatives of $f$.

Integration with respect to the Gaussian smooths out local minima, preserving the global features of f while reducing high-frequency oscillations. This dampening can also be attributed to 
$F$  satisfying the modified heat equation

\begin{equation}
\label{heat}
\frac {\partial F}{\partial \sigma}=\sigma \frac {\partial^2 F}{\partial \mu^2},
\end{equation}
as shown in Proposition \ref{heatpro}.
Figure \ref{relfig} illustrates a smoothing behavior for the objective function $f(x)=x^2-\cos 10 x$. There,  we see that as $\sigma$ increases,
$F$ becomes convex in $\mu$. The global minimum of $f$, $\bar x=0$, corresponds to the infimum of $F$ at $(\mu,\sigma)=(0,0)$, as expected.
\begin{figure}[ht]
	\centering
	\includegraphics[width=0.35\textwidth]{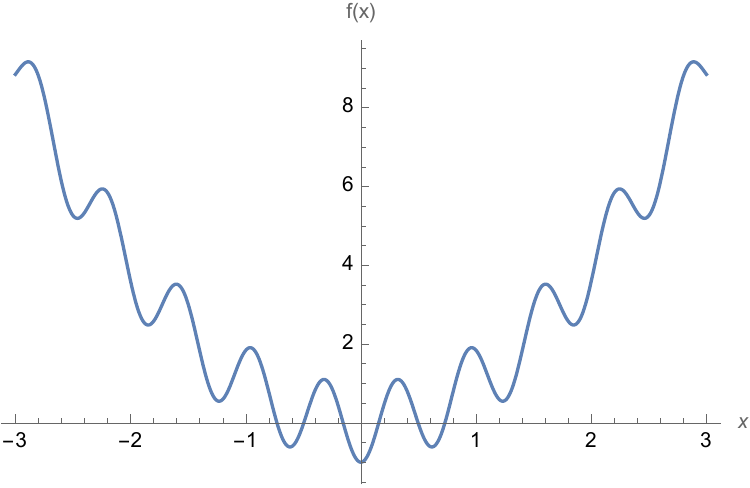}
	\includegraphics[width=0.35\textwidth]{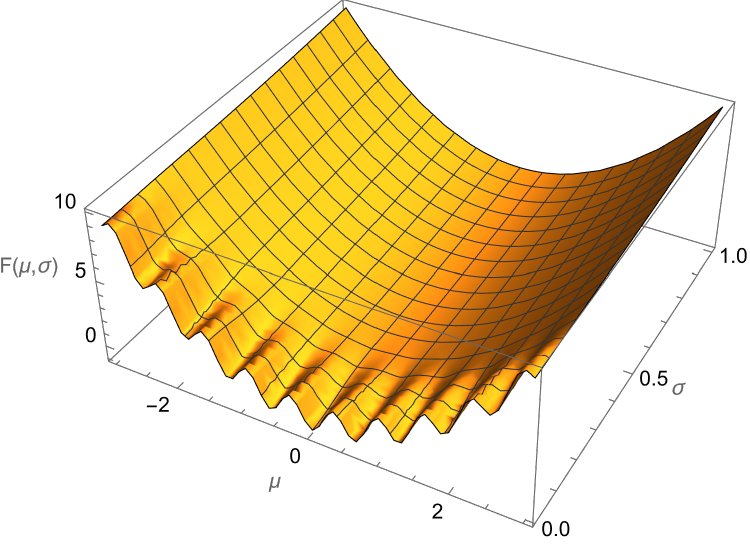}
	\caption{Original objective function $f(x)=x^2-\cos 10 x$ (left) and the corresponding
	relaxed objective function $F$ (right).}
	\label{relfig}
\end{figure}

To minimize $F$, we consider the gradient flow of $F$ in $(\mu, \sigma)$; that is,
\begin{equation}
	\label{gf}
	\begin{bmatrix}
		\dot \mu\\
		\dot \sigma
	\end{bmatrix}=-\nabla F(\mu, \sigma).
\end{equation}
This gradient flow decreases $F$. More precisely, 
\begin{equation}
	\label{Fdec1}
	\frac{d}{dt} F(\mu, \sigma)=-\left(\frac {\partial F}{\partial \mu}\right)^2-\left(\frac {\partial F}{\partial \sigma}\right)^2\leq 0. 
\end{equation}
In addition, \eqref{gf}
has several desirable properties
discussed in detail in Section \ref{mathprosec}: if $f$ is strictly convex, 
$\sigma\to 0$ and $\mu$ converges to the global minimizer of $f$ as $t\to \infty$. If $f$ is
non-convex, any local maxima $\hat x$ of $f$, corresponds to a point $(\mu, \sigma)=(\hat x,0)$
which is repellent for the gradient flow. 
Figure \ref{stream} illustrates this behavior. It displays the flow lines corresponding to the function depicted in Figure \ref{relfig}. We observe that initial conditions with sufficiently large $\sigma$ are drawn towards the global minimizer, while the local maxima of $f$ repel the flow for small $\sigma$, as anticipated.

\begin{figure}[ht]
	\centering
	\includegraphics[width=0.35\textwidth]{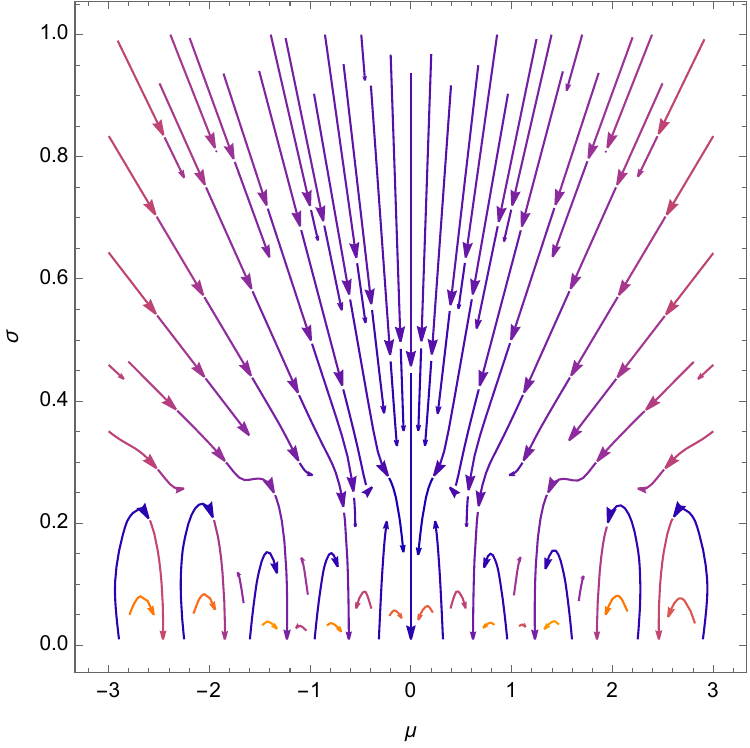}
	\caption{Gradient flow stream plot for the function $F$ depicted in Figure \ref{relfig}}
	\label{stream}
\end{figure}

\subsection{Gradient flow approximation}

Calculating the closed-form expression for \( \nabla F \) can be complex or even infeasible. 
Thus, we need to calculate \( \nabla F \) using numerical methods, such as Monte Carlo integration. 
We can then approximate the gradient flow, \eqref{gf}, using Euler's method. 
However, this approach poses two challenges. First, the Monte Carlo integration error is \( O(\frac{1}{\sqrt{n}}) \), where \( n \) is the sample size. This makes accurate estimation of \( \nabla F \) difficult with limited data points.
Second, the time step in Euler's method is limited by the Lipschitz constant $L$ of $\nabla F$; for stability, we must use a time step not exceeding $2/L$. $L$ may be difficult to estimate if the numerical computation of $\nabla F$ 
has significant errors. 
Although convergence improvement strategies like variance reduction techniques exist and alternative integration methods offer improved stability and convergence properties, we opt for a different approach. Our approach uses error estimates
to avoid stability issues while keeping the number of function evaluations small. 

We start by noting that if $f$ is a quadratic function, i.e., $f(x) = q(x) = a + bx + cx^2$, where $a, b, c \in \Rr$, the gradient $\nabla F$ can be calculated exactly (Proposition \ref{gradflowcomp}). Moreover, the gradient flow,
\begin{equation}
\label{qgf}
\begin{bmatrix}
	\dot \mu\\
	\dot \sigma
\end{bmatrix}
=
\begin{bmatrix}
	-b-2 c\mu\\
	-2 c\sigma
\end{bmatrix},  
\end{equation}
is an uncoupled linear equation with an explicit solution (Proposition \ref{linsol}). By substituting $f$ with a quadratic function, we can compute the exact solution of the previous gradient flow for arbitrary time, thus avoiding stability limitations. However, we must address the error introduced by replacing $f$ with a quadratic function (Section \ref{gfsec}).
At iteration $j$, a natural choice is to replace $f$ with a quadratic function $q_j = a_j + b_jx + c_jx^2$ that minimizes the error
\begin{equation}
\label{msexact}
\int_{\Rr} |q_j-f|^2 \Gamma_{\mu_j, \sigma_j}(x) dx.
\end{equation}
The first-order optimality condition for the above variational problem states that for any second-order polynomial $p(x)$
\begin{equation}
\label{nocexact}
\int_{\Rr} p (q_j-f)  \Gamma_{\mu_j, \sigma_j}(x) dx=0.
\end{equation}
Let $F^{q_j}$ be given by \eqref{Fdef} with $f$ replaced by $q_j$. The computation of $\nabla F$ and $\nabla F^{q_j}$ involves only integrals of the form $\int_\Rr p(x)f(x)  \Gamma_{\mu_j, \sigma_j}(x)dx$ or $\int_\Rr p(x) q_j(x)  \Gamma_{\mu_j, \sigma_j}(x)dx$ for certain second-order polynomials $p$ (see Proposition \ref{gradflowcomp}). 
Because \ref{nocexact} implies \[\int_\Rr p f  \Gamma_{\mu_j, \sigma_j}dx=\int_\Rr p q_j  \Gamma_{\mu_j, \sigma_j}dx,\] we have
\begin{equation}
\label{qid}
\nabla F(\mu_j, \sigma_j)=\nabla F^{q_j}(\mu_j, \sigma_j).
\end{equation}
Thus, at each iteration, we propose to solve
\[
\begin{bmatrix}
	\dot \mu\\
	\dot \sigma
\end{bmatrix}
=
\begin{bmatrix}
	-b_j-2 c_j\mu\\
	-2 c_j\sigma
\end{bmatrix},  
\]
with initial condition $(\mu(0), \sigma(0))=(\mu_j, \sigma_j)$.
As $\mu$ and $\sigma$ change due to the gradient flow, 
$\nabla F(\mu, \sigma)$ is no longer identical to $\nabla F^{q_j}(\mu, \sigma)$ but, by continuity, remains close for some time.
Therefore, we derive estimates for the maximal time step in which the quadratic approximation remains valid. It is worth noting that the identity \eqref{qid} is exact even if $f$ is not well-approximated by a quadratic, in which case the error estimates yield a small time step.

Our algorithm uses Monte Carlo integration to approximate \eqref{msexact} by a finite sum. Then, we apply least squares to determine $q_j$. Finally, the error estimates involve integral expressions computed via Monte Carlo integration. See Sections \ref{sampsec} and \ref{gfsec}.

\subsection{Prior work}
\label{prior}

Our primary focus is on derivative-free global optimizers, particularly evolutionary methods. Derivative-free algorithms are especially valuable for optimizing functions given by a black-box function, where no exact derivatives are available. For a comprehensive bibliographic overview of global optimization methods, including historical perspectives and recent advancements, we recommend referring to \cite{LOCATELLI2021100012}.

Our algorithm uses random sampling to explore the feasible space.  However, 
	the sampling distribution evolves according to a gradient flow. 
	Nelder-Mead \cite{MR3363409}, multilevel coordinate search 
\cite{MR1707795}, or pattern search \cite{HookeJeeves}, \cite{MR321541}, rely on a direct search of the feasible space combining ideas from optimization with heuristic procedures. The convergence analysis of many of these algorithms is reasonably well understood, as well as some of their limitations; see, for example, \cite{lagrias1996convergence}, \cite{MR1430554}, \cite{MR2048154}, and  
\cite{mckinnon1998convergence}.  To better explore the state space, 
a random search approach was introduced in \cite{rastrigin1963convergence}, and multiple improvements were proposed in the literature, for example, the Luus-Jaakola algorithm \cite{LuusJaakola}. 
Another important random search contribution is 
		\cite{matyas1965random}
	and recent improvements  \cite{ghadimi2013stochastic}, \cite{duchi2015optimal},
and \cite{nesterov2017random}, sometimes called
	zeroth order optimization.
Driven by machine learning advances, random search is popular in black-box hyperparameter optimization; see  
		\cite{JMLR:v13:bergstra12a} and \cite{YANG2020295}.

Evolutionary algorithms allow for a broad exploration of the feasible space for highly non-convex or expensive black-box objective functions. Two well-known population-based algorithms in the evolutionary family are the particle swarm algorithm, introduced in \cite{488968} and \cite{699146}, and 
the differential evolution from \cite{storn1997differential}; see also 
		\cite{price2006differential}, \cite{locatelli2013global}, \cite{DAS20161} and \cite{DELSER2019220}.
These algorithms maintain a population of candidate solutions that are combined to create new candidates using suitable heuristics, such as combination and mutation. For both particle swarm and differential evolution, there are several convergence  results, see \cite{LNV20151}, \cite{LNV20152}, \cite{Ghosh2012107} and \cite{LocVasile2015}. 
Another development in differential evolution strategies is the combination with local optimization algorithms. These memetic algorithms, a synergistic relation between local and global optimization strategies, are found in Schoen's work, \cite{schoen2021efficient} and \cite{MANSUETO2021107849}, for example.
Finally, population-based methods are also combined with decomposition-based strategies in global optimization, see \cite{MA2019} for a recent review. 
Our algorithm maintains a population of points where we have evaluated the function.
We obtain a new distribution from which additional points are sampled using rejection sampling. Our approach differs from swarm methods in that a gradient flow governs the evolution of these distributions.

	In \cite{lu2022swarmbased}, the authors introduce Swarm-Based Gradient Descent.
	The swarm includes agents, each defined by their position and mass. Their relative mass determines the agents' step size: heavier agents move in the local gradient direction with smaller time steps, while lighter agents use a backtracking protocol with larger steps. The authors explore the choice of time-step, creating a dynamic split between heavier "leaders" expected to approach local minima and lighter "explorers" who, with their large steps, are likely to find improved positions. Further, at each step, mass is transferred from agents with higher objective function values to those with lower ones. Unlike our approach, this method requires the evaluation of derivatives.

Evolutionary algorithms date back to 
\cite{rechenberg1973evolutionsstrategie}, \cite{schwefel1977evolutionsstrategien} (see 
		\cite{beyer2002evolution}). Other than particle swarms and differential evolution algorithms, 
evolutionary strategies also include
genetic algorithms \cite{barricelli1957symbiogenetic}, \cite{jh1975adaptation}, \cite{goldberg1989genetic}, 
cross-entropy methods \cite{rubinstein2004cross}, estimation of distribution algorithms \cite{pelikan2002survey}, \cite{larranaga2001estimation}, \cite{bosman2000expanding}
natural evolution strategies \cite{sun2009efficient}, \cite{glasmachers2010exponential}, 
\cite{wierstra2014natural},  and covariance matrix adaptation evolutionary strategies  (CMA-ES) \cite{hansen1996adapting}, \cite{hansen2001completely}, \cite{hansen2006cma}. 
Our algorithm incorporates several concepts and characteristics from previous approaches. Firstly, we view our gradient flow for $F$ as a method to learn the probability $\delta_{\bar \mu}(x)$, where $\bar \mu$ represents a global minimum of $f$, by sampling through a series of Gaussian distributions. This concept is employed in estimation distribution algorithms,
where this distribution is computed using  maximum likelihood estimation.

The concept of a gradient flow in the space of probability measures 
was proposed in \cite{berny2000selection}, and it was 
later explored using natural gradients \cite{amari1998natural} to 
develop natural evolution strategies in \cite{wierstra2014natural}. 
The authors suggest that sampling is not needed at every step of the algorithm. They introduce an importance weighting to avoid  sampling and employ statistical tests to control the quality of this approximation.
Similarly, our algorithm does not sample at every iteration, as we utilize rejection sampling based on previous evaluations.

In two recent papers, \cite{chaudhari2018deep} and \cite{osher2022hamilton}, the 
authors explore using sampling and Hamilton-Jacobi equations to build optimization algorithms. 
The first paper, \cite{chaudhari2018deep}, introduces a new zero-order algorithm called Hamilton-Jacobi-based Moreau Adaptive Descent (HJ-MAD). This algorithm guarantees convergence to global minima, given a continuous objective function. 
The authors demonstrate HJ-MAD's efficiency, showing that it outperforms other algorithms in several nonconvex examples and consistently converges to the global minimizer.
The second paper,   \cite{osher2022hamilton}, focuses on accurately approximating the Moreau envelope and proximals. This approach allows for solving high-dimensional optimization problems using a relatively low number of samples. 
The Hopf-Cole transformation, which converts the Hamilton-Jacobi equation
used in those papers into the heat equation used here, provides a connection between this approach
and our algorithm. A main difference, however, is that in contrast with our work, their 
regularization parameter, that somewhat corresponds to $\sigma$ in our approach, does not have to converge to $0$. 

Our algorithm 
also shares some features with the Covariance Matrix Adaptation Evolution Strategy (CMA-ES) algorithm. Both algorithms seek to adjust the mean and
 standard deviation (or, with CMA-ES, the covariance matrix) of a distribution to find the minimum of a function. 
 Moreover, CMA-ES can be seen as a gradient flow, as explored in the paper \cite{akimoto2010bidirectional}. 
 In general, CMA-ES attempts to track the principal components of the covariance matrix and adapt the geometry of the objective function (see \cite{LNV20151} and \cite{LNV20152} for improvements on Hansen and co-authors' algorithm, as well as convergence results). 
 Because we are working in one dimension, there is no need to consider a covariance matrix. 
 In future research, we will explore higher-dimensional cases where analyzing principal components of the covariance may prove valuable. 
 Our proposed algorithm differs from previous CMA-ES strategies in three main ways: (1) it employs rejection sampling to minimize function evaluations,  (2) it uses a quadratic model to approximate the gradient of the quadratic functional, integrates the corresponding gradient flow with an exponential-like integrator, and (3) uses error estimates for faster convergence within prescribed error tolerances.

Simulated annealing is another meta-heuristic method for minimizing a function $f$ \cite{pincus}, \cite{khachaturyan1979statistical}, \cite{kirkpatrick1983optimization}, related to the Metropolis-Hastings algorithm. It generates a sequence of candidate solutions through the following process: given a point $x_j$, it samples a new point $y$ within its neighborhood; the acceptance probability for $y$ depends on $f(x_j)$, $f(y)$, and a temperature parameter $T_j$; if $y$ is accepted, $x_{j+1}$ is set to $y$; otherwise, another point is sampled. Initially, $T_j$ 
has a high value, allowing non-improving points to be accepted. As $j$ increases, $T_j$ decreases, favoring acceptance of only improving points.
In our algorithm, the closest analog to a temperature parameter is $\sigma_j$, which does not strictly decrease, as demonstrated by the flow lines in Figure \ref{stream}. A possible variation involves prescribing a fixed cooling schedule for $\sigma$, such as $\dot \sigma=-\sigma$, and adjusting $\mu$ according to the gradient flow $\dot \mu=-\frac{\partial F}{\partial \mu}$. However, the mathematical properties of this approach remain uncertain as it does not guarantee a monotone decrease in $F(\mu(t), \sigma(t))$.

In \cite{vardhan2022tackling},  the authors examine the perturbed stochastic gradient descent (SGD) method for non-convex optimization problems and 
identify a class of non-convex functions for which convergence to a  global minimum occurs.  
The perturbed SGD method can be interpreted as sampling from a given Gaussian, which is similar to the approach here. However, the algorithm in that paper requires the computation of derivatives.

Model-based algorithms \cite{more1983computing}, \cite{shultz1985family}, \cite{byrd1987trust} approximate the objective function using a quadratic and attempt to minimize it within a trust region where the approximation is valid. Bayesian optimization, \cite {Mockus1975166} and 
\cite{mockus1978application}, combines trust region concepts with stochastic analysis \cite{garnett_bayesoptbook_2022}: the objective function is treated as a realization of a random process.
Our algorithm, which also approximates the objective function using a quadratic function, presents a fundamentally different approach from previous methods.
Specifically, the quadratic function we use may not necessarily provide an accurate approximation.
However, by combining the least squares optimality conditions with the algebraic structure of Gaussians, we demonstrate that the gradient flow associated with the quadratic approximation closely aligns with the original, regardless of the approximation's accuracy.

%


\subsection{Algorithm outline}

We outline the proposed minimization algorithm, which generates a sequence $(\mu_j, \sigma_j)$ approximating a minimizer of $F$. This is achieved by approximating the gradient flow of $F$ starting at $(\mu_0, \sigma_0)$. A naive approach would involve generating $(\mu_j, \sigma_j)$ according to Algorithm \ref{naive}, and terminating the algorithm 
when a stopping criterion is met.
%
\begin{algorithm}
	\caption{Naive algorithm}
	\label{naive}
	\begin{algorithmic}[1]
		\State Initialize $(\mu_0, \sigma_0)$   
		\While{Stopping criteria not met} 
		\State Sample $n_0$ points $x_i$
		according to the probability distribution $\Gamma_{\mu_j, \sigma_j}$.
		\State Using the pairs  $(x_i, f(x_i))$ and 
		least squares, find a quadratic approximation $q_j$ of  $f$. 
		\State Solve the exact gradient flow \eqref{qgf} for $q_j$ up some time step $\bar T$ with initial conditions $(\mu_j, \sigma_j)$ to produce $(\mu_{j+1}, \sigma_{j+1})$.
		\EndWhile
		\State Output best point found. 
	\end{algorithmic}
\end{algorithm}

Algorithm \ref{naive} presents several challenges that our algorithm addresses. First, computing $n_0$ points at every iteration, results in many function evaluations, as demonstrated in Section \ref{numresults}. To mitigate this, we employ a rejection sampling technique that reuses previous evaluations whenever possible, with mathematical details provided in Section \ref{rejesec}.
Another issue concerns the time step. As discussed in Section \ref{gfsec}, the gradient flow of
the least squares approximation 
$q_j$ of $f$ accurately approximates the gradient flow of $f$ for short times, even if there is a significant approximation error between $f$ and $q_j$. We select the time step based on error estimates rather than an arbitrary fixed value, as explained in Section \ref{timestepsec}. These three concepts -- quadratic approximation, rejection sampling, and adaptive time step -- are major contributions of our algorithm and make it efficient and competitive to minimize $F$ instead of $f$. 
We prove the convergence of the algorithm in Section \ref{tsa}.

Additionally, we implemented several improvements to enhance performance further. For instance, the number of points sampled is fixed in Algorithm \ref{naive}. However, it is natural to 
employ more points for poor quadratic approximations of $f$ and fewer for better ones, as detailed in Section \ref{adaptivitysec}.
Thus, we developed an adaptive sample size strategy that chooses a variable number $ n_j$ of samples at iteration $j$.
 Moreover, as Section \ref{sparsesec} outlines, we may not need to  sample and find a new quadratic approximation, $q_j$,  at every iteration when $q_j$ fits $f$ well.
Thus, we implement a sparse  sampling strategy that only samples $f$ and estimates $q_j$ when needed.  
 Finally, our stopping criterion accounts for error estimates and handles boundary and interior points differently, as discussed in Section \ref{stop}. Following the final iteration, we use a postprocessing step to improve accuracy by utilizing the quadratic approximation computed in the last iteration (Section \ref{postprocesssec}).

Given the relatively few points used in rejection sampling, restart strategies can enhance  accuracy without significantly increasing the number of function evaluations. We employ a combination of two methods. 
Occasionally, due to random sampling, 
the algorithm may evaluate $f$ at points better than the final value, indicating potential convergence to a local minimum. To address this, we implemented a restart strategy, ensuring our algorithm's result is always close to the best point where $f$ was evaluated (Section \ref{restartsec}) by restarting at the best point found so far. 
 In addition to this restarting strategy, we can also repeat the algorithm with a random initial condition but using all prior function evaluations. This further improves our code accuracy with a minimal increase in function evaluations.  We discuss this boosting strategy in Section \ref{boosting}.

We summarize the complete algorithm in Algorithm \ref{algo}  and refer the reader to later sections
for the technical details.
\begin{algorithm}
	\caption{Proposed algorithm}
	\label{algo}
	\begin{algorithmic}[1]
		\State Initialize $\Lambda_0=\{\}$   \Comment{Sec. \ref{rejesec}}
		\While{Boosting cycles limit is not achieved}		\Comment{Sec. \ref{boosting}}
		\State Initialize $(\mu_0, \sigma_0)$   \Comment{Either user provided or defaults in Sec. \ref{parsec}}
		\While{Restart criterion met} \Comment{Sec. \ref{restartsec}}
		\While{Stopping criteria not met} \Comment{Sec. \ref{stop}}
		\State Compute new sample of $ n_j$ points and update $\Lambda_j$
		\Comment{if using sparse  sampling (SR) only when needed, Sec. \ref{rejesec} and \ref{sparsesec}}
		\State If using adaptive sample size compute $n_{j+1}$ \Comment{Sec. \ref{adaptivitysec}}
		\State Compute quadratic approximation $q_j$ (if SR is used, only when needed) \Comment{Sec. \ref{lsas}}
		\State Compute $T_j$  \Comment{Sec. \ref{timestepsec}}
		\State Compute $(\mu_{j+1}, \sigma_{j+1})$   (if using SR compute
		$\tilde \gamma_i$)  \Comment{Sec. \ref{lfi} and \ref{sparsesec}}.
		\EndWhile
		\State If restart criterion is met set $(\mu_{j+1}, \sigma_{j+1})$ to restart value \Comment{Sec. \ref{restartsec}}
		\EndWhile	
		\State Postprocessing  \Comment{Sec. \ref{postprocesssec}}
		\EndWhile			
		\State Output the best point found. 
	\end{algorithmic}
\end{algorithm}

\subsection{Numerical experiments}

To evaluate the overall performance of our algorithm, we benchmarked it against four global optimization algorithms: Nelder-Mead, Random Search, Differential Evolution, and Simulated Annealing, illustrating the strengths and weaknesses of each algorithm under different conditions.

In summary, our algorithm excels in  the number of function evaluations and optimization efficiency. Its relative complexity leads to higher computational overhead, making it slower in terms of run time for less computationally intensive functions.  The significant reduction in the number of function evaluations 
makes our algorithm suitable for applications where such evaluations are expensive.
 Section \ref{numresults} and Appendix \ref{breakdown} provide a comprehensive breakdown of our findings.

Special thanks to Luis Espath for his thoughtful input and suggestions.

\section{Mathematical properties of the relaxed functional and associated gradient flow}
\label{mathprosec}

Our algorithm minimizes an objective function, $f$, through the approximation of the gradient flow \eqref{gf} of a relaxed functional, $F$. This section highlights key mathematical properties of $F$ and its gradient flow that motivate and justify our approach.
 We begin with the assumptions on $f$ and the basic properties of  $F$. Then, in Section \ref{cca}, we examine the properties of \eqref{gf} when $f$ is convex
and demonstrate convergence to a minimizer. Next, in Section \ref{nca}, we consider the non-convex case. Lastly, Section \ref{gfa} examines
the gradient flow for quadratic functionals, from which we develop an iterative scheme, the basis of our algorithm.

\subsection{Assumptions and elementary properties}

We want to ensure that the integral in \eqref{Fdef} exists
and that we can exchange derivatives with the integral sign. 
For this, we consider the class of functions, $C^k_p(\Rr)$,
that have $k$ continuous derivatives
and such that for all  $j$ with $0\leq j\leq k$, 
their $j$-th order derivatives, $f^{(j)}$, 
satisfy the following polynomial growth condition
\begin{equation}
\label{pgc}
\lim_{|x|\to +\infty}\frac{f^{(j)}(x)}{|x|^m}=0
\end{equation}
for some  $m>0$.
The space $C^0_p(\Rr)$ is denoted by $C_p(\Rr)$.

Suppose $f\in C_p(\Rr)$. 
$F$ is a convolution of 
a function with  polynomial growth, $f$, 
with a Gaussian. Consequently, $F$ is smooth and  we can exchange derivatives with the integral sign to compute the derivatives of $F$. Moreover, $\Gamma_{\mu, \sigma}$
converges (in weak sense) to the Dirac-$\delta$ at $\mu$ when $\sigma\to 0$. 
Accordingly, 
we have the following two elementary properties:
\begin{enumerate}
	\item $F$ is smooth for $(\mu, \sigma)\in \Rr\times \Rr^+$;
	\item For any $\mu$, $\lim_{\sigma\to 0}F(\mu, \sigma)=f(\mu)$.
\end{enumerate}

To guarantee a global minimizer for $f$ (though not necessarily unique), we assume the following:
\begin{hyp}
\label{a2}
$f$ satisfies the growth condition
\[
\lim_{|x|\to \infty} f(x)=+\infty.
\]
\end{hyp}

If $f\in C_p(\Rr)$ satisfies the above assumption, it has a global minimizer, $\bar x$, that may not be unique. In addition to ensuring a global minimizer's existence, this assumption implies the following growth property for $F$:
\begin{pro}
Suppose that  $f\in C_p(\Rr)$ satisfies Assumption \ref{a2}. Then
\begin{equation}
\label{lim}
\lim_{|\mu|+\sigma\to +\infty} F(\mu, \sigma)=+\infty.
\end{equation}
\end{pro}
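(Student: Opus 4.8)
The plan is to show that $F(\mu,\sigma)\to+\infty$ as $|\mu|+\sigma\to+\infty$ by splitting the integral defining $F$ into a region where the Gaussian mass concentrates and its complement, and exploiting Assumption \ref{a2} together with the polynomial growth of $f$. Write $F(\mu,\sigma)=\int_\Rr f(x)\Gamma_{\mu,\sigma}(x)\dx$. Since $f\in C_p(\Rr)$ and satisfies Assumption \ref{a2}, $f$ is bounded below, say $f\geq -M$ for some $M\geq 0$; replacing $f$ by $f+M$ (which shifts $F$ by $M$) we may assume $f\geq 0$, so all integrands are nonnegative and there are no convergence issues in what follows.

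First I would handle the case $\sigma\to+\infty$ (with $\mu$ arbitrary). Fix $K>0$; by Assumption \ref{a2} there is $R_K$ with $f(x)\geq K$ for $|x|\geq R_K$. Then $F(\mu,\sigma)\geq K\int_{|x|\geq R_K}\Gamma_{\mu,\sigma}(x)\dx = K\,\Pp(|N(\mu,\sigma^2)|\geq R_K)$. By the change of variables $x=\mu+\sigma z$, this probability is $\Pp(|\mu+\sigma Z|\geq R_K)$ with $Z$ standard normal, and for $\sigma$ large this tends to $1$ (indeed $\Pp(|\mu+\sigma Z|< R_K)\leq \Pp(|Z|< (R_K+|\mu|)/\sigma)\leq \sqrt{2/\pi}\,(R_K+|\mu|)/\sigma$). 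Hence $\liminf F(\mu,\sigma)\geq K$ along any sequence with $\sigma\to\infty$, and since $K$ is arbitrary we get $F\to+\infty$ there. The estimate above is uniform enough in $\mu$ provided $|\mu|/\sigma$ stays bounded, so care is needed when $|\mu|$ and $\sigma$ both blow up; I would treat the general regime $|\mu|+\sigma\to\infty$ by the next step.

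Second, the case where $|\mu|\to\infty$. Here the Gaussian is concentrated near $\mu$, which is far out, where $f$ is large. Concretely, fix $K$ and $R_K$ as above. If $|\mu|\geq R_K+1$, then on the interval $I_\mu:=[\mu-1,\mu+1]$ (say, if we also know $\sigma$ is not too large) we have $|x|\geq R_K$, so $f\geq K$ on $I_\mu$, giving $F(\mu,\sigma)\geq K\int_{I_\mu}\Gamma_{\mu,\sigma}(x)\dx=K\,\Pp(|Z|\leq 1/\sigma)$. This is good when $\sigma\lesssim 1$ but degenerates for large $\sigma$; however that sub-regime is already covered by the previous step. To make this fully rigorous and avoid awkward case-splitting, I would instead argue by contradiction: suppose $F$ does not tend to $+\infty$, so there is a sequence $(\mu_n,\sigma_n)$ with $|\mu_n|+\sigma_n\to\infty$ and $F(\mu_n,\sigma_n)\leq C$ for all $n$. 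Passing to a subsequence, either $\sigma_n\to\infty$ — handled by Step 1 (with the refinement that if also $|\mu_n|\to\infty$, then $\Pp(|\mu_n+\sigma_n Z|\geq R_K)\to 1$ still holds because either $|\mu_n|\geq 2R_K$ eventually, in which case the complement event forces $Z$ into a shrinking-or-shifted-far window, or $|\mu_n|\leq 2R_K$ and we are in the bounded-ratio case) — or $\sigma_n$ stays bounded, $\sigma_n\leq \Sigma$, and then $|\mu_n|\to\infty$, and we use $F(\mu_n,\sigma_n)\geq K\int_{|x|\geq R_K}\Gamma_{\mu_n,\sigma_n}(x)\dx = K\,\Pp(|\mu_n+\sigma_n Z|\geq R_K)\geq K\,\Pp\big(Z\geq (R_K-\mu_n)/\sigma_n\big)\to K$ when $\mu_n\to+\infty$ (and symmetrically for $\mu_n\to-\infty$), since $(R_K-\mu_n)/\sigma_n\to-\infty$. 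Either way $\liminf F(\mu_n,\sigma_n)\geq K$ for every $K$, contradicting the bound $C$.

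The main obstacle is the book-keeping in the mixed regime where both $|\mu|$ and $\sigma$ tend to infinity: one must ensure the Gaussian still places asymptotically full mass in the region $\{|x|\geq R_K\}$ where $f\geq K$. The contradiction-plus-subsequence formulation above is the cleanest way to isolate the two essential mechanisms (mass escaping to infinity when $\sigma\to\infty$, mass concentrating far out when $|\mu|\to\infty$) without a proliferation of cases; the only quantitative input needed is the elementary bound $\Pp(|Z|\leq t)\leq \sqrt{2/\pi}\,t$ and the fact that $\Pp(Z\geq s)\to 1$ as $s\to-\infty$. I would also remark that the lower bound $f\geq -M$ is exactly what makes the truncation $F\geq K\int_{|x|\geq R_K}\Gamma_{\mu,\sigma}$ legitimate after discarding the contribution of $\{|x|<R_K\}$, since there $f\geq -M$ contributes at least $-M$, which is a harmless additive constant.
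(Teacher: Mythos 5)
Your proof is correct and follows essentially the same strategy as the paper's: use Assumption \ref{a2} to produce a compact set outside which $f\geq K$, normalize $f$ to be bounded below, and bound $F$ from below by $K$ times the Gaussian mass of the complement of that compact set. The only difference is how you show that mass tends to $1$ — the paper does it in one stroke via a uniform pointwise bound $\Gamma_{\mu,\sigma}(x)\leq C_K/\sqrt{A}$ on compact sets when $|\mu|+\sigma>A$, whereas you split into subsequences ($\sigma\to\infty$ versus $\sigma$ bounded and $|\mu|\to\infty$) with explicit normal-tail estimates; both are valid.
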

\begin{remark}
When $|\mu|+\sigma\to +\infty$, then either $\mu$ or $\sigma$ diverge. Accordingly, this proposition means that 
$F$ becomes large once one of its arguments is large. Thus, for minimization purposes, 
both $\mu$ and $\sigma$ can be constrained to a bounded set. This observation is
explored in different results, e.g., Corollary \ref{boundtraj} and Proposition \ref{convmu}.
\end{remark}
\begin{proof}
First, we observe the following elementary fact. 
When either $\mu$ or $\sigma$ diverge, $\Gamma_{\mu, \sigma}(x)$ converges pointwise to $0$. 
Further, this convergence is uniform for $x$ on any compact set  $K\subset \Rr$. More precisely, elementary computations show that there exists a constant $C_K>0$ such that when $|\mu|+\sigma>A$, for large $A$, we have
\[
\Gamma_{\mu, \sigma}(x)\leq \frac{C_K}{\sqrt{A}}
\]
for all $x\in K$. 
Thus,
\[
\lim_{|\mu|+\sigma\to +\infty} \int_K \Gamma_{\mu, \sigma}(x) dx \to 0. 
\]
Consequently, $\lim_{|\mu|+\sigma\to +\infty} \int_{K^c} \Gamma_{\mu, \sigma}(x) dx \to 1$, where $K^c$ is the complement of $K$.

Furthermore, for every $M>0$, the set
$K_M=\{x\in\Rr: f-\min_x f\leq M\}$ is compact. Moreover, 
\[
F(\mu, \sigma)-\min_x f=\int_{\Rr} (f-\min_x f)\Gamma_{\mu, \sigma}(x) dx ][]\geq M \int_{K_M^c}\Gamma_{\mu, \sigma}(x) dx.
\]
Therefore, 
\[
\lim_{|\mu|+\sigma\to +\infty}F(\mu, \sigma)\geq \min_x f+ M, 
\]
for any real $M$. Hence, \eqref{lim} holds.  
\end{proof}

\begin{cor}
\label{boundtraj}
Suppose that  $f\in C_p(\Rr)$ satisfies Assumption \ref{a2}. 
Let $(\mu(t), \sigma(t))$ solve \eqref{gf}. Then, there exists $C>0$ such that 
$|\mu(t)|+ \sigma(t)\leq C$ for all $t>0$. 
\end{cor}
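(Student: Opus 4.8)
The plan is to use the energy functional $F$ as a Lyapunov function for the gradient flow \eqref{gf}, combined with the coercivity property \eqref{lim} just established. The gradient flow is precisely the trajectory along which $F$ is non-increasing, as recorded in \eqref{Fdec1}: along any solution $(\mu(t),\sigma(t))$ we have $\tfrac{d}{dt}F(\mu(t),\sigma(t)) = -|\nabla F(\mu(t),\sigma(t))|^2 \le 0$. Hence $F(\mu(t),\sigma(t)) \le F(\mu(0),\sigma(0)) =: F_0$ for all $t \ge 0$ for which the solution exists, so the trajectory is confined to the sublevel set $S_{F_0} = \{(\mu,\sigma) \in \Rr \times \Rr^+ : F(\mu,\sigma) \le F_0\}$.

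The next step is to observe that $S_{F_0}$ is a bounded subset of $\Rr \times \Rr^+$. This is exactly what \eqref{lim} gives: since $F(\mu,\sigma) \to +\infty$ as $|\mu| + \sigma \to +\infty$, there is $R > 0$ such that $F(\mu,\sigma) > F_0$ whenever $|\mu| + \sigma > R$; equivalently, $(\mu,\sigma) \in S_{F_0}$ forces $|\mu| + \sigma \le R$. Taking $C = \max\{R, |\mu(0)| + \sigma(0)\}$ (or simply $C = R$, noting the initial point lies in $S_{F_0}$) yields $|\mu(t)| + \sigma(t) \le C$ for all $t > 0$ in the maximal interval of existence. A short additional remark settles global existence: $F$ is smooth on $\Rr \times \Rr^+$ by property (1) above, so $\nabla F$ is locally Lipschitz, and since the trajectory stays in a fixed bounded set on which $\nabla F$ is bounded, the solution cannot blow up in finite time and thus extends to all $t > 0$.

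The one genuine subtlety — and the place I would be most careful — is that $S_{F_0}$, while bounded in $\Rr \times \Rr^+$, need not be a compact subset of $\Rr^2$, because its closure could touch the boundary line $\{\sigma = 0\}$ where $F$ is only defined by the limiting value $f(\mu)$ from property (2). This matters for ruling out the trajectory escaping by having $\sigma(t) \to 0$ in finite time, but it does not affect the \emph{bound} $|\mu(t)| + \sigma(t) \le C$, which is all the corollary asserts: the estimate $|\mu| + \sigma \le R$ on the sublevel set holds regardless of behavior near $\sigma = 0$, since \eqref{lim} only concerns the regime $|\mu| + \sigma \to \infty$. So for the stated conclusion, the argument is: (i) $F$ decreases along the flow by \eqref{Fdec1}; (ii) therefore the flow stays in $S_{F_0}$; (iii) $S_{F_0}$ is bounded by \eqref{lim}; (iv) hence $|\mu(t)| + \sigma(t)$ is bounded by a constant $C$ depending only on $F_0$, i.e. on the initial condition. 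The non-blowup in finite time (needed to make "for all $t > 0$" meaningful) follows from local Lipschitz continuity of $\nabla F$ together with the a priori bound.
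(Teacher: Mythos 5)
Your proof is correct and follows essentially the same route as the paper: the gradient flow decreases $F$ by \eqref{Fdec1}, so the trajectory is trapped in a sublevel set, which is bounded by the coercivity \eqref{lim} of the preceding proposition. The paper states this in one sentence; your added remarks on global existence and on the behavior near $\sigma=0$ are careful elaborations of points the paper leaves implicit, but they do not change the argument.
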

\begin{proof}
Because the gradient flow \eqref{gf} decreases $F$, the result follows from the preceding proposition. 
\end{proof}

\begin{pro}
	\label{relaxpro}
Suppose that  $f\in C_p(\Rr)$ satisfies Assumption \ref{a2}. Then 
	$\inf_{\mu, \sigma} F=\min_x f$; that is, \eqref{relaxid} holds.
\end{pro}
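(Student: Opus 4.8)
The plan is to prove the two inequalities $\inf_{\mu,\sigma}F \ge \min_x f$ and $\inf_{\mu,\sigma}F \le \min_x f$ separately, and then combine them. For the first, I would use only that $\Gamma_{\mu,\sigma}$ is a probability density: since $\int_\Rr \Gamma_{\mu,\sigma}(x)\,dx = 1$, bounding $f(x)\ge \min_x f$ pointwise inside the integral in \eqref{Fdef} gives
\[
F(\mu,\sigma) = \int_\Rr f(x)\,\Gamma_{\mu,\sigma}(x)\,dx \ge (\min_x f)\int_\Rr \Gamma_{\mu,\sigma}(x)\,dx = \min_x f
\]
for every $(\mu,\sigma)\in\Rr\times\Rr^+$, and taking the infimum over $(\mu,\sigma)$ yields $\inf_{\mu,\sigma}F \ge \min_x f$.

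For the reverse inequality, I would exploit that $f\in C_p(\Rr)$ together with Assumption \ref{a2} guarantees a global minimizer $\bar x$. Fixing $\mu=\bar x$ and letting $\sigma\to 0^+$, the elementary property (2) recorded above — namely $\lim_{\sigma\to 0}F(\mu,\sigma)=f(\mu)$ for every $\mu$, which is the statement that $\Gamma_{\mu,\sigma}$ converges weakly to $\delta_\mu$ — gives $\lim_{\sigma\to 0^+}F(\bar x,\sigma)=f(\bar x)=\min_x f$. Hence
\[
\inf_{\mu,\sigma}F \le \inf_{\sigma>0}F(\bar x,\sigma) \le \lim_{\sigma\to 0^+}F(\bar x,\sigma)=\min_x f.
\]
Combining the two bounds gives $\inf_{\mu,\sigma}F=\min_x f$, i.e.\ \eqref{relaxid}.

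I do not expect a genuine obstacle: the argument is short and uses only normalization of the Gaussian, the existence of $\bar x$, and the already-established limit (2). The single point deserving a word of care is the passage $\sigma\to 0$, but this is exactly property (2), which in turn follows from dominated convergence using the polynomial growth of $f$ in \eqref{pgc} and uniform-in-small-$\sigma$ Gaussian tail bounds. It is also worth remarking — as the paper itself notes — that the infimum is in general not attained on $\Rr\times\Rr^+$; it is approached only in the limit $\sigma\to 0$, i.e.\ on the boundary $\sigma=0$, which is precisely why \eqref{relaxid} is stated with an infimum on the left and a minimum on the right.
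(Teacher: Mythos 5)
Your proposal is correct and follows essentially the same route as the paper's proof: the lower bound from the normalization of $\Gamma_{\mu,\sigma}$ together with $f\geq \min_x f$, and the upper bound by fixing $\mu=\bar x$ and letting $\sigma\to 0$ using the weak convergence of $\Gamma_{\bar x,\sigma}$ to $\delta_{\bar x}$. Your additional remarks on justifying the $\sigma\to 0$ limit and on the infimum not being attained are consistent with what the paper already records elsewhere.
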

\begin{proof}
Because $\Gamma_{\mu, \sigma}$ is a probability density function and $f\geq \min_x f$, we have
	\[
	F(\mu, \sigma)\geq  \min_x f. 
	\]
	Therefore, $\inf_{\mu, \sigma} F\geq \min_x f$. 
	Let $\bar x$ be a global minimizer of $f$. In the limit $\sigma \to 0$, $\Gamma_{\mu, \sigma}$ converges to a Dirac-$\delta$ distribution and thus
	$$
	\lim_{\sigma \to 0} F(\bar x,\sigma )=f(\bar x).
	$$
	Then,
	$$
	\inf_{\mu, \sigma} F \leq \lim_{\sigma\to 0} F(\bar x,\sigma )=f(\bar x)=\min_x f. 
	$$
	Finally, if $\inf_{\mu, \sigma} F\geq \min_x f$ and $\inf_{\mu, \sigma} F\leq\min_x f $, then  \eqref{relaxid} holds.
\end{proof}

\begin{remark}
Assumption \ref{a2} was only used in the preceding proof to ensure the existence of a minimum $\bar x$. 
The result holds even if $f$ lacks a global minimum, in which case, we have
$\inf_{\mu, \sigma} F=\inf_x f$.
\end{remark}

\begin{pro}
	\label{heatpro}
	Suppose that  $f\in C_p(\Rr)$. Then $F$	satisfies \eqref{heat}.	
\end{pro}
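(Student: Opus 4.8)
The plan is to reduce the claimed identity for $F$ to the corresponding identity for the Gaussian kernel $\Gamma_{\mu,\sigma}$ and then differentiate under the integral sign. Since $f\in C_p(\Rr)$, the function $F(\mu,\sigma)=\int_\Rr f(x)\Gamma_{\mu,\sigma}(x)\,dx$ is smooth on $\Rr\times\Rr^+$, and, as already recorded among the elementary properties of $F$, the derivatives $\partial_\mu$, $\partial_\mu^2$, $\partial_\sigma$ may be exchanged with the integral: the relevant dominating functions are products of $|f(x)|$ (of polynomial growth) with polynomials in $x$ times $\Gamma_{\mu,\sigma}(x)$, which are integrable locally uniformly in $(\mu,\sigma)$ on $\Rr\times\Rr^+$. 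Granting this, it suffices to prove that the kernel itself satisfies
\[
\frac{\partial \Gamma_{\mu,\sigma}}{\partial\sigma}=\sigma\,\frac{\partial^2 \Gamma_{\mu,\sigma}}{\partial\mu^2},
\]
because then $\partial_\sigma F=\int_\Rr f\,\partial_\sigma\Gamma_{\mu,\sigma}\,dx=\sigma\int_\Rr f\,\partial_\mu^2\Gamma_{\mu,\sigma}\,dx=\sigma\,\partial_\mu^2 F$, which is \eqref{heat}.

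To check the kernel identity I would just differentiate the explicit expression $\Gamma_{\mu,\sigma}(x)=(\sqrt{2\pi}\,\sigma)^{-1}e^{-(x-\mu)^2/(2\sigma^2)}$. One finds
\[
\frac{\partial \Gamma_{\mu,\sigma}}{\partial\mu}=\frac{x-\mu}{\sigma^2}\,\Gamma_{\mu,\sigma},\qquad
\frac{\partial^2 \Gamma_{\mu,\sigma}}{\partial\mu^2}=\left(\frac{(x-\mu)^2}{\sigma^4}-\frac{1}{\sigma^2}\right)\Gamma_{\mu,\sigma},
\]
whereas
\[
\frac{\partial \Gamma_{\mu,\sigma}}{\partial\sigma}=\left(\frac{(x-\mu)^2}{\sigma^3}-\frac{1}{\sigma}\right)\Gamma_{\mu,\sigma}
=\sigma\left(\frac{(x-\mu)^2}{\sigma^4}-\frac{1}{\sigma^2}\right)\Gamma_{\mu,\sigma},
\]
which is precisely $\sigma\,\partial_\mu^2\Gamma_{\mu,\sigma}$. (Equivalently, one could note that $\Gamma_{\mu,\sigma}$ is the standard heat kernel after the time change $t=\sigma^2/2$, together with $\partial_\mu\Gamma_{\mu,\sigma}=-\partial_x\Gamma_{\mu,\sigma}$, so that $\partial_\sigma\Gamma_{\mu,\sigma}=\sigma\,\partial_t\Gamma=\sigma\,\partial_x^2\Gamma=\sigma\,\partial_\mu^2\Gamma$.)

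I do not expect any real obstacle here. The only step meriting a word of justification is the interchange of differentiation and integration, and that is already subsumed in the smoothness and exchangeability statement established for $F$ under the hypothesis $f\in C_p(\Rr)$; everything else is the elementary differentiation of the Gaussian displayed above.
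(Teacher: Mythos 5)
Your proposal is correct and follows essentially the same route as the paper, which simply instructs the reader to differentiate \eqref{Fdef} with respect to $\sigma$ and compare with $\sigma$ times the second $\mu$-derivative; you have merely written out the kernel identity $\partial_\sigma\Gamma_{\mu,\sigma}=\sigma\,\partial_\mu^2\Gamma_{\mu,\sigma}$ and the justification for differentiating under the integral sign explicitly. The computations are all correct.
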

\begin{proof}
	Differentiate \eqref{Fdef} with respect to $\sigma$ to show that it is equal to the second partial derivative of \eqref{Fdef} with respect to $\mu$ multiplied by $\sigma$.
\end{proof}

If $f\in C^2_p$, we have that $\lim_{\sigma\to 0} \frac{\partial F} {\partial \mu }(\mu, \sigma)$ and $\lim_{\sigma\to 0} \frac{\partial^2 F} {\partial \mu^2 }(\mu, \sigma)$ exist. Thus, because of the preceding proposition, $\nabla F$, extends as a continuous function up to $\sigma=0$. 
Thus, we can consider the gradient flow for $\sigma \geq 0$ instead of $\sigma>0$.

A further consequence of the preceding proposition is the following.
\begin{cor}
	Suppose that  $f\in C_p(\Rr)$ and satisfies  Assumption \ref{a2}. Then $F$ has no minima for $\sigma>0$. 
\end{cor}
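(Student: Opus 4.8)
The plan is to argue by contradiction, exploiting the heat-equation structure of Proposition \ref{heatpro} together with the coercivity \eqref{lim}. Suppose $F$ admitted a (local, hence a fortiori also a global) minimizer $(\mu_0,\sigma_0)$ with $\sigma_0>0$. First I would remove the variable coefficient in \eqref{heat} by the substitution $\tau=\sigma^2/2$: setting $u(\mu,\tau):=F(\mu,\sqrt{2\tau})$, a direct chain-rule computation turns \eqref{heat} into the classical heat equation $u_\tau=u_{\mu\mu}$ on the connected open set $\Rr\times(0,\infty)$, and $(\mu_0,\tau_0)$, with $\tau_0=\sigma_0^2/2>0$, is then a local minimizer of $u$. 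Note $u$ is a genuine classical (indeed smooth) solution there, since $F$ is smooth for $\sigma>0$ by the elementary properties recorded above.

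Next I would localize. Choose $r>0$ small enough that the closed rectangle $R=[\mu_0-r,\mu_0+r]\times[\tau_0-r,\tau_0]$ is contained in $\Rr\times(0,\infty)$ and $u(\mu_0,\tau_0)=\min_R u$. The point $(\mu_0,\tau_0)$ lies on the top edge of $R$, not on its parabolic boundary $\partial_pR=\bigl([\mu_0-r,\mu_0+r]\times\{\tau_0-r\}\bigr)\cup\bigl(\{\mu_0-r,\mu_0+r\}\times[\tau_0-r,\tau_0]\bigr)$. The weak minimum principle for the heat equation gives $\min_R u=\min_{\partial_pR}u$, so $u$ attains its minimum over $R$ at the interior-in-time point $(\mu_0,\tau_0)$; the strong minimum principle for caloric functions then forces $u$ to be constant on $R$. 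Equivalently, $F$ is constant on a nonempty open subset of $\Rr\times\Rr^+$.

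To conclude, fix $\sigma_1\in(0,\sigma_0)$ whose horizontal slice meets this open set, so that $F(\cdot,\sigma_1)$ is constant on some nonempty open interval. On the other hand, for fixed $\sigma_1$ the integrand $\Gamma_{\mu,\sigma_1}(x)$ extends to an entire function of $\mu$, and the polynomial growth of $f$ is dominated by the Gaussian, so $F(\cdot,\sigma_1)$ extends to an entire function on $\Cc$; being constant on an interval, it is therefore constant on all of $\Rr$. This contradicts the preceding proposition (equation \eqref{lim}), which forces $F(\mu,\sigma_1)\to+\infty$ as $|\mu|\to\infty$ with $\sigma_1$ fixed. Hence no minimizer of $F$ with $\sigma>0$ can exist, which is the assertion.

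The main obstacle is supplying the correct PDE input. The pointwise consequences of being a minimizer are weak: $\nabla F(\mu_0,\sigma_0)=0$ gives $F_\sigma(\mu_0,\sigma_0)=0$, hence $F_{\mu\mu}(\mu_0,\sigma_0)=0$ by \eqref{heat}, but no finite list of derivative conditions at a single point rules out an interior extremum of a solution of the heat equation — one genuinely needs the strong maximum principle (equivalently, the real-analyticity/unique-continuation of caloric functions) to propagate the degeneracy to a whole open set, where it can finally collide with the coercivity \eqref{lim}. A secondary point to verify is that the substitution $\tau=\sigma^2/2$ is admissible precisely on $\sigma>0$, so that the rectangle $R$ can indeed be taken inside the open region where the classical parabolic theory applies.
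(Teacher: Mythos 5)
Your proposal is correct and follows essentially the same route as the paper, whose entire proof is the one-line observation that $F$ solves the modified heat equation \eqref{heat} and "the minimum principle gives the result"; your write-up simply supplies the details the paper leaves implicit (the substitution $\tau=\sigma^2/2$, the weak and strong parabolic minimum principles on a rectangle, and the use of \eqref{lim} together with analyticity in $\mu$ to rule out the constant case). The only blemish is the parenthetical "(local, hence a fortiori also a global)", which has the implication backwards -- your argument in fact excludes local minima, which is the stronger statement and covers global ones.
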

\begin{proof}
By Proposition \ref{heatpro}, $F$ satisfies a modified heat equation. The minimum principle gives the result. 
\end{proof}

\subsection{Convex case}
\label{cca}

We now explore the properties of the relaxed functional $F$ and its associated gradient flow when $f$ is convex. To simplify the discussion, we assume that $f\in C^2_p(\Rr)$ and, thus, 
$f$ is convex if and only if $f''\geq 0$.
We recall that a $C^2$  function is strictly convex if $f''>0$ and is uniformly convex if there exists $\tau>0$ such that $f''(x)\geq \tau$ for all $x\in \Rr$. 
Uniformly convex functions satisfy Assumption \ref{a2}, whereas strictly convex functions may not; for instance, $f(x)=e^x$ is strictly convex but does not meet Assumption \ref{a2}. Consequently, Assumption \ref{a2} must be verified independently for convex functions that are not uniformly convex. Furthermore, if $f$ is strictly convex and satisfies Assumption \ref{a2}, its minimizer is unique.

\begin{pro}[Preservation of convexity]
	\label{preconv}
Suppose that  $f\in C_p^2(\Rr)$.
 If $f$ is convex, then $\mu \mapsto F(\mu, \sigma)$ is also convex; that is, if $f''\ge 0$, then $F_{\mu\mu}\ge 0$. Moreover, if $f$ is strictly convex, then $F_{\mu\mu}> 0$. Furthermore, 
 if $f$ is uniformly convex with $f''\geq \tau>0$, then $F_{\mu\mu}\geq \tau$. 
\end{pro}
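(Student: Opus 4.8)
The plan is to exploit the convolution structure of $F$. Since $f \in C_p^2(\Rr)$, Proposition \ref{heatpro}'s surrounding discussion guarantees we may differentiate under the integral sign twice in $\mu$. The key observation is that for the Gaussian kernel, $\frac{\partial^2}{\partial \mu^2}\Gamma_{\mu,\sigma}(x) = \frac{\partial^2}{\partial x^2}\Gamma_{\mu,\sigma}(x)$, because $\Gamma_{\mu,\sigma}$ depends on $\mu$ and $x$ only through the difference $x-\mu$. Therefore
\begin{equation}
\label{Fmmplan}
F_{\mu\mu}(\mu,\sigma) = \int_\Rr f(x)\, \frac{\partial^2}{\partial \mu^2}\Gamma_{\mu,\sigma}(x)\, dx = \int_\Rr f(x)\, \frac{\partial^2}{\partial x^2}\Gamma_{\mu,\sigma}(x)\, dx.
\end{equation}
Now I would integrate by parts twice in $x$. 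The polynomial growth condition \eqref{pgc} on $f$, $f'$, $f''$, together with the super-polynomial decay of $\Gamma_{\mu,\sigma}$ and its $x$-derivatives, kills all boundary terms at $\pm\infty$. This yields
\[
F_{\mu\mu}(\mu,\sigma) = \int_\Rr f''(x)\, \Gamma_{\mu,\sigma}(x)\, dx.
\]

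With this identity in hand, the three claims are immediate. If $f'' \ge 0$ everywhere, then the integrand is nonnegative and $\Gamma_{\mu,\sigma}$ is a probability density, so $F_{\mu\mu} \ge 0$. If $f'' > 0$ everywhere, then the integrand is strictly positive on a set of full measure, so $F_{\mu\mu} > 0$ (here $\sigma > 0$ so $\Gamma_{\mu,\sigma}$ is genuinely a positive density; the case $\sigma = 0$ follows by the continuity of $\nabla F$ up to $\sigma = 0$ noted after Proposition \ref{heatpro}, or simply $F_{\mu\mu}(\mu,0) = f''(\mu) > 0$). If $f'' \ge \tau$, then $F_{\mu\mu} \ge \tau \int_\Rr \Gamma_{\mu,\sigma}(x)\,dx = \tau$.

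Alternatively, one could avoid integration by parts altogether by differentiating $\Gamma_{\mu,\sigma}$ under the integral sign and recognizing the result as $\int f''\,\Gamma$ via the same kernel identity read in reverse — but the integration-by-parts route is cleanest and makes the boundary-term estimates explicit. The main obstacle, such as it is, is the bookkeeping for the boundary terms: one must check that $f(x)\,\partial_x\Gamma_{\mu,\sigma}(x)$ and $f'(x)\,\Gamma_{\mu,\sigma}(x)$ both vanish as $|x|\to\infty$. This is routine given \eqref{pgc}, since each derivative of $\Gamma_{\mu,\sigma}$ is a polynomial in $x$ times $\Gamma_{\mu,\sigma}$, and $|x|^m \Gamma_{\mu,\sigma}(x) \to 0$ for every $m$; I would state this decay once and apply it uniformly. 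No subtlety beyond that arises, so the proof is short.
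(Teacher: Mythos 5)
Your proof is correct, and it reaches the same key identity as the paper, $F_{\mu\mu}=\int_\Rr f''\,\Gamma_{\mu,\sigma}\,dx$, but by a different mechanism. The paper first performs the change of variable $u=x-\mu$, writing $F(\mu,\sigma)=\int_\Rr f(u+\mu)\Gamma_{0,\sigma}(u)\,du$, so that the $\mu$-dependence sits inside $f$ and differentiating twice under the integral sign lands directly on $f''$ with no integration by parts and no boundary terms to check; the only justification needed is differentiation under the integral, which the growth condition \eqref{pgc} on $f''$ supplies. You instead keep the $\mu$-dependence in the kernel, use the translation identity $\partial_\mu^2\Gamma_{\mu,\sigma}=\partial_x^2\Gamma_{\mu,\sigma}$, and integrate by parts twice, which forces you to verify that $f\,\partial_x\Gamma_{\mu,\sigma}$ and $f'\,\Gamma_{\mu,\sigma}$ vanish at $\pm\infty$ — routine given \eqref{pgc}, as you note, but it is exactly the bookkeeping the paper's substitution avoids. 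Your route has the mild advantage of not requiring $f'$ or $f''$ to exist pointwise if one is willing to interpret the derivatives distributionally (only $f$ and the smooth kernel are differentiated), whereas the paper's route is shorter under the stated $C_p^2$ hypothesis. The concluding steps — nonnegativity, strict positivity, and the lower bound $\tau$ from $\Gamma_{\mu,\sigma}$ being a probability density — are identical in both arguments; your aside about $\sigma=0$ is unnecessary since the proposition concerns $\sigma>0$, but it is harmless.
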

\begin{proof}
	Consider the change of variable $u=x-\mu$ and rewrite the definition \eqref{Fdef}  of $F$ as
	$$
	\frac{\partial^2 F}{\partial \mu\partial \sigma}=\int_{\Rr} f(u+\mu) \Gamma_{0, \sigma}(u) du.
	$$
	Because $f\in C_p^2(\Rr)$, Differentiating twice with respect to $\mu$ yields
	$$
	\frac{\partial^2 F}{\partial \mu^2}= \int_{\Rr} f'' \Gamma_{0, \sigma}(u) du\ge 0
	$$
	from the hypothesis $f''\ge 0$. If $f$ is strictly convex, 
the preceding inequality is strict, and if $f$ is uniformly convex with $f''\geq \tau>0$, then $\frac{\partial^2 F} {\partial \mu^2 }\geq \tau$, taking into account that $\Gamma_{0, \sigma}$ is a probability density. 
\end{proof}

\begin{remark}
The preceding proposition does not assert convexity of $F$ in $(\mu, \sigma)$, only in $\mu$ for fixed $\sigma$.  Together with \eqref{heat}, this convexity gives $\frac{\partial F} {\partial \sigma}\geq 0$. 
\end{remark}

\begin{pro}
	\label{convsigma} 
Suppose that  $f\in C_p^2(\Rr)$ satisfies Assumption \ref{a2}. Suppose further that $f$ is strictly convex. 
Let $(\mu, \sigma)$ solve  \eqref{gf}. 
Then, as $t\to \infty$, we have $\sigma(t)\to 0$.
Moreover, if $f$ is uniformly convex, then $\sigma\to 0$ exponentially. 
\end{pro}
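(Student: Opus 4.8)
The plan is to analyze the scalar ODE $\dot\sigma = -\frac{\partial F}{\partial\sigma}(\mu,\sigma)$ directly, exploiting the sign information we already have. By Proposition \ref{preconv} and the remark following it, $f$ strictly convex gives $F_{\mu\mu}>0$ for $\sigma>0$, and combined with the heat equation \eqref{heat}, $\frac{\partial F}{\partial\sigma} = \sigma F_{\mu\mu} > 0$ whenever $\sigma>0$. Hence along the gradient flow, $\sigma(t)$ is strictly decreasing and bounded below by $0$, so $\sigma(t)\to\sigma_\infty$ for some $\sigma_\infty\ge 0$. The goal is to rule out $\sigma_\infty>0$.

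Suppose, for contradiction, that $\sigma_\infty>0$. By Corollary \ref{boundtraj}, the whole trajectory $(\mu(t),\sigma(t))$ stays in a bounded set, so it lives in a compact region $R$ of $\{\sigma\ge\sigma_\infty>0\}$ on which $F$ is smooth. On this compact set, $F_{\mu\mu}$ attains a positive minimum $\tau_0>0$ (strict convexity of $f$ gives $F_{\mu\mu}>0$ pointwise, and continuity plus compactness upgrades this to a uniform lower bound; note that in the uniformly convex case one gets this $\tau_0$ for free on all of $\sigma\ge 0$). Therefore $-\frac{\partial F}{\partial\sigma} = -\sigma F_{\mu\mu} \le -\sigma_\infty\tau_0 < 0$ along the trajectory for all $t$, which forces $\sigma(t)\to -\infty$, contradicting $\sigma(t)\ge 0$. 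Hence $\sigma_\infty=0$, proving the first claim.

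For the uniformly convex refinement, one does not need compactness: if $f''\ge\tau>0$ everywhere then $F_{\mu\mu}\ge\tau$ for all $(\mu,\sigma)$ with $\sigma\ge 0$ by Proposition \ref{preconv}, so
\[
\dot\sigma = -\sigma F_{\mu\mu}(\mu,\sigma) \le -\tau\,\sigma .
\]
Grönwall's inequality then gives $0\le \sigma(t)\le \sigma(0)e^{-\tau t}$, i.e. exponential decay.

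The main obstacle is the first claim, where strict convexity gives only $F_{\mu\mu}>0$ pointwise, not a uniform bound; the contradiction argument must therefore extract a uniform lower bound on $F_{\mu\mu}$ along the trajectory, and this is exactly where Corollary \ref{boundtraj} (boundedness of the trajectory) together with continuity of $F_{\mu\mu}$ and the standing assumption $\sigma_\infty>0$ (keeping us away from the $\sigma=0$ boundary) is essential. One should also double-check that the trajectory is defined for all $t>0$ (global existence), which follows from the a priori bound of Corollary \ref{boundtraj} and the smoothness of $\nabla F$ on $\Rr\times\Rr^+$ extended continuously to $\sigma=0$ as noted after Proposition \ref{heatpro}.
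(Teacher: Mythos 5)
Your proof is correct, and the overall skeleton matches the paper's: monotone decrease of $\sigma$ from $\dot\sigma=-\sigma F_{\mu\mu}<0$, boundedness of the trajectory via Corollary \ref{boundtraj}, positivity of $F_{\mu\mu}$ from Proposition \ref{preconv}, and an identical Gr\"onwall argument in the uniformly convex case. Where you diverge is in how the contradiction is extracted when one assumes $\sigma_\infty>0$. The paper integrates the identity $\dot\sigma=-\sigma F_{\mu\mu}$ to conclude that $\sigma F_{\mu\mu}$ is integrable on $[0,\infty)$, extracts a sequence $T_j\to\infty$ along which this quantity tends to zero, and then uses compactness to produce a limit point $(\bar\mu,\bar\sigma)$ with $\bar\sigma>0$ at which $F_{\mu\mu}$ would have to vanish, contradicting strict convexity. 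You instead note that if $\sigma_\infty>0$ the closed trajectory is a compact subset of $\{\sigma\geq\sigma_\infty\}$, so $F_{\mu\mu}$ has a uniform positive lower bound $\tau_0$ there, giving $\dot\sigma\leq-\sigma_\infty\tau_0<0$ uniformly in $t$ and hence $\sigma(t)\to-\infty$, which is absurd. Your version is slightly more elementary and arguably cleaner, as it dispenses with the integrability and subsequence-extraction steps; both hinge on exactly the same ingredients (boundedness of the orbit keeping it away from the degenerate boundary $\sigma=0$, plus continuity and pointwise positivity of $F_{\mu\mu}$). Your closing remarks about global existence of the flow are a sensible addition that the paper leaves implicit.
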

\begin{proof}
	From  \eqref{heat} and \eqref{gf}, we have
	$$
	\dot \sigma = - \frac {\partial F}{\partial \sigma}= -\sigma \frac {\partial^2 F}{\partial \mu^2}<0,
	$$
	since, from Proposition \ref{preconv}, $\frac {\partial^2 F}{\partial \mu^2} >0$ if $f''>0$. That is, $\sigma(t)$ is a strictly decreasing function and therefore approaches its infimum, which is 
	$\lim_{t\to +\infty}\sigma(t)\geq 0$.  Now, we prove that this limit vanishes.  We have
	\[
	\int_0^{+\infty}\sigma(t) \frac {\partial^2 F}{\partial \mu^2}(\mu(t), \sigma(t)) dt =-	\int_0^{+\infty}\dot \sigma dt=\sigma(0)-\lim_{t\to +\infty } \sigma(t)<+\infty. 
	\]
	Taking into account that $\sigma(t) \frac{\partial^2 F}{\partial \mu^2}(\mu(t), \sigma(t)) \geq 0$, we conclude that $\sigma(t) \frac{\partial^2 F}{\partial \mu^2}(\mu(t), \sigma(t))$ is integrable on $[0, +\infty)$. Thus, 
	 there exists a sequence $T_j\to \infty$ such that 
	$\sigma(T_j) \frac{\partial^2 F}{\partial \mu^2}(\mu(T_j), \sigma(T_j))\to 0$. 
	Because of Corollary \ref{boundtraj}, $(\mu(T_j), \sigma(T_j))$ is bounded. Hence, we can extract a subsequence such that $(\mu(T_j), \sigma(T_j))\to (\bar \mu, \bar \sigma)$. 
	If $\bar \sigma= 0$, because $\sigma$ is decreasing and hence convergent we have
	$\lim_{t\to +\infty } \sigma(t)=0$. 
	We claim this must be the case,
	as the alternative leads to a contradiction. In fact, if $\sigma(T_j)$ does not converge to $0$, 
    we have  $\frac{\partial^2 F}{\partial \mu^2}(\mu(T_j), \sigma(T_j))\to 0$. By continuity, $\frac{\partial^2 F}{\partial \mu^2}(\bar \mu, \bar \sigma)=0$, 
	which contradicts the inequality $\frac{\partial^2 F}{\partial \mu^2}>0$ from Proposition \ref{preconv}. 
	
	In the uniformly convex case, that is, if $f''\geq \tau>0$, we have by Proposition \ref{preconv},
	$F_{\mu\mu}\geq \tau$. Hence, 
	\[
	\dot \sigma\leq -\tau \sigma, 
	\]
	which gives $\sigma(t)\leq \sigma_0e^{-\tau t}$.
\end{proof}

\begin{pro}
	\label{convmu}
Suppose that  $f\in C_p^2(\Rr)$ satisfies Assumption \ref{a2}. Suppose further that $f$ is strictly convex. Then $\mu(t)\to \bar x$ as $t\to\infty$,
	where $\bar x$ is the unique minimizer of $f$.
\end{pro}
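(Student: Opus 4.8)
The plan is to combine the compactness of the trajectory (Corollary \ref{boundtraj}) with the strict convexity estimates from Proposition \ref{preconv} and the fact, established in Proposition \ref{convsigma}, that $\sigma(t)\to 0$. The target is to show $\mu(t)\to\bar x$, the unique minimizer. First I would recall that by Proposition \ref{heatpro} and the remark after it, $\nabla F$ extends continuously up to $\sigma=0$, with $\frac{\partial F}{\partial\mu}(\mu,0)=f'(\mu)$ (this follows by dominated convergence as $\Gamma_{\mu,\sigma}\to\delta_\mu$, using $f\in C^2_p$) and $\frac{\partial F}{\partial\sigma}(\mu,0)=0$ (from \eqref{heat}, since $\frac{\partial F}{\partial\sigma}=\sigma F_{\mu\mu}$ and $F_{\mu\mu}$ stays bounded on compacts). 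Thus the only zero of $\nabla F$ on the closed region $\{\sigma\ge 0\}$ inside the compact trajectory-closure is the point $(\bar x,0)$: indeed $\frac{\partial F}{\partial\sigma}=0$ forces $\sigma F_{\mu\mu}=0$, and $F_{\mu\mu}>0$ everywhere (Proposition \ref{preconv}, strict convexity), so $\sigma=0$; then $\frac{\partial F}{\partial\mu}=f'(\mu)=0$ together with strict convexity of $f$ forces $\mu=\bar x$.

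Next I would set up a standard gradient-flow convergence argument. From \eqref{Fdec1}, $t\mapsto F(\mu(t),\sigma(t))$ is nonincreasing and bounded below by $\min_x f$, hence converges to some limit $F_\infty$; integrating \eqref{Fdec1} gives $\int_0^\infty |\nabla F(\mu(t),\sigma(t))|^2\,dt<\infty$, so there is a sequence $t_k\to\infty$ with $\nabla F(\mu(t_k),\sigma(t_k))\to 0$. By Corollary \ref{boundtraj} the trajectory lies in a compact set $K\subset\{\sigma\ge 0\}$, so along a subsequence $(\mu(t_k),\sigma(t_k))\to(\mu_*,\sigma_*)\in K$, and by continuity of $\nabla F$ on $K$ we get $\nabla F(\mu_*,\sigma_*)=0$. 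By the previous paragraph $(\mu_*,\sigma_*)=(\bar x,0)$. Since Proposition \ref{convsigma} already gives $\sigma(t)\to 0$ monotonically, it remains only to upgrade $\mu(t_k)\to\bar x$ along a subsequence to full convergence $\mu(t)\to\bar x$.

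For the upgrade I would use that $(\bar x,0)$ is the unique critical point in $K$ together with the Lyapunov structure: $F$ is a strict Lyapunov function for \eqref{gf} whose only equilibrium in $K$ is $(\bar x,0)$, so by the LaSalle invariance principle the $\omega$-limit set of the bounded trajectory is contained in $\{(\bar x,0)\}$, hence equals it, which gives $(\mu(t),\sigma(t))\to(\bar x,0)$ and in particular $\mu(t)\to\bar x$. Alternatively, and perhaps more self-containedly, one can argue directly: since $\sigma(t)\to 0$, write $\dot\mu=-\frac{\partial F}{\partial\mu}(\mu,\sigma)=-f'(\mu)+\big(f'(\mu)-\frac{\partial F}{\partial\mu}(\mu,\sigma)\big)$, where the bracketed term tends to $0$ uniformly on the compact $\mu$-range as $\sigma\to 0$; then a perturbed one-dimensional gradient-descent estimate for the uniquely-minimized convex $f$ (using $(\mu-\bar x)f'(\mu)>0$ for $\mu\ne\bar x$, via $\frac{d}{dt}\frac12(\mu-\bar x)^2=-(\mu-\bar x)f'(\mu)+o(1)$) forces $\mu(t)\to\bar x$. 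I expect the main obstacle to be precisely this last step — ruling out that $\mu$ lingers or drifts while $\sigma$ is small but the perturbation $f'(\mu)-\frac{\partial F}{\partial\mu}(\mu,\sigma)$ is not yet negligible — which is why invoking LaSalle (leveraging that $\bar x$ is the \emph{unique} critical point, guaranteed by strict convexity and Assumption \ref{a2}) is the cleanest route; the continuity of $\nabla F$ up to $\sigma=0$, already noted in the text, is the key enabling fact.
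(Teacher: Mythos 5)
Your proposal is correct, and its skeleton (boundedness of the trajectory from Corollary \ref{boundtraj}, $\sigma(t)\to 0$ from Proposition \ref{convsigma}, the energy identity \eqref{Fdec1}, continuity of $\nabla F$ up to $\sigma=0$, and uniqueness of the minimizer) matches the paper's. The difference lies in how full convergence is extracted from $\int_0^\infty |\nabla F|^2\,dt<\infty$. The paper does not stop at subsequential convergence: it upgrades to $\frac{\partial F}{\partial\mu}(\mu(t),\sigma(t))\to 0$ along the \emph{whole} trajectory by a Barbalat-type argument (the right-hand side of \eqref{gf} is bounded on the compact trajectory, so $(\mu,\sigma)$ is Lipschitz in $t$ and $F_\mu$ is uniformly continuous there; a recurrent excursion of $(F_\mu)^2$ above $\epsilon$ would then contribute infinitely to the integral). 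Since $\sigma\to 0$ forces $F_\mu - f'(\mu)\to 0$, this yields $f'(\mu(t))\to 0$, and every accumulation point of the bounded $\mu(t)$ is the unique critical point $\bar x$. You instead extract only a sequence $t_k$ with $\nabla F\to 0$, identify $(\bar x,0)$ as the unique equilibrium, and then upgrade either by LaSalle or by the descent estimate for $\tfrac12(\mu-\bar x)^2$. Your second option is fully rigorous and arguably cleaner, since $(\mu-\bar x)f'(\mu)>0$ uses convexity directly and gives forward invariance of small neighbourhoods of $\bar x$. The LaSalle option carries a small technical wrinkle you should be aware of: invariance of the $\omega$-limit set requires a well-defined continuous (semi)flow on the closure $\{\sigma\ge 0\}$, and there $\nabla F$ is only known to extend continuously, not Lipschitz-continuously, so uniqueness of solutions through points with $\sigma=0$ is not automatic. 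Both the paper's Barbalat argument and your direct estimate sidestep this issue, so either is a satisfactory replacement; you correctly identified the upgrade from subsequential to full convergence as the crux of the proof.
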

\begin{proof}
By the preceding proposition, we have $\sigma(t)\to 0$. Furthermore, $\mu(t)$ is bounded by Corollary
\ref{boundtraj}. By  integrating the identity in \eqref{Fdec1} and discarding the term $\left(\frac {\partial F}{\partial \sigma}\right)^2$, we have
\begin{equation}
\label{est}
\int_0^{+\infty} \left(\frac {\partial F}{\partial \mu}\right)^2dt \leq F(\mu(0), \sigma(0))-\lim_{t\to \infty}F(\mu(t), \sigma(t))<+\infty.
\end{equation}
We claim that $\frac {\partial F}{\partial \mu}(\mu(t), \sigma(t))\to 0$ as $t\to \infty$. If this is not the case, there is 
$\epsilon>0$ and 
a
sequence $T_j\to \infty$ such that $\left(\frac {\partial F}{\partial \mu}\right)^2(\mu(T_j), \sigma(T_j))>\epsilon$. Note that
$\mu$ and  $\sigma$ solve the ODE \eqref{gf} and take values on a compact set $K$. Accordingly, their time derivative, the right-hand side of \eqref{gf}, is bounded uniformly in time. Therefore,
$\mu$ and  $\sigma$ are Lipschitz 
continuous on $[0,+\infty)$. Further, on $K$, $F_\mu$ is uniformly continuous. Accordingly, there exists
$\delta>0$ such that 
$\left(\frac {\partial F}{\partial \mu}\right)^2(\mu(t), \sigma(t))>\frac \epsilon 2$, for all $T_j-\delta<t<T_j+\delta$. This contradicts the 
bound in \eqref{est}. Because
 $\sigma\to 0$,  $\frac{\partial F}{\partial \mu}(\mu(t), \sigma(t))-f'(\mu(t))\to 0$ as $t\to \infty$. Hence
 $f'(\mu(t))\to 0$. By continuity of $f'$, any accumulation point of $\mu(t)$ is a critical point and, by 
 convexity, a minimizer of $f$. 
 Since $f$ is strictly convex and satisfies Assumption\ref{a2} there is a unique minimizer $\bar x$. Thus, $\mu(t)$  converges to $\bar x$.
\end{proof}

\subsection{Non-convex case}
\label{nca}

The analysis of the non-convex case is more complex. Nevertheless, there are two aspects that 
we can address rigorously: the behavior as $\sigma\to 0$ and $+\infty$. 

First, we consider the behavior of \eqref{gf} near critical points of $f$. 
Due to \eqref{heat}, we can rewrite \eqref{gf} as 
\begin{equation}
\label{flow}	
	\begin{bmatrix}
	\dot \mu\\
	\dot \sigma
\end{bmatrix}
=-
\begin{bmatrix}
	\frac{\partial F}{\partial \mu}\\
	\sigma \frac{\partial^2 F}{\partial \mu^2}
\end{bmatrix}.
\end{equation}
This shows that $\sigma$ is increasing in regions where $F$ is concave in $\mu$.  Furthermore, we have the following result.
\begin{pro}
Let $f\in C^4_{p}(\Rr)$. 
Moreover, let $\tilde x$
be a critical point of $f$. Then, at $(\mu, \sigma)=(\tilde x, 0)$, the linearization of 
\eqref{gf} is
\[
	\begin{bmatrix}
	\dot \mu\\
	\dot \sigma
\end{bmatrix}
=-
\begin{bmatrix}
f''(\tilde x)&0\\
0&f''(\tilde x)
\end{bmatrix}
\begin{bmatrix}
\mu-\tilde x\\
\sigma
\end{bmatrix}.
\]
Because non-degenerate local maxima satisfy $f''<0$,  these points are repelling, whereas
non-degenerate local minima are attractive. 
\end{pro}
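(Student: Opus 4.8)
The goal is to compute the Jacobian of the right-hand side of \eqref{gf} at $(\tilde x,0)$, i.e.\ minus the Hessian of $F$ there, and then to read off the signs of its eigenvalues. First I would pass to the normalized variable: setting $x=\mu+\sigma z$ in \eqref{Fdef} gives $F(\mu,\sigma)=\int_{\mathbb{R}}f(\mu+\sigma z)\,\Gamma_{0,1}(z)\,dz$. Since $f\in C^4_p(\mathbb{R})$ and $\Gamma_{0,1}$ decays faster than any polynomial, the growth condition \eqref{pgc} justifies differentiating under the integral sign to the orders needed below, and, as noted after Proposition \ref{heatpro}, the resulting derivatives are continuous on $\mathbb{R}\times[0,\infty)$. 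In particular $F_\mu(\mu,\sigma)=\int f'(\mu+\sigma z)\Gamma_{0,1}(z)\,dz$, $F_{\mu\mu}(\mu,\sigma)=\int f''(\mu+\sigma z)\Gamma_{0,1}(z)\,dz$, and $F_{\mu\sigma}(\mu,\sigma)=\int z\,f''(\mu+\sigma z)\Gamma_{0,1}(z)\,dz$.

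Evaluating at $\sigma=0$ and using that $\tilde x$ is critical, $F_\mu(\tilde x,0)=f'(\tilde x)=0$, so $(\tilde x,0)$ is an equilibrium of \eqref{gf}; moreover $\{\sigma=0\}$ is invariant, since by \eqref{heat} the $\sigma$-component of the field is $-\sigma F_{\mu\mu}$, which vanishes there. For the Hessian of $F$ at $(\tilde x,0)$: directly, $F_{\mu\mu}(\tilde x,0)=f''(\tilde x)$; by oddness of $z\mapsto z\Gamma_{0,1}(z)$, $F_{\mu\sigma}(\tilde x,0)=f''(\tilde x)\int_{\mathbb{R}}z\,\Gamma_{0,1}(z)\,dz=0$; and, differentiating the identity $F_\sigma=\sigma F_{\mu\mu}$ from \eqref{heat}, $F_{\sigma\sigma}=F_{\mu\mu}+\sigma\,\partial_\sigma F_{\mu\mu}$ and $F_{\sigma\mu}=\sigma\,\partial_\mu F_{\mu\mu}$, so $F_{\sigma\sigma}(\tilde x,0)=f''(\tilde x)$ and $F_{\sigma\mu}(\tilde x,0)=0$. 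Hence $\operatorname{Hess}F(\tilde x,0)=f''(\tilde x)\,I$, and the linearization of \eqref{gf} at $(\tilde x,0)$ is $-f''(\tilde x)\,I$ applied to $(\mu-\tilde x,\sigma)$, which is precisely the claimed formula.

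Finally, this linearization has the single eigenvalue $-f''(\tilde x)$ with full eigenspace. At a non-degenerate local maximum $f''(\tilde x)<0$, so the eigenvalue is positive: $(\tilde x,0)$ is a hyperbolic source, and by the standard linearization theorem for hyperbolic equilibria of the $C^1$ flow \eqref{gf} (which, as recalled above, extends $C^1$ to $\mathbb{R}\times[0,\infty)$), it is repelling. At a non-degenerate local minimum $f''(\tilde x)>0$, the eigenvalue is negative and $(\tilde x,0)$ is an attracting sink. I expect the only point requiring care is the justification that $\nabla F$ and the second-order derivatives of $F$ entering the Hessian extend continuously up to $\sigma=0$, so that the flow is genuinely $C^1$ there and the linearization is legitimately computed at a boundary equilibrium; this follows from dominated convergence using \eqref{pgc} and the Gaussian decay, exactly as in the elementary properties established at the start of this section. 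Everything else is routine differentiation under the integral sign.
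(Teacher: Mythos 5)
Your proposal is correct and follows essentially the same route as the paper: both compute the Hessian of $F$ at $(\tilde x,0)$, obtaining $F_{\mu\mu}\to f''(\tilde x)$ directly and using the identity \eqref{heat} to reduce $F_{\mu\sigma}$ and $F_{\sigma\sigma}$ to derivatives of $F_{\mu\mu}$, which vanish or converge to $f''(\tilde x)$ as $\sigma\to 0$. Your additions — the substitution $x=\mu+\sigma z$ to justify differentiation under the integral, the parity argument giving $F_{\mu\sigma}(\tilde x,0)=0$ independently, and the explicit appeal to hyperbolicity for the repelling/attracting conclusion — are consistent refinements of the paper's argument rather than a different method.
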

\begin{proof}
If $f\in C^4_p(\Rr)$, $F$ extends as a $C^2$ function up to $\sigma=0$. 
We compute the linearization of \eqref{gf} using its alternative form \eqref{flow}. 
A direct computation gives
\[
\lim_{\sigma\to 0} \frac{\partial^2 F}{\partial \mu^2}=f''(\tilde x).
\]
Using the identity \eqref{heat}, we have
\[
\frac{\partial^2 F}{\partial \mu\partial \sigma}=\sigma \frac{\partial^3 F}{\partial \mu^3}.
\]
Thus, 
\[
\lim_{\sigma\to 0} \frac{\partial^2 F}{\partial \mu\partial \sigma}=\lim_{\sigma\to 0} \sigma \frac{\partial^3 F}{\partial \mu^3}=0.
\]
Similarly, using \eqref{heat},  we have 
\[
\frac{\partial^2 F}{\partial \sigma^2}=\sigma \frac{\partial^3 F}{\partial \mu^2\partial \sigma}+\frac{\partial^2 F}{\partial \mu^2}, 
\]
hence 
\[
\lim_{\sigma\to 0} \frac{\partial^2 F}{\partial \sigma^2}=f''(\tilde  x).
\]
Gathering these computations, we get the linearization of 
\eqref{gf} in the statement. 
\end{proof}

The second aspect concerns the asymptotic behavior of $F$ as $\sigma\to \infty$.  For this, 
we  consider the case
where
$f=f_c+f_p$, where $f_c$ is a convex function and $f_p$ is a non-convex perturbation. 

\begin{pro}
Suppose that $f=f_c+f_p$ where $f_c$ is a convex function and $f_p$ is a non-convex perturbation. 
Let $F_c$ and $F_p$ be the corresponding relaxed functionals. 
Assume that $f_c$ is comparable to a quadratic function in the following sense:
for some $C>0$,
\[
\frac 1 C \leq f_c''(x)\leq C. 
\]
Then, we have
\begin{equation}
\label{ulb}
\frac 1 C\leq \frac{\partial^2 F_c}{\partial \mu^2}\leq C.
\end{equation}
Assume further that $f_p\in C^1_p(\Rr)$ is  an integrable function with $f'_p$ 
also integrable. Then, 
as $\sigma\to \infty$, 
\[
F(\sigma, \mu)=F_c(\mu, \sigma)+O\left(\frac{1}{\sigma}\right),
\]
and
\[
\left| \frac{\partial F}{\partial \mu}(\sigma, \mu)\right|=\left|\frac{\partial F_c} {\partial \mu} (\mu, \sigma)\right|+O\left(\frac{1}{\sigma}\right).
\]
\end{pro}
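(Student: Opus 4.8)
The plan is to prove the three assertions in turn, all by reducing to the convolution structure of the relaxed functional. Writing $F = F_c + F_p$ by linearity of \eqref{Fdef} in the integrand, the first claim \eqref{ulb} is immediate from Proposition \ref{preconv}: the hypothesis $\frac1C \le f_c'' \le C$ means $f_c - \frac{1}{2C}x^2$ is convex and $\frac{C}{2}x^2 - f_c$ is convex, so applying the preservation-of-convexity proposition to both gives $\frac1C \le \frac{\partial^2 F_c}{\partial\mu^2} \le C$. (Equivalently, differentiate under the integral sign, $\frac{\partial^2 F_c}{\partial\mu^2} = \int_\Rr f_c''(x)\,\Gamma_{\mu,\sigma}(x)\,dx$, and bound the integrand using that $\Gamma_{\mu,\sigma}$ is a probability density.)

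For the two asymptotic statements, the point is to control $F_p$ and $\frac{\partial F_p}{\partial\mu}$ as $\sigma \to \infty$. First I would write, after the change of variable $u = x - \mu$,
\[
F_p(\mu,\sigma) = \int_\Rr f_p(\mu + u)\,\Gamma_{0,\sigma}(u)\,du,
\]
and use the uniform bound $\Gamma_{0,\sigma}(u) \le \frac{1}{\sqrt{2\pi}\,\sigma}$ together with the integrability of $f_p$ to get
\[
|F_p(\mu,\sigma)| \le \frac{1}{\sqrt{2\pi}\,\sigma}\int_\Rr |f_p(\mu+u)|\,du = \frac{\|f_p\|_{L^1}}{\sqrt{2\pi}\,\sigma} = O\!\left(\frac1\sigma\right),
\]
uniformly in $\mu$. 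Since $f \in C^1_p$ lets us differentiate under the integral, the same argument applied to $\frac{\partial F_p}{\partial\mu}(\mu,\sigma) = \int_\Rr f_p'(\mu+u)\,\Gamma_{0,\sigma}(u)\,du$ with $f_p'$ integrable gives $\left|\frac{\partial F_p}{\partial\mu}(\mu,\sigma)\right| \le \frac{\|f_p'\|_{L^1}}{\sqrt{2\pi}\,\sigma} = O\!\left(\frac1\sigma\right)$. Then $F = F_c + O(1/\sigma)$ follows directly, and for the gradient statement I would combine $\frac{\partial F}{\partial\mu} = \frac{\partial F_c}{\partial\mu} + \frac{\partial F_p}{\partial\mu}$ with the reverse triangle inequality $\left|\,|\frac{\partial F}{\partial\mu}| - |\frac{\partial F_c}{\partial\mu}|\,\right| \le |\frac{\partial F_p}{\partial\mu}| = O(1/\sigma)$.

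I do not expect a serious obstacle here; the only things to be careful about are (i) checking that the polynomial-growth / integrability hypotheses genuinely justify differentiating under the integral sign and interchanging limits — this is exactly the role of the $C^1_p$ assumption and the added $L^1$ hypotheses on $f_p, f_p'$ — and (ii) making explicit that all the $O(1/\sigma)$ estimates are uniform in $\mu$, so that the decomposition is meaningful as $\sigma \to \infty$ regardless of where $\mu$ sits (recall trajectories stay in a bounded set by Corollary \ref{boundtraj}, but the estimate does not even need that). A minor subtlety worth a sentence: the constant in the $O(1/\sigma)$ term for $F$ is $\|f_p\|_{L^1}/\sqrt{2\pi}$ and for the gradient is $\|f_p'\|_{L^1}/\sqrt{2\pi}$, both finite by hypothesis, so no compactness or decay-rate argument beyond the crude sup bound on the Gaussian is needed.
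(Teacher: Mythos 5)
Your proposal is correct and follows essentially the same route as the paper: the bound \eqref{ulb} is obtained exactly as in Proposition \ref{preconv}, and the two asymptotic statements come from the $L^\infty$ bounds $\|F_p\|_{L^\infty}\leq C\|f_p\|_{L^1}/\sigma$ and $\|\partial_\mu F_p\|_{L^\infty}\leq C\|f_p'\|_{L^1}/\sigma$, which you derive from the sup bound on the Gaussian just as the paper does. Your write-up merely fills in the details (the explicit constant $1/\sqrt{2\pi}$, the reverse triangle inequality, uniformity in $\mu$) that the paper's terse proof leaves implicit.
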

\begin{proof}
The proof of \eqref{ulb} is similar to the one in 	
Proposition \eqref{preconv}.
	
Because $f_p$ and its derivatives are integrable, we have
\[
\|F_p\|_{L^\infty}\leq C\frac{\|f_p\|_{L^1}}{\sigma}.
\]
In addition, we have
\[
\left\|\frac{\partial F_p} {\partial \mu }\right\|_{L^\infty}\leq C\frac{\|f_p'\|_{L^1}}{\sigma}.\qedhere
\]
\end{proof}

From the previous proposition, we see that when $\sigma$ is large, the convex part of $F$ drives the gradient flow component corresponding to $\mu$.

\subsection{Gradient flow}
\label{gfa}

Now, we give a formula for $\nabla F$ and study the gradient flow for a quadratic function.
Here, we provide an explicit solution for \eqref{gf} that is
crucial for constructing the iterative step in our algorithm.

\begin{pro}
	\label{gradflowcomp}
	The gradient of $F$ is 
	\begin{equation}
	\label{exactgf}
	\nabla F(\mu, \sigma)=
	\begin{bmatrix}
		\frac{\partial F} {\partial \mu }\\
		\frac{\partial F} {\partial \sigma}
	\end{bmatrix}
	=
	\begin{bmatrix}
		\int_{\Rr} \frac{x-\mu}{\sigma^2} f(x) \Gamma_{\mu, \sigma}(x) dx \\
		\int_{\Rr}  \frac{(x-\mu -\sigma ) (x-\mu + \sigma)}{\sigma^3} f(x) \Gamma_{\mu, \sigma}(x) dx
	\end{bmatrix}.
	\end{equation}
	In particular, if $f=a + b x + c x^2$, with $a,b,c\in\mathbb{R}$, then
	\begin{equation}
	\label{gfq}
	\nabla F
	=
	\begin{bmatrix}
		b+2 c\mu\\
		2 c\sigma
	\end{bmatrix}.
	\end{equation}
\end{pro}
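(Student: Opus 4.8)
The plan is to compute $\nabla F$ directly by differentiating the defining integral \eqref{Fdef} under the integral sign, which is justified since $f \in C_p(\Rr)$ and $\Gamma_{\mu,\sigma}$ together with all its $\mu$- and $\sigma$-derivatives decay fast enough (the convolution of a polynomially-growing function with a Gaussian is smooth, as noted right after \eqref{pgc}). So the whole proof reduces to two elementary computations of $\partial_\mu \Gamma_{\mu,\sigma}$ and $\partial_\sigma \Gamma_{\mu,\sigma}$, followed by the substitution $f = a + bx + cx^2$ and a handful of Gaussian moment identities.

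First I would record the two logarithmic-derivative identities for the Gaussian kernel. Writing $\Gamma_{\mu,\sigma}(x) = (2\pi)^{-1/2}\sigma^{-1}e^{-(x-\mu)^2/(2\sigma^2)}$, a direct differentiation gives
\[
\frac{\partial \Gamma_{\mu,\sigma}}{\partial \mu}(x) = \frac{x-\mu}{\sigma^2}\,\Gamma_{\mu,\sigma}(x),
\qquad
\frac{\partial \Gamma_{\mu,\sigma}}{\partial \sigma}(x) = \left(\frac{(x-\mu)^2}{\sigma^3} - \frac{1}{\sigma}\right)\Gamma_{\mu,\sigma}(x) = \frac{(x-\mu-\sigma)(x-\mu+\sigma)}{\sigma^3}\,\Gamma_{\mu,\sigma}(x),
\]
the last step being just the factorization $(x-\mu)^2 - \sigma^2 = (x-\mu-\sigma)(x-\mu+\sigma)$. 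Multiplying each by $f(x)$ and integrating yields \eqref{exactgf} immediately.

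For the quadratic case, substitute $f(x) = a + bx + cx^2$ into \eqref{exactgf} and evaluate using the centered Gaussian moments: with $y = x-\mu$ one has $\int y\,\Gamma\,dx = 0$, $\int y^2\,\Gamma\,dx = \sigma^2$, $\int y^3\,\Gamma\,dx = 0$, $\int y^4\,\Gamma\,dx = 3\sigma^4$, while $\int \Gamma\,dx = 1$. Expanding $f(x) = f(\mu+y)$ as a polynomial in $y$ and collecting terms, the $\partial_\mu F$ integrand becomes $\frac{y}{\sigma^2}\big(f(\mu) + f'(\mu)y + c y^2\big)$, whose Gaussian integral is $f'(\mu) = b + 2c\mu$; similarly the $\partial_\sigma F$ integrand reduces, after using $\int(y^2-\sigma^2)\,\Gamma\,dx = 0$ and $\int y^2(y^2-\sigma^2)\,\Gamma\,dx = 2\sigma^4$, to $2c\sigma$. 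This gives \eqref{gfq}.

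There is no real obstacle here; the only point requiring a word of care is the interchange of differentiation and integration, which I would dispatch by the standard dominated-convergence argument: on any bounded $(\mu,\sigma)$-neighborhood the functions $x\mapsto |f(x)|\,(1+|x-\mu|+|x-\mu|^2/\sigma)\,\Gamma_{\mu,\sigma}(x)$ admit an integrable majorant because $f$ has polynomial growth and the Gaussian tail kills any polynomial. Everything else is bookkeeping with the first four moments of a normal distribution.
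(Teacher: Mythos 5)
Your proposal is correct and follows exactly the route the paper takes: differentiate \eqref{Fdef} under the integral sign (using the polynomial-growth hypothesis to justify the interchange), identify the logarithmic derivatives of the Gaussian kernel to get \eqref{exactgf}, and then evaluate the resulting integrals with the standard centered Gaussian moments to obtain \eqref{gfq}. The paper's own proof is just a one-line version of the same computation, so there is nothing to add.
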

\begin{proof}
	Differentiating
	\eqref{Fdef} with respect to $\mu$ and $\sigma$ results in the two integrals above. 
	Direct integration yields $F_{\mu}
	= b+2c\mu$ and $F_{\sigma} = 2c\sigma$ when $f$ is quadratic.
\end{proof}

\begin{cor}
\label{lsiden}
Let $q_j$ be the minimizer of \eqref{msexact}. Then
$\nabla F(\mu_j, \sigma_j)=\nabla F^{q_j}(\mu_j, \sigma_j)$.
\end{cor}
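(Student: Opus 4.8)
The plan is to combine the first-order optimality conditions for the least-squares problem \eqref{msexact} with the explicit formula \eqref{exactgf} for $\nabla F$ from Proposition \ref{gradflowcomp}. First I would observe that, by Proposition \ref{gradflowcomp}, both components of $\nabla F(\mu_j,\sigma_j)$ are of the form $\int_\Rr p(x) f(x)\,\Gamma_{\mu_j,\sigma_j}(x)\,dx$ for a specific second-order polynomial $p$: namely $p(x)=\frac{x-\mu_j}{\sigma_j^2}$ for the $\mu$-component and $p(x)=\frac{(x-\mu_j-\sigma_j)(x-\mu_j+\sigma_j)}{\sigma_j^3}$ for the $\sigma$-component. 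Crucially, each of these polynomials has degree at most two. The same is true with $f$ replaced by $q_j$, so each component of $\nabla F^{q_j}(\mu_j,\sigma_j)$ is $\int_\Rr p(x) q_j(x)\,\Gamma_{\mu_j,\sigma_j}(x)\,dx$ for the same $p$.

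Next I would invoke the optimality condition \eqref{nocexact}: since $q_j$ minimizes $\int_\Rr |q_j-f|^2 \Gamma_{\mu_j,\sigma_j}\,dx$ over all second-order polynomials, the first variation vanishes, i.e., $\int_\Rr p\,(q_j-f)\,\Gamma_{\mu_j,\sigma_j}\,dx=0$ for every polynomial $p$ of degree $\le 2$. Applying this with the two specific polynomials identified above gives
\[
\int_\Rr p(x) f(x)\,\Gamma_{\mu_j,\sigma_j}(x)\,dx=\int_\Rr p(x) q_j(x)\,\Gamma_{\mu_j,\sigma_j}(x)\,dx
\]
for each of the two choices of $p$, which is exactly the componentwise statement that $\nabla F(\mu_j,\sigma_j)=\nabla F^{q_j}(\mu_j,\sigma_j)$, i.e., \eqref{qid}.

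The only genuine point to check — and the mildest of obstacles — is that the polynomials $p$ appearing in the two entries of \eqref{exactgf} really are of degree at most two, so that they lie in the span over which the least-squares optimality condition is quoted; one just expands $(x-\mu_j-\sigma_j)(x-\mu_j+\sigma_j)=(x-\mu_j)^2-\sigma_j^2$ to see this. One should also note integrability is not an issue, since $f\in C_p(\Rr)$ and $p\,\Gamma_{\mu_j,\sigma_j}$ decays faster than any polynomial grows, so all the integrals are finite and the manipulations are legitimate. With these remarks the corollary follows immediately, essentially as a restatement of the discussion around \eqref{qid} in the introduction.
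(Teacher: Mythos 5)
Your proof is correct and follows essentially the same route as the paper: both identify the two entries of $\nabla F$ in \eqref{exactgf} as integrals of $f$ against degree-two polynomials times the Gaussian and then apply the optimality condition \eqref{nocexact} to replace $f$ by $q_j$. Your added checks (expanding $(x-\mu_j-\sigma_j)(x-\mu_j+\sigma_j)$ to confirm the degree, and noting integrability) are fine but not points the paper dwells on.
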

\begin{proof}
As shown in the preceding computation, the gradient of $F$ is computed by integrating a second-order polynomial $p(x)$ multiplied by $f \Gamma_{\mu, \sigma}$. However, by the necessary optimality condition in \eqref{nocexact}, we have
\[
\int p q_j \Gamma_{\mu_j, \sigma_j}dx=\int p f \Gamma_{\mu_j, \sigma_j}dx, 
\]
which gives the desired identity. 
\end{proof}

%
%

\begin{pro}
	\label{linsol}
Consider the gradient flow for a quadratic function $q=a+b x +c x^2$; that is, 
\begin{equation}
\label{quadgf}
\begin{bmatrix}
	\dot \mu\\
	\dot \sigma
\end{bmatrix}
=
\begin{bmatrix}
	-b-2 c\mu\\
	-2 c\sigma
\end{bmatrix}.
\end{equation}
Then, 
\begin{equation}
\label{quadgfsol}
\mu(t)=
\frac{ b(e^{-2 c t}- 1)}{2 c}+ \mu(0)e^{-2 c t}, 
\quad 
\sigma(t)=e^{-2 c t}\sigma(0). 
\end{equation}
\end{pro}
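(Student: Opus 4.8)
The statement is a simple linear-ODE computation, so the plan is short and direct. The system \eqref{quadgf} is triangular: the $\sigma$-equation $\dot\sigma=-2c\sigma$ is decoupled and linear with constant coefficient, while the $\mu$-equation $\dot\mu=-b-2c\mu$ is linear, first-order, inhomogeneous, with constant coefficients and does not involve $\sigma$ at all. So I would solve the two equations separately and then simply verify the claimed closed forms.

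\textbf{Step 1: the $\sigma$ component.} Integrate $\dot\sigma=-2c\sigma$ directly: this is a scalar linear homogeneous ODE, so $\sigma(t)=\sigma(0)e^{-2ct}$, which is exactly the second formula in \eqref{quadgfsol}. (If one wants, note this also follows from $\frac{d}{dt}\log\sigma=-2c$.) One should record that this is valid for all $t$ and keeps $\sigma>0$ when $\sigma(0)>0$, which matters for using it inside the algorithm.

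\textbf{Step 2: the $\mu$ component.} Solve $\dot\mu+2c\mu=-b$. The general solution is the particular constant solution $\mu_p=-b/(2c)$ (valid when $c\neq 0$) plus the homogeneous solution $Ke^{-2ct}$; matching $\mu(0)$ gives $K=\mu(0)+b/(2c)$, hence
\[
\mu(t)=-\frac{b}{2c}+\left(\mu(0)+\frac{b}{2c}\right)e^{-2ct}=\frac{b(e^{-2ct}-1)}{2c}+\mu(0)e^{-2ct},
\]
which is the first formula in \eqref{quadgfsol}. Alternatively, use the integrating factor $e^{2ct}$: $\frac{d}{dt}\bigl(e^{2ct}\mu\bigr)=-b e^{2ct}$, integrate from $0$ to $t$, and rearrange. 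I would present the verification form: differentiate the claimed $\mu(t)$ and $\sigma(t)$ and check they satisfy \eqref{quadgf} with the right initial data, which sidesteps any case distinction and is the cleanest write-up.

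\textbf{Obstacle / caveat.} There is essentially no hard step; the only thing to be careful about is the degenerate case $c=0$ (and the sign of $c$): when $c=0$ the formulas $b(e^{-2ct}-1)/(2c)$ and $e^{-2ct}\sigma(0)$ should be read via the limit $c\to 0$, giving $\mu(t)=\mu(0)-bt$ and $\sigma(t)=\sigma(0)$, which is consistent (L'Hôpital on the first term). I would either note this limiting interpretation in a remark or simply observe that the expressions in \eqref{quadgfsol} extend continuously to $c=0$ and still solve \eqref{quadgf} there. So the proof is just: verify by differentiation, note the $c=0$ limit, done.
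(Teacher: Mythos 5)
Your proposal is correct and matches the paper's approach: the paper's proof simply states that the result is obtained by solving the ODE, and you carry out exactly that computation (decoupled linear equations, integrating factor or homogeneous-plus-particular solution). Your added remark on the $c=0$ limit is a harmless, sensible caveat but not needed for the statement as written.
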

\begin{proof}
The result
is obtained by
solving the ODE.
\end{proof}

Our
algorithm iteratively approximates $f$ with a quadratic function
$q_j=a_j+b_j x+c_j x^2$ and replaces the gradient flow associated
with $f$ with that of $q_j$.
Accordingly, 
we use \eqref{quadgfsol} with $\mu(0)=\mu_j$ and $\sigma(0)=\sigma_j$
to obtain 
\begin{equation}
\label{quadgfsoli}
\mu_{j+1}=
\frac{ b_j(e^{-2 c_j T_j}- 1)}{2 c_j}+ \mu_je^{-2 c_j T_j}, 
\quad 
\sigma_{j+1}=e^{-2 c_j T_j}\sigma_j. 
\end{equation}
The construction of the function $q_j$ is discussed in the next section.
The choice of the time step $T_j$ relies on the error estimates examined in Section \ref{gfsec}. 
%

\section{Sampling and approximation}
\label{sampsec}

Our algorithm samples points from a
Gaussian distribution $\Gamma_{\mu_j, \sigma_j}$ to build a
quadratic model of $f$.
We can achieve
this by selecting $ n_j$ independent samples at each iteration. However, this requires many function evaluations; see Section \ref{numresults}.
Therefore, we  propose to use the rejection sampling \cite{MR1999614} strategy we discuss next. 
	Our methodology systematically reuses previous samples, reducing function evaluations significantly.
	 Moreover, 
	despite inter-iteration dependencies from sample re-use, they do not
	affect the algorithm's convergence properties (see Section
	\ref{tsa}).

\subsection{Rejection sampling}
\label{rejesec}

Consider an infinite sequence of triplets $(x_k, \mu_k, \sigma_k)$, where $x_k$ is drawn from a Gaussian distribution $\Gamma_{\mu_k, \sigma_k}$. Each triplet represents a sample generated from the respective Gaussian in earlier iterations. 
We generate $ n_j$ samples from this distribution at
iteration $j$. Before sampling a new point, 
in rejection sampling, we check if any $x_k$ in the sequence can be used as a sample from the current Gaussian, $\Gamma_{\mu_j, \sigma_j}$ according to the following procedure. We define
$$
M_k= \sup_x \frac{
	\Gamma_{\mu_j, \sigma_j}(x)}{\Gamma_{\mu_{k}, \sigma_{k}}(x)}.
$$
If $\sigma_j<\sigma_k$, the supremum exists and is a critical point of the ratio of the Gaussian distributions.
This results in the expression
\[
M_k=\frac{ \sigma_k}{\sigma_j}e^{-(\mu_j - \mu_k)^2/(2 (\sigma_j^2 - \sigma_k^2))}.
\]
Otherwise, if $\sigma_j\geq \sigma_k$, the supremum does not exist, and we set $M_k=+\infty$.

The ratio $M_k$ is used to define
\begin{equation}
\label{pik}
\pi_k = \frac{
	\Gamma_{\mu_j, \sigma_j}(x_k)}{M_k\Gamma_{\mu_{k}, \sigma_{k}}(x_k)}
\end{equation}
with $0\le \pi_k\le 1$. We accept $x_k$ as a sample from the current Gaussian $\Gamma_{\mu_j, \sigma_j}$ with probability $\pi_k$. 
After examining all triplets $(x_k,\mu_k, \sigma_k)$, we end up with  a sequence with distribution $\Gamma_{\mu_j, \sigma_j}$. 

Two issues need to be addressed: insufficient samples and reduced independence between iteration steps due to sample reuse. To tackle the first issue, we sample new points from $\Gamma_{\mu_j, \sigma_j}$ whenever the number of points selected is less than $n_j$. The second issue occurs when previous time steps leading to the current $\mu_j$ and $\sigma_j$ are small, causing $\pi_k$ values to be close to 1. This results in relying heavily on previous samples and fewer independent samples. In our experience, this is suboptimal. To mitigate this, we introduce a multiplicative parameter $0<p<1$ and accept each $x_k$ with independent probability $\tilde \pi_k$ given by
\begin{equation}
	\label{selprob}
\tilde \pi_k=p \pi_k=	p\frac{
		\Gamma_{\mu_j, \sigma_j}(x_k)}{M_k\Gamma_{\mu_{k}, \sigma_{k}}(x_k)}.
\end{equation}
The sequence obtained by this procedure is iid and has a common 
distribution $\Gamma_{ \mu_j,  \sigma_j}$.
More about rejection sampling in \cite{MR1999614}.

In summary, our sampling procedure begins with an empty list:
\begin{equation}
	\label{lambda1}
	\Lambda_0=\{
	\}.
\end{equation}
At iteration $n>0$, we run through $\Lambda_{j-1}$ and independently select points with probability given by \eqref{selprob}. If the number of selected points is equal or exceeds $ n_j$, we randomly choose $n_j$ points from this list. Otherwise, we sample additional points from $\Gamma_{\mu_j,\sigma_j}$. The selected and newly sampled points form the sample of $ n_j$ points at iteration $j$. 
Then, we build $\Lambda_{j}$ by appending to $\Lambda_{j-1}$  
the quadruplets \[(x_k, f(x_k), \mu_{ n_j}, \sigma_{ n_j})\] 
corresponding to newly sampled points. 
 Thus, $\Lambda_j$ contains all unique quadruplets up to iteration $j$ of the algorithm.

The rejection sampling procedure yields a list of $ n_j$ iid random variables. However, 
sample overlap can lead to systematic errors and dependence across iterations. 
The parameter $p$ in \eqref{selprob} mitigates this by rejecting some prior samples with probability $1-p$. Nevertheless, our convergence proof in Section \ref{tsa} does not require independence between iterations.

\subsection{Least squares approximation}
\label{lsas}

At each iteration, we aim to find a quadratic function $q_j$ that minimizes the integral in \eqref{msexact} with $\mu=\mu_j$ and $\sigma=\sigma_j$.
This requires the computation of the integrals in \eqref{msexact} or in \eqref{nocexact} exactly, which
 we try to avoid here. Instead, we use a Monte Carlo method to approximate $q_j$. 
 For that, we sample $ n_j$ points, $x_i$, according to the distribution $\Gamma_{\mu_j, \sigma_j}$.
 Then, we replace the minimization problem in  \eqref{msexact} with its Monte Carlo approximation; that is, the problem of minimizing 
\begin{equation}
	\label{mcls}
	\frac 1 { n_j}\sum_{i=1}^{ n_j} |q(x_i)-f(x_i)|^2
\end{equation}
among all quadratic functions. 
The minimizer, $q_j(x)=a_j+b_j x+c_j x^2$, satisfies the least squares condition: for any polynomial $p$ of degree less than or equal to $2$,
\begin{equation}
	\label{mcoc}
	\sum_{i=1}^{ n_j}  p(x_i) (f(x_i)-q_j(x_i))=0.
\end{equation}
This condition is the Monte Carlo approximation to \eqref{nocexact}.

\section{Error estimates}
\label{gfsec}

Here, we discuss the error estimates in our approximations and explain the 
time step selection criterion. 
We first analyze the difference
between the gradient flow of $f$ and $q$, and establish a bound on
this difference up to a time $T$.
Next, we show how to  approximate this error estimate using Monte Carlo integration, Taylor series, and importance sampling techniques. Lastly, we discuss the appropriate selection of time step $T_j$ at iteration $j$, constrained by the desired error bound and the validity of the approximations.  The time step choice plays a crucial role. It must strike a balance, being neither excessively large, which could lead to accumulation of errors, nor exceedingly small to maintain an exponential contraction per iteration in equation \eqref{quadgfsoli}. Furthermore,
if the time steps $T_j$ were chosen too small, it could happen that $\sum_{j=1}^\infty T_j$ converges to a finite time $T^*<\infty$; thus,  in this case, our algorithm would not capture the asymptotic behavior of the gradient flow. However, with high probability, this is not the case,
as follows from Proposition \ref{513} and Theorem \ref{mainteo}.

\subsection{Error estimates}
\label{errorsec}

 Let $q$ be a second-order polynomial, $q(x) = a + bx + cx^2$, where $a, b, c \in \mathbb{R}$. We aim to compare the gradient flow associated with $f$, corresponding to $\nabla F$, with the one corresponding to $q$, represented as $\nabla F^q$. We do not assume here that $q = q_j$, the least squares approximation of $f$, since the general case is required for some improvements of our algorithm, specifically for the sparse  sampling discussed in Section \ref{sparsesec}.

Let $\theta=(\mu, \sigma)$ and set
\begin{equation}
\label{bif}
B(\theta, x)=\begin{bmatrix}
	B_1\\
	B_2	
\end{bmatrix}
=
\begin{bmatrix}
	\frac{x-\mu}{\sigma^2}   \\
	\frac{(x-\mu -\sigma ) (x-\mu + \sigma)}{\sigma^3} 
\end{bmatrix}. 
\end{equation}
Then, 
\[
\nabla F(\theta)=\int_{\Rr}  f(x)B(\theta, x) \Gamma_{\theta}(x)dx, \qquad \nabla F^q(\theta)=\int_{\Rr}  q(x)B( \theta, x)\Gamma_{\theta}(x)dx. 
\]

We start at iteration $j$ with the initial condition $\theta^f(0)=\theta^q(0)=\theta_j$ and would like to track the difference in the evolution
of the two gradient flows:
\[
\dot \theta^f=-\nabla F(\theta^f),\qquad \dot \theta^q=-\nabla F^q(\theta^q).
\]
 
Now, we examine the difference between these two differential
 equations by letting $f=q+e$. Then,
\begin{align}
	\label{diffeq}
	\dot \theta^f-\dot \theta^q
	&=-\int_{\Rr}  \left[f(x) B(\theta^f, x) \Gamma_{\theta^f}(x) - q(x) B(\theta^q, x)\Gamma_{\theta^q}(x)\right]
dx\\\notag
	&=-\int_{\Rr}  q(x) \left[B(\theta^f, x)\Gamma_{\theta^f}(x) - B(\theta^q, x)\Gamma_{\theta^q}(x)\right]dx-\int_{\Rr}  e(x)B( \theta^f, x)\Gamma_{\theta^f}(x)dx\\\notag
	&=-2 c (\theta^f-\theta^q) -\int_{\Rr}  e(x)B(\theta^f, x)\Gamma_{\theta^f}(x)dx, 
\end{align}	
taking into account that, because $q$ is a quadratic polynomial,
\[
\int_{\Rr}  q(x) \left[B( \theta^f, x) \Gamma_{\theta^f}(x) - B(\theta^q, x)\Gamma_{\theta^q}(x) \right]dx=2c (\theta^f-\theta^q), 
\]
by direct integration as in \eqref{gfq}.

Next, we estimate $\int_{\Rr}  e(x)B(\theta^f, x)\Gamma_{\theta^f}(x)dx$.
\begin{pro} 	
For $i=1,2$, we have
\begin{align}
\label{errorest}
&\left|\int_{\Rr}  e(x)B_i(\theta^f, x)\Gamma_{\theta^f}(x) dx\right|\\\notag
&\qquad \leq 
\left(\int_{\Rr}  e^2\Gamma_{\theta_j}( x) dx\right)^{1/2} 
\left(\int_{\Rr}  
\frac{\left(B_i(\theta^f, x)\Gamma_{\theta^f}( x)-
	B_i(\theta^q, x)\Gamma_{\theta^q}(x)\right)^2}{\Gamma_{\theta_j}( x)}dx
\right)^{1/2} 
 \\\notag
&\qquad+ 
\left|
	\int_{\Rr}  e(x) B_i(\theta^q, x) 
\Gamma_{\theta^q}( x)dx
\right|.
\end{align}
\end{pro}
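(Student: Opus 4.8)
The plan is to split the target integral into two pieces by inserting $B_i(\theta^q, x)\Gamma_{\theta^q}(x)$ against $B_i(\theta^f, x)\Gamma_{\theta^f}(x)$, and then estimate each piece separately. Writing
\[
\int_{\Rr} e(x) B_i(\theta^f, x)\Gamma_{\theta^f}(x)\, dx = \int_{\Rr} e(x)\left(B_i(\theta^f, x)\Gamma_{\theta^f}(x) - B_i(\theta^q, x)\Gamma_{\theta^q}(x)\right) dx + \int_{\Rr} e(x) B_i(\theta^q, x)\Gamma_{\theta^q}(x)\, dx,
\]
the triangle inequality bounds the left-hand side by the sum of the absolute values of the two terms on the right, and the second of these is already exactly the last term in \eqref{errorest}. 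So the work is to handle the first term.

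For the first term the key idea is to use the strictly positive Gaussian density $\Gamma_{\theta_j}$ attached to the common initial condition $\theta_j = \theta^f(0) = \theta^q(0)$ as a reference weight and apply the Cauchy--Schwarz inequality with respect to Lebesgue measure after factoring out $\sqrt{\Gamma_{\theta_j}}$. Concretely, I would write the integrand as
\[
e(x)\left(B_i(\theta^f, x)\Gamma_{\theta^f}(x) - B_i(\theta^q, x)\Gamma_{\theta^q}(x)\right) = \left(e(x)\sqrt{\Gamma_{\theta_j}(x)}\right)\cdot\left(\frac{B_i(\theta^f, x)\Gamma_{\theta^f}(x) - B_i(\theta^q, x)\Gamma_{\theta^q}(x)}{\sqrt{\Gamma_{\theta_j}(x)}}\right),
\]
which is legitimate since $\Gamma_{\theta_j}(x) > 0$ for every $x \in \Rr$. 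Cauchy--Schwarz then bounds the integral of this product by the product of the $L^2(\Rr)$ norms of the two factors, which are precisely $\left(\int_{\Rr} e^2 \Gamma_{\theta_j}\, dx\right)^{1/2}$ and $\left(\int_{\Rr} \frac{(B_i(\theta^f, x)\Gamma_{\theta^f}(x) - B_i(\theta^q, x)\Gamma_{\theta^q}(x))^2}{\Gamma_{\theta_j}(x)}\, dx\right)^{1/2}$. Combining this with the triangle inequality from the first step yields \eqref{errorest} immediately.

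The remaining task is the integrability bookkeeping that justifies the manipulations, and I expect this — rather than the Cauchy--Schwarz step itself — to be the only place needing care. Since $f\in C_p(\Rr)$ and $q$ is quadratic, $e=f-q$ has polynomial growth, each $B_i(\theta,\cdot)$ is a polynomial in $x$ with coefficients depending smoothly on $\theta=(\mu,\sigma)$ for $\sigma>0$, and products or ratios of Gaussian densities with fixed parameters are, up to constants, Gaussians times polynomials; hence $\int e^2\Gamma_{\theta_j}\,dx<\infty$ and the second term on the right is finite. For the mixed term one needs the quotient $\Gamma_{\theta^f}^2/\Gamma_{\theta_j}$ (and likewise with $\theta^q$) to retain Gaussian-type decay, which forces a mild comparability of the standard deviations (roughly $\sigma^f,\sigma^q<\sqrt 2\,\sigma_j$); this holds on the short time intervals over which the estimate is used, since $\theta^f$ and $\theta^q$ start at $\theta_j$ and the flows have bounded velocity there. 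With these points checked, the splitting and Cauchy--Schwarz above are fully rigorous and the proof is complete.
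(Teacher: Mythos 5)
Your proof is correct and follows essentially the same route as the paper: split the integral by adding and subtracting $B_i(\theta^q,x)\Gamma_{\theta^q}(x)$, keep the second term as is, and apply Cauchy--Schwarz with the weight $\Gamma_{\theta_j}$ to the first. The integrability remarks you add (polynomial growth of $e$, the condition $(\sigma^f)^2,(\sigma^q)^2<2\sigma_j^2$ for the quotient of Gaussians) are consistent with what the paper imposes elsewhere, e.g.\ in Section \ref{timestepsec}.
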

\begin{proof}
We write
\begin{align*}
	\int_{\Rr}  e(x) B_i( \theta^f, x)\Gamma_{\theta^f}(x) dx=&\int_{\Rr}  e(x) \left[
	B_i(\theta^f, x) \Gamma_{\theta^f}( x)-
	B_i(\theta^q, x) \Gamma_{\theta^q}( x)\right]dx \\&+
	\int_{\Rr}  e(x) B_i(\theta^q, x)\Gamma_{\theta^q}(x)dx.
\end{align*}
We use Cauchy's inequality for the first term to get 
\begin{align*}
&\left|\int_{\Rr}  e(x) \left[
B_i(\theta^f, x) \Gamma_{\theta^f}( x)-
B_i(\theta^q, x) \Gamma_{\theta^q}( x)\right]dx\right|
\\&\qquad\leq  \left(\int_{\Rr}  e^2\Gamma_{\theta_j}( x)dx\right)^{1/2}
\left(\int_{\Rr}  
\frac{\left(B_i(\theta^f, x)\Gamma_{\theta^f}( x)-
	B_i(\theta^q, x)\Gamma_{\theta^q}(x)\right)^2}{\Gamma_{\theta_j}( x)}
dx\right)^{1/2}. 
\end{align*}
\end{proof}

Next, we apply the previous estimates to obtain a bound on the error between the quadratic and the non-quadratic flows. 

\begin{pro}
\label{properror}
Let
\begin{equation}
\label{e0}
R=\left(\int_{\Rr}  e^2\Gamma_{\theta_j}( x)dx\right)^{1/2},
\end{equation}
\begin{equation}
\label{qi}
Q_i=\left(\int_{\Rr}  
\frac{\left(B_i(\theta^f, x)\Gamma_{\theta^f}( x)-
	B_i(\theta^q, x)\Gamma_{\theta^q}(x)\right)^2}{\Gamma_{\theta_j}( x)}dx
\right)^{1/2},
\end{equation}
and
\begin{equation}
\label{bi}
\beta_i=\left|
\int_{\Rr}  e(x) B_i(\theta^q, x)  
\Gamma_{\theta^q}( x)dx
\right|.
\end{equation}
Fix $T>0$ and  for $i=1,2$, consider the upper bound on the error term \eqref{errorest}
\begin{equation}
\label{epsiloni}
\epsilon_i=\sup_{0\leq t\leq T } \left(RQ_i+\beta_i\right). 
\end{equation}
Let
$z_1=|\mu^f-\mu^q|^2$ and $z_2=|\sigma^f-\sigma^q|^2$.
Then, for all $0\leq t \leq T$,
\begin{equation}
\label{errorprop}
\dot z_i\leq 2 \epsilon_i \sqrt{z_i}-4 c z_i, 
\end{equation}
for $i=1,2$. 
Furthermore, 
\begin{equation}
\label{error}
\sqrt{z_i(t)}\leq 
\begin{cases}
\frac{\epsilon_i  (1-e^{-2 c t} )}{2 c}\quad &c\neq 0\\
t \epsilon_i&c=0
\end{cases}
\end{equation}
for all $0\leq t \leq T$. 
\end{pro}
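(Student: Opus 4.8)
The plan is to derive the differential inequality \eqref{errorprop} for each component and then integrate it to get \eqref{error}.

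\textbf{Step 1: Differential inequality for $z_i$.} Starting from the evolution equation \eqref{diffeq}, write componentwise
\[
\dot\mu^f-\dot\mu^q=-2c(\mu^f-\mu^q)-\int_{\Rr}e(x)B_1(\theta^f,x)\Gamma_{\theta^f}(x)\,dx,
\]
and similarly for the $\sigma$-component with $B_2$. Since $z_i$ is the square of the corresponding difference, $\dot z_i=2(\theta^f-\theta^q)_i(\dot\theta^f-\dot\theta^q)_i$. Substituting the evolution equation gives $\dot z_i=-4cz_i-2(\theta^f-\theta^q)_i\int_{\Rr}e B_i(\theta^f,x)\Gamma_{\theta^f}\,dx$. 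For the last term I would bound $|(\theta^f-\theta^q)_i|=\sqrt{z_i}$ and apply the previous proposition's estimate \eqref{errorest} to the integral. The only subtlety is that \eqref{errorest} bounds the integral by $RQ_i+\beta_i$ at each time $t$, whereas I want a time-uniform bound; this is exactly why $\epsilon_i=\sup_{0\le t\le T}(RQ_i+\beta_i)$ is defined as a supremum over $[0,T]$. Note $R$ in \eqref{e0} is evaluated at the fixed initial Gaussian $\Gamma_{\theta_j}$ so it is constant in $t$, but $Q_i$ and $\beta_i$ depend on $t$ through $\theta^f(t),\theta^q(t)$, hence the sup. This yields $\dot z_i\le 2\epsilon_i\sqrt{z_i}-4cz_i$ for $0\le t\le T$, which is \eqref{errorprop}.

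\textbf{Step 2: Integrating the scalar ODE inequality.} Set $w_i=\sqrt{z_i}$, so that (wherever $z_i>0$) $\dot w_i=\dot z_i/(2\sqrt{z_i})\le \epsilon_i-2cw_i$. This is a linear differential inequality for $w_i$. With $w_i(0)=0$ (since $\theta^f(0)=\theta^q(0)=\theta_j$), Gr\"onwall's inequality — or explicitly multiplying by the integrating factor $e^{2ct}$ — gives $w_i(t)\le\epsilon_i\int_0^t e^{-2c(t-s)}\,ds$, which evaluates to $\frac{\epsilon_i(1-e^{-2ct})}{2c}$ when $c\neq0$ and to $t\epsilon_i$ when $c=0$, matching \eqref{error}. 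To handle the points where $z_i=0$ (where $\sqrt{z_i}$ is not differentiable), one can either work with $\sqrt{z_i+\delta}$ and let $\delta\to0^+$, or simply observe that $\frac{d}{dt}\sqrt{z_i}\le\epsilon_i-2c\sqrt{z_i}$ holds in the sense of upper Dini derivatives everywhere on $[0,T]$, which suffices for the comparison argument.

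\textbf{Main obstacle.} The routine part is the integrating-factor computation; the one genuine technical point is the regularity issue at $z_i=0$ — justifying that the comparison with the linear ODE $\dot y=\epsilon_i-2cy$, $y(0)=0$, is valid despite $\sqrt{z_i}$ failing to be $C^1$ there. The clean fix is the $\delta$-regularization: from $\dot z_i\le 2\epsilon_i\sqrt{z_i}-4cz_i\le 2\epsilon_i\sqrt{z_i+\delta}-4c(z_i+\delta)+4c\delta$ one gets $\frac{d}{dt}\sqrt{z_i+\delta}\le\epsilon_i-2c\sqrt{z_i+\delta}+\frac{2c\delta}{\sqrt{z_i+\delta}}$, apply standard Gr\"onwall, and send $\delta\to0$. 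Everything else — the componentwise expansion, the use of \eqref{errorest}, and the explicit solution — is direct calculation.
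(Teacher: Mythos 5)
Your proof follows the same route as the paper: multiply the componentwise form of \eqref{diffeq} by $2(\theta^f-\theta^q)_i$, invoke \eqref{errorest} together with the supremum defining $\epsilon_i$ to get \eqref{errorprop}, pass to the linear differential inequality for $\sqrt{z_i}$, and integrate. The only difference is that you explicitly handle the non-differentiability of $\sqrt{z_i}$ at $z_i=0$ (via $\delta$-regularization or Dini derivatives), a point the paper's proof passes over silently; this is a welcome refinement, not a divergence.
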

\begin{remark}
The previous proposition's bound is implicit. For a given time $T$, the error estimates for $|\mu^f-\mu^q|$ and $|\sigma^f-\sigma^q|$ depend on these quantities' errors over the interval $0\leq t\leq T$, particularly through  \eqref{qi}.
 We tackle this challenge in Section \ref{timestepsec}, where we discuss how to select the appropriate time step $T$.	
\end{remark}

\begin{proof}
The inequality in \eqref{errorprop} follows by multiplying \eqref{diffeq} 
by $2 (\mu^f-\mu^q)$, for $z_1$ and by $2 (\theta^f-\theta^q)$, for $z_2$,  and
using the estimate \eqref{errorest} in the preceding proposition.


From \eqref{errorprop}, it follows that  $\sqrt{z_i}$ solves the linear differential inequality
	\[
	\frac{d}{dt}\sqrt{z_i}\leq \epsilon_i -2c \sqrt{z_i}.
	\]
	Solving the associated equality with $z_i(0)=0$ gives
\[	
	\sqrt{z_i(t)}=\frac{\epsilon_i  (1-e^{-2 c t} )}{2 c}, 
\]
	which provides the bound in the statement,  \eqref{error} for $c\neq 0$. The case $c=0$
	is obtained with a similar procedure. 
\end{proof}

\subsection{Monte Carlo estimation of errors}
\label{mee}

Here,  we discuss how to approximate $R$, $Q_i$, and
$\beta_i$ so that we can apply the estimate in Proposition \ref{properror}.

The term $R$ in \eqref{e0} can be estimated by Monte Carlo integration \cite{MR1999614}. For this, the most direct way would be to
consider the sample, $x_k$, of $ n_j$ points sampled at iteration $j$.  
These points are sampled
according to the probability distribution $\Gamma_{\theta_j}$. Then, the Monte Carlo estimator for $R$, $\hat R$, would be
\begin{equation}
\label{hatR}
\hat R = \left[\frac {\sum_{k=1}^{ n_j} (f(x_k)-q(x_k))^2}{ n_j}\right]^{1/2}.
\end{equation}
However, one of the algorithmic improvements we introduce, see Section \ref{sparsesec}, 
eliminates the  need  for sampling and least squares approximation in every iteration. 
Consequently, we may have to rely on a previous sample and its corresponding least squares approximation, associated with a different Gaussian, $\Gamma_{\bar \theta}$.

Accordingly, we rewrite the expression for $R$ as
$$
R=\left(\int_\Rr e^2\Gamma_{\theta_j}( x)dx\right)^{1/2} =\left(\int_\Rr e^2\frac{\Gamma_{\theta_j}(x)}{\Gamma_{\bar \theta}(x)}\Gamma_{\bar \theta}(x)dx\right)^{1/2} .
$$
Suppose we have  
a sample of $\bar n$ points sampled from $\Gamma_{\bar \theta}$ and the corresponding values of $f$ and $q$. The corresponding 
Monte Carlo estimator is
$$
\hat R =\left[\frac {\sum_{k=1}^{\bar n} (f(x_k)-q(x_k))^2 \ell(x_k)}{\bar n}\right]^{1/2},
$$
in which $\ell$ is the likelihood multiplier
\begin{equation}
\label{likemult}
\ell(x)=\frac{\Gamma_{\theta_j}(x)}{\Gamma_{\bar \theta}(x)}.
\end{equation}
Moreover, note that $\sum_{k=1}^{\bar n}  \ell(x_k)/{\bar n}$ is the Monte Carlo estimator for the integral
$$
\int_\Rr \frac{\Gamma_{\theta_j}(x)}{\Gamma_{\bar \theta}(x)}\Gamma_{\bar \theta}(x)dx = 1.
$$
Then, we can write
$$
\bar n \approx \sum_{k=1}^{\bar n}  \ell(x_k). 
$$
Thus, we introduce another estimator for $R$:
\begin{equation}
\label{hatrl}
\hat R =\left[\frac {\sum_{k=1}^{\bar n}  (f(x_k)-q(x_k))^2 \ell(x_k)}{\sum_{k=1}^{\bar n}  \ell(x_k)}\right]^{1/2}.
\end{equation}
This new estimator for $R$ has the advantage of exactly integrating constants.
This approach is called importance sampling \cite{MR1999614}.

The second term, $Q_i$, as defined in \eqref{qi}, is well-defined as long as $(\sigma^q_j)^2 < 2\sigma_j^2$ and $(\sigma^f_n)^2 < 2\sigma_j^2$. 
It can be computed exactly since it involves the product of polynomials with degree 4 or lower and Gaussians. 
Although the full expression of $Q_i$ is computable, we omit it here due to its length. Instead, we derive the second-order Taylor series near $\theta_j$, resulting in a more manageable expression for error bounds. Accordingly, the expressions used for error control are given by:
\begin{equation}
\label{q1}
Q_1\simeq 
\left[\frac{2}{\sigma_j^4} (\mu_q-\mu_f)^2+\frac{6}{\sigma_j^4} (\sigma_q-\sigma_f)^2\right]^{1/2}
\end{equation}
and
\begin{align}
	\label{q2}
	Q_2\simeq &\left[\frac{6}{\sigma_j^4} (\mu_q-\mu_f)^2+\frac{26}{\sigma_j^4} (\sigma_q-\sigma_f)^2
	\right]^{1/2}.
\end{align}

%
%

Finally, we estimate the supremum $\beta_i$ given by \eqref{bi}  in $[0,T]$
by its value at $t=0$; that is, we assume $\beta_i$ does not change substantially in the time intervals being used. 
We have the following Monte Carlo estimator
\begin{equation}
\label{hatbeta}
\hat \beta_i=\left|\frac {\sum_{k=1}^{\bar n}  (f(x_k)-q(x_k)) B_i(\theta_j, x_k)\ell(x_k)}{\sum_{k=1}^{\bar n}  \ell(x_k)}\right|,
\end{equation}
which has a standard deviation estimate
\begin{equation}
\label{sigmahatbeta}
\hat \sigma_{\beta_i}^2=
\frac {\sum_{k=1}^{\bar n}  (f(x_k)-q(x_k))^2 B_i^2(\theta_j, x_k)\ell(x_k)}{{\sum_{k=1}^{\bar n}  \ell(x_k)}}-\hat \beta_i^2.
\end{equation}
To estimate an integral using Monte Carlo sums, it is essential to consider the confidence interval defined by the empirical mean and the empirical standard deviation. For functions  that do not change sign, the empirical standard deviation, which scales as $\frac{1}{\sqrt{\bar n}}$, can often be disregarded, and the empirical mean can be used as the primary estimate. This approach was applied to the variable $R$. However, as $\beta_i$ represents the integral of a function with changing signs. In particular,  the least squares condition results in $\hat \beta_i$ becoming zero when $\bar \theta = \theta_j$. Consequently, to calculate an upper bound for $\beta_i$, both \eqref{hatbeta} and \eqref{sigmahatbeta} are required. We suggest the following estimate:
\begin{equation}
	\label{betai}
	\beta_i	\lesssim\bar \beta_i=\hat \beta_i+m\frac{\hat \sigma_{\beta_i}}{\sqrt{\bar n}},
\end{equation}
where $m$ determines the confidence interval for our estimates.

\subsection{Time step choice}
\label{timestepsec}

We now discuss the choice of the time step, $T_j$, at iteration $j$, which is constrained by two factors. The first factor involves the desired error bound for each iteration, while the second concerns the validity of the approximations for these bounds, as discussed in the previous sections.
We select the maximum permissible time step compatible with those bounds to optimize the algorithm's speed. 
This selection is pivotal for the algorithm's convergence.
	
Concerning the error bounds on iteration $j$, we proceed as follows. 
The error at each iteration is measured as a fraction of $\sigma_j$: 
\begin{equation}
	\label{gammabound}
	|\mu^f(T_j)-\mu^q(T_j)|\leq \gamma_1 \sigma_j,\qquad 
	|\sigma^f(T_j)-\sigma^q(T_j)|\leq \gamma_2 \sigma_j,
\end{equation}
where $\gamma_1$ and $\gamma_2$ are positive parameters.
 As $\sigma_j \to 0$, the error tolerances reduce, resulting in a higher precision as the algorithm converges.  
Moreover, the error scaling is convenient in the face of the denominators in 
\eqref{q1} and \eqref{q2}.
Using the aforementioned bounds, we obtain $Q_1, Q_2 \simeq O(\sigma_j^{-1})$. 
In our error bounds (i.e., \eqref{epsiloni}), these quantities appear multiplied by $R$, which can be bounded by $O(\sigma_j^3)$ and subsequently produce terms that vanish as $\sigma_j \to 0$ (see Section \ref{tsa}).

For the bounds \eqref{q1} and \eqref{q2} to be valid, we need
\begin{itemize}
	\item $(\sigma^f)^2\leq 2 \sigma_j^2$ and $(\sigma^q)^2\leq 2 \sigma_j^2$ so that the integral expression for $Q_i$ is well defined;
	\item $|\sigma^f-\sigma_j|$, $|\mu^f-\mu_j|$,  $|\sigma^q-\sigma_j|$, and $|\mu^q-\mu_j|$
	are small so that the Taylor approximation in \eqref{q1} and \eqref{q2} can be used. 
\end{itemize}
To enforce the second condition, we require 
\begin{equation}
	\label{mb}
	|\mu^q(T_j)-\mu_j|\leq \upsilon_1 \sigma_j. 
\end{equation}
and
\begin{equation}
	\label{sb}
	|\sigma^q(T_j)-\sigma_j|\leq \upsilon_2 \sigma_j, 
\end{equation}
with $\upsilon_1, \upsilon_2>0$. If \eqref{gammabound} and the previous two conditions hold, the triangle inequality gives
\[
|\mu^f(T_j)-\mu_j|\leq (\upsilon_1+\gamma_1)\sigma_j, 
\]
and 
\[
|\sigma^f(T_j)-\sigma_j|\leq (\upsilon_2+\gamma_2)\sigma_j.
\]
Thus, if $\upsilon_i$ and $\gamma_i$ are small enough  $|\sigma^f-\sigma_j|$, $|\mu^f-\mu_j|$,  $|\sigma^q-\sigma_j|$, and $|\mu^q-\mu_j|$ are also small. 
Moreover, requiring $1+\gamma_2+ \upsilon_2\leq \sqrt{2}$, we have 
$\sigma_f^2\leq 2 \sigma_j^2$ and $\sigma_q^2\leq 2 \sigma_j^2$. Thus, it is enough to 
select $T_j$ so that \eqref{gammabound}, \eqref{mb}, and \eqref{sb} hold.

We begin by addressing the bounds in \eqref{mb} and \eqref{sb} and the corresponding bounds for $T_j$. 
\begin{lem}
	Let
	\begin{equation}
		\label{tc1}
		T_\mu
		=
		\begin{cases}
			\frac{\upsilon_1 \sigma_j}{|b_j|} \qquad &\text{if}\quad  c_j=0\wedge b_j\neq 0\\
			-\frac{\log\left(1-\frac{2 |c_j|\sigma_j}{|b_j+2 c_j\mu_j|}\upsilon_1\right)}{2c_j}\qquad
			&\text{if} \quad c_j\neq 0\wedge b_j+2 c_j\mu_j\neq 0\wedge 1-\frac{2 |c_j|\sigma_j}{|b_j+2 c_j\mu_j|}\upsilon_1>0\\
					+\infty \qquad &\text{otherwise} 
		\end{cases}
	\end{equation}
and
	\begin{equation}
		\label{tc2}
		T_\sigma=
		\begin{cases}
			\frac{-\log (1-\upsilon_2\sgn(c_j))}{2c_j}\qquad &\text{if}\quad 1-\upsilon_2\sgn(c_j)>0\wedge c_j\neq 0\\
			+\infty\qquad &\text{otherwise}
		\end{cases}
	\end{equation}
	Suppose that 
	\begin{equation}
	\label{ubs}
	T_j\leq  \min\{T_\mu, T_\sigma \}, 
	\end{equation}
	then \eqref{mb} and \eqref{sb} hold for $0\leq t\leq  T_j$.

\end{lem}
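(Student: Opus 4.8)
The plan is to use the explicit solution of the quadratic gradient flow, \eqref{quadgfsol} applied to $q_j = a_j + b_j x + c_j x^2$, so that $\mu^q(t)$ and $\sigma^q(t)$ with initial data $(\mu_j,\sigma_j)$ are given in closed form, and then simply invert the bounds \eqref{mb} and \eqref{sb} to read off the largest admissible $T_j$. First I would write, from \eqref{quadgfsoli} with $T_j$ replaced by a running time $t$,
\[
\mu^q(t)-\mu_j = \bigl(e^{-2c_j t}-1\bigr)\frac{b_j+2c_j\mu_j}{2c_j},
\qquad
\sigma^q(t)-\sigma_j = \bigl(e^{-2c_j t}-1\bigr)\sigma_j,
\]
using the identity $\frac{b_j(e^{-2c_jt}-1)}{2c_j}+\mu_j e^{-2c_jt}-\mu_j = (e^{-2c_jt}-1)\frac{b_j+2c_j\mu_j}{2c_j}$ (valid for $c_j\neq 0$; for $c_j=0$ one gets $\mu^q(t)-\mu_j=-b_j t$ and $\sigma^q(t)=\sigma_j$).

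Next I would treat the $\sigma$-bound \eqref{sb}. Since $|\sigma^q(t)-\sigma_j| = |e^{-2c_j t}-1|\,\sigma_j$, the condition $|\sigma^q(t)-\sigma_j|\le \upsilon_2\sigma_j$ is equivalent to $|e^{-2c_j t}-1|\le \upsilon_2$. The function $t\mapsto e^{-2c_j t}$ is decreasing in $t$ when $c_j>0$ (so $e^{-2c_jt}-1\in(-1,0]$, and the binding constraint is $1-e^{-2c_jt}\le\upsilon_2$, i.e. $e^{-2c_jt}\ge 1-\upsilon_2$, giving $t\le -\log(1-\upsilon_2)/(2c_j)$ provided $1-\upsilon_2>0$) and increasing when $c_j<0$ (so the binding constraint is $e^{-2c_jt}-1\le\upsilon_2$, i.e. $e^{-2c_jt}\le 1+\upsilon_2$, giving $t\le -\log(1+\upsilon_2)/(2c_j) = \log(1+\upsilon_2)/(2|c_j|)$). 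The two cases are unified by the $\sgn(c_j)$ factor in \eqref{tc2}: in both cases the threshold is $-\log(1-\upsilon_2\sgn(c_j))/(2c_j)$, well defined exactly when $1-\upsilon_2\sgn(c_j)>0$ and $c_j\neq 0$; and when $c_j=0$, $\sigma^q\equiv\sigma_j$ so \eqref{sb} holds for all $t$, hence $T_\sigma=+\infty$. For $0\le t\le T_\sigma$, monotonicity of $e^{-2c_jt}$ in $t$ shows the bound persists on the whole interval, not just at the endpoint.

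Then I would do the analogous computation for the $\mu$-bound \eqref{mb}: $|\mu^q(t)-\mu_j| = |e^{-2c_jt}-1|\cdot\frac{|b_j+2c_j\mu_j|}{2|c_j|}$, so $|\mu^q(t)-\mu_j|\le\upsilon_1\sigma_j$ amounts to $|e^{-2c_jt}-1|\le \frac{2|c_j|\sigma_j}{|b_j+2c_j\mu_j|}\upsilon_1$ — the same shape of inequality as above but with $\upsilon_2$ replaced by $\frac{2|c_j|\sigma_j}{|b_j+2c_j\mu_j|}\upsilon_1$, which is why \eqref{tc1} has that quantity inside the logarithm; one must record the well-definedness condition $1-\frac{2|c_j|\sigma_j}{|b_j+2c_j\mu_j|}\upsilon_1>0$ and the degenerate subcases ($c_j=0$, $b_j\neq0$ linear in $t$; $c_j=0,b_j=0$ or $b_j+2c_j\mu_j=0$ giving $\mu^q\equiv\mu_j$ and $T_\mu=+\infty$). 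Finally, taking $T_j\le\min\{T_\mu,T_\sigma\}$ and using on each sub-interval $[0,T_j]$ the monotonicity of $t\mapsto e^{-2c_jt}$ (so the maximum of $|e^{-2c_jt}-1|$ over $[0,T_j]$ is attained at $t=T_j$) yields \eqref{mb} and \eqref{sb} for all $0\le t\le T_j$. The only mild subtlety — not really an obstacle — is the bookkeeping of signs and of the several degenerate cases ($c_j=0$; $b_j+2c_j\mu_j=0$; the logarithm's argument nonpositive), making sure each is correctly routed to the $+\infty$ branch; the analytic content is just inverting a monotone exponential.
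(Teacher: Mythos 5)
Your proof takes essentially the same route as the paper's: both rest on the explicit solution \eqref{quadgfsol} of the quadratic flow and on inverting the monotone map $t\mapsto e^{-2c_jt}$. The paper organizes the inversion by solving the two equality branches of the absolute value in \eqref{mb} and taking the minimum of the admissible positive roots (with $+\infty$ conventions), while you factor the deviations as $(e^{-2c_jt}-1)$ times a constant and split on the sign of $c_j$; the computations are the same, and your treatment of \eqref{sb} and of $T_\sigma$ is complete and correct. The one place you wave your hands is exactly where care is needed. Writing $\rho=\frac{2|c_j|\sigma_j}{|b_j+2c_j\mu_j|}\upsilon_1$, your own argument (run exactly as in your $\sigma$-analysis) yields the threshold $-\log\bigl(1-\rho\,\sgn(c_j)\bigr)/(2c_j)$ for the $\mu$-constraint, which agrees with the displayed \eqref{tc1} only when $c_j>0$: for $c_j<0$ with $1-\rho>0$ the printed formula evaluates to a \emph{negative} number, whereas the genuine saturation time is the positive value $\log(1+\rho)/(2|c_j|)$; and for $c_j<0$ with $1-\rho\le 0$ the printed formula returns $T_\mu=+\infty$ even though the $\mu$-constraint binds at a finite time. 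So your assertion that the computation ``is why \eqref{tc1} has that quantity inside the logarithm'' does not match what your computation actually produces when $c_j<0$, and in that regime your argument as written does not establish the stated implication (the paper's own proof, via $\min\{T_j^1,T_j^2\}$, likewise derives the $\sgn$-corrected value, so \eqref{tc1} as printed appears to carry a sign slip in the $c_j<0$ branch). You should either derive the $\sgn$-corrected formula explicitly and flag the discrepancy with the statement, or handle the $c_j<0$ subcases separately and explain why the hypothesis $T_j\le\min\{T_\mu,T_\sigma\}$ with the printed $T_\mu$ still forces \eqref{mb} there (e.g.\ vacuously when $T_\mu<0$, or via the competing bound $T_\sigma$ when $T_\mu=+\infty$ and $\upsilon_2\le\rho$).
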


\begin{proof}
	At iteration $j$,
	we determine a step size $T_j$
	such that  \eqref{mb}, \eqref{sb} hold.
	Let $(\mu, \sigma)$ solve \eqref{quadgf} with initial conditions $\mu_j$ and $\sigma_j$. Since \eqref{mb} and \eqref{sb} hold at $t=0$, there exists a $0<\bar T\leq +\infty$ where these conditions are valid for $0\leq t < \bar T$. At the maximal time $\bar T$, if finite, equality holds in \eqref{mb} and \eqref{sb}.
	To determine $T_j$, we solve the equality cases in \eqref{mb} and \eqref{sb}. Each positive solution provides an upper bound for $T_j$. If no positive solutions exist, the corresponding identity does not constrain $T_j$, and either \eqref{mb} or \eqref{sb} remains strict for all positive times.	
	
	Since the inequality  \eqref{mb} involves an absolute value, we need to consider two cases corresponding to 
	$T^1_j$ and $T^2_j$ that we determine next. 
	When $c_j=0$ in \eqref{quadgf}, 
	\eqref{quadgfsol} yields $\mu(t)=b_j t+\mu_j$. Accordingly, 
	the equality case corresponding to \eqref{mb} becomes
	\[
	|b_j T_j^1|= \upsilon_1 \sigma_j, 
	\]
	which gives 
	\begin{equation*}
	T_j^1= \frac{\upsilon_1 \sigma_j}{|b_j|}, 
	\end{equation*}
	if $b_j\neq 0$, $T_j^1=+\infty$ if $b_j=0$.  For convenience, we set $T_j^2=T^1_j$ since here we do not need to consider two cases. 
	
	If $c_j\neq 0$, the equality case corresponding to \eqref{mb} becomes 
	\begin{equation}
		\label{ineq1}
	\left|\frac{ b(e^{-2 c_j T_j}- 1)}{2 c_j}+ \mu_j (e^{-2 c_j T_j}-1)\right|= \upsilon_1\sigma_j.
	\end{equation}
	
	Solving the preceding equality results in two values for $T_j$:
	\[
	T_j^1=\frac{1}{2c_j}\log \left(\frac{b_j+2 c_j \mu_j}{b_j+2 c_j \mu_j+ 2 c_j \sigma_j  \upsilon_1 }\right),
	\]
	if
	\[
	\frac{b_j+2 c_j \mu_j}{b_j+2 c_j \mu_j+ 2 c_j \sigma_j  \upsilon_1 }> 0,
	\]
	and 
	\[
	T_j^2=\frac{1}{2c_j}\log \left(\frac{b_j+2 c_j \mu_j}{b_j+2 c_j \mu_j- 2 c_j \sigma_j  \upsilon_1 }\right)
	\]
	if
	\[
	\frac{b_j+2 c_j \mu_j}{b_j+2 c_j \mu_j- 2 c_j \sigma_j  \upsilon_1 }> 0.
	\]
	For each of these values, we have three possibilities:
	\begin{enumerate}
		\item $T_j^i$ is not defined because the logarithm does not exist;
		\item $T_j^i<0$, in which case the equality is not achieved for a positive time and there is no upper bound for $T_j^i$;
		\item $T_j^i>0$, in which case the equality is achieved at a finite positive time. 
	\end{enumerate}	
	In the first two cases, we redefine $T_j^i=+\infty$. In the last case, we keep $T_j^i$ unchanged to be the obtained positive value. Finally, considering only the constraints on $\mu$, we take
	$$
	T_{\mu} =\min \left\{T_j^1,T_j^2\right\}.
	$$
	
	Similarly, using \eqref{quadgfsol}, condition \eqref{sb} becomes
	\[
	\left|e^{-2 c_j T_j}-1\right|\leq \upsilon_2.
	\]
	If $c_j=0$, there is no constraint, and $T_\sigma=+\infty$. Otherwise, 
	solving the equality case as before gives
	\[
	T_\sigma=\frac{-\log (1-\upsilon_2\sgn(c_j))}{2c_j}
	\]
	if $1-\upsilon_2\sgn(c_j)>0$; otherwise, we also set $T_\sigma=+\infty$.	
	
If \eqref{ubs} holds,  $T_j\leq T_\mu$ and 	 $T_j\leq T_\sigma$. Hence,
	 \eqref{mb} and \eqref{sb} hold for $0\leq t\leq T_j$.
	\end{proof}

	We now address the bounds in \eqref{gammabound}.
	\begin{lem}
Let $\epsilon_i$ be given by  \eqref{epsiloni}.  Define
			\begin{equation}	
		\label{tei}
		T_{\epsilon_i}=
		\begin{cases}
			\frac{-\log \left(1-2c_j \frac {\gamma_i\sigma_j}{\epsilon_i}
				\right)}{2c_j},\qquad & c_j\neq 0\wedge 1-2c_j \frac {\gamma_i\sigma_j} {\epsilon_i}>0\\
			\frac{\gamma_i\sigma_j}{\epsilon_i}	& c_j=0\\
			+\infty&c_j\neq 0\wedge 1-2c_j \frac {\gamma_i\sigma_j} {\epsilon_i}\leq 0.
		\end{cases}	
	\end{equation}
	Suppose that 
		\begin{equation}
			\label{bt}
			T_j \leq \min(T_{\epsilon_1}, T_{\epsilon_2}),
		\end{equation}
	then \eqref{gammabound} holds for $0\leq t\leq T_j$. 	
	\end{lem}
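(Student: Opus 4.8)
The plan is to read the conclusion off directly from Proposition \ref{properror}, applied with $T=T_j$, and then to check, componentwise, that the explicit bound on $\sqrt{z_i(t)}$ it produces stays below $\gamma_i\sigma_j$ at $t=T_j$ whenever $T_j\leq T_{\epsilon_i}$. Concretely, I would set, for $i=1,2$,
\[
g_i(t)=
\begin{cases}
\dfrac{\epsilon_i\,(1-e^{-2c_j t})}{2c_j}, & c_j\neq 0,\\[2mm]
t\,\epsilon_i, & c_j=0,
\end{cases}
\]
so that \eqref{error} (with $c=c_j$, and $\epsilon_i$ as in \eqref{epsiloni}) reads $\sqrt{z_i(t)}\leq g_i(t)$ for $0\leq t\leq T_j$. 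Recalling $z_1=|\mu^f-\mu^q|^2$ and $z_2=|\sigma^f-\sigma^q|^2$, the two bounds in \eqref{gammabound} are precisely $\sqrt{z_i(T_j)}\leq \gamma_i\sigma_j$ for $i=1,2$, so it suffices to prove $g_i(T_j)\leq \gamma_i\sigma_j$. If $\epsilon_i=0$, then $z_i\equiv 0$ and there is nothing to show, so I assume $\epsilon_i>0$.

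Next I would observe that $g_i$ is nondecreasing on $[0,\infty)$: for $c_j=0$ this is obvious, and for $c_j\neq 0$ one has $\frac{d}{dt}\frac{1-e^{-2c_j t}}{2c_j}=e^{-2c_j t}>0$, irrespective of the sign of $c_j$, while $\epsilon_i\geq 0$. Consequently it is enough to establish $g_i(T_{\epsilon_i})\leq \gamma_i\sigma_j$ and then combine this with the hypothesis $T_j\leq T_{\epsilon_i}$ and monotonicity.

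It then remains to go through the three branches of \eqref{tei}. If $c_j=0$, then $g_i(T_{\epsilon_i})=T_{\epsilon_i}\,\epsilon_i=\frac{\gamma_i\sigma_j}{\epsilon_i}\,\epsilon_i=\gamma_i\sigma_j$. If $c_j\neq 0$ and $1-2c_j\frac{\gamma_i\sigma_j}{\epsilon_i}>0$, substituting $t=T_{\epsilon_i}$ yields $e^{-2c_j T_{\epsilon_i}}=1-2c_j\frac{\gamma_i\sigma_j}{\epsilon_i}$, hence $\frac{1-e^{-2c_j T_{\epsilon_i}}}{2c_j}=\frac{\gamma_i\sigma_j}{\epsilon_i}$ and again $g_i(T_{\epsilon_i})=\gamma_i\sigma_j$; here one also checks $T_{\epsilon_i}>0$, by noting that the argument of the logarithm is $<1$ when $c_j>0$ and $>1$ when $c_j<0$, so that dividing by $2c_j$ gives a positive quantity in both cases. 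Finally, if $c_j\neq 0$ and $1-2c_j\frac{\gamma_i\sigma_j}{\epsilon_i}\leq 0$ — which forces $c_j>0$, since $\gamma_i,\sigma_j,\epsilon_i>0$ — then $T_{\epsilon_i}=+\infty$ imposes no constraint, and directly $g_i(t)=\epsilon_i\,\frac{1-e^{-2c_j t}}{2c_j}\leq \frac{\epsilon_i}{2c_j}\leq \gamma_i\sigma_j$ for every $t\geq 0$, because $2c_j\frac{\gamma_i\sigma_j}{\epsilon_i}\geq 1$. In every case $g_i(T_j)\leq \gamma_i\sigma_j$, which is \eqref{gammabound}.

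I do not expect a genuine obstacle: the lemma is essentially a transcription of \eqref{error} into time-step language, and the only points requiring care are the monotonicity of $g_i$ uniformly in the sign of $c_j$, the degenerate case $\epsilon_i=0$, and the verification that $T_{\epsilon_i}>0$ so that the hypothesis $T_j\leq \min(T_{\epsilon_1},T_{\epsilon_2})$ is non-vacuous. The conceptually delicate issue — that $\epsilon_i$ from \eqref{epsiloni} itself depends, through $Q_i$ (see \eqref{q1}--\eqref{q2}), on the very differences $|\mu^f-\mu^q|$ and $|\sigma^f-\sigma^q|$ being bounded — lies outside this lemma, where $\epsilon_i$ is treated as a prescribed constant; that interdependence is resolved in the time-step construction that follows.
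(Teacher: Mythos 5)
Your proposal is correct and follows the same route as the paper: the paper's proof is a one-line instruction to combine the bound \eqref{epsiloni} with the explicit estimate \eqref{error} and the equality condition $\sqrt{z_i(T_j)}=\gamma_i\sigma_j$, which is exactly what you carry out. Your write-up is simply the fully detailed version — the monotonicity of the bound in $t$, the sign analysis of $c_j$, and the degenerate cases $\epsilon_i=0$ and $T_{\epsilon_i}=+\infty$ are all handled correctly and are the right points to check.
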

	\begin{proof}
		We use this bound in \eqref{epsiloni}, the expression \eqref{error}, and 
		the condition $\sqrt{z_i(T_j)}= \gamma_i\sigma_j$ to get the result.
	\end{proof}

In our algorithm, we cannot compute $\epsilon_i$ from  \eqref{epsiloni} since the bound there is implicit. 
Thus, we use the following approximations. 
First, 	we use the Taylor approximations for $Q_1$ and $Q_2$. Accordingly, while \eqref{gammabound}
holds, we have 
\begin{equation}
	\label{bqi}
	\hat Q_1=\frac{\sqrt{2 \gamma_1^2+6 \gamma_2^2}}{\sigma_j}\qquad 
	\hat Q_2=\frac{\sqrt{6 \gamma_1^2+26 \gamma_2^2
	}}{\sigma_j}. 
\end{equation}
Then, we replace
$R$  with $\hat R$ from \eqref{hatrl}, and 
$\beta_i$ with $\bar \beta_i$ from \eqref{betai}; that is, we define
\begin{equation}
	\label{hatepsilon}
	\hat \epsilon_i=\hat R \hat Q_i+\bar \beta_i, 
\end{equation}
and compute an approximation to $T_{\hat \epsilon_i}$ of  $T_{\epsilon_i}$.

Finally,  we select the time step as
	\begin{equation}
		\label{timestep}
		T_j=\min(T_\mu, T_\sigma, T_{\hat \epsilon_1}, T_{\hat \epsilon_2}).
	\end{equation}
The subsequent section provides proof of our algorithm's convergence using this approximation for
 $T_j$.

\section{Convergence}
\label{tsa}

We investigate the convergence of $\mu_j$ and
$\sigma_j$ as $j\to \infty$ at a non-degenerate minimum of $f$,
i.e., a minimum point where $f$ has a positive second derivative.
Without loss of generality, 
we suppose that this minimum is at the origin and that $f$ vanishes there. 
Accordingly, we assume the following.
\begin{hyp}
	\label{chyp}
	$f\in C^2_p(\Rr)$ satisfies
	\begin{itemize}
		\item  $f(0)=0$, $f'(0)=0$,  and $f''(0)=2c>0$;
		\item  for all $x\in \Rr$,  $|f(x)-c x^2|\leq C |x|^3$ for some constant $C>0$. 
	\end{itemize}
\end{hyp}

The second condition is a global bound for $f$. This bound simplifies the analysis as it allows to estimate various quantities.
Moreover, we do not apply acceleration techniques detailed in Sections \ref{adaptivitysec} and \ref{sparsesec}. Thus, we 
 maintain  $n_j=n$ as a constant number of sample points.
 Our result does not require the independence of samples
 between iterations.  Therefore, it applies to the case where 
we sample at every step, using the rejection sampling strategy outlined in Section
\ref{rejesec}.

The key  to establishing convergence is to 
show that, with high probability, both $b_j$ and $c_j$ are close to $0$ and
$c$, respectively. This follows from the norm estimates in Proposition \eqref{abcest}.
Because  $b_j$ is close to zero and $c_j$ is
close to $c$,  \eqref{quadgfsoli} is close to 
\[
\mu_{j+1}= \mu_j e^{-2 c T_j}, \quad \sigma_{j+1}=\sigma_j e^{-2 c T_j}, 
\]
whose solutions converge exponentially to zero if $T_j$ remains bounded below. 
We show this is the case in Proposition \ref{513}, using 
the error estimate in Proposition \ref{511}.
Hence, in Proposition \ref{p410}, we obtain $\mu_j, \sigma_j\to 0$. The precise 
convergence result is the following theorem. 

\begin{teo}
	\label{mainteo}
	Suppose $f$ satisfies Assumptions \ref{a2} and \ref{chyp}. Let $\upsilon_1$ and $\upsilon_2$ be as in \eqref{mb} and \eqref{sb}. Let $p\geq  1$ and suppose $n> 6p+2$. Set $r=\frac{2p}3$.
	Fix $K>0$
	and
	\[
	\Theta= \max\left\{\left(1-\frac{\upsilon_1}{4K}\right), 1-\upsilon_2\right\}<1.
	\]
	Then, there is a constant $\bar C$ such that for any $\delta$ small enough, 
	with
	probability larger than
	\[
	1- \bar C\delta^r
	\]
	for any initial condition satisfying 
	\[
	\frac{|\mu_0|}{\sigma_0}\leq \frac{K}{2}, \qquad \sigma_0\leq \delta,
	\]   
   the sequence    $(\mu_j, \sigma_j)$ obtained from Algorithm \ref{algo} (without restart, adaptivity or sparse  sampling) 
	satisfies
	\[
	\frac{|\mu_j|}{\sigma_j}\leq K, \qquad \sigma_j\leq \delta \Theta^j
	\]
	for all $j$.
	
\end{teo}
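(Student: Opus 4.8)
The plan is to prove the statement by induction on $j$, establishing simultaneously the two invariants $|\mu_j|/\sigma_j \le K$ and $\sigma_j \le \delta\Theta^j$, on a single ``good event'' of probability at least $1-\bar C\delta^r$ whose occurrence I control once and for all at the start. The first task is to identify that good event: by Proposition~\ref{abcest} (the norm estimates for $a_j,b_j,c_j$ obtained from least squares on $n$ Monte Carlo samples), together with Assumption~\ref{chyp} which guarantees $|f(x)-cx^2|\le C|x|^3$, one gets that the least-squares coefficients satisfy $|b_j|\lesssim \sigma_j \cdot(\text{random factor})$ and $|c_j-c|\lesssim (\text{random factor})$, where the random factors have moments controlled because $n>6p+2$ and $r=2p/3$. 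I would union-bound the relevant deviation events across all iterations $j$; the geometric decay $\sigma_j\le\delta\Theta^j$ (which is part of the induction hypothesis, so available at step $j$) makes $\sum_j \delta^r\Theta^{rj}$ summable, yielding the total failure probability $\bar C\delta^r$. This is the standard trick: one pays $\delta^r$ once because the per-step failure probabilities are themselves $O((\sigma_j/\sigma_0)^{\text{something}}\delta^r)$ and sum geometrically.

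Granting the good event, the inductive step runs as follows. Assume $|\mu_j|/\sigma_j\le K$ and $\sigma_j\le\delta\Theta^j$. From the norm estimates, $b_j$ is small (order $\sigma_j$, hence order $\delta$) and $c_j$ is close to $c>0$, so in particular $c_j>0$ for $\delta$ small. Then I examine the time step $T_j=\min(T_\mu,T_\sigma,T_{\hat\epsilon_1},T_{\hat\epsilon_2})$ from \eqref{timestep}: I need a \emph{lower} bound on $T_j$ bounded away from $0$ (this is Proposition~\ref{513}, invoking the error estimate Proposition~\ref{511}). The key point, flagged in the text after \eqref{gammabound}, is that $\hat Q_i = O(\sigma_j^{-1})$ while $\hat R = O(\sigma_j^3)$ and $\bar\beta_i$ is small, so $\hat\epsilon_i = O(\sigma_j^2)\to 0$; plugging into \eqref{tei}, $T_{\hat\epsilon_i}$ does not shrink (in fact the constraint becomes vacuous or gives a large bound), and $T_\mu,T_\sigma$ from \eqref{tc1}--\eqref{tc2} are bounded below because they depend only on $\upsilon_i$, $c_j\approx c$, and the ratio $|b_j+2c_j\mu_j|/\sigma_j$ which stays bounded by the induction hypothesis $|\mu_j|/\sigma_j\le K$. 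So $T_j\ge T_{\min}>0$ uniformly. Finally, from the explicit solution \eqref{quadgfsoli}, $\sigma_{j+1}=e^{-2c_jT_j}\sigma_j$; using $c_j$ close to $c$ and the definition of $\Theta$ via $1-\upsilon_2$, one checks $e^{-2c_jT_j}\le\Theta$, giving $\sigma_{j+1}\le\delta\Theta^{j+1}$. For the ratio: $\mu_{j+1}=\frac{b_j(e^{-2c_jT_j}-1)}{2c_j}+\mu_j e^{-2c_jT_j}$, so
\[
\frac{|\mu_{j+1}|}{\sigma_{j+1}}\le \frac{|b_j|}{2c_j}\cdot\frac{1-e^{-2c_jT_j}}{e^{-2c_jT_j}\sigma_j}+\frac{|\mu_j|}{\sigma_j}.
\]
The second term is $\le K$ by hypothesis; the first term must be shown $\le \upsilon_1/(4K)\cdot(\text{small})$ so that the whole thing stays $\le K$ — here one uses that $T_\mu$ was chosen precisely so that $|\mu^q(T_j)-\mu_j|\le\upsilon_1\sigma_j$, which translates to $\frac{|b_j|}{2c_j}(1-e^{-2c_jT_j})\le\upsilon_1\sigma_j$, and combined with $\Theta\le 1-\upsilon_2/(\cdots)$ and the definition $\Theta=\max\{1-\upsilon_1/(4K),1-\upsilon_2\}$ closes the induction. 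The bookkeeping with $K$, $\Theta$, and the constant $4$ is exactly calibrated to make this work.

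The main obstacle, and where the real content lies, is the lower bound $T_j\ge T_{\min}>0$ together with the probabilistic control: one must show that on the good event, for \emph{every} $j$, the Monte Carlo estimates $\hat R,\bar\beta_i$ entering $\hat\epsilon_i$ are genuinely $O(\sigma_j^2)$ or smaller with the right uniformity — this requires Assumption~\ref{chyp}'s cubic bound to dominate the error $e=f-q_j$ by $O(\sigma_j^3)$ in the relevant weighted $L^2$ norm, and requires the moment condition $n>6p+2$ so that the $r$-th moments of the Monte Carlo fluctuations are finite and the union bound over $j$ converges. The danger is a time step collapsing so fast that $\sum_j T_j<\infty$ and the flow never reaches the origin; Proposition~\ref{513} rules this out on the good event, and I would lean on it rather than re-derive it. Everything else is careful but routine propagation of the two inequalities through the explicit linear flow \eqref{quadgfsoli}.
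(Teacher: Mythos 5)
Your overall architecture matches the paper's: a per-iteration ``good set'' on which $b_j$ is small, $c_j$ is near $c$, and $\hat\epsilon_i=O(\sigma_j^2)$ forces $T_{\hat\epsilon_i}=+\infty$, so that $T_j=\min(T_\mu,T_\sigma)$ is bounded below and $e^{-2c_jT_j}\le\Theta$; then the explicit flow \eqref{quadgfsoli} propagates the two invariants, and the failure probabilities sum geometrically. Two points need repair, one of which is a genuine gap.

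The gap is in closing the induction for the ratio. You assume $|\mu_j|/\sigma_j\le K$ and write
\[
\frac{|\mu_{j+1}|}{\sigma_{j+1}}\le \frac{|b_j|}{2c_j}\cdot\frac{1-e^{-2c_jT_j}}{e^{-2c_jT_j}\sigma_j}+\frac{|\mu_j|}{\sigma_j},
\]
then claim the sum ``stays $\le K$'' because the first term is small. But adding a strictly positive increment to a quantity bounded by $K$ does not keep it bounded by $K$; the invariant $\le K$ does not propagate to itself. This is exactly why the theorem's hypothesis is $|\mu_0|/\sigma_0\le K/2$ rather than $\le K$: the paper strengthens the induction hypothesis to $|\mu_j|/\sigma_j\le K_j$, where $K_{j+1}=K_j+C_K\delta^{1/3}\Theta^{j/3}$ and $K_0=K/2$, and then uses that on the good set $|b_j|/(2c_j\sigma_j)\le C_K\sigma_j^{1/3}\le C_K\delta^{1/3}\Theta^{j/3}$, so the increments form a convergent geometric series whose total is $O(\delta^{1/3})\le K/2$ for $\delta$ small. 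Your phrase about the bookkeeping being ``exactly calibrated'' is hiding precisely this cumulative-sum argument, which is the one nontrivial piece of the propagation step; without it the induction does not close.

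The second, more minor point concerns the probability bound. The per-step estimate $P(\Xi_j^c)\lesssim\delta^r\Theta^{rj}$ is only available \emph{conditionally} on $(\mu_j,\sigma_j)$ lying in the region guaranteed by the previous good sets (Proposition \ref{p48} bounds $P(\Xi_j^c\mid(\mu_j,\sigma_j))$, and the right-hand side is only $O(\delta^r\Theta^{rj})$ when $\sigma_j\le\delta\Theta^j$ and $|\mu_j|/\sigma_j\le K$). A bare union bound over the unconditional events is therefore circular. The paper resolves this by writing $\bar\Xi^c$ as the disjoint union $\cup_j\bigl(\Xi_j^c\cap(\cap_{k<j}\Xi_k)\bigr)$ and applying the tower property, conditioning on the $(\mu_j,\sigma_j)$-measurable event $\Upsilon_j$. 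You clearly sense the issue (``available at step $j$''), but you should make the disjoint decomposition and conditioning explicit rather than invoking a union bound.
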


We present the proof of this theorem in section \ref{mainteoproof} after some preliminary results. Before that, we remark that 
because $0<\Theta<1$, the preceding theorem implies that $\sigma_j\to 0$. But then, 
the condition $	\frac{|\mu_j|}{\sigma_j}\leq K$ gives $\mu_j\to 0$ as well. 


To simplify the notation, throughout this section, we write the same letter $C$ to denote any positive constant  depending only  on the data; that is, depending only on
the constants in Assumption \ref{chyp}, on the parameters $\upsilon_1$ and $\upsilon_2$, 
on the constants $K$, $p$ and $n$  in the theorem statement, but independent on $j$, $\mu_j$, $\sigma_j$ or on the particular
sample $\omega$ in the underlying probability space $\Omega$.
In particular, 
the letter $C$ might represent different constants in consecutive
expressions.

\subsection{Least squares convergence estimates}

In our algorithm, the coefficients $a_j$, $b_j$, and $c_j$ in the least squares approximation of $f$ are random variables. In particular, with small probabilities, they can deviate substantially 
from the values corresponding to the quadratic part of $f$.  
We aim to show that for small $\mu_j$ and $\sigma_j$, the coefficients $a_j$, $b_j$, and $c_j$ converge towards the values corresponding to the quadratic part of $f$, specifically, $a_j$ and $b_j$ approach zero and $c_j$ approaches $c$.

We introduce the following notation. 
For a random variable $Y$ and $p\geq 1$, we denote $\|Y\|_p=\left(E\left[|Y|^p\right]\right)^{1/p}$. Similarly, 
for a function $g(x_1, \hdots x_n)$,
$\|g\|_{p}$ represents the ${L^p}$-norm with respect to the measure $\Gamma_{\mu,\sigma}(x_1)\Gamma_{\mu,\sigma}(x_2)\ldots \Gamma_{\mu,\sigma}(x_n)$; that is,
$$
\|g\|_{p}^p = \int |g |^p \Gamma_{\mu,\sigma}(x_1)\Gamma_{\mu,\sigma}(x_2)\ldots \Gamma_{\mu,\sigma}(x_n) dx_1dx_2\ldots dx_n.
$$
For a deterministic quantity,  $\bar Y$, we write $\bar Y=O(h)$ if there exists a constant $C$ such that $|\bar Y|\leq C|h|$ for all sufficiently small $h$. We say that a function $s:\Rr^n\to \Rr$ is symmetric if it is
invariant under any permutation of the components of its argument.  

Firstly, we note that the least squares problem, given sample points $x_1, \hdots, x_n$, is equivalent to solving the following linear system of equations.
\begin{equation}
\label{sys1}
\begin{bmatrix}
n&\sum_{i=1}^{n} x_i&\sum_{i=1}^{n} x_i^2\\
\sum_{i=1}^{n} x_i&\sum_{i=1}^{n} x_i^2&\sum_{i=1}^{n} x_i^3\\
\sum_{i=1}^{n} x_i^2&\sum_{i=1}^{n} x_i^3&\sum_{i=1}^{n} x_i^4
\end{bmatrix}
\begin{bmatrix}
a_j\\b_j\\c_j
\end{bmatrix}
=
\begin{bmatrix}
\sum_{i=1}^{n} f(x_i)\\
\sum_{i=1}^{n} x_i f(x_i)\\
\sum_{i=1}^{n} x_i^2 f(x_i)
\end{bmatrix}.
\end{equation}
Let $x=(x_1,x_2,\ldots,x_n)\in \Rr^n$. 
Assume that 
\begin{equation}
\label{det}
d(x)=\det\begin{bmatrix}
	{n}&\sum_{i=1}^{n} x_i&\sum_{i=1}^{n} x_i^2\\
	\sum_{i=1}^{n} x_i&\sum_{i=1}^{n} x_i^2&\sum_{i=1}^{n} x_i^3\\
	\sum_{i=1}^{n} x_i^2&\sum_{i=1}^{n} x_i^3&\sum_{i=1}^{n} x_i^4
\end{bmatrix}
\end{equation}
does not vanish.
If $f$ is a quadratic function, $f=\hat a+\hat b x +\hat c x^2$, the solution of \eqref{sys1} is $a_j=\hat a$, $b_j=\hat b$, and $c_j=\hat c$. 
Here, we are interested in the case where
$f=c x^2+\tilde f$, where $\tilde f=O(|x|^3)$, according to Assumption \ref{chyp}. 
Thus, 
because \eqref{sys1} is linear, the solution is of the form
 $a_j=\tilde a_j$, $b_j=\tilde b_j$ and $c_j=c+\tilde c_j$, where the coefficients
 $\tilde a_j$, $\tilde b_j$, and $\tilde c_j$ solve \eqref{sys1} with $f$ replaced by 
 $\tilde f$.  
 Our goal is to estimate $\|a_j\|_p=\|\tilde a_j\|_p$, $\|b_j\|_p=\|\tilde b_j\|_p$, and $\|c_j-c\|_p=\|\tilde c_j\|_p$. For this, we need
 quantitative estimates on the solution of \eqref{sys1} when $f$ is replaced by $\tilde f$. 
 These estimates depend on $n$. For instance, when $n<3$, $d(x)$ vanishes and \eqref{sys1} lacks a unique solution.  Consequently, quantitative bounds cannot be established in this case. The condition $n>6p+2$ in Theorem \ref{mainteo} ensures that $n$ is large enough for quantitative bounds on the coefficients to hold. 
 
We start by investigating how  $\tilde a_j$, $\tilde b_j$ and $\tilde c_j$ are calculated from least squares. 
Let $C_{kl}$ be the cofactor $(kl)$ of the matrix in \eqref{sys1}.  
For example, 
\begin{align*}
	C_{21} & = - \sum_{i=1}^n x_i \sum_{i=1}^n x_i^4+\sum_{i=1}^n x_i^3\sum_{i=1}^n x_i^2.
\end{align*}
By direct inspection, we see that the $C_{kl}$ is a homogeneous polynomial
(in the sample points)
 of degree $8-k-l$.  
For further reference, we record the explicit form of the
solutions:   
$$
\tilde a_j = \frac{\sum_{i=1}^n \tilde f(x_i) C_{11}+\sum_{i=1}^n x_i\tilde f(x_i)C_{21}+ \sum_{i=1}^n x_i^2\tilde f(x_i)C_{31}} {d(x)},
$$ 
$$
\tilde b_j = \frac{\sum_{i=1}^n  \tilde f(x_i) C_{12}+\sum_{i=1}^n x_i\tilde f(x_i)C_{22}+ \sum_{i=1}^n x_i^2\tilde f(x_i)C_{32}} {d(x)},
$$ 
$$
\tilde c_j = \frac{\sum_{i=1}^n \tilde f(x_i) C_{13}+\sum_{i=1}^n x_i\tilde f(x_i)C_{23}+ \sum_{i=1}^n x_i^2\tilde f(x_i)C_{33}} {d(x)}.
$$ 
Then, we have the following estimates.
\begin{lem}
\label{lem52}
Suppose $p\geq 1$. 
Let $\frac 1 p =\frac 1{p_1}+\frac 1 {p_2}+\frac{1}{p_3}$. Then
\begin{align*}
\|\tilde a_j\|_p\leq &\left\|\frac 1 {d}\right\|_{p_1}\left(
\left\|\sum_{i=1}^n \tilde f(x_i)\right\|_{p_2}\|C_{11}\|_{p_3}+\right.\\
&\left.+\left\|\sum_{i=1}^n x_i \tilde f(x_i)\right\|_{p_2}\|C_{21}\|_{p_3}+
\left\|\sum_{i=1}^n x_i^2\tilde  f(x_i)\right\|_{p_2}\|C_{31}\|_{p_3}
\right),
\end{align*}
\begin{align*}
	\|\tilde b_j\|_p\leq &\left\|\frac 1 {d}\right\|_{p_1}\left(
	\left\|\sum_{i=1}^n \tilde f(x_i)\right\|_{p_2}\|C_{12}\|_{p_3}+\right.\\
	&\left.+\left\|\sum_{i=1}^n x_i \tilde f(x_i)\right\|_{p_2}\|C_{22}\|_{p_3}+
	\left\|\sum_{i=1}^n x_i^2 \tilde f(x_i)\right\|_{p_2}\|C_{32}\|_{p_3}
	\right),
\end{align*}
and
\begin{align*}
	\|\tilde c_j\|_p\leq &\left\|\frac 1 {d}\right\|_{p_1}\left(
	\left\|\sum_{i=1}^n \tilde f(x_i)\right\|_{p_2}\|C_{13}\|_{p_3}+\right.\\
	&\left.+\left\|\sum_{i=1}^n x_i \tilde f(x_i)\right\|_{p_2}\|C_{23}\|_{p_3}+
	\left\|\sum_{i=1}^n x_i^2 \tilde f(x_i)\right\|_{p_2}\|C_{33}\|_{p_3}
	\right).
\end{align*}

\end{lem}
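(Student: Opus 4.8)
The statement of Lemma \ref{lem52} is an immediate application of H\"older's inequality, so the plan is short. First I would recall that each of $\tilde a_j$, $\tilde b_j$, $\tilde c_j$ is written above as a sum of three terms, each of the form
\[
\frac{1}{d(x)}\left(\sum_{i=1}^n x_i^s\tilde f(x_i)\right)C_{kl},
\]
with $s\in\{0,1,2\}$. The triangle inequality for the $\|\cdot\|_p$-norm reduces the task to bounding each such term separately.

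Next, for a single term I would apply the generalized H\"older inequality with exponents $p_1,p_2,p_3$ satisfying $\frac1p=\frac1{p_1}+\frac1{p_2}+\frac1{p_3}$, to the product of the three factors $\frac1{d(x)}$, $\sum_{i=1}^n x_i^s\tilde f(x_i)$, and $C_{kl}$. This yields
\[
\left\|\frac{1}{d}\left(\sum_{i=1}^n x_i^s\tilde f(x_i)\right)C_{kl}\right\|_p
\leq \left\|\frac1d\right\|_{p_1}\left\|\sum_{i=1}^n x_i^s\tilde f(x_i)\right\|_{p_2}\|C_{kl}\|_{p_3}.
\]
Summing over the three terms (matching the cofactor column index $l$ to the coefficient and the row index $k$ to the power $s$: $k=1\leftrightarrow s=0$, $k=2\leftrightarrow s=1$, $k=3\leftrightarrow s=2$, exactly as in the explicit formulas for $\tilde a_j,\tilde b_j,\tilde c_j$ displayed before the lemma) and factoring out the common factor $\|1/d\|_{p_1}$ gives the three displayed inequalities.

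There is essentially no obstacle here; the only point requiring a word of care is that the norms on the right-hand side are all finite, which is implicit in the statement and is precisely where the hypothesis that $n$ is large enough (so that $1/d(x)$ has enough integrability, cf. the discussion preceding the lemma and the condition $n>6p+2$ of Theorem \ref{mainteo}) will later be used; at the level of this lemma one simply applies H\"older formally. So the proof is: write each coefficient as a three-term sum, apply the triangle inequality, then apply the three-factor H\"older inequality term by term, and collect terms.

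\begin{proof}
Each of the quantities $\tilde a_j$, $\tilde b_j$, $\tilde c_j$ is, from the explicit formulas above, a sum of three terms of the form $\frac{1}{d(x)}\big(\sum_{i=1}^n x_i^s\tilde f(x_i)\big)C_{kl}$ with $s\in\{0,1,2\}$. By the triangle inequality for $\|\cdot\|_p$, it suffices to bound each such term. Applying the generalized H\"older inequality with exponents $p_1$, $p_2$, $p_3$ satisfying $\frac1p=\frac1{p_1}+\frac1{p_2}+\frac1{p_3}$ to the product of the factors $\frac1{d(x)}$, $\sum_{i=1}^n x_i^s\tilde f(x_i)$, and $C_{kl}$ gives
\[
\left\|\frac{1}{d}\left(\sum_{i=1}^n x_i^s\tilde f(x_i)\right)C_{kl}\right\|_p
\leq \left\|\frac1d\right\|_{p_1}\left\|\sum_{i=1}^n x_i^s\tilde f(x_i)\right\|_{p_2}\|C_{kl}\|_{p_3}.
\]
Summing over the three terms appearing in the formula for each coefficient and factoring out the common factor $\|1/d\|_{p_1}$ yields the stated inequalities.
\end{proof}
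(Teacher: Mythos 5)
Your proof is correct and follows the same route as the paper, which simply invokes H\"older's inequality; you have filled in the (straightforward) details of splitting each coefficient via the triangle inequality and applying the three-factor H\"older inequality term by term. No issues.
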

\begin{proof}The inequalities follow from H\"older's inequality. 
\end{proof}

The subsequent lemma provides an explicit formula for the determinant of the least squares matrix in \eqref{sys1}. 

\begin{lem} Let $d$ be given by \eqref{det}. For $x_1,x_2,\ldots, x_n \in \Rr$,
\begin{equation}
\label{dp}
d(x_1,x_2,\ldots,x_n)=s(x_1,x_2,\ldots,x_n),
\end{equation}
where 
\[
s(x_1,x_2,\ldots,x_n) \equiv \frac 1 6 \sum_{k,l, m=1}^n (x_k-x_l)^2(x_k-x_m)^2(x_l-x_m)^2.
\]
\end{lem}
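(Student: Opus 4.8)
The plan is to recognize the matrix in \eqref{sys1} as a Gram matrix and apply the Cauchy--Binet formula. Let $V$ be the $n\times 3$ matrix whose $i$-th row is $(1,\,x_i,\,x_i^2)$. The $(p,q)$ entry of $V^{\mathsf T}V$ is $\sum_{i=1}^n x_i^{(p-1)+(q-1)}$ for $p,q\in\{1,2,3\}$, which is precisely the matrix appearing in \eqref{sys1} and in \eqref{det}. Hence $d(x_1,\dots,x_n)=\det\!\bigl(V^{\mathsf T}V\bigr)$.

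By the Cauchy--Binet formula (the determinant of a product expressed as a sum of products of maximal minors), we have
\[
\det\!\bigl(V^{\mathsf T}V\bigr)=\sum_{1\le i<j<k\le n}\bigl(\det V_{\{i,j,k\}}\bigr)^2,
\]
where $V_{\{i,j,k\}}$ is the $3\times 3$ submatrix of $V$ formed by rows $i,j,k$. Each $V_{\{i,j,k\}}$ is a Vandermonde matrix, so $\det V_{\{i,j,k\}}=(x_j-x_i)(x_k-x_i)(x_k-x_j)$; note the Vandermonde sign is irrelevant since the minor is squared. Substituting,
\[
d(x_1,\dots,x_n)=\sum_{1\le i<j<k\le n}(x_i-x_j)^2(x_i-x_k)^2(x_j-x_k)^2.
\]

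It then remains to identify this ordered sum with $s$. The summand $(x_k-x_l)^2(x_k-x_m)^2(x_l-x_m)^2$ is symmetric under permutations of $(k,l,m)$ and vanishes whenever two of the indices coincide; therefore the unrestricted sum over $k,l,m\in\{1,\dots,n\}$ equals $3!=6$ times the sum over strictly increasing triples. Dividing by $6$ yields $s(x_1,\dots,x_n)=d(x_1,\dots,x_n)$, which is \eqref{dp}.

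The only real obstacle is bookkeeping: applying Cauchy--Binet with consistent index conventions and handling the passage between the ordered sum over $i<j<k$ and the symmetric unrestricted sum defining $s$. If one preferred not to cite Cauchy--Binet, an alternative is a direct cofactor expansion of $d$ followed by collecting monomials and matching them against the expansion of $s$, but this is substantially more tedious; the Gram-matrix viewpoint is the clean route.
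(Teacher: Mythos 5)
Your proof is correct, and it takes a genuinely different route from the paper's. The paper argues that both $d$ and $s$ are symmetric polynomials of degree $6$, reduces the identity to the specialization $(x_1,\ldots,x_6,0,\ldots,0)$, and then verifies the resulting six-variable identity by direct computation. You instead observe that the matrix in \eqref{det} is the Gram matrix $V^{\mathsf T}V$ of the $n\times 3$ Vandermonde-type matrix $V$ with rows $(1,x_i,x_i^2)$, apply Cauchy--Binet to get
\[
d=\sum_{1\le i<j<k\le n}\bigl(\det V_{\{i,j,k\}}\bigr)^2
=\sum_{1\le i<j<k\le n}(x_i-x_j)^2(x_i-x_k)^2(x_j-x_k)^2,
\]
and then pass to the unrestricted symmetric sum by noting that the summand vanishes on coinciding indices and is invariant under the $3!=6$ permutations of a distinct triple. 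All three steps are sound (including the degenerate cases $n<3$, where both sides are zero). Your argument is cleaner and more conceptual: it avoids the specialization step, which rests on the not-entirely-obvious claim that a degree-$6$ symmetric polynomial is determined by its restriction to six nonzero variables, and it explains \emph{why} $d\ge 0$ and why $d$ vanishes exactly when no three sample points are distinct --- facts the paper uses later in bounding $\|1/d\|_p$. The paper's approach buys only a self-contained verification that sidesteps citing Cauchy--Binet.
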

\begin{proof}
The determinant, $d$, is a symmetric polynomial of degree 6 in $x_1,x_2,\ldots, x_n$, as is $s(x_1,x_2,\ldots,x_n)$. Thus, it suffices to show that
\[
d(x_1,\ldots,x_6,0,\ldots0)=s(x_1,\ldots,x_6,0,\ldots0).
\]
The prior identity is equivalent to
\begin{equation}
\label{det6}
\det\begin{bmatrix}
n&\sum_{i=1}^6 x_i&\sum_{i=1}^6 x_i^2\\
\sum_{i=1}^6 x_i&\sum_{i=1}^6 x_i^2&\sum_{i=1}^6 x_i^3\\
\sum_{i=1}^6 x_i^2&\sum_{i=1}^6 x_i^3&\sum_{i=1}^6 x_i^4\\
\end{bmatrix}=s(x_1,\ldots,x_6,0,\ldots0).
\end{equation}
Setting $x_i=0$ for $i>6$ in $s(x_1,x_2,\ldots,x_n)$ yields
\[
s(x_1,\ldots,x_6,0,\ldots0) = \frac 1 6 \sum_{k,l, m=1}^6 (x_k-x_l)^2(x_k-x_m)^2(x_l-x_m)^2+ 3 (n-6) \sum_{k,l=1}^6 x_k^2 x_l^2 (x_k-x_l)^2,
\]
which can be checked directly to be equal to the determinant in \eqref{det6}.
\end{proof}

Next, we discuss $L^p$ bounds for $\frac 1 d$ when $\mu=0$ and $\sigma=1$. Lemma \ref{l55} addresses the general case using a scaling argument. 

\begin{pro} Let $p\geq 1$. Suppose that $n> 6 p+2$ and let $d$ be given by \eqref{det}.
	Then, 
\begin{equation}
\label{detp}
\int_{\Rr^n} \frac{1}{d(x)^p} \Gamma(x_1)\hdots \Gamma(x_n)dx_1\hdots d x_n=\bar C_{n,p}<\infty,
\end{equation}
 where $\Gamma=\Gamma_{0,1}$ and $\bar C_{n,p}$ is a positive constant dependent on $n$ and $p$.
\end{pro}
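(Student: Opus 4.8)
The plan is to bound $1/d^p$ pointwise by a product of inverse powers of the pairwise differences $|x_a-x_b|$ and then to recognize the resulting integral as a Mehta integral. By the preceding lemma, $d(x)=s(x)=\frac16\sum_{k,l,m=1}^n(x_k-x_l)^2(x_k-x_m)^2(x_l-x_m)^2$; the terms with a repeated index vanish, so in fact $d(x)=\sum_{\{k,l,m\}}P_{klm}$, the sum running over the $\binom n3$ three-element subsets $\{k,l,m\}\subseteq\{1,\ldots,n\}$, where $P_{klm}=(x_k-x_l)^2(x_k-x_m)^2(x_l-x_m)^2\geq 0$. Away from the Lebesgue-null set where two coordinates coincide, every $P_{klm}$ is strictly positive, so the arithmetic--geometric mean inequality gives
\[
d(x)=\sum_{\{k,l,m\}}P_{klm}\ \geq\ \Big(\prod_{\{k,l,m\}}P_{klm}\Big)^{1/\binom n3}\qquad\text{for a.e. }x\in\Rr^n .
\]

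The next step is a power count. A fixed pair $\{a,b\}$ lies in exactly $n-2$ of the triples $\{k,l,m\}$ (one for each choice of the third index), and in each such $P_{klm}$ the factor $(x_a-x_b)^2$ appears to the first power; hence $\prod_{\{k,l,m\}}P_{klm}=\prod_{a<b}(x_a-x_b)^{2(n-2)}$. Setting $\beta:=\dfrac{2(n-2)p}{\binom n3}=\dfrac{12p}{n(n-1)}$, we therefore obtain
\[
\frac{1}{d(x)^p}\ \leq\ \prod_{1\leq a<b\leq n}|x_a-x_b|^{-\beta}\qquad\text{for a.e. }x\in\Rr^n .
\]
It remains to prove that $\int_{\Rr^n}\prod_{a<b}|x_a-x_b|^{-\beta}\,\Gamma(x_1)\cdots\Gamma(x_n)\,dx<\infty$. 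Since $\Gamma=\Gamma_{0,1}$, up to the constant $(2\pi)^{-n/2}$ this is exactly the Mehta integral $\int_{\Rr^n}\prod_{a<b}|x_a-x_b|^{2\gamma}e^{-\frac12\sum x_a^2}\,dx$ with $\gamma=-\beta/2$, which is finite precisely when $\gamma>-1/n$, i.e. when $\beta<2/n$. The hypothesis $n>6p+2$ gives $\beta=\frac{12p}{n(n-1)}<\frac{2}{n}$ (equivalently $6p<n-1$), so the integral is finite; denoting its value by $\bar C_{n,p}$ completes the argument. Note that the same pointwise bound simultaneously handles the behaviour at infinity, since the Gaussian factors dominate any polynomial, so no separate estimate there is needed.

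The main obstacle is the last step: the \emph{sharp} threshold $\beta<2/n$. If one wishes to avoid quoting the closed form of the Mehta/Selberg integral, one can instead restrict to the ordered region $x_{(1)}<\cdots<x_{(n)}$, pass to gap coordinates $g_i=x_{(i+1)}-x_{(i)}>0$, use $x_{(j)}-x_{(i)}=g_i+\cdots+g_{j-1}$, and estimate $\prod_{a<b}|x_a-x_b|^{-\beta}$ from above by a product of powers of the $g_i$. A crude inductive use of H\"older's inequality (integrating out one variable at a time, each singularity $|x_a-x_b|^{-\beta(n-1)}$ being locally integrable) already yields finiteness under the stronger condition $n>12p$, and it is precisely the balanced distribution of the exponents among the gaps---encoded efficiently by the Mehta integral---that sharpens this to the stated $n>6p+2$. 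The collision set carries no Lebesgue mass, so it is irrelevant throughout.
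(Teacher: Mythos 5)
Your argument is correct, and it takes a genuinely different route from the paper. You bound $d$ from below pointwise via the AM--GM inequality applied to the $\binom{n}{3}$ nonnegative summands $P_{klm}$ of $s(x)$, count that each pair $\{a,b\}$ occurs in exactly $n-2$ triples, and thereby dominate $d(x)^{-p}$ by $\prod_{a<b}|x_a-x_b|^{-\beta}$ with $\beta=\tfrac{12p}{n(n-1)}$; the resulting Gaussian integral is a Mehta integral, finite exactly when $\beta<2/n$, i.e. $n>6p+1$, which the hypothesis $n>6p+2$ comfortably implies. (The degree check is consistent: your majorant is homogeneous of degree $-6p$, matching $d^{-p}$.) The paper instead works directly with the zero set of $d$: it stratifies the complement into dyadic shells $\tilde G_r$ on which $d\gtrsim 2^{-6r}$, estimates the Gaussian measure of $\tilde G_r$ by $C2^{-r(n-2)}$ through an explicit decomposition into configurations clustered around one or two coordinates, and sums the series, which converges precisely for $n>6p+2$. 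Your approach is shorter and even yields the slightly sharper threshold $n>6p+1$, but it outsources the key convergence fact to the Mehta/Selberg integral, whose threshold $\gamma>-1/n$ is itself a nontrivial statement (one must check integrability near every partial-collision stratum, the full diagonal being the binding one); the paper's argument is longer but entirely self-contained and elementary. You flag this dependence honestly and note that a crude H\"older substitute only gives $n>12p$, so if you want to keep the proof self-contained at the stated threshold you would either need to reproduce the local analysis near the diagonal strata or cite the Mehta integral explicitly. One cosmetic point: AM--GM gives $d\geq\binom{n}{3}\bigl(\prod P_{klm}\bigr)^{1/\binom{n}{3}}$, and you silently drop the harmless factor $\binom{n}{3}\geq 1$; worth a word, but not a gap.
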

\begin{proof}
Given the proposition's conditions, we assume $n\geq 3$ throughout this proof. Note that for $n<3$, the determinant $d(x)$ is identically zero, rendering the integral in \eqref{detp} infinite.	

Let $Z\subset \Rr^n$ be the set where $d$ vanishes.
 By identity \eqref{dp}, $x\in Z$ if and only if the following property holds:
for any choice of three indices, $k,l,m$, with $1\leq k< l< m\leq n$, $x_k=x_l$ or $x_k=x_m$ or $x_l=x_m$. 
Consequently, any point in $Z$ has at least two identical coordinates. Therefore, $Z$  is contained in the union of the finite collection of hyperplanes defined by $x_k=x_l$ for some $1\leq k<l\leq n$. Thus, $Z$ has measure zero.

Observe that 
\[
Z^c=\left\{
x\in \Rr^n: \exists_{k,l, m: 1\leq k< l< m\leq n}: |x_k-x_l|\neq 0\wedge |x_k-x_m| \neq 0\wedge|x_l-x_m| \neq 0
\right\}.
\]
We organize $Z^c$ in the following sets. For $r\in\Nn$, let
\begin{align*}
G_r=&\left\{
x\in \Rr^n: \exists_{k,l, m: 1\leq k< l< m\leq n}: |x_k-x_l|\geq 2^{-r}\wedge |x_k-x_m| \geq 2^{-r}\wedge|x_l-x_m| \geq 2^{-r}
\right\}.
\end{align*}
For any $x\in Z^c$, there exists $r$ such that $x\in G_r$. 
Furthermore,  in $G_r$, we have the following lower bound for $d$:
\begin{equation}
\label{lbdet}
d\geq C_n 2^{-6r},
\end{equation}
where $C_n$ is a positive constant that depends on $n$.

We note that $G_r$ is an increasing sequence of sets; that is, $G_{r}\subset G_{r+1}$. 
Accordingly, we define $\tilde G_0=G_0$ and, for $r>0$, set
\begin{equation}
\label{tj}
\tilde G_r=G_r \cap G^c_{r-1}.
\end{equation}
The sets $\tilde G_r$ are disjoint and cover $Z^c$. Thus, because $Z$ has measure zero,  the Monotone Convergence Theorem gives
\begin{align*}
\int_{\Rr^n} \frac{1}{d(x)^p} \Gamma(x_1)\hdots \Gamma(x_n)dx_1\hdots d x_n
&=\sum_{r=0}^{+\infty}
\int_{\tilde G_r} \frac{1}{d(x)^p} \Gamma(x_1)\hdots \Gamma(x_n)dx_1\hdots d x_n.
\end{align*}
Using \eqref{lbdet}, we have
\begin{align}
\label{sum}
\int_{\Rr^n} \frac{1}{d(x)^p} \Gamma(x_1)\hdots \Gamma(x_n)dx_1\hdots d x_n
&\leq \tilde C_n 
\sum_{r=0}^\infty2^{6rp} \int_{\tilde G_r} \Gamma(x_1)\hdots \Gamma(x_n)dx_1\hdots d x_n.
\end{align}

Note that by \eqref{tj}, we have
\[
\int_{\tilde G_r} \Gamma(x_1)\hdots \Gamma(x_n)dx_1\hdots d x_n
\leq 
\int_{G_{r-1}^c} \Gamma(x_1)\hdots \Gamma(x_n)dx_1\hdots d x_n.
\]
Because
 \begin{align*}
 G^c_{r-1}=&\left\{
 x\in \Rr^n: \forall_{k,l, m: 1\leq k< l< m\leq n}: \right.\\
 &\left.
  |x_k-x_l|<2^{-(r-1)}\vee |x_k-x_m|<2^{-(r-1)}\vee|x_l-x_m|<2^{-(r-1)}
 \right\},
 \end{align*}
we decompose $G^c_{r-1}$ as the union of two sets $A_{r-1}$ and $B_{r-1}$ defined as follows. Let $ A_{r-1}$ be
 \begin{align*}
 A_{r-1}=&\left\{
 x\in G^c_{r-1}: \forall_{k: 1< k\leq n}: |x_1-x_k|<2^{-(r-1)}
 \right\};
 \end{align*}
 that is, $A_{r-1}$ is the set of all $x\in G^c_{r-1}$ with all coordinates close to $x_1$. Let $B_{r-1}$ be the set of all $x\in G^c_{r-1}$ not in $A_{r-1}$:
 \begin{align*}
 B_{r-1}=&\left\{
 x\in G^c_{r-1}:\exists_{l: 1<l\leq n, |x_1-x_l|\geq 2^{-(r-1) }} \forall_{k: 1< k\leq n,k\ne l}:\right.\\
 &\left. |x_1-x_k|<2^{-(r-1)}\vee |x_l-x_k|<2^{-(r-1)}
 \right\}.
 \end{align*}
Then, because $A_{r-1}\subset\left\{
x\in \Rr^n: \forall_{k: 1< k\leq n}: |x_1-x_k|<2^{-(r-1)}
\right\}$, we have
\begin{align}
\label{Aest}
\notag
\int_{A_{r-1}}  \Gamma(x_1)\hdots \Gamma(x_n)dx
&\leq \int_{\Rr}\int_{|x_1-x_2|<2^{-(r-1)}} \hdots \int_{|x_1-x_n|<2^{-(r-1)}} 
\Gamma(x_1)\hdots \Gamma(x_n)dx
\\
&\leq C 2^{-(r-1)(n-1)},
\end{align}
since, for each $k$ with $1<k\leq n$, 
\[
\int_{|x_1-x_k|<2^{-(r-1)}}\Gamma(x_k)dx_k\leq C 2^{-(r-1)},
\]
where the constants above are positive and depend only on $n$.

For $l>1$, we define
\begin{align*}
B^l_{r-1}=\left\{
x\in \Rr^n: \forall_{k: 1< k\leq n,k\ne l}:
|x_1-x_k|<2^{-(r-1)}\vee |x_l-x_k|<2^{-(r-1)}
\right\}.
\end{align*}
Note that $B_{r-1}\subset \cup_{l=2}^nB^l_{r-1}$. Then,
\begin{align}
\label{Bsplit}
\int_{B_{r-1}}  \Gamma(x_1)\hdots \Gamma(x_n)dx
&\leq \sum_{l=2}^n \int_{B^l_{r-1}}  \Gamma(x_1)\hdots \Gamma(x_n)dx.
\end{align}

Let $H[x_1,x_l] = \{
y\in \Rr: 
|x_1-y|<2^{-(r-1)}\vee |x_l-y|<2^{-(r-1)}\}$ and denote by $\tilde {x} = (x_2,\hdots, x_{l-1},x_{l+1},\hdots,x_n)$. Then,
\begin{align}
\label{Best}
\notag
\int_{B^l_{r-1}}  \Gamma(x_1)\hdots \Gamma(x_n)dx & =
\int_{\Rr} \int_{\Rr} \int_{H[x_1,x_l]} \hdots \int_{H[x_1,x_l]} \Gamma(x_1)\hdots \Gamma(x_n)d\tilde{x} dx_l dx_1\\
&\leq C 2^{-(r-1)(n-2)}.
\end{align}
Combining \eqref{Aest}, \eqref{Bsplit} and \eqref{Best}, we obtain
\[
\int_{\tilde G_r} \Gamma(x_1)\hdots \Gamma(x_n)dx_1\hdots d x_n
\leq 
C 2^{-r (n-2)}, 
\]
for a suitable positive constant $C$ that depends only on $n$. 

Finally, using the preceding estimate in \eqref{sum},  
\begin{align*}
\int_{\Rr^n} \frac{1}{d(x)^p} \Gamma(x_1)\hdots \Gamma(x_n)dx_1\hdots d x_n
&\leq C
\sum_{r=0}^\infty 2^{6rp-r(n-2)} .
\end{align*}
Thus, the sum is finite provided $n> 6 p+2$, which means \eqref{detp} holds.
\end{proof}

\begin{lem}
\label{l55}	
Let $p\geq 1$. Suppose that $n> 6 p+2$ and let $d$ be given by \eqref{det}.
Then, 
\[
\left\|\frac{1}{d}\right\|_{p}\leq \frac{C}{\sigma^6},
\]
where  $\|\cdot \|_{p}$ represents the $L^p$ norm with respect to $\Gamma_{\mu, \sigma}$ and $C$ is a positive real constant depending only on $n$.
\end{lem}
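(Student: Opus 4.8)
The plan is to reduce the general statement to the case $\mu=0$, $\sigma=1$ already settled in the preceding proposition, by a simple scaling change of variables. Two structural features of $d$ make this work, and both are immediate from the identity \eqref{dp}: first, $d$ depends on the sample points only through their pairwise differences $x_k-x_l$; second, $d$ is homogeneous of degree $6$ in those differences, since
\[
s(x_1,\dots,x_n)=\frac 1 6\sum_{k,l,m=1}^n (x_k-x_l)^2(x_k-x_m)^2(x_l-x_m)^2
\]
is manifestly translation invariant and homogeneous of degree $6$. In particular $d\geq 0$, so $|d|=d$ throughout.

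Concretely, I would perform the substitution $x_i=\mu+\sigma y_i$ for $i=1,\dots,n$ in the integral defining $\|1/d\|_p^p$. Under this map $x_k-x_l=\sigma(y_k-y_l)$ for every pair $k,l$, so by \eqref{dp} each summand of $d$ scales by $\sigma^6$ and hence $d(x)=\sigma^6 d(y)$; moreover the Gaussian factors transform as $\Gamma_{\mu,\sigma}(x_i)\,dx_i=\Gamma_{0,1}(y_i)\,dy_i$. Therefore
\[
\left\|\frac 1 d\right\|_p^p=\int_{\Rr^n}\frac{1}{d(x)^p}\,\Gamma_{\mu,\sigma}(x_1)\cdots\Gamma_{\mu,\sigma}(x_n)\,dx_1\cdots dx_n=\frac{1}{\sigma^{6p}}\int_{\Rr^n}\frac{1}{d(y)^p}\,\Gamma_{0,1}(y_1)\cdots\Gamma_{0,1}(y_n)\,dy_1\cdots dy_n.
\]

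To finish, I would invoke the preceding proposition: since $n>6p+2$, the last integral equals the finite constant $\bar C_{n,p}$ depending only on $n$ and $p$. Taking $p$-th roots yields $\|1/d\|_p=\bar C_{n,p}^{1/p}/\sigma^6$, which is exactly the asserted bound with $C=\bar C_{n,p}^{1/p}$. I do not anticipate any real obstacle here; the one point requiring a moment's care is the degree bookkeeping in $d(x)=\sigma^6 d(y)$, which is dispatched immediately by the explicit formula \eqref{dp}, and the observation that the hypothesis $n>6p+2$ needed for finiteness is precisely the hypothesis of the lemma.
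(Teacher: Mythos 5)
Your proof is correct and follows essentially the same route as the paper: both exploit the translation invariance and degree-$6$ homogeneity of $d$ visible from \eqref{dp} to change variables (the paper does $x_i\to x_i-\mu$ then $x_i\to\sigma x_i$; you combine them into $x_i=\mu+\sigma y_i$), pull out the factor $\sigma^{6p}$, and conclude by citing the finiteness result \eqref{detp} of the preceding proposition.
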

\begin{proof}
We recall that 
\[
\left\|\frac{1}{d}\right\|^p_{p} = \int_{\Rr^n} \left[\frac{\frac 1 6}{\sum_{k,l, m=1}^n (x_k-x_l)^2(x_k-x_m)^2(x_l-x_m)^2}\right]^p \Gamma_{\mu,\sigma}(x_1)\hdots \Gamma_{\mu,\sigma}(x_n)dx_1\hdots d x_n,
\]
where we used \eqref{dp}. Two changes of variables, namely $x_i \rightarrow x_i-\mu$ followed by $x_i \rightarrow \sigma x_i$, result in
\begin{align*} 
\left\|\frac{1}{d}\right\|^p_{p}  &=
\int_{\Rr^n} \left[\frac{\frac 1 6}{\sum_{k,l, m=1}^n (x_k-x_l)^2(x_k-x_m)^2(x_l-x_m)^2}\right]^p \Gamma_{0,\sigma}(x_1)\hdots \Gamma_{0,\sigma}(x_n)dx_1\hdots d x_n\\
&= \frac{1}{\sigma^{6p}}
\int_{\Rr^n} \left[\frac{\frac 1 6}{\sum_{k,l, m=1}^n (x_k-x_l)^2(x_k-x_m)^2(x_l-x_m)^2}\right]^p \Gamma(x_1)\hdots \Gamma(x_n)dx_1\hdots d x_n.
\end{align*}
Finally, from \eqref{detp}, we have that 
\[
\left\|\frac{1}{d}\right\|_{p} \leq \frac{C}{\sigma^6},
\]
where $C$ is a positive constant dependent on $n$ and $p$.
\end{proof}

The following proposition is 
a corollary of Lemma \ref{hplem} and yields estimates needed to apply
Lemma \ref{lem52} in the proof of Proposition \ref{abcest}. 

\begin{pro}
\label{bcor}
We have, for all $p\geq 1$,
 \[
 \|C_{ik}\|_{p}=O( |\mu_j|^\alpha +\sigma_j^\alpha),
 \]
 where $\alpha=8-i-j$, and, for $\beta=0,1,2$, 
 \[
 \left\| \sum x_i^\beta \tilde f(x_i)\right\|_{p}=O( |\mu_j|^{3+\beta}+\sigma_j^{3+\beta}),
 \]
where  $\|\cdot \|_{p}$ represents the $L^p$ norm with respect to $\Gamma_{\mu_j, \sigma_j}$.
\end{pro}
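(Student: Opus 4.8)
The plan is to read off both families of estimates from Lemma \ref{hplem} —the $L^p$ bound for homogeneous polynomials in the sample points against a tensor product of Gaussians— combined with the cubic control on $\tilde f$ supplied by Assumption \ref{chyp}.

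For the cofactors: it was already observed before the statement that each $C_{ik}$ is a homogeneous polynomial of degree $8-i-k$ in $x_1,\ldots,x_n$ (each entry of the matrix in \eqref{sys1} equals $\sum_i x_i^{\ell+m-2}$, homogeneous of degree $\ell+m-2$, and a cofactor is a difference of products of two such entries, whose degrees add up to $8-i-k$). Hence I would apply Lemma \ref{hplem} to $C_{ik}$, with degree $\alpha=8-i-k$ and with the Gaussian $\Gamma_{\mu_j,\sigma_j}$, obtaining $\|C_{ik}\|_p=O(|\mu_j|^\alpha+\sigma_j^\alpha)$ immediately.

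For the sums involving $\tilde f$: by Assumption \ref{chyp} we have $|\tilde f(x)|=|f(x)-cx^2|\le C|x|^3$, so $|x_i^\beta\tilde f(x_i)|\le C|x_i|^{3+\beta}$ for $\beta=0,1,2$. By Minkowski's inequality on the product space and the fact that each summand depends on a single coordinate with law $\Gamma_{\mu_j,\sigma_j}$,
\[
\Bigl\|\sum_{i=1}^n x_i^\beta\tilde f(x_i)\Bigr\|_p\le\sum_{i=1}^n\bigl\|x_i^\beta\tilde f(x_i)\bigr\|_p\le nC\,\bigl\||X|^{3+\beta}\bigr\|_{L^p(\Gamma_{\mu_j,\sigma_j})}=nC\,\|X\|_{L^{(3+\beta)p}(\Gamma_{\mu_j,\sigma_j})}^{3+\beta}.
\]
Writing $X=\mu_j+\sigma_j Z$ with $Z$ standard normal and using Minkowski once more, $\|X\|_{L^{(3+\beta)p}}\le|\mu_j|+\sigma_j\|Z\|_{L^{(3+\beta)p}}$; raising to the power $3+\beta$ and using $(a+b)^{3+\beta}\le 2^{2+\beta}(a^{3+\beta}+b^{3+\beta})$ for $a,b\ge 0$ yields $\||X|^{3+\beta}\|_{L^p(\Gamma_{\mu_j,\sigma_j})}=O(|\mu_j|^{3+\beta}+\sigma_j^{3+\beta})$, which is the asserted bound (the standard normal moments $\|Z\|_{L^{(3+\beta)p}}$ being finite and absorbed into the constant).

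The argument is essentially routine; the one point worth flagging is that $\tilde f$ is not a polynomial, so Lemma \ref{hplem} cannot be invoked directly for the second family of estimates. One must first pass to the pointwise majorant $C|x_i|^{3+\beta}$ and then, for the odd exponents $3+\beta\in\{3,5\}$ (i.e.\ $\beta\in\{0,2\}$), bound $|X|^{3+\beta}$ through the scaling $X=\mu_j+\sigma_j Z$ and Minkowski's inequality rather than through Lemma \ref{hplem}. Keeping the exponents of $|\mu_j|$ and $\sigma_j$ exactly equal to $3+\beta$—instead of settling for the weaker $2+\beta$ that a polynomial majorant such as $x_i^{2+\beta}+x_i^{4+\beta}$ would produce—is what forces this slightly more careful step.
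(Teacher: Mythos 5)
Your proof is correct and follows the paper's route almost exactly: the cofactor bound is obtained, as in the paper, from the homogeneity of degree $8-i-k$ together with Lemma \ref{hplem}, and the second family of estimates rests on the pointwise majorant $|x_i^\beta\tilde f(x_i)|\leq C|x_i|^{3+\beta}$ supplied by Assumption \ref{chyp}. The only divergence is in how the majorant is then integrated: the paper observes that $C\sum_i|x_i|^{3+\beta}$ is itself a continuous, positively $(3+\beta)$-homogeneous function of $(x_1,\ldots,x_n)$ and feeds it back into Lemma \ref{hplem}, whereas you redo that lemma's computation by hand via Minkowski's inequality and the scaling $X=\mu_j+\sigma_jZ$. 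Your closing remark that Lemma \ref{hplem} ``cannot be invoked directly'' because $\tilde f$ is not a polynomial is therefore slightly off the mark: the lemma requires only continuity and positive homogeneity, not polynomiality, so it applies verbatim to $\sum_i|x_i|^{3+\beta}$ for every $\beta$, odd exponents included; the extra care you take is harmless but not forced. One small point both you and the paper leave implicit: Lemma \ref{hplem} is stated as an $L^1$ bound, so to get the $\|\cdot\|_p$ estimate for the cofactors one should apply it to $|C_{ik}|^p$ (positively homogeneous of degree $\alpha p$) and take $p$-th roots.
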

\begin{proof}
The estimates for $C_{ik}$  in the corollary follow directly from the fact that 
each of these cofactors is homogeneous of degree $\alpha=8-i-j$. The estimates for
$ \sum x_i^\beta \tilde f(x_i)$ follow from the homogeneous bound 
$\left| \sum x_i^\beta \tilde f(x_i) \right|\leq  C\sum |x_i|^{\beta+3}$, which results from Asssumption \ref{chyp}. 
\end{proof}

Finally, we collect the previous bounds and obtain the main estimates for the least squares
coefficients. 
\begin{pro}
\label{abcest}
Suppose $f$ satisfies Assumptions \ref{a2} and \ref{chyp}. Let $p\geq 1$ and suppose that  $n>6p+2$. 
At iteration $j$, consider 
a corresponding sample of $n$ points drawn out of $(\mu_j, \sigma_j)$.
 Then
\begin{equation}
	\label{aest}
	\|a_j\|_p=
	O\left(\frac{|\mu_j|^9}{\sigma_j^6}+\sigma_j^3\right), 
\end{equation}
\begin{equation}
	\label{best}
	\|b_j\|_p=
	O\left(\frac{|\mu_j|^8}{\sigma_j^6}+\sigma_j^2\right), 	
\end{equation}
and
\begin{equation}
	\label{cest}
	\|c_j-c\|_p=	
	O\left(\frac{|\mu_j|^7}{\sigma_j^6}+\sigma_j\right),
\end{equation}
where  $\|\cdot \|_{p}$ represents the $L^p$ norm with respect to $\Gamma_{\mu_j, \sigma_j}$.
\end{pro}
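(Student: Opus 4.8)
The plan is to feed the three quantitative inputs already established — the bound $\|1/d\|_{p_1}\le C\sigma_j^{-6}$ from Lemma \ref{l55}, the cofactor bounds $\|C_{ik}\|_{p_3}=O(|\mu_j|^{8-i-k}+\sigma_j^{8-i-k})$ from Proposition \ref{bcor}, and the source-term bounds $\big\|\sum_i x_i^\beta\tilde f(x_i)\big\|_{p_2}=O(|\mu_j|^{3+\beta}+\sigma_j^{3+\beta})$ for $\beta=0,1,2$, also from Proposition \ref{bcor} — into the H\"older estimate of Lemma \ref{lem52}, recalling that $a_j=\tilde a_j$, $b_j=\tilde b_j$, and $c_j-c=\tilde c_j$ because the quadratic part of $f$ contributes coefficients $0,0,c$. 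Since Lemma \ref{lem52} writes each of $\|\tilde a_j\|_p,\|\tilde b_j\|_p,\|\tilde c_j\|_p$ as $\|1/d\|_{p_1}$ times a sum of three products, each a source-term norm times a cofactor norm, it suffices to estimate those products and then optimize the exponents.

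First I would verify that an admissible H\"older triple $\tfrac1p=\tfrac1{p_1}+\tfrac1{p_2}+\tfrac1{p_3}$ exists for which $\|1/d\|_{p_1}$ is controlled. Because the hypothesis $n>6p+2$ is strict, one may take $p_1=p(1+\eta)$ with $\eta>0$ so small that $n>6p_1+2$ still holds, which is exactly what Lemma \ref{l55} requires; then $\tfrac1{p_2}+\tfrac1{p_3}=\tfrac1p-\tfrac1{p_1}>0$, so $p_2,p_3$ can be chosen finite and arbitrarily large, and this is harmless since the estimates of Proposition \ref{bcor} hold for every exponent $\ge 1$.

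What remains is bookkeeping. In the bound for $\tilde a_j$ the cofactors $C_{11},C_{21},C_{31}$ have homogeneity degrees $6,5,4$ and the matching source terms ($\beta=0,1,2$) have orders $3,4,5$, so each of the three products has total order $9$ in $(|\mu_j|,\sigma_j)$; the mechanism is that raising the cofactor row index by one lowers its degree by one but raises the source order by one, so the sum is constant. Likewise all products entering $\tilde b_j$ have total order $8$ and all entering $\tilde c_j$ have total order $7$ (the column index, $1,2,3$, shifts the constant down by one each time). Multiplying by $\|1/d\|_{p_1}\le C\sigma_j^{-6}$ thus gives $\|a_j\|_p\le C\sigma_j^{-6}\sum_{a+b=9}|\mu_j|^a\sigma_j^b$, $\|b_j\|_p\le C\sigma_j^{-6}\sum_{a+b=8}|\mu_j|^a\sigma_j^b$, and $\|c_j-c\|_p\le C\sigma_j^{-6}\sum_{a+b=7}|\mu_j|^a\sigma_j^b$ (only a subset of these monomials actually occurs, but the overcount does no harm), whose extreme monomials $|\mu_j|^9/\sigma_j^6,\sigma_j^3$ (resp.\ $|\mu_j|^8/\sigma_j^6,\sigma_j^2$ and $|\mu_j|^7/\sigma_j^6,\sigma_j$) are exactly the right-hand sides of \eqref{aest}, \eqref{best}, \eqref{cest}.

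Finally I would absorb the intermediate monomials by interpolation: for $0\le a\le 9$ one has $|\mu_j|^a\sigma_j^{3-a}=\big(|\mu_j|^9/\sigma_j^6\big)^{a/9}\big(\sigma_j^3\big)^{1-a/9}$, so Young's inequality yields $|\mu_j|^a\sigma_j^{3-a}\le |\mu_j|^9/\sigma_j^6+\sigma_j^3$, and the analogous identities handle the order-$8$ and order-$7$ sums; summing the finitely many terms completes the proof. I expect no genuine obstacle: the only places needing care are checking that a legal H\"older triple exists under $n>6p+2$ and that the homogeneity degrees line up, and both are routine.
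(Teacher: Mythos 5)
Your proposal is correct and follows essentially the same route as the paper: it combines the H\"older bound of Lemma \ref{lem52} with the determinant estimate of Lemma \ref{l55} and the cofactor/source-term bounds of Proposition \ref{bcor}, then observes each product is homogeneous of total degree $9$, $8$, or $7$ and absorbs the resulting monomials into the two extreme terms (the paper does this via Lemma \ref{pseudonorm}, you via Young's inequality — an equivalent step). Your explicit check that the strict inequality $n>6p+2$ leaves room to choose $p_1>p$ with $n>6p_1+2$ is a point the paper leaves implicit, and is handled correctly.
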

\begin{proof}
The result follows from Lemma \ref{lem52}, combined with Lemma
\ref{l55} and 
the estimates in Proposition  \ref{bcor}. 
Accordingly, we have
\begin{align*}
\|a_j\|_p=O\left(\frac{1}{\sigma_j^6} \right)
&\left[
O\left(|\mu_j|^3+\sigma_j^3 \right)O\left(|\mu_j|^6+\sigma_j^6 \right)
+
O\left(|\mu_j|^4+\sigma_j^4 \right)O\left(|\mu_j|^5+\sigma_j^5 \right)\right.\\
&\left.+
O\left(|\mu_j|^5+\sigma_j^5 \right)O\left(|\mu_j|^4+\sigma_j^4 \right)
\right], 
\end{align*}
\begin{align*}
	\|b_j\|_p=O\left(\frac{1}{\sigma_j^6} \right)
	&\left[
	O\left(|\mu_j|^3+\sigma_j^3 \right)O\left(|\mu_j|^5+\sigma_j^5 \right)
	+
	O\left(|\mu_j|^4+\sigma_j^4 \right)O\left(|\mu_j|^4+\sigma_j^4 \right)\right.\\
	&\left.+
	O\left(|\mu_j|^5+\sigma_j^5 \right)O\left(|\mu_j|^3+\sigma_j^3\right)
	\right], 
\end{align*}
and
\begin{align*}
	\|c_j-c\|_p=O\left(\frac{1}{\sigma_j^6} \right)
	&\left[
	O\left(|\mu_j|^3+\sigma_j^3 \right)O\left(|\mu_j|^4+\sigma_j^4 \right)
	+
	O\left(|\mu_j|^4+\sigma_j^4 \right)O\left(|\mu_j|^3+\sigma_j^3 \right)\right.\\
	&\left.+
	O\left(|\mu_j|^5+\sigma_j^5 \right)O\left(|\mu_j|^2+\sigma_j^2\right)
	\right].
\end{align*}
Now, we 
observe 
that the terms that multiply $O\left(\frac{1}{\sigma_j^6}\right)$ can be bounded by homogeneous
functions in $\mu_j$ and $\sigma_j$ which have degrees of $9$, $8$ and $7$, respectively. 
Accordingly, using Lemma \ref{pseudonorm}, they can be bounded by $O\left(|\mu_j|^9+\sigma_j^9 \right)$, 
$O\left(|\mu_j|^8+\sigma_j^8\right)$, and $O\left(|\mu_j|^7+\sigma_j^7 \right)$.
Thus, 
\eqref{aest}, \eqref{best},  and \eqref{cest} follow.
\end{proof}

\subsection{Additional convergence estimates}

We now consider the random variables in the time-step selection.
In particular, we examine the various components of \eqref{hatepsilon} which determine
$T_{\hat \epsilon_i}$. Our goal is to show that $\hat \epsilon_i$ converges to zero quadratically as 
$\sigma_j\to 0$. Hence, according to \eqref{tei}, with high probability, $T_{\hat \epsilon_i}=+\infty$. 
This, in turn, implies that the time step is asymptotically determined by $T_\mu$ and $T_\sigma$, 
which is used in the following section to simplify the analysis of the iterations in \eqref{quadgfsoli}.

\begin{pro}
\label{511}
Suppose $f$ satisfies Assumptions \ref{a2} and \ref{chyp}.  
Consider a sample of $n$ points drawn from $(\mu_j, \sigma_j)$.  Suppose that $n>6p+2$. 
Let $\hat \epsilon_i$ be given by  \eqref{hatepsilon}.  Then 
\begin{equation}
	\label{epsiest}
	\|\hat \epsilon_i\|_p=O\left(\frac{|\mu_j|^{9}}{\sigma_j^{7}}+\sigma_j^2\right),
\end{equation}
where  $\|\cdot \|_{p}$ represents the $L^p$ norm with respect to $\Gamma_{\mu_j, \sigma_j}$.
\end{pro}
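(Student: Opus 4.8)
The plan is to estimate separately the two pieces of $\hat\epsilon_i=\hat R\,\hat Q_i+\bar\beta_i$ from \eqref{hatepsilon}. Since the theorem in Section \ref{tsa} excludes sparse sampling, at iteration $j$ the $n$ points $x_1,\dots,x_n$ are drawn directly from $\Gamma_{\theta_j}=\Gamma_{\mu_j,\sigma_j}$, so the likelihood multiplier $\ell$ equals $1$, $\bar n=n$ and $\bar\theta=\theta_j$. Consequently $\hat R=\bigl(\tfrac1n\sum_k e(x_k)^2\bigr)^{1/2}$, $\hat\beta_i=\bigl|\tfrac1n\sum_k e(x_k)B_i(\theta_j,x_k)\bigr|$ and $\hat\sigma_{\beta_i}^2=\tfrac1n\sum_k e(x_k)^2 B_i(\theta_j,x_k)^2-\hat\beta_i^2$, where $e=f-q_j$ and $q_j=a_j+b_jx+c_jx^2$ is the least squares approximation; the deterministic factors $\hat Q_i$ from \eqref{bqi} are $O(1/\sigma_j)$. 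So everything reduces to $L^p$ bounds for $\hat R$ and $\bar\beta_i$ in terms of $\mu_j$ and $\sigma_j$.

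The key observation is that $B_1(\theta_j,\cdot)=(x-\mu_j)/\sigma_j^2$ and $B_2(\theta_j,\cdot)=((x-\mu_j)^2-\sigma_j^2)/\sigma_j^3$ are polynomials of degree at most $2$, so the least squares optimality condition \eqref{mcoc} forces $\sum_k e(x_k)B_i(\theta_j,x_k)=0$; hence $\hat\beta_i=0$ and $\bar\beta_i=\tfrac{m}{\sqrt n}\bigl(\tfrac1n\sum_k e(x_k)^2B_i(\theta_j,x_k)^2\bigr)^{1/2}$. This cancellation is what prevents a worse bound. Next I would estimate $\hat R$ by using that $q_j$ is the empirical least squares minimizer: comparing with the competitor $q(x)=cx^2$ and using $|f-cx^2|=|\tilde f|\le C|x|^3$ from Assumption \ref{chyp} gives $\hat R^2\le \tfrac Cn\sum_k|x_k|^6$; combined with the Gaussian moment bounds $\bigl\||x|^m\bigr\|_{L^r(\Gamma_{\theta_j})}=O((|\mu_j|+\sigma_j)^m)$, this yields $\|\hat R\|_p=O((|\mu_j|+\sigma_j)^3)$, whence $\|\hat R\hat Q_i\|_p=O\bigl((|\mu_j|+\sigma_j)^3/\sigma_j\bigr)$, which by an elementary case split on whether $|\mu_j|\le\sigma_j$ or not is $O(|\mu_j|^9/\sigma_j^7+\sigma_j^2)$.

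For $\bar\beta_i$ I would write $e(x_k)=\tilde f(x_k)-a_j-b_jx_k-(c_j-c)x_k^2$ and bound, uniformly in $k$, $\|e(x_k)B_i(\theta_j,x_k)\|_p$ by the triangle inequality over the four terms. On the deterministic term use $|\tilde f|\le C|x|^3$ and Gaussian moments; on each term involving a random coefficient apply H\"older, $\|a_j\,B_i\|_p\le\|a_j\|_{p_1}\|B_i\|_{p_2}$ and similarly for $b_j,c_j-c$, with $\tfrac1p=\tfrac1{p_1}+\tfrac1{p_2}$ and $p_1$ chosen slightly larger than $p$ — admissible since $n>6p+2$ strictly, so $n>6p_1+2$ still holds — then plug in $\|a_j\|_{p_1}=O(|\mu_j|^9/\sigma_j^6+\sigma_j^3)$, $\|b_j\|_{p_1}=O(|\mu_j|^8/\sigma_j^6+\sigma_j^2)$, $\|c_j-c\|_{p_1}=O(|\mu_j|^7/\sigma_j^6+\sigma_j)$ from Proposition \ref{abcest}, together with $\|B_i(\theta_j,x_k)\|_{p_2}=O(1/\sigma_j)$ and $\|x_k\|_{p_2},\|x_k^2\|_{p_2}$ of order $(|\mu_j|+\sigma_j)$ and $(|\mu_j|+\sigma_j)^2$. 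Every resulting monomial in $|\mu_j|,\sigma_j$ is then, by the homogeneity/degree inequalities already used for Proposition \ref{abcest} (cf. Lemma \ref{pseudonorm}), dominated by $|\mu_j|^9/\sigma_j^7+\sigma_j^2$, so $\|e(x_k)B_i(\theta_j,x_k)\|_p=O(|\mu_j|^9/\sigma_j^7+\sigma_j^2)$. Finally a power-mean inequality ($\bigl\|(\tfrac1n\sum_k a_k)^{1/2}\bigr\|_p\lesssim\max_k\|a_k^{1/2}\|_p$, valid both for $p\ge2$ via Minkowski and for $1\le p<2$ via subadditivity of $t\mapsto t^{p/2}$) applied with $a_k=e(x_k)^2B_i(\theta_j,x_k)^2$ gives $\|\bar\beta_i\|_p=O(|\mu_j|^9/\sigma_j^7+\sigma_j^2)$; combining with the bound on $\|\hat R\hat Q_i\|_p$ yields \eqref{epsiest}.

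The main obstacle is the bookkeeping: identifying which combination of the coefficient estimates of Proposition \ref{abcest} and the Gaussian moments actually saturates at $|\mu_j|^9/\sigma_j^7$, and verifying by the homogeneity arguments that all other contributions are absorbed, in particular the elementary comparisons of the type $|\mu_j|^m/\sigma_j^k\le |\mu_j|^9/\sigma_j^7+\sigma_j^2$ whenever $m-k\ge2$ and $k\le7$. A secondary technical point is confirming that the H\"older exponents can always be taken admissible, which uses only the strictness of $n>6p+2$.
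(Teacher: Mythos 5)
Your proposal is correct and follows essentially the same route as the paper's proof: the vanishing of $\hat\beta_i$ via the least squares orthogonality against the degree-$\le 2$ polynomials $B_i(\theta_j,\cdot)$, the bound $\|\hat R\|_p=O(|\mu_j|^3+\sigma_j^3)$ via sub-optimality of the competitor $cx^2$, and the splitting of $e$ into the cubic remainder plus the coefficient deviations handled by H\"older together with Proposition \ref{abcest} (with the exponent taken slightly above $p$, using strictness of $n>6p+2$) and Young-type homogeneity comparisons. The only cosmetic difference is that the paper writes the three-factor H\"older step explicitly with exponents $r,s,t$ and reduces the empirical sum via permutation invariance rather than a power-mean bound, which amounts to the same estimate.
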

\begin{proof}
We consider the terms in \eqref{hatepsilon} separately.
First, 
let $\hat R$ be given by \eqref{hatR}. We claim that 
\begin{equation}
	\label{hatrest}
	\|\hat R\|_p=O(|\mu_j|^3+\sigma_j^3).
\end{equation}
To establish \eqref{hatrest}, we begin by noting that 	 from 
Assumption \ref{chyp} we have
$
|f(x)-c x^2|\leq C |x|^3.
$
Thus, because the polynomial $c x^2$ is sub-optimal in the least squares problem in \eqref{mcls}, we have 
\[
\hat R^2 \leq \frac{C}{n}\sum_{i=1}^n x_i^6.
\]
Using Lemma \ref{normie}, we have 
\[
|\hat R|^p \leq C\sum_{i=1}^n |x_i|^{3p}.
\]
Using now Lemma \ref{hplem}, we see that the expected value of the right-hand side can be bounded using the estimate $\int_{\Rr} |x_i|^{3p} \Gamma_{\mu_j, \sigma_j}dx_i= O(|\mu_j|^{3p}+\sigma_j^{3p})$. 
Thus, \eqref{hatrest} holds.

Next,  we examine $\bar \beta_i$ given by \eqref{betai}.  
Since sparse sampling is not employed (see Section \ref{sparsesec}), we have 
$\ell\equiv 1$. 
Accordingly, 
 the least squares condition \eqref{mcoc} in \eqref{hatbeta} gives 
\begin{equation}
\label{betai0}
\hat \beta_i=0. 
\end{equation}
Thus, we only need to consider
$\hat \sigma_{\beta_i}$ given by \eqref{sigmahatbeta}. 
We claim that 
\begin{equation}
	\label{sigmabetest}
	\|\hat \sigma_{\beta_i}\|_p=O\left(\frac{|\mu_j|^{9}}{\sigma_j^{7}}+\sigma_j^2\right).
\end{equation}
To establish \eqref{sigmabetest}, we use  \eqref{betai0}
in 
 \eqref{sigmahatbeta}, to get
 \[
 \hat \sigma_{\beta_i}^2=
 \frac 1 n \sum_{k=1}^{n}  (f(x_k)-q(x_k))^2 B_i^2(\theta_j, x_k).
 \]
Thus, using Lemma \ref{normie}, we have
 \[
 \hat \sigma_{\beta_i}^p\leq C
\sum_{k=1}^{n}  |f(x_k)-q(x_k)|^p |B_i(\theta_j, x_k)|^p.
 \]
By permutation invariance, all terms in the previous sum are identical in distribution. Hence,
\[
	E\left[\hat \sigma_{\beta_i}^p\right]\leq C E \left[|f(x_1)-q_j(x_1)|^p |B_i(\theta_j, x_1)|^p\right].
\]
Further, we have
\[
|f(x)-q_j(x)|^p\leq C |f(x)-c x^2|^p +C |q_j(x)-c x^2|^p.
\] 
Therefore, using Assumption \ref{chyp},   
\begin{equation}
E\left[\hat \sigma_{\beta_i}^p\right]\leq C\int_{\Rr}  |x_1|^{3p} |B_i|^p \Gamma_{\mu_j, \sigma_j}dx_1+
CE \left[|q_j(x_1)-c x_1^2|^p |B_i(\theta_j, x_1)|^p\right].\label{hatsigmabetaest}
\end{equation}

Now, we consider the expressions for $B_i$ in \eqref{bif} as follows. 
For $B_1$, we have 
\begin{align*}
	\int_{\Rr}  |x_1|^{3p} |B_1|^p \Gamma_{\mu_j, \sigma_j}dx_1&=\frac{1}{\sigma_j^p}
	\int_{\Rr}  |y+\mu_j|^{3p} \left|\frac{y}{\sigma_j}\right|^p \Gamma_{0, \sigma_j}dy\\
	&\leq 
	\frac{C}{\sigma_j^p}\left[
	\int_{\Rr}  |y|^{3p} \left|\frac{y}{\sigma_j}\right|^p \Gamma_{0, \sigma_j}dy
	+|\mu_j|^{3p} \int_{\Rr}  \left|\frac{y}{\sigma_j}\right|^p \Gamma_{0, \sigma_j}dy
	\right]\\&=O\left(\frac{|\mu_j|^{3p}}{\sigma_j^{p}}+\sigma_j^{2p}\right).
\end{align*}
Similarly, for 
$B_2$, we have 
\begin{align*}
	\int_{\Rr}  |x_1|^{3p} |B_2|^p \Gamma_{\mu_j, \sigma_j}dx_1&=\frac{1}{\sigma_j^p}
	\int_{\Rr}  |y+\mu_j|^{3p} \left|\frac{y^2}{\sigma_j^2}-1 \right|^p \Gamma_{0, \sigma_j}dy\\
	&\leq \frac{C}{\sigma_j^p}\left(
	\int_{\Rr}  |y|^{3p} \left|\frac{y^2}{\sigma_j^2}-1 \right|^p \Gamma_{0, \sigma_j}dy+
		|\mu_j|^{3p} \int_{\Rr}  \left|\frac{y^2}{\sigma_j^2}-1 \right|^p \Gamma_{0, \sigma_j}dy
	\right)\\
	&=O\left(\frac{|\mu_j|^{3p}}{\sigma_j^{p}}+\sigma_j^{2p}\right).
\end{align*}

To address the second term in \eqref{hatsigmabetaest}, we use the estimates in Proposition \ref{abcest} combined with H\"older's inequality, as we explain next.
Let $t>1$. 
Taking into account \eqref{bif}, we observe that 
\[
\int_{\Rr} |B_1|^t \Gamma_{\mu_j, \sigma_j}dx
=\frac{1}{\sigma_j^t}\int_{\Rr}  \frac{|x|^t}{\sigma_j^t}\Gamma_{0, \sigma_j}dx=\frac{C}{\sigma_j^t}, 
\]
using a change of variables. Similarly, 
\[
\int_{\Rr} |B_2|^t \Gamma_{\mu_j, \sigma_j}dx
=\frac{1}{\sigma_j^t}\int_{\Rr}  \left|\frac{x^2}{\sigma_j^2}-1\right|^t\Gamma_{0, \sigma_j}dx
=\frac{C}{\sigma_j^t}.
\]
Accordingly, 
we have
\begin{equation}
\label{biest}
\|B_i\|_{t}\leq \frac{C_t}{\sigma_j},
\end{equation}
for some constant $C_t$ depending only on $t$.

Next, we have 
\[
E\left[ |q_j-c x^2_1|^p |B_i|^p\right]\leq
C \left(
E\left[ |a_j|^p |B_i|^p\right]+
 E\left[ |b_j x_1|^p |B_i|^p\right]
 +E\left[ |(c_j-c)_j x_1^2|^p |B_i|^p\right]
\right).
\]
We apply H\"older's inequality to each of the terms in the right-hand side of
the preceding expression with exponents, 
 $r, s, t>1$ with
\[
\frac 1 r +\frac 1 s + \frac 1 t=1.
\]
Then, using \eqref{biest},  we have
\[
E\left[ |q_j-c x^2_1|^p |B_i|^p\right]
\leq \frac{C}{\sigma^p_j}\left(\|a_j\|_{p r}^p+\|b_j\|_{pr}^p \|x_1\|^p_{ps}+\|c_j-c\|_{pr}^p \|x_1\|^{2p}_{2 p s}  \right).
\]
Next, we select $r$ close to $1$ so that $n>6 p r+2$ such that we can apply the estimates
in Proposition \ref{abcest}, and in Lemma \eqref{hplem} to get
\[
E\left[  |q_j-c x^2|^p |B_i|^p \right]=O\left(\frac{|\mu_j|^{9p}}{\sigma_j^{7p}}+\sigma_j^{2p}\right).
\]
Therefore, 
\[
E\left[\hat \sigma_{\beta_i}^p\right]= O \left(\frac{|\mu_j|^{3p}}{\sigma_j^p}+\sigma_j^{2p}\right)+
O\left(\frac{|\mu_j|^{9p}}{\sigma_j^{7p}}+\sigma_j^{2p}\right)
=O \left(\frac{|\mu_j|^{3p}}{\sigma_j^p}+\frac{|\mu_j|^{9p}}{\sigma_j^{{7p}}}+\sigma_j^{2p}\right);
\]
 thus establishing \eqref{sigmabetest} by observing that Young's inequality
 gives
 $\frac{|\mu_j|^{3p}}{\sigma_j^p}=O\left(
 \frac{|\mu_j|^{9p}}{\sigma_j^{{7p}}}+\sigma_j^{2p}\right)$.
So, using the preceding identity and \eqref{sigmabetest} in \eqref{betai}, we have
\begin{equation}
\label{barbetaiest}
\|\bar \beta_i\|_p
=O \left(\frac{|\mu_j|^{9}}{\sigma_j^{7}}+\sigma_j^2\right).
\end{equation}
Finally, we take into account \eqref{bqi}, \eqref{barbetaiest}, and \eqref{hatrest} in \eqref{hatepsilon}. This gives us \eqref{epsiest}.
\end{proof}

\subsection{Convergence}

To prove convergence, we use the following strategy. We construct a "good set", $\bar \Xi$, in 
the underlying probability space where $b_j$ is close to $0$, $c_j$ is close to $c$, and
$\hat \epsilon_j$ is small for every $j$.
This set is built inductively by intersecting a sequence of "good sets", $\Xi_j$, 
constructed in Proposition \ref{p48}. In that proposition, we deduce estimates for 
the probability of $\Xi_j^c$ using Chebychev inequality in conjunction with the estimates 
in the preceding sections. 
In $\bar \Xi$, Proposition \ref{513} shows that the time $T_j$ has a lower bound.
Because the convergence estimates involve terms of the form $\frac{\mu_j}{\sigma_j}$, we need to ensure that this quotient remains bounded. This boundedness and
convergence follow from the analysis of \eqref{quadgfsoli} performed in Proposition \ref{p410}.
Finally, in Proposition \ref{p411}, we show that 
 $\bar \Xi^c$, where convergence may not occur, has a small probability.

First, we define sets
$\Xi_j$ where $b_j$ is close to $0$, $c_j$ is close to $c$, and
$\hat \epsilon_j$ is small and estimate the probability of their complements. 
More precisely, because $\mu_j$ and $\sigma_j$ are random variables, 
we estimate $P(\Xi_j^c| (\mu_j, \sigma_j) )$.

\begin{pro}
\label{p48}	
Suppose $f$ satisfies Assumptions \ref{a2} and \ref{chyp}.
Let $p>1$ and suppose that  $n>6p+2$. 
Fix a constant $C>0$ and consider the set $\Xi_j$ of all events for which
\begin{equation}
	\label{bestpoint}
	|b_j|\leq \left(\frac{|\mu_j|^8}{\sigma_j^6}+\sigma_j^2\right)^{2/3},
\end{equation}
\begin{equation}
	\label{cestpoint}
	|c_j-c|\leq \left(\frac{|\mu_j|^7}{\sigma_j^6}+\sigma_j\right)^{1/3},
\end{equation}
and
\begin{equation}
	\label{epsiestpoint}
	\hat \epsilon_i\leq \left( \frac{|\mu_j|^9}{\sigma_j^7}+\sigma_j^2\right)^{2/3}.
\end{equation}
Then, 
\begin{equation}
\label{pbad}
P(\Xi_j^c| (\mu_j, \sigma_j) )\leq C\left( \frac{|\mu_j|^{14}}{\sigma_j^{12}}+\sigma_j^2\right)^{p/3}.
\end{equation}
\end{pro}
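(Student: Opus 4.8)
The plan is to bound $P(\Xi_j^c\,|\,(\mu_j,\sigma_j))$ by a union bound over the defining inequalities \eqref{bestpoint}, \eqref{cestpoint}, and \eqref{epsiestpoint} (the latter for $i=1,2$), estimating each probability with Chebyshev's inequality applied to the appropriate power of $|b_j|$, $|c_j-c|$, and $\hat\epsilon_i$. The key observation is that, conditionally on $(\mu_j,\sigma_j)$, the $n$ sample points at iteration $j$ are iid with law $\Gamma_{\mu_j,\sigma_j}$, so the $L^p$ norms appearing in Proposition \ref{abcest} and Proposition \ref{511} are precisely the conditional $L^p$ norms of these random variables; since $n>6p+2$, both propositions apply with the exponent $p$ fixed in the statement, and their $O$-constants depend only on the data.

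Concretely, set $M_b=\frac{|\mu_j|^8}{\sigma_j^6}+\sigma_j^2$, $M_c=\frac{|\mu_j|^7}{\sigma_j^6}+\sigma_j$, and $M_\epsilon=\frac{|\mu_j|^9}{\sigma_j^7}+\sigma_j^2$, so that Propositions \ref{abcest} and \ref{511} give $\|b_j\|_p\le C M_b$, $\|c_j-c\|_p\le C M_c$, and $\|\hat\epsilon_i\|_p\le C M_\epsilon$. Chebyshev's inequality then yields
\[
P\bigl(|b_j|>M_b^{2/3}\,\big|\,(\mu_j,\sigma_j)\bigr)\le \frac{\|b_j\|_p^{\,p}}{M_b^{2p/3}}\le C\,M_b^{p/3},
\]
and, in the same way, $P(|c_j-c|>M_c^{1/3}\,|\,(\mu_j,\sigma_j))\le C\,M_c^{2p/3}$ and $P(\hat\epsilon_i>M_\epsilon^{2/3}\,|\,(\mu_j,\sigma_j))\le C\,M_\epsilon^{p/3}$. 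Summing these (the factor from $i=1,2$ absorbed into $C$) gives
\[
P\bigl(\Xi_j^c\,\big|\,(\mu_j,\sigma_j)\bigr)\le C\bigl(M_b^{p/3}+M_c^{2p/3}+M_\epsilon^{p/3}\bigr).
\]

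It remains to dominate the right-hand side by $C\bigl(\frac{|\mu_j|^{14}}{\sigma_j^{12}}+\sigma_j^2\bigr)^{p/3}$. Writing $A=\frac{|\mu_j|^{14}}{\sigma_j^{12}}$ and $B=\sigma_j^2$, an exponent count gives $\frac{|\mu_j|^8}{\sigma_j^6}=A^{4/7}B^{3/7}$, $\frac{|\mu_j|^9}{\sigma_j^7}=A^{9/14}B^{5/14}$, and $\frac{|\mu_j|^7}{\sigma_j^5}=A^{1/2}B^{1/2}$, so Young's inequality bounds each of these by $A+B$. Hence $M_b\le C(A+B)$, $M_\epsilon\le C(A+B)$, and, expanding $M_c^2=A+2\frac{|\mu_j|^7}{\sigma_j^5}+B$, also $M_c^2\le C(A+B)$; raising to the power $p/3$ shows that $M_b^{p/3}$, $M_c^{2p/3}=(M_c^2)^{p/3}$, and $M_\epsilon^{p/3}$ are each at most $C(A+B)^{p/3}=C\bigl(\frac{|\mu_j|^{14}}{\sigma_j^{12}}+\sigma_j^2\bigr)^{p/3}$, which is \eqref{pbad}. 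The only step requiring care is this final reconciliation: one must check that the three scales $M_b$, $M_c^2$, and $M_\epsilon$ are geometric means of $\frac{|\mu_j|^{14}}{\sigma_j^{12}}$ and $\sigma_j^2$ — equivalently, that their exponent pairs lie on the segment joining those of $\frac{|\mu_j|^{14}}{\sigma_j^{12}}$ and $\sigma_j^2$ — so that Young's inequality collapses them to a common bound; the probabilistic half of the argument is a routine application of Chebyshev's inequality together with the moment estimates already established.
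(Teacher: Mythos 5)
Your proposal is correct and follows essentially the same route as the paper's proof: a union bound over the three defining inequalities, Chebyshev's inequality with thresholds $\|Y\|_p^{2/3}$ (for $b_j$ and $\hat\epsilon_i$) and $\|Y\|_p^{1/3}$ (for $c_j-c$) combined with the moment bounds of Propositions \ref{abcest} and \ref{511}, and a final application of Young's inequality to collapse the three scales to $\left(\frac{|\mu_j|^{14}}{\sigma_j^{12}}+\sigma_j^2\right)^{p/3}$. Your explicit exponent bookkeeping in the last step (writing each term as a weighted geometric mean of $\frac{|\mu_j|^{14}}{\sigma_j^{12}}$ and $\sigma_j^2$) simply makes precise what the paper asserts without detail.
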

\begin{proof}
We observe that	$\Xi_j$ is the intersection of the sets determined by \eqref{bestpoint}, \eqref{cestpoint}, and \eqref{epsiestpoint}. Hence, 
the probability of $\Xi_j^c$ is bounded by the sum of the probabilities of the complements of each of these sets. To obtain \eqref{pbad}, it suffices to show that each of these probabilities is
$O\left(\left(\frac{|\mu_j|^{14}}{\sigma_j^{12}}+\sigma_j^2\right)^{p/3}\right)$.
	
According to Chebychev's inequality, for any $p$-integrable random variable $Y$, we have
\[
P(|Y|> \gamma)\leq \frac{\|Y\|_p^p}{\gamma^p}. 
\]	
We observe that 
by choosing $\gamma=C\|Y\|_p^{\theta}$, we obtain
\[
P(|Y|> \gamma)\leq  C\|Y\|_p^{p(1-\theta)}.
\]
For the complements of the sets in \eqref{bestpoint} and  \eqref{epsiestpoint}, we use $\theta=\frac 2 3$ and for the complement of the set in \eqref{cestpoint}, we use $\theta=\frac 1 3$. 
The corresponding norms are given in Proposition \ref{abcest}
and in Proposition \eqref{511}. 
Finally, we note that by Young's inequality
\[
\left(\frac{|\mu_j|^8}{\sigma_j^6}+\sigma_j^2\right)^{p/3}, \ \ \left(\frac{|\mu_j|^7}{\sigma_j^6}+\sigma_j\right)^{2p/3}, 
\ \text{and }  \ \left( \frac{|\mu_j|^9}{\sigma_j^7}+\sigma_j^2\right)^{p/3}
\leq C\left( \frac{|\mu_j|^{14}}{\sigma_j^{12}}+\sigma_j^2\right)^{p/3}.
\]
Accordingly, \eqref{pbad} follows. 

The choice of 
$\theta=\frac 2 3$ is somewhat arbitrary; 
any $\frac 1 2 <\theta<1$ would result in similar estimates. 
The requirement $\theta>\frac 1 2$ stems from the need for $\frac{b_j}{\sigma_j}$ to vanish as $\sigma_j\to 0$ later in our proof, which is only valid when $\theta>\frac 1 2$.
The choice of $\theta=\frac{1}{3}$ for \eqref{cestpoint} is not critical, but it simplifies the expression in \eqref{pbad}
as it produces a term of the same order as the previous ones. 
\end{proof}

Now, we set
\begin{equation}
	\label{barxi}\bar \Xi=\cap_{j=1}^\infty\Xi_j. 
\end{equation} 
$\bar \Xi$ 
contains all events that 
satisfy the pointwise bounds from Proposition \ref{p48} for all $j$. 
%
We establish convergence of $(\mu_j,\sigma_j)$ in
$\bar \Xi$ and then show that $\bar \Xi^c$ has a small
probability.

To prove Theorem \ref{mainteo}, we introduce the following sequence. Let 
$\Theta$ be given by \eqref{Theta} and
$C_K$ be the constant in \eqref{ck}
determined in the proof of Proposition \ref{p410}, and 
we assume that $\delta$ is small enough so that 
\begin{equation}
\label{kn}
K_{j+1}=\frac{K}{2}+\sum_{k=0}^{j}C_K \delta^{1/3}  (\Theta^{1/3})^k\leq K.
\end{equation}
Then, in Proposition \ref{p410}, we proceed by induction and show that 
 on $\cap_{k=0}^j\Xi_k$, we have
\begin{equation}
	\label{induction}
\frac{|\mu_j|}{\sigma_j}\leq K_j, \quad  \sigma_j\leq \delta\Theta^{j}
\end{equation}
for all $j$.
\begin{remark}
\label{r510}
Note that from \eqref{induction}, we have $|\mu_j|\leq 
K\Theta^{j}\delta$. Furthermore, on $\bar \Xi$, if \eqref{induction} holds, we have  
\[
\hat \epsilon_i, |b_j|\leq C \sigma_j^{4/3},
\]
and $| c_j-c|\leq C \sigma_j^{1/3}$ (which implies $\frac c 2 \leq c_j\leq \frac 3 2 c$ since for 
$\delta$ small $C \sigma_j^{1/3}\leq \frac c 2$), 
 using \eqref{bestpoint}, \eqref{cestpoint}, and \ref{epsiestpoint}. Finally, from \eqref{pbad}, we also have
\[
P(\Xi_j^c| (\mu_j, \sigma_j) )\leq C\left( (1+K^{14})\sigma_j^2\right)^{p/3}\leq  C \delta^r
\Theta^{rj}, 
\]
for $r=\frac{2p}{3}$. 
\end{remark}

The contraction of the algorithm in each step is determined by the factor $e^{-2 c_j T_j}$. The next proposition examines this term.

\begin{pro}
\label{513}
Suppose $f$ satisfies Assumptions \ref{a2} and \ref{chyp}.
Suppose $\mu_j$ and $\sigma_j$ satisfy \eqref{induction} and $\delta$ is small enough. 
Let 
\begin{equation}
\label{Theta}
\Theta=\max\left\{1-\frac{\upsilon_1}{4K}, 1-\upsilon_2\right\}.
\end{equation}
Then, 
on $\cap_{k=0}^j\Xi_k$,
\begin{equation}
\label{expbound}
e^{-2 c_j T_j}\leq \Theta. 
\end{equation}
Moreover,
\begin{equation}
\label{onomorebound}
e^{-2 c_j T_j}\geq 1-\upsilon_2
\end{equation}
\end{pro}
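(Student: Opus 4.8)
The plan is to read off all the a priori bounds from Remark~\ref{r510}: on $\cap_{k=0}^{j}\Xi_k$, under \eqref{induction} and for $\delta$ small, one has $|b_j|\le C\sigma_j^{4/3}$, $\hat\epsilon_i\le C\sigma_j^{4/3}$, $\tfrac c2\le c_j\le\tfrac{3c}2$, and $|\mu_j|\le K\sigma_j$ (these are pointwise-at-iteration-$j$ statements, so $\cap_{k=0}^j\Xi_k\subseteq\Xi_j$ suffices). In particular $c_j>0$, so $\sgn(c_j)=1$ and all of $T_\mu$, $T_\sigma$, $T_{\hat\epsilon_1}$, $T_{\hat\epsilon_2}$ are given by their $c_j\neq0$ branches in \eqref{tc1}, \eqref{tc2}, \eqref{tei}. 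Since $t\mapsto e^{-2c_jt}$ is strictly decreasing and $T_j=\min\{T_\mu,T_\sigma,T_{\hat\epsilon_1},T_{\hat\epsilon_2}\}$ by \eqref{timestep},
\[
e^{-2c_jT_j}=\max\bigl\{e^{-2c_jT_\mu},\,e^{-2c_jT_\sigma},\,e^{-2c_jT_{\hat\epsilon_1}},\,e^{-2c_jT_{\hat\epsilon_2}}\bigr\},
\]
with the convention $e^{-\infty}=0$. Both claimed inequalities then reduce to controlling these four factors.

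For the lower bound \eqref{onomorebound} I would use only $T_j\le T_\sigma$. Since $c_j>0$ and, by the standing requirement $1+\gamma_2+\upsilon_2\le\sqrt2$ of Section~\ref{timestepsec}, $0<\upsilon_2<1$, the ``otherwise'' branch of \eqref{tc2} is inactive, so $T_\sigma=\frac{-\log(1-\upsilon_2)}{2c_j}$ and hence $e^{-2c_jT_\sigma}=1-\upsilon_2$. Monotonicity of $t\mapsto e^{-2c_jt}$ together with $T_j\le T_\sigma$ gives $e^{-2c_jT_j}\ge1-\upsilon_2$.

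For the upper bound \eqref{expbound} I would bound each of the four factors by $\Theta$. The $T_\sigma$ factor equals $1-\upsilon_2\le\Theta$ directly from \eqref{Theta}. For $T_{\hat\epsilon_i}$: choosing $\delta$ small enough that $C\sigma_j^{1/3}\le c\gamma_i$ (valid since $\sigma_j\le\delta$), we get $\hat\epsilon_i\le C\sigma_j^{4/3}\le c\gamma_i\sigma_j\le2c_j\gamma_i\sigma_j$ using $2c_j\ge c$; hence $1-2c_j\gamma_i\sigma_j/\hat\epsilon_i\le0$ and \eqref{tei} yields $T_{\hat\epsilon_i}=+\infty$, so $e^{-2c_jT_{\hat\epsilon_i}}=0\le\Theta$. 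For $T_\mu$: if the ``otherwise'' branch of \eqref{tc1} applies then $T_\mu=+\infty$ and the factor is $0$; otherwise \eqref{tc1} gives $e^{-2c_jT_\mu}=1-\frac{2c_j\sigma_j}{|b_j+2c_j\mu_j|}\upsilon_1$, and I would bound the denominator from above by $|b_j+2c_j\mu_j|\le|b_j|+2c_j|\mu_j|\le C\sigma_j^{4/3}+3cK\sigma_j\le4cK\sigma_j$ for $\delta$ small (using $|\mu_j|\le K\sigma_j$, $c_j\le\tfrac32c$). Together with $2c_j\ge c$ this gives $\frac{2c_j\sigma_j}{|b_j+2c_j\mu_j|}\ge\frac1{4K}$, so $e^{-2c_jT_\mu}\le1-\frac{\upsilon_1}{4K}\le\Theta$. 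Taking the maximum of the four factors proves \eqref{expbound}.

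I do not expect a genuine obstacle: the statement is a direct consequence of the explicit time-step formulas of Section~\ref{timestepsec} and the a priori bounds of Remark~\ref{r510}. The only care needed is bookkeeping --- one must fix $\delta$ below a single explicit constant so that the finitely many smallness thresholds invoked above ($C\sigma_j^{1/3}\le c\gamma_i$, $C\sigma_j^{4/3}\le cK\sigma_j$, and the thresholds behind Remark~\ref{r510}) all hold for every $j$, which is immediate since each follows once $\sigma_j\le\delta$ is small enough.
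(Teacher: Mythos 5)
Your proof is correct and follows essentially the same route as the paper's: show $T_{\hat\epsilon_i}=+\infty$ from $\hat\epsilon_i\le C\sigma_j^{4/3}$, read off $e^{-2c_jT_\sigma}=1-\upsilon_2$ from \eqref{tc2}, and bound $e^{-2c_jT_\mu}\le 1-\upsilon_1/(4K)$ via $|b_j/\sigma_j+2c_j\mu_j/\sigma_j|\le 4cK$. Your write-up is somewhat more explicit about the degenerate branches and the max/min bookkeeping, but the mathematical content is identical.
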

\begin{proof}
On $\cap_{k=0}^j\Xi_k$, the argument of the logarithm in the expression defining $T_{\hat \epsilon_i}$, 
\eqref{tei}, is
\begin{equation*}
1-\frac{2 c_j \gamma_i\sigma_j}{\hat \epsilon_i}\leq 
1-\frac{c \gamma_i}{C} \sigma_j^{-1/3}<0, 
\end{equation*}
for all $j$ if $\delta$ is small enough, considering Remark \ref{r510}. Thus, $T_{\hat \epsilon_i}=+\infty$. Consequently, only $T_\mu$ and $T_\sigma$ constrain the time step,  
thus, according to \eqref{timestep}, $T_j=\min\{T_{\mu}, T_{\sigma}\}$.

Moreover, according to \eqref{tc2},
\[
T_{\sigma}=\frac{-\log(1-\upsilon_2)}{2c_j}.
\]	
Thus, if $T_j=T_\sigma$, we have
\[
e^{-2 c_j T_j}=1-\upsilon_2.
\]
Hence, because $T_j=\min\{T_{\mu}, T_{\sigma}\}$, we obtain \eqref{onomorebound}.

Next, by examining \eqref{tc1}, we have	
\[
T_\mu=-\frac{1}{2c_j}\log\left(1-\frac{2 c_j\sigma_j}{|b_j+2 c_j\mu_j|}\upsilon_1\right).
\]	
We can rewrite the argument of the logarithm as 
\begin{equation}
	\label{logarg}
1- \frac{2 c_j \sigma_j}{|b_j+2 c_j \mu_j|}\upsilon_1=
1- \frac{2 c_j }{\left|\frac{b_j}{\sigma_j}+2 c_j \frac{\mu_j}{\sigma_j}\right|}\upsilon_1.
\end{equation}
Now, we note that $c_j\leq \frac 3 2 c$ and that, by Remark \ref{r510},   $\frac{b_j}{\sigma_j}\leq C\sigma_j^{1/3}$.
Thus,
we have the following estimate for the absolute value in the denominator:
\[
\left|\frac{b_j}{\sigma_j}+2 c_j \frac{\mu_j}{\sigma_j}\right|\leq C\sigma_j^{1/3} +3 c K
\leq 4 c K, 
\]
if $\delta$ is small enough.
Therefore, 
\[
\frac{2 c_j }{\left|\frac{b_j}{\sigma_j}+2 c_j \frac{\mu_j}{\sigma_j}\right|}\geq \frac{1}{4K}.
\]
Thus, combining the preceding inequality with \eqref{logarg}, we get 
\[
T_{\mu}\geq -\frac{1}{2c_j}\log\left(1-\frac{\upsilon_1}{4K}\right).
\]
Accordingly, we obtain the upper bound in \eqref{expbound}.
\end{proof}

Now, we prove exponential convergence in $\bar \Xi$, that is \eqref{induction}. 
\begin{pro}
\label{p410}
Suppose $f$ satisfies Assumptions \ref{a2} and \ref{chyp}.
Then, on  $\cap_{k=0}^j\Xi_k$, we have
\[
\sigma_{j+1}\leq \Theta \sigma_j
\]
and
\[
\frac{|\mu_{j+1|}}{\sigma_{j+1}}\leq K_{j+1}, 
\]
where $K_{j+1}$ is given by \eqref{kn}.
\end{pro}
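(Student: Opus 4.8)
The plan is to establish \eqref{induction} for all $j$ by induction on $j$, reading off $\sigma_{j+1}\leq\Theta\sigma_j$ and $|\mu_{j+1}|/\sigma_{j+1}\leq K_{j+1}$ as the substance of the inductive step. The base case $j=0$ is immediate from the hypothesis on the initial condition: $|\mu_0|/\sigma_0\leq K/2=K_0$ and $\sigma_0\leq\delta=\delta\Theta^0$, so \eqref{induction} holds at level $0$. For the inductive step I would assume \eqref{induction} holds at level $j$ on $\cap_{k=0}^{j}\Xi_k$; then $(\mu_j,\sigma_j)$ satisfies the hypotheses of Proposition \ref{513}, so on $\cap_{k=0}^j\Xi_k$ one has $1-\upsilon_2\leq e^{-2c_jT_j}\leq\Theta<1$.

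The $\sigma$-component is then immediate: since $\sigma_{j+1}=e^{-2c_jT_j}\sigma_j$ by \eqref{quadgfsoli}, the bound $e^{-2c_jT_j}\leq\Theta$ gives $\sigma_{j+1}\leq\Theta\sigma_j$, and hence $\sigma_{j+1}\leq\Theta\,\delta\Theta^{j}=\delta\Theta^{j+1}$, which is the $\sigma$-half of \eqref{induction} at level $j+1$.

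For the $\mu$-component I would divide the first identity in \eqref{quadgfsoli} by $\sigma_{j+1}=e^{-2c_jT_j}\sigma_j$ to obtain
\[
\frac{\mu_{j+1}}{\sigma_{j+1}}=\frac{\mu_j}{\sigma_j}+\frac{b_j}{\sigma_j}\,\frac{e^{-2c_jT_j}-1}{2c_j\,e^{-2c_jT_j}},
\]
so that
\[
\left|\frac{\mu_{j+1}}{\sigma_{j+1}}\right|\leq\left|\frac{\mu_j}{\sigma_j}\right|+\frac{|b_j|}{\sigma_j}\,\frac{1-e^{-2c_jT_j}}{2c_j\,e^{-2c_jT_j}}.
\]
Now I would insert the pointwise bounds collected in Remark \ref{r510}, which are in force precisely because \eqref{induction} holds at level $j$ (in particular because $|\mu_j|\leq K\sigma_j$): namely $|b_j|/\sigma_j\leq C\sigma_j^{1/3}$ and $c_j\geq c/2$, together with $0\leq 1-e^{-2c_jT_j}\leq\upsilon_2$ and $e^{-2c_jT_j}\geq 1-\upsilon_2$ from Proposition \ref{513}. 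These combine into an estimate of the form
\begin{equation}
\label{ck}
\left|\frac{\mu_{j+1}}{\sigma_{j+1}}\right|\leq\left|\frac{\mu_j}{\sigma_j}\right|+C_K\,\sigma_j^{1/3},
\end{equation}
where $C_K$ depends only on the data (it is built from $c$, $\upsilon_2$, $K$ and the constant in \eqref{bestpoint}), and in particular is independent of $j$. Using $\sigma_j\leq\delta\Theta^j$ so that $\sigma_j^{1/3}\leq\delta^{1/3}(\Theta^{1/3})^j$, and unrolling the recursion defining $K_j$ in \eqref{kn}, this gives
\[
\left|\frac{\mu_{j+1}}{\sigma_{j+1}}\right|\leq K_j+C_K\delta^{1/3}(\Theta^{1/3})^j=\frac{K}{2}+\sum_{k=0}^{j}C_K\delta^{1/3}(\Theta^{1/3})^k=K_{j+1},
\]
and the standing requirement \eqref{kn} guarantees $K_{j+1}\leq K$ (valid for $\delta$ small since $\Theta^{1/3}<1$ makes the geometric series converge). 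This closes the induction and yields both claimed inequalities.

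The main obstacle is the circular-looking dependence structure: the Remark \ref{r510} bounds on $b_j$, $c_j$, $\hat\epsilon_i$, and through them the conclusion $e^{-2c_jT_j}\leq\Theta$ of Proposition \ref{513}, are only available once \eqref{induction} is known at level $j$ (and, crucially, once $|\mu_j|\leq K\sigma_j$ is known); so the induction must be organized so that the level-$j$ hypothesis is fully consumed before the level-$(j+1)$ estimates are derived. Beyond that, the only quantitative care required is to verify that the constant $C_K$ emerging in \eqref{ck} does not degrade with $j$, and that the accumulation $\sum_{k\geq0}C_K\delta^{1/3}(\Theta^{1/3})^k$ stays below $K/2$ for $\delta$ small — which is exactly what \eqref{kn} encodes.
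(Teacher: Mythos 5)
Your proposal is correct and follows essentially the same route as the paper: induction on $j$, the $\sigma$-bound read off directly from \eqref{quadgfsoli} and \eqref{expbound}, and the $\mu$-bound obtained by dividing \eqref{quadgfsoli} by $\sigma_{j+1}=e^{-2c_jT_j}\sigma_j$ and controlling the term $\frac{|b_j|}{2c_j\sigma_j}\,(e^{2c_jT_j}-1)$ via \eqref{bestpoint}, $c_j\geq c/2$, and \eqref{onomorebound}. The only difference is presentational — you spell out the base case and the logical ordering (level-$j$ hypothesis before Proposition \ref{513} and the Remark \ref{r510} bounds), which the paper leaves implicit.
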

\begin{proof}
By induction, we assume that $\sigma_j\leq \delta \Theta^j$ and $\frac{|\mu_{j|}}{\sigma_{j}}\leq K_{j}<K$.	

The first statement is trivial by using \eqref{expbound} in the iterative formula for $\sigma_j$, 
 $\sigma_{j+1}=e^{-2 c_j T_j}\sigma_j$ from  \eqref{quadgfsoli}.
Regarding $\mu_j$, we  have
  \[
  \mu_{j+1}=
  \frac{ b_j(e^{-2 c_j T_j}- 1)}{2 c_j}+ \mu_je^{-2 c_j T_j}.
  \]
Using again  \eqref{quadgfsoli}, we obtain
\[
\left|\frac{\mu_{j+1}}{\sigma_{j+1}}\right|\leq 
\left|\frac{ b_j(1-  e^{2 c_j T_j})}{2 c_j\sigma_j }\right|+ \left|\frac{\mu_j}{\sigma_j}\right|.
\]
We note that 
\[
\frac{|b_j|}{2 c_j \sigma_j}\leq 
C\frac 1 {\sigma_j}\left(
\frac{\mu_j^{8}}{\sigma_j^{6}}+\sigma_j^{2}
\right)^{2/3}
\leq C (K^8+1)^{2/3}\sigma_j^{1/3}\leq C (K^8+1)^{2/3}\delta^{1/3}\Theta^{j/3},
\]
using \eqref{bestpoint}, the induction hypothesis, and that $K_j\leq K$. Furthermore, from \eqref{onomorebound}, 
we have 
\[
e^{2 c_j T_j}\leq \frac{1}{1-\upsilon_2}.
\]
Therefore, we obtain
\[
\left|\frac{\mu_{j+1}}{\sigma_{j+1}}\right|\leq K_j+C_K \delta^{1/3} \Theta^{j/3}=K_{j+1}, 
\]
where 
\begin{equation}
\label{ck}
C_K=C (K^8+1)^{2/3}.\qedhere
\end{equation}
\end{proof}

Next, we establish bounds on the probability of $\bar \Xi^c$.
\begin{pro}
\label{p411}
Suppose $f$ satisfies Assumptions \ref{a2} and \ref{chyp}.
Let $p>1$ and suppose that  $n>6p+2$ and that $\delta$ is small enough. 
Then
\[
P(\bar \Xi^c)\leq C\delta^r,
\]
 where $r=\frac{2p}3$, for some constant $C$ depending on $K$.
\end{pro}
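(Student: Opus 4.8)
The plan is a first–exit–time decomposition of $\bar\Xi^c$ combined with the union bound: on the event where the algorithm has stayed on track through iteration $j-1$, the conditional probability that iteration $j$ leaves the good set $\Xi_j$ decays geometrically in $j$, by Proposition \ref{p48} and the estimate recorded in Remark \ref{r510}; summing these geometric bounds yields $P(\bar\Xi^c)\le \bar C\delta^{r}$.

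Concretely, I would first unwind \eqref{barxi} to get $\bar\Xi^c=\bigcup_{j\ge1}\Xi_j^c$, and, writing $\Xi_0$ for the deterministic, full–measure initial–condition event $\{\,|\mu_0|/\sigma_0\le K/2,\ \sigma_0\le\delta\,\}$ of the hypothesis, decompose it according to the first failed index,
\[
\bar\Xi^c=\bigsqcup_{j=1}^{\infty}\Bigl(\Xi_j^c\cap\bigcap_{k=0}^{j-1}\Xi_k\Bigr),\qquad\text{so}\qquad P(\bar\Xi^c)=\sum_{j=1}^{\infty}P\Bigl(\Xi_j^c\cap\bigcap_{k=0}^{j-1}\Xi_k\Bigr).
\]
The reason to include $\Xi_0$ is that $\bigcap_{k=0}^{j-1}\Xi_k$ is precisely the event on which Proposition \ref{p410} guarantees that $(\mu_j,\sigma_j)$ satisfies the induction bounds \eqref{induction}, namely $\sigma_j\le\delta\Theta^{j}$ and $|\mu_j|/\sigma_j\le K_j\le K$.

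To bound a single term, let $\mathcal G_j$ be the $\sigma$–algebra generated by the first $j$ sampling steps, so that $(\mu_j,\sigma_j)$ and $\mathbf 1_{\cap_{k=0}^{j-1}\Xi_k}$ are $\mathcal G_j$–measurable. Using that $P(\Xi_j^c\mid\mathcal G_j)$ is controlled by the pointwise bound \eqref{pbad} of Proposition \ref{p48} evaluated at $(\mu_j,\sigma_j)$, and then invoking \eqref{induction} on $\bigcap_{k=0}^{j-1}\Xi_k$ exactly as in Remark \ref{r510}, I get on that event
\[
P\bigl(\Xi_j^c\mid\mathcal G_j\bigr)\le C\bigl((1+K^{14})\sigma_j^{2}\bigr)^{p/3}\le C\,\delta^{r}\Theta^{r j},\qquad r=\tfrac{2p}{3}.
\]
Hence, by the tower property,
\[
P\Bigl(\Xi_j^c\cap\bigcap_{k=0}^{j-1}\Xi_k\Bigr)=E\Bigl[\mathbf 1_{\cap_{k=0}^{j-1}\Xi_k}\,P(\Xi_j^c\mid\mathcal G_j)\Bigr]\le C\,\delta^{r}\Theta^{r j}\,P\Bigl(\bigcap_{k=0}^{j-1}\Xi_k\Bigr)\le C\,\delta^{r}\Theta^{r j}.
\]
Since $0<\Theta<1$ and $r>0$, summing the geometric series gives $P(\bar\Xi^c)\le C\delta^{r}\sum_{j\ge1}\Theta^{r j}=C\,\frac{\Theta^{r}}{1-\Theta^{r}}\,\delta^{r}\le\bar C\,\delta^{r}$, with $\bar C$ depending only on the data (and on $K$ through $\Theta$).

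The main obstacle is the conditioning step, i.e.\ justifying that $P(\Xi_j^c\mid\mathcal G_j)$ is indeed dominated by the right–hand side of \eqref{pbad} evaluated at $(\mu_j,\sigma_j)$. This requires that, given $\mathcal G_j$, the iteration–$j$ sample has the conditional law of $n$ iid draws from $\Gamma_{\mu_j,\sigma_j}$, so that the $L^p$ estimates of Propositions \ref{abcest} and \ref{511}, and therefore the Chebyshev bounds assembled in Proposition \ref{p48}, apply conditionally. This is exactly the point at which the inter–iteration dependence introduced by the rejection sampling of Section \ref{rejesec} must be checked to be harmless: the whole argument uses only the correct conditional marginal law of the current sample together with the $\mathcal G_j$–measurability of $\bigcap_{k=0}^{j-1}\Xi_k$, never independence across iterations. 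Once that reduction is in place, the remaining steps are a routine union bound and a geometric sum.
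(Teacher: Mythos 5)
Your proof is correct and follows essentially the same route as the paper: a disjoint first-failure decomposition of $\bar \Xi^c$, the tower property combined with the conditional bound of Proposition \ref{p48} and Remark \ref{r510} on $\cap_{k=0}^{j-1}\Xi_k$, and a geometric sum. The only notable difference is that you condition on the full filtration $\mathcal{G}_j$ and use measurability of $\cap_{k=0}^{j-1}\Xi_k$, whereas the paper enlarges that event to the $(\mu_j,\sigma_j)$-measurable set $\Upsilon_j=\{\sigma_j\leq \delta\Theta^j \wedge |\mu_j|/\sigma_j\leq K\}$ so that only conditioning on $(\mu_j,\sigma_j)$ --- the form in which Proposition \ref{p48} is actually stated --- is required; the conditional-sampling-law caveat you rightly flag is precisely what that substitution sidesteps.
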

\begin{proof}
	Taking into account the definition of $\bar \Xi$ in \eqref{barxi}, 
we decompose $\bar \Xi^c$ as a disjoint union, as follows	
\[
\bar \Xi^c=\cup_j \Xi_j^c=\cup_j\left(\Xi_j^c\cap (\cap_{k=0}^{j-1}\Xi_k)\right).
\]
The advantage of this decomposition is that on $\cap_{k=0}^{j-1}\Xi_k$, 
we have by Proposition \eqref{p410}, 
$\sigma_j\leq \delta \Theta^j $ and $ \frac{|\mu_j|}{\sigma_j}\leq K$.
Thus, denoting by $\Omega$ the underlying probability space, 
\[
 \cap_{k=0}^{j-1}\Xi_k \subset \left\{\omega\in \Omega: \sigma_j\leq \delta \Theta^j \wedge \frac{|\mu_j|}{\sigma_j}\leq K\right\}=\Upsilon_j.
\]
Accordingly, 
\[
P(\Xi_j^c\cap (\cap_{k=0}^{j-1}\Xi_k))\leq 
P(\Xi_j^c\cap \Upsilon_j).
\]
Next, we observe that $\Upsilon_j$ is $(\mu_j, \sigma_j)$-measurable. Hence, 
\begin{align*}
P(\Xi_j^c\cap \Upsilon_j)&=E \left[1_{\Upsilon_j} 1_{\Xi_j^c}\right]\\
	&=E \left[1_{\Upsilon_j} 
	E\left[1_{\Xi_j^c}|(\mu_j, \sigma_j)\right]
	\right]\\
	&\leq C \delta^{r} \Theta^{rj}, 
\end{align*}
%
%
%
 for $r=\frac{2p}3$ using Remark \ref{r510}. 
Thus,
\begin{equation}
	\label{pbad2}
P(\Xi_j^c\cap (\cap_{k=0}^{j-1}\Xi_k))\leq C \delta^{r} \Theta^{rj}. 
\end{equation}	
Consequently,
\[
P(\bar \Xi^c)\leq \sum_j P(\Xi_j^c\cap (\cap_{k=0}^{j-1}\Xi_k))\leq  \sum_jC  \delta^{r} \Theta^{rj}
\leq C \delta^r.\qedhere
\]
\end{proof}	

\subsection{Proof of Theorem \ref{mainteo}}
\label{mainteoproof}

Finally, by gathering the preceding results, we establish the main theorem.
\begin{proof}[Proof of Theorem \ref{mainteo}]
The theorem follows from combining the convergence result in Proposition \ref{p410} with 
the estimate for the probability of $\bar \Xi^c$ in Proposition \ref{p411}.
\end{proof}

\section{Algorithmic improvements}

Here, we discuss several implementation details and algorithmic improvements
for enhancing code performance. In Section \ref{ext},  we explain how $f$ is extended from its original domain to
$\Rr$. 
A few additional improvements
and modifications to the explicit solution of the quadratic flow,  \eqref{quadgfsol}, are discussed in Section \ref{lfi}. Sections \ref{adaptivitysec} and \ref{sparsesec} discuss two strategies to improve performance: adapting the number of sample points or skipping the sampling and quadratic interpolation step.  A detailed discussion of the stopping criteria for the algorithm is presented in Section \ref{stop}. 
There, we discuss termination at interior points and close to the boundary, which must be handled differently.
After the algorithm ends, we implement two strategies to improve the results. 
First, we consider a restart strategy (Section \ref{restartsec}) that involves restarting the algorithm 
if the best point where $f$ was evaluated is not near the terminal value of $\mu$. Second,  we use
a postprocessing stage (Section \ref{postprocesssec}) that takes advantage of the quadratic interpolation to improve the accuracy
of our results.
Finally, because our algorithm uses a relatively small number of function evaluations, it is often 
competitive to run it multiple times. Furthermore, by replacing the initial list $\Lambda_0$ in \eqref{lambda1} with
the final list, $\Lambda_j$, from the previous run of the algorithm, we further reduce 
the number of function evaluations needed for a re-run. 
Thus, using previous function evaluations allows running the algorithm multiple times with only a modest increase in the number of evaluations.
This strategy, which we call boosting,  is discussed in Section \ref{boosting}.

\subsection{Function extension}
\label{ext}

Let the domain of $f$ be the interval $[x_{min}, x_{max}]$.
Our approach uses the gradient flow of \eqref{Fdef}, requiring $f$ to be extended to $\mathbb{R}$ while preserving its minimum. 
For this,  we consider a parameter 
$\nu>0$ and define the extension
\[
\hat f(x) =
\begin{cases}
f(x_{min})+\nu (x_{min}-x)\quad& x<x_{min}, \\
f(x)& x_{min}\leq x\leq x_{max},\\
f(x_{max})+\nu (x-x_{max})\quad& x>x_{max}.
\end{cases}
\]
Accordingly, we have
\[
\min_{x\in \Rr}\hat f(x)=\min_{x_{min}\leq x\leq x_{max}} f(x).  
\]

Outside the interval $[x_{min}, x_{max}]$, $\hat{f}$ has linear growth, ensuring $\hat{f} \in C_p(\mathbb{R})$. Additionally, if $f$ is convex in $[x_{min}, x_{max}]$ and if $\nu > \max(|f'(x_{min})|, |f'(x_{max})|)$, $\hat f$ is convex. However, strict convexity is not preserved by this extension. Although strictly convex extensions can be constructed, our experience indicates that when $f$ is convex with an interior minimum, function evaluations outside the domain are minimal, rendering the choice of extension irrelevant. 

For convenience, we automatically scale the parameter $\nu$ based on the domain length. We fix a parameter $\varpi$ and set 
\[
\nu=\frac{\varpi}{x_{max}-x_{min}}. 
\]

\subsection{Flow}
\label{lfi}

To avoid numerical instabilities due to the exponentials in \eqref{quadgfsol}, we limit the time step to
a maximum value $h_{max}$. 

In degenerate cases, where $f$ is locally flat, i.e., functions that are remarkably well approximated by a constant or linear function,
there is an excellent fit between $q$ and $f$, and the time step is large or unbounded. However, since $c_j$ is zero or near zero, the algorithm only ends when the maximum number of iterations is reached, as there is no contraction in $\sigma_j$. For instance, if $f \equiv 0$, then $b_j = c_j = 0$. In this case, $\mu_j$ and $\sigma_j$ remain unchanged. 
Accordingly, 
when the time step $T_j$ is larger than $h_{max}$ and $c_j\geq 0$,
we contract $\sigma_j$ by an additional factor $0<\vartheta<1$.
That is, whenever
the time step $T_j>h_{max}$, we write
\[
e^{-2 c_j T_j}=e^{-2 c_j h_{max}} e^{-2 c_j (T_j-h_{max})}
\]
and replace the factor $e^{-2 c_j (T_j-h_{max})}$ with a constant $\vartheta$. Accordingly, 
\eqref{quadgfsoli} becomes
\[
\mu_{j+1}=
\frac{ b_j(e^{-2 c_j h_{max}}\vartheta - 1)}{2 c_j}+ \mu_je^{-2 c_j h_{max}}\vartheta, 
\quad 
\sigma_{j+1}=\vartheta e^{-2 c_j h_{max}}\sigma_j. 
\]
Without this additional contraction, the algorithm would continue until reaching the maximum number of iterations, as the primary termination criterion depends on a small $\sigma_j$.
The value of $\vartheta$ is chosen close to $1$ to expedite the algorithm's termination for degenerate problems without substantially changing the iterations in all other cases.

A final enhancement addresses cases where solving \eqref{quadgfsol} yields a value $\mu_{j+1}$ outside the domain of $f$. In such instances, we project $\mu_{j+1}$ onto the nearest boundary point and contract $\sigma_{j+1}$ by a factor of $\vartheta$. Without this contraction, the flow might repeatedly push $\mu_j$ outside the domain without substantially contracting $\sigma_j$. This occurs at boundary points where the quadratic interpolation is suboptimal due to the extension's lack of smoothness. As a result, the time step may become exceedingly small, leading to $\mu_j$ being continuously pushed outside the domain and returned to the same state without any change in $\sigma_j$, causing numerous iterations.

\subsection{Adaptivity in the sample size}
\label{adaptivitysec}
In this section, we discuss adapting the sample size, $n_j$, to decrease function evaluations while maintaining the accuracy of the gradient flow approximation. 
In the choice of the time step in \eqref{timestep}, the expressions for $T_\mu$ and $T_\sigma$
do not directly depend on the number of sample points, but rather indirectly through the coefficients $a_j$, $b_j$, and $c_j$. 
In contrast, the expressions for $T_{\epsilon_i}$ depend on our estimate 
for \eqref{epsiloni}. 
The estimate for $R$ in  \eqref{epsiloni} converges to the limit integral in \eqref{e0} as $n_j\to \infty$, but may fluctuate with $n_j$ without any monotonic behavior. Based on expressions \eqref{q1} and \eqref{q2},  our estimate for $Q_i$ remains independent of $n_j$. 
 Nevertheless, the explicit dependence on $n_j$ in \eqref{betai} shows that the error 
 decreases as $n_j$ increases. Consequently, our estimate for $T_{\epsilon_i}$ grows with increasing $n_j$.

When $\min(T_{\epsilon_1}, T_{\epsilon_2}) > \min(T_\mu, T_\sigma)$, increasing the number of sample points will not affect the time step, as it is determined by either $T_\mu$ or $T_\sigma$. On the other hand, when $\min(T_{\epsilon_1}, T_{\epsilon_2}) < \min(T_\mu, T_\sigma)$, increasing the number of sample points will lead to an increased time step. Consequently, instead of keeping the number of points fixed, we employ a greedy strategy for selecting the number of points at each iteration, starting with an initial sample of $n_0$ points.
Then, we consider 
two integer numbers $n_{min}$ and $n_{max}$, representing  a sample's minimum and maximum size. At  iteration $j$, given the time $T_j$ and the corresponding times, $T_{\epsilon_1}, T_{\epsilon_2}, T_\mu, T_\sigma$, we select $n_{j+1}$ as follows.
We set $n_{j+1}=n_{min}$ if 
$\min(T_{\epsilon_1}, T_{\epsilon_2})>\min(T_\mu, T_\sigma)$
and set $n_{j+1}=n_{max}$ if $\min(T_{\epsilon_1}, T_{\epsilon_2})\leq \min(T_\mu, T_\sigma)$.

\subsection{Sparse  sampling}
\label{sparsesec}

To reduce the number of function evaluations, we employ a strategy of skipping sampling and least squares approximation at certain iterations. This approach is used when the fit between $q_j$ and $f$ at iteration $j$ yields a significantly smaller error bound than \eqref{gammabound} permits.
In this case, the only limiting factor for a larger time step is the saturation of either \eqref{mb} or \eqref{sb}, which were imposed to ensure the validity of  $Q_i$ approximations in \eqref{q1} and \eqref{q2} required for error estimates on the current iteration. 
As a result, we can use $q_j$ without  sampling in iteration $j+1$ and obtain new error estimates based on the new point $(\mu_{j+1}, \sigma_{j+1})$. It is important to note that this is feasible because the error estimates in Section \ref{mee} consider the possibility of the sampling Gaussian being distinct from the one defined by $(\mu_{j+1}, \sigma_{j+1})$.

The process just described can be 
iterated as long as the original bounds in \eqref{gammabound} are not exceeded, as we explain now. If  \eqref{gammabound} is not saturated at the end of one iteration, we do not  sample
 to calculate $q_{j+1}$. Instead, we 
 use the same quadratic interpolation but require a smaller error at the end of the following iteration.
Suppose that $q_j=a_j+b_j x +c_j x^2$ with  $c_j\neq 0$ (the case $c_j=0$ is similar).
 After a time step $T_j$, by \eqref{error}, we have
\[
|\mu^f(T_j)-\mu^q(T_j)|\leq \frac{\epsilon_1 (1-e^{-2 c_j T_j} )}{2 c_j\sigma_j}\sigma_j,\qquad 
|\sigma^f(T_j)-\sigma^q(T_j)|\leq \frac{\epsilon_2(1-e^{-2 c_j T_j} )}{2 c_j\sigma_j} \sigma_j.
\]
Since \eqref{gammabound} allows for a larger error estimate, we can repeat a further time step
as long as 
\begin{align*}
&|\mu^f(T_{j+1})-\mu^q(T_{j+1})|\leq \left(\gamma_1-\frac{\epsilon_1  (1-e^{-2 c_j T_j} )}{2 c_j\sigma_j}\right)\sigma_j,\\
&|\sigma^f(T_{j+1})-\sigma^q(T_{j+1})|\leq \left(\gamma_2-\frac{\epsilon_2 (1-e^{-2 c_j T_j} )}{2 c_j\sigma_j}\right) \sigma_j.
\end{align*}
We can interpret the right-hand side as an error budget that we can still spend without additional sampling. 

If $\sigma_{j+1}\leq \sigma_j$, we can do one (or more) iteration step without  sampling with
$\gamma_i$ replaced by
\[
\tilde \gamma_i=
\begin{cases}
\gamma_i-\frac{\epsilon_i  (1-e^{-2 c_j T_j} )}{2 c_j \sigma_j}\qquad &c_j\neq 0\\
\gamma_i-\frac{\epsilon_i T_j}{\sigma_j}& c_j=0.
\end{cases}
\]
If $\sigma_{j+1}> \sigma_j$, we always  sample. The reason for doing this is that, in this case, $c_j<0$.
Accordingly, the errors are amplified by the quadratic flow rather than dampened.

\subsection{Stopping criteria}
\label{stop}

A key component of our algorithm implementation is the stopping criterion. At iteration $j$, we must decide whether to stop or continue with the iterations. Our stopping criterion is a 
conjunction of several conditions that reflect that certain necessary optimality conditions are met
within specified tolerances.  

We first observe that if $F$ is continuous up to $\sigma=0$, 
$F$ has a global minimum at some point $(\bar \mu, 0)$ (see Proposition \ref{relaxpro}). Moreover, 
no point $(\mu, \sigma)$ with $\sigma>0$ can be a global minimum for $F$ since
$F(\mu, \sigma)>\min f$ for all $\sigma>0$. 
Thus, our first stopping criterion concerns $\sigma_j$ being small enough. For this, we define a target standard deviation,   $\sigma_{target}>0$. Our first stopping condition is met if $\sigma_j< \sigma_{target}$. 

In addition to this criterion, we require additional conditions to be met. 
These differ depending on the proximity of $\mu_j$ to the boundary.
 For a fixed $\kappa>0$ (for example, $\kappa=1$), we say that $\mu_j$ is far from the boundary if the distance from $\mu_j$ to the boundary is larger than $\kappa \sigma_j$; conversely, 
we say that   $\mu_j$ is close to the boundary if the preceding condition is not met. 

When $\mu_j$ is away from the boundary and $\sigma_j$ small, the standard deviation of the sample of $f$,
$\sigma_f$, is of the order of
$|f'(\mu_j)|\sigma_j$. At a minimum point of $f$, $\bar \mu$, we have $f'(\bar \mu)=0$.
This second stopping criterion attempts to detect critical points of $f$. 
Accordingly, we fix a number $\delta_f>0$, small, and
we stop once $\sigma_f\leq \delta_f$. 

To summarize, for points far from the boundary, the algorithm stops if, simultaneously,
\begin{itemize}
\item $ \sigma_j\leq \sigma_{target}$
\item$\sigma_f\leq \delta_f$.  
\end{itemize}

For $\mu_j$ near the boundary,  we proceed as follows. Let $x_b$ be the closest point to $\mu_j$ on the boundary. Note that we may not have $f'(x_b) = 0$ even if $f$ has a minimum at $x_b$. Therefore, $\sigma_f$ is $O(\sigma_j)$, independent of whether $x_b$ is a minimum or not. 
Thus, we do not impose conditions on $\sigma_f$ for points near the boundary. Instead, we observe that if $x_b$ is a boundary minimum, $f'(x_b) \geq 0$ for $x_b = x_{min}$ and $f'(x_b) \leq 0$ for $x_b = x_{max}$. Thus, $f(x_b)$ should be less than or equal to $f$ evaluated at the sample points. To minimize function evaluations, we select $\tilde x_b$ from the current sample within the domain and closest to $x_b$.
 The algorithm stops if, simultaneously,
\begin{itemize}
	\item $\sigma_j\leq \sigma_{target}$
	\item $f(\tilde x_b)\leq f(x_i)$ for all interior points, $x_i$,  in the current sample inside the domain. 
\end{itemize}

To ensure termination, we set a small number 
$\sigma_{min}>0$ and two integers $N^*_i$ and $N^*_f$ corresponding to the maximum number of iterations and function evaluations, respectively. 
 The algorithm stops when either of the following conditions are met.
\begin{itemize}
	\item $\sigma_j<\sigma_{min}$;
	\item maximum number of iterations is achieved;
	\item maximum number of function evaluations is achieved. 
\end{itemize}
If terminated by a fail-safe criterion, the algorithm returns the best point found throughout the process.

\subsection{Restarting}
\label{restartsec}

Upon termination and before
the postprocessing step (see next section), we check if the best point found, $x_{best}$, is near $\mu_j$, i.e., $|x_{best}-\mu_j|<\sigma_j$. If not, the algorithm restarts with $\Lambda_0$ replaced by the current list $\Lambda_j$, and initial condition $\mu_{j+1}=x_{best}$, and $\sigma_{j+1}=\tilde\sigma/2$, where $\tilde\sigma$ is the standard deviation of the distribution from which $x_{best}$ was sampled. This strategy prevents termination at a non-optimal point if a better position was discovered, which can occur if the gradient flow converges to a local minimum but sampling identifies a point near a better local or global minimum.

\subsection{Postprocessing}
\label{postprocesssec}

The last stage in the algorithm is postprocessing. For points away from the boundary (in the sense of Section \ref{stop}), there are three candidates for the minimizer: the best point found by the algorithm, $x_{best}$, the mean $\mu_j$ in the last iteration, and, if $c_j>0$, the minimizer of $q_j$ (and if this last point falls outside the boundary, the nearest boundary point). The objective function is evaluated at
these points, and  the point corresponding to the best value is returned as output. For points  near the boundary, we consider three candidates: $x_{best}$, the mean $\mu_j$ in the last iteration, and the boundary point closest to $\mu_j$.  As before, the objective function is evaluated at
these points; the one corresponding to the best value is returned as output. 

\subsection{Boosting}
\label{boosting}

The main optimization cycle can be iterated using previous function evaluations as initial data. 
We can re-run the algorithm but retain all samples from previous cycles instead of using
$\Lambda_0=\{\}$ as in \eqref{lambda1}. Due to random sampling, this approach may find different minima without significantly increasing the number of evaluations, as illustrated in Section \ref{deppars}.

\section{Numerical results}
 \label{numresults}

In this section, we
assess our algorithm's performance relative to established
algorithms using a variety of test functions.   These functions represent a broad spectrum of optimization challenges discussed in Section \ref{tf}. Our analysis aims to illustrate the algorithm's robustness and performance in multiple problem contexts. Specifically, we focus on the algorithm's probability of finding a global minimum and its efficiency regarding the number of function evaluations needed. The criteria for these comparisons are detailed in Section \ref{comparasiosec}, while Sections \ref{73} and \ref{parsec} explain the chosen initial conditions and parameters. Section \ref{ares} provides a comprehensive discussion of our results. Furthermore, we investigate the algorithm's performance sensitivity to parameters in Section \ref{deppars}. Finally, in Section \ref{noisy}, we illustrate the algorithm behavior in noisy functions, which are relevant in many applications.

\subsection{Test Functions}
\label{tf}

Our performance evaluation comprises a diverse set of 50 test functions, encompassing various optimization challenges and complexities. Specifically, the tests use the following classes of functions:
\begin{itemize}
	\item 
	convex (uniformly convex functions, convex functions, non-smooth convex functions), 
	\item unimodal non-convex, concave functions (for which minima are at the boundary), 
	\item multimodal non-convex (including bimodal functions and several other well-known test functions), \item highly oscillatory functions (including functions with infinitely many local minima and infinitely many global minima), 
	\item discontinuous functions,
	\item degenerate problems (linear and constant functions).
\end{itemize}
These functions are depicted in Figures \ref{niceconvex}-\ref{discontinuous}. 
All test functions are normalized such that $\max f-\min f=1$.  Thus, the comparison and aggregation of errors (Section \ref{ares}) are meaningful. 
Moreover, in Section \ref{noisy}, 
we also present results for noisy functions, such as the ones that arise in mini-batching problems in machine learning.

 \subsection{Comparative Analysis of Algorithms}
\label{comparasiosec}

We compare our algorithm with global minimization methods representing several classes outlined in Section \ref{prior}: Nelder-Mead, Differential Evolution, Random Search, and Simulated Annealing. Our implementation was developed in Mathematica, and these algorithms are also available as built-in versions within the same platform. The built-in algorithms were executed using default parameters for consistency.

A potential comparison metric between algorithms is the average run time, denoted by $\tau$, measured in seconds. However, run time is not ideal for evaluating algorithms in a manner that is independent of function evaluation cost.  Run time depends on the number of function evaluations, their evaluation time,  and function-independent overhead computation time.
 Our algorithm's efficiency advantage -- reduced function evaluations -- may not be evident in the run time for relatively inexpensive test functions. This is because the overhead in our algorithm is computationally expensive.
  Furthermore, our Mathematica code is compared to the platform's internal implementations of alternative algorithms, which may be more optimized for speed, suggesting run time may not be the most suitable comparison metric. Finally,  evaluation metrics should also consider both function evaluations and the algorithm's success probability.
 Moreover, the number of function evaluations and success probability are implementation-independent metrics, unlike run-time, which could change substantially with the choice of programming language or computer hardware.
   These are discussed in detail below.

For each test function, all algorithms  are run 100 times.
We considered a run successful if the objective function's value at the found candidate minimizer $\tilde x$ is close to the value at a global minimizer $\bar x$.  More precisely, we consider an output a success if $|f(\tilde x)-f(\bar x)|\leq 10^{-3}$.

To assess the quality of a candidate minimizer $\tilde x$, we consider two evaluation metrics: the average optimization gap, $\Delta$, which represents the average error $|f(\tilde x)-f(\bar x)|$ over 100 runs, and the average gap conditional on success, $\Delta_c$, also calculated over 100 runs. The gap value $\Delta$ quantifies the deviation of an algorithm output's objective function from the global minimum on average. Lower $\Delta$ values signify algorithms that either select a global minimizer or a point with an objective function value near the global minimum (i.e., favorable local minima). Conversely, higher $\Delta$ values correspond to algorithms that identify local minima or terminate before converging to the global minimum.
The gap $\Delta_c$ takes into account only instances where successful identification of a global minimum occurred. Consequently, when applying a typical local minimizer algorithm to a multimodal function, a large gap $\Delta$ is expected (as finding a global minimum is less likely), while a small conditional gap $\Delta_c$ is anticipated (given that a global minimum was found, the local algorithm exhibits higher precision). On the other hand, an effective global minimizer should exhibit a small gap $\Delta$, even if its $\Delta_c$ value is not as small as the one achieved using a local minimizer.

The number of function evaluations for each algorithm and function class depends on the termination criterion. To ensure a fair comparison between the algorithms, we selected a termination criterion for our algorithm that yields an average $\Delta_c$ value similar to those of the various comparison algorithms (refer to Section \ref{parstop} for a comprehensive discussion).

Algorithms differ in terms of the number of function evaluations and success rates. To effectively compare algorithms, we need a metric that considers both factors. Let $\Pi$ denote the probability of success for an algorithm when applied to a given class of functions, and let $N_f$ represent the average number of function evaluations per run. We examine two algorithms, indexed by $i=1,2$, characterized by $\Pi^i$ and $N_f^i$.

To simplify, we assume that each algorithm utilizes exactly $N_f^i$ function evaluations and has an independent probability of success, $\Pi^i$, for every run. We propose two experiments. In the first experiment, we execute each algorithm repeatedly and terminate it upon achieving success. 
Elementary probability shows that this scenario's expected number of function evaluations is:
\begin{equation}
\label{ns}
N_s^i=\frac{N_f^i}{\Pi^i}.
\end{equation}
In the second experiment, we execute the first algorithm $N_f^2$ times and the second algorithm $N_f^1$ times, utilizing a total of $N_f^1 N_f^2$ function evaluations for each algorithm. Let $\bar i$ be given by $\bar 1=2$ and $\bar 2 =1$. Algorithm $i$ undergoes a sequence of $N_f^{\bar i}$ Bernoulli trials, each having a probability of success $\Pi^i$. The probability of success for algorithm $i$, which indicates at least one successful trial within the $N_f^{\bar i}$ trials, is given by:
\begin{equation}
\label{suc}
1-(1-\Pi^i)^{N_f^{\bar i}}.
\end{equation}
By defining the efficiency index
\begin{equation}
\label{pi100}
\Pi_{100}^i = 1 - (1-\Pi^i)^{\frac{100}{N_f^i}},
\end{equation}
we can rewrite the probability of success,  \eqref{suc}, as
\[
1-(1-\Pi_{100}^i)^{N_f^1 N_f^2/100}.
\]
The index $\Pi_{100}^i$ represents a synthetic probability of success per 100 function evaluations. Therefore, if we conduct multiple independent trials of algorithm $i$, resulting in $100m$ function evaluations, the probability of achieving at least one success is given by $1-(1-\Pi_{100}^i)^m$; in this experiment $m=N_f^1 N_f^2$.

To compare algorithms, we use both metrics, $N_s$ and $\Pi_{100}$, as defined by equations \eqref{ns} and \eqref{pi100}, respectively. The $\Pi_{100}$ 
metric is suitable for problems with a fixed budget of function evaluations. Conversely, 
the $N_s$ metric is particularly 
relevant when the number of function evaluations is not predetermined.
%
%
%

\subsection{Initial Conditions}
\label{73}

The algorithm starts with a user-defined value $\mu_0$ and $\sigma_0$. If these are not provided, $\mu_0$ is chosen randomly in $[x_{min}, x_{max}]$ and $\sigma_0=x_{max}-x_{min}$.
In the initial step, $n_0$  samples are taken where $n_0$ is either user provided or takes the default value $n_0=10$. 
Without adaptivity, the number of sample points per iteration is always $n_0$. 
By Theorem \ref{mainteo}, for local convergence, we need a number of points larger than $6p+2$, where $p>1$. This number $n_0$ satisfies this condition for $p$ close to $1$. The smallest number of points required for the Theorem to hold would be $n_0=9$. 

\subsection{Parameters}
\label{parsec}

In this section, we present the chosen values for each parameter and provide justifications for these selections. These values are organized on a section-by-section basis and can be found in Table \ref{defaults0}. By default, the proposed algorithm uses rejection sampling (Section \ref{rejesec}), adaptivity (Section \ref{adaptivitysec}), sparse  sampling (Section \ref{sparsesec}), restarting (Section \ref{restartsec}), and postprocessing (Section \ref{postprocesssec}).

\subsubsection{Parameters in Section \ref{rejesec}}
The rejection sampling parameter $p$ should be set below $1$ to prevent excessive dependence between consecutive samples. In our experiments, a value of $p=0.75$ has proven reasonable, as it strikes a balance between minimizing the number of function evaluations and ensuring sufficient renewal of the sample points.

\subsubsection{Parameters in Section  \ref{mee}}
The parameter $m$, determining the confidence interval for the estimator of $\beta_i$, \eqref{bi}, is set to $m=1$.

\subsubsection{Parameters in Section \ref{timestepsec}}
The time stepping parameters,  $\gamma_i$ and $\upsilon_i$, 
	were chosen close to their maximal values as this reduces the number of iterations without decreasing performance illustrated in Section \ref{deppars}.

\subsubsection{Parameters in Section \ref{ext}} 
The coefficient $\varpi$ associated with function extension beyond the domain has minimal impact on most simulations, as function evaluation outside the domain is rare. In our experiments, we set $\varpi=10$ and observed that performance  does not change with 
alternative reasonable values for $\varpi$.

\subsubsection{Parameters in Section \ref{lfi}} 
For quadratic and linear functions, where exact interpolation leads to unbounded time steps, or near the end of the algorithm where smooth functions exhibit a high-quality quadratic approximation near a minimum, the time step is $h_{max}$. In most other cases, the time step determined by the discussion in Section \ref{timestepsec} is significantly smaller than $h_{max}$. As such, our choice of $h_{max}$ and the contraction parameter $\vartheta$ was guided to enhance convergence performance for quadratic or linear functions, without substantially impacting the performance for other functions.

\subsubsection{Parameters in Section \ref{adaptivitysec}} 
The values for the maximum and minimum number of samples, $n_{min}=6$ and $n_{max}=10$
and the initial sample size $n_0=10$,  were determined based on the following considerations. For uniformly convex functions like the parabola $f(x)=x^2$, our algorithm uses about 50 function evaluations. Consequently, significantly increasing $n_0$ or $n_{max}$ would negatively impact performance. On the other hand, performing least squares requires at least three points, and obtaining meaningful error bounds requires four or more points. Therefore, we chose $n_{min}=6$.

\subsubsection{Parameters in Section \ref{stop}}
\label{parstop}

To accommodate varying domain magnitudes for different functions, we set two parameters, $\hat \sigma_{target}$ and $\hat \sigma_{min}$, and define $ \sigma_{target}=\hat \sigma_{target} (x_{max}-x_{min})$ and $ \sigma_{min}=\hat \sigma_{min} (x_{max}-x_{min})$. We choose termination parameters $\hat \sigma_{target}$ and $\delta_f$ such that our algorithm's average error conditional on success, $\Delta_c = 1.4\times 10^{-5}$, is comparable to the average $\Delta_c = 1.7\times 10^{-5}$ for four competing algorithms (see table \ref{allresults}). This enables meaningful comparison of function evaluations between algorithms. The default value for $\kappa$ is $\kappa=1$.
The parameters $N^*_f$ and $N^*_i$ are chosen somewhat arbitrarily, as they are rarely reached in our experiments and are included here for completeness.

\begin{table}[h]
	\begin{tabular}{|l|l|l|l|}
		\hline
		{\bf Par.}	&{\bf Description}& {\bf Default}  \\ \hline
		$\varpi$&Section \ref{ext}& $10$\\\hline
		$n_{min}$&Section \ref{adaptivitysec}&$6$ \\ \hline
		$n_{max}$&Section \ref{adaptivitysec}&$10$ \\ \hline
		$h_{max}$&Section \ref{lfi}&$1000$\\ \hline
		$N^*_f$&Section \ref{stop}s&$1000$\\ \hline
		$N^*_i$&Section \ref{stop}&$1000$\\ \hline
		$\hat \sigma_{target}$&Section \ref{stop}&$5 \times 10^{-5}$\\ \hline
		$\hat \sigma_{min}$&Section \ref{stop}&$10^{-8}$\\ \hline
		$\delta_f$ & Section \ref{stop}&$1.25\times 10^{-6}$\\ \hline
		$\kappa$ & Section \ref{stop}&$1$\\ \hline
		$p$&Section \ref{rejesec}&0.75\\ \hline
		$\gamma_1$&Section \ref{timestepsec}&$0.2$\\ \hline
		$\gamma_2$&Section \ref{timestepsec}&$0.2$\\ \hline
		$\upsilon_1$&Section \ref{timestepsec}&$0.2$\\ \hline
		$\upsilon_2$&Section \ref{timestepsec}&$0.2$\\ \hline
		$m$&Section \ref{mee}&$1.$\\ \hline
		$\vartheta$&Section \ref{lfi}&$0.95$\\ \hline
	\end{tabular}
	\medskip
	\caption{Default parameter values}
	\label{defaults0}
\end{table}

\subsection{Numerical results}
\label{ares}

Table \ref{allresults} compares the results of our algorithm with those of the four competitors, presenting averaged values across all tested function classes. 
	All the numerical tests were performed on a Mac Pro (2019), 
	3.5GHz 8-Core Intel Xeon W,  32 Gb RAM.
As previously explained in Section \ref{comparasiosec}, $N_f$ represents the number of function evaluations, $\Pi$ denotes the success probability, $N_s$ is the average number of function evaluations per success given in \eqref{ns}, $\Pi_{100}$ is the efficiency index in \eqref{pi100}, $\tau$ is the average run time per optimization problem in seconds, $\Delta$ is the optimization gap, and $\Delta_c$ is the optimization gap conditional on success. Our algorithm outperforms the competing algorithms using 11 to 26 times fewer  function evaluations. Furthermore, it achieves the highest efficiency index and the smallest average number of function evaluations per success. 

Because our stopping criterion was selected to ensure that our algorithm's average value of $\Delta_c$ is similar to the average values of the other algorithms, our algorithm naturally ranks between the competing algorithms in this criterion. Regarding success probability, only Differential Evolution achieves better values for $\Pi$ and $\Delta$, but it requires approximately 25 times more function evaluations.

Table \ref{boostingtable} displays the averaged results using boosting (Section \ref{boosting}), with $N_i$ representing the average total number of iterations. As seen in that table, a single boosting cycle increases the success probability $\Pi$ to a value higher than that of Differential Evolution, using around 15 times fewer function evaluations. Additional boosting cycles further improve the probability of success, despite a slight decrease in the efficiency index $\Pi_{100}$ and an increase in average number of function evaluations per success, $N_s$. Notably, the number of function evaluations, $N_f$, grows sublinearly for each boosting cycle.  The use of boosting is particularly interesting in situations where function evaluations are costly but further exploration of the space is desirable. Thus, for example, we can run a single boosting cycle, increasing the probablity of finding a minimum,  without doubling the number of function evaluations, which would happen if we simply ran the algorithm again.

It is important to mention that the run time of our algorithm is larger but of the same order of magnitude as the other algorithms, even though it has not been optimized for speed and the test functions are computationally inexpensive. For computationally expensive functions, our algorithm is expected to be substantially faster because it uses fewer function evaluations. 

\begin{TBL}
	{"allresults",  (normalizeassociation/@combinedreport[{}//selecttestfunctions])// displayreportastable//First//#[[All, {1, 2, 4, 5, 6,7, 8}]]&}
\end{TBL}
	%
	
	\begin{center}
		\begin{small}
			\begin{table}
				\input{tables/allresultsedited2}
				\medskip

				\caption{Averaged results for all test functions}
				\label{allresults}
				
			\end{table}	
		\end{small}
	\end{center}

\begin{TBL}
	{"boosting",replist = normalizeassociation/@report[selecttestfunctions[{}],
		{
			Association[],
			Association["boosting" -> 1],
			Association["boosting" -> 2],
			Association["boosting" -> 3],
			Association["boosting" -> 4],
			Association["boosting" -> 5]
		}] // displayreportastable //First//(#[[All, {1, 2, 4,9, 5, 6,7,8}]]&)}
\end{TBL}
	
	\begin{center}
		\begin{small}
			\begin{table}
				\input{tables/boostingedited2}
				\medskip
				\caption{Performance for different number of boosting cycles}
				\label{boostingtable}
			\end{table}		
		\end{small}
	\end{center}

\subsection{Dependence on parameters}
\label{deppars}

Table \ref{parstable} analyzes the performance of the algorithm for various choices of the parameters $\gamma_i$ and $\upsilon_i$ from Section \ref{timestepsec}.  As evident from the table, the selection of these parameters has minimal impact on the number of function evaluations ($N_f$), algorithm performance ($\Pi$),  efficiency ($\Pi_{100}$) and function evaluations per success ($N_s$), despite a significant increase in the number of iterations ($N_i$) due to decreased time steps. This behavior is due to the following.  Since the variables $\mu_j$ and $\sigma_j$ also exhibit less change from iteration to iteration, the probability $\pi_k$ in \eqref{pik} approaches 1, enabling the reuse of more prior samples.
It is worth noting, however, that both $\Pi$ and $\Pi_{100}$ seem to decrease for very small values of $\gamma_i$ and $\upsilon_i$. This observation may be attributed to the fact that larger errors associated with higher values of $\gamma_i$ and $\upsilon_i$ facilitate better exploration of the state space.

Table \ref{comparison} illustrates the effect of various options. Firstly, it is important to note that rejection sampling considerably decreases in  function evaluations. However, rejection sampling also is a primary source of algorithmic complexity. In fact, the execution time (which is a proxy for algorithmic complexity) without rejection sampling is comparable to Simulated Annealing, as seen in Table \ref{allresults}. This means that the run time overhead can be mainly attributed to rejection sampling. 
Secondly, even in the absence of rejection sampling, our algorithm outperforms all competing algorithms, requiring 2 to 5 times fewer function evaluations. This superior performance can be attributed to our highly optimized method, which allows for the largest time step within the prescribed error bounds. Thirdly, due to the increase in function evaluations, the value of $\Delta$ without rejection sampling is smaller than the corresponding $\Delta$ with rejection sampling, as can be seen  in Table \ref{parstable}. Lastly, we see that both adaptivity and sparse  sampling contribute to reducing the execution time ($\tau$) and the number of iterations ($N_i$).

Finally, in Table \ref{restarttable}, we can see the effect of the restart strategy. While this strategy
substantially increases  function evaluations, it also improves performance and reduces
the optimization gap $\Delta$.

\begin{TBL}
	{"pars",replist = normalizeassociation/@report[selecttestfunctions[{}],
		{
			Association[],
			Association[{"gamma1" -> 0.02, "gamma2" -> 0.02,"u1"->0.02, "u2"->0.02}],
			Association[{"gamma1" -> 0.002, "gamma2" -> 0.002,"u1"->0.002, "u2"->0.002}]
		}] // displayreportastable //First//(#[[All, {1, 2, 4,9, 5, 6,7,8}]]&)}
\end{TBL}

\begin{center}
	\begin{small}
		\begin{table}
			\input{tables/parsedited2}			
			\medskip
			\caption{Dependence on parameters}
			\label{parstable}
		\end{table}		
	\end{small}
\end{center}

\begin{TBL}
	{"comparisonresults2",  normalizeassociation/@report[selecttestfunctions[{}], {
			Association["resamplingmethod" -> "plain", "nmin" -> 10,  "nmax" -> 10, "sparseresampling" -> False],
			Association["resamplingmethod" -> "plain","nmin" -> 10, "nmax" -> 10, 
			"sparseresampling" -> True],
			Association["resamplingmethod" -> "plain", "sparseresampling" -> False],			
             Association["resamplingmethod" -> "plain", "sparseresampling" -> True],			
			Association[ "nmin" -> 10,  "nmax" -> 10, "sparseresampling" -> False],
			Association["nmin" -> 10, "nmax" -> 10, "sparseresampling" -> True],
			Association["sparseresampling" -> False],							
			Association[]
		}] // displayreportastable//First//(#[[All, {1, 2, 4,9, 5, 6,7,8}]]&)}
\end{TBL}
\begin{center}
	\begin{small}
		\begin{table}
			\input{tables/comparisonresults2edited}
			\medskip
			\caption{Comparison of various options -
				Plain= no Rejection Sampling, 		RS = Rejection Sampling, A=Adaptivity, SS=Sparse  Sampling}
			\label{comparison}
		\end{table}		
	\end{small}
\end{center}


\begin{TBL}
	{"restart",replist = normalizeassociation/@report[selecttestfunctions[{}],
		{
			Association[],
			Association[{"restart"->False}]
		}] // displayreportastable //First//(#[[All, {1, 2, 4,9, 5, 6,7,8}]]&)}
\end{TBL}
%

\begin{center}
	\begin{small}
		\begin{table}
			\begin{tabular}{|c|c|c|c|c|c|c|c|c|}\hline
Restart&\(N_f\)&\(\Pi\)&$N_s$&\(\Pi_{100}\)&\(\tau\)&\(N_i\)&\(\Delta\)&\(\Delta_c\)\\\hline
True&\(148.7\)&\(0.93\)&160.0&\(0.84\)&\(0.3\)&\(205.4\)&\(0.015\)&\(0.000016\)\\\hline
False&\(105.6\)&\(0.82\)&128.9&\(0.81\)&\(0.27\)&\(145.\)&\(0.027\)&\(0.000036\)\\\hline\end{tabular}
			
			\medskip
			\caption{Effect of the restart strategy}
			\label{restarttable}
		\end{table}		
	\end{small}
\end{center}

%


%

\subsection{Noisy functions}
\label{noisy}

In certain applications, the objective function is random.
This is the case in  mini-batching in machine learning.
To model this category of functions, which we call noisy functions, we employ additive Gaussian noise. Consequently, we substitute the deterministic objective function $f$ with
\[
\hat f(x)=f(x)+\zeta X,
\]
where $X$ denotes a Gaussian random variable with zero mean and unit variance, and $\zeta\in \Rr$ represents the noise level.

We ran our algorithm using various function selections and noise levels.
We disabled restarting, sparse  sampling, and adaptivity features. Further, 
because $\hat f$ is a random variable,  not a deterministic function, we had to modify the  stopping criteria.
We employed a single stopping criterion: the algorithm stops when  
 $\sigma_j\leq \hat \sigma_{target}$, where $\hat \sigma_{target}=5\times 10^{-5}$. 
 The functions under consideration do not possess minima at their boundaries, so there is not a distinct stopping criteria for points at the boundary. 

Because $\hat f$ is a random variable, to define the success of the optimization process, it is not
suitable to look at the value of $\hat f$ at the algorithm's output $\tilde x$.
Instead, we compare the minimizer $\bar x$ of $f$ with $\tilde x$: the algorithm is successful if
$|\bar x-\tilde x|\leq 0.05 (x_{max}-x_{min})$. 
In our numerical results, Tables \ref{noisyDeJong1table}, \ref{noisySchwefeltable}, and \ref{noisyDeltadprime10table}, we show the average error
$\Delta x=E|\bar x-\tilde x|$ and the error conditional on success 
$\Delta_c x=E\big[|\bar x-\tilde x| \ \big|\   |\bar x-\tilde x|\leq 0.05 (x_{max}-x_{min})\big]$. 

Tables \ref{noisyDeJong1table}, \ref{noisySchwefeltable}, and \ref{noisyDeltadprime10table} present the results for each of the functions in \ref{noisyDeJongimages}, \ref{noisySchwefelimages}, and \ref{noisyDeltadprime10images}, respectively. 
As we see in the numerical results, even for large noise levels
the results are substantially better than chance. 
If the algorithm were to produce a uniformly distributed output, approximately $10\%$ of the runs would be considered successful, since for a uniform distribution in
$[x_{min}, x_{max}]$, 
$10\%$ of the points would fall within a relative error of $\pm 5\%$ of $\bar x$. 
The  algorithm's performance under low noise ($\zeta = 1\%$) is comparable to the performance without noise for all test functions.
In the medium noise scenario ($\zeta = 10\%$), the algorithm performs well, generating points near the global minimum or suitable local minima. While the quality of the proposed minima significantly deteriorates in the high-noise case ($\zeta = 50\%$), the proposed minima still lie closer to the global minimum with a noticeably higher probability than if the points were chosen randomly. Lastly, we observe a moderate increase in the number of function evaluations as the noise level rises.


\begin{TBL}
	{"noisyDeJong1",normalizeassociation /@ 
		noisyreport[
		selecttestfunctions[
		"DeJong1"], {Association["noisevector" -> {0, .0}], 
			Association["noisevector" -> {0, .01}], 
			Association["noisevector" -> {0, .1}], 
			Association["noisevector" -> {0, 0.5}]}] // displayreportastable//First//(#[[All, {1, 2, 4,9, 5, 6,7,8}]]&)}
\end{TBL}
\begin{center}
	\begin{small}
		\begin{table}
			\input{tables/noisyDeJong1edited}
			\medskip
			\caption{Performance for the function in Figure \ref{noisyDeJongimages}, $\Delta x$, and
			$\Delta_c x$ range between 0 and 1 and are scaled with the domain. }
			\label{noisyDeJong1table}
		\end{table}		
	\end{small}
\end{center}

\begin{TBL}
	{"noisySchwefel",normalizeassociation /@ 
		noisyreport[
		selecttestfunctions[
		"Schwefel"], {Association["noisevector" -> {0, .0}], 
			Association["noisevector" -> {0, .01}], 
			Association["noisevector" -> {0, .1}], 
			Association["noisevector" -> {0, 0.5}]}] // displayreportastable//First//(#[[All, {1, 2, 4,9, 5, 6,7,8}]]&)}
\end{TBL}
\begin{center}
	\begin{small}
		\begin{table}
				\input{tables/noisySchwefeledited}
			\medskip
			\caption{Performance for the function in Figure \ref{noisySchwefelimages}.}
			\label{noisySchwefeltable}
		\end{table}		
	\end{small}
\end{center}

\begin{TBL}
	{"noisyDeltadprime10",normalizeassociation /@ 
		noisyreport[
		selecttestfunctions[
		"Deltadprime10"], {Association["noisevector" -> {0, .0}], 
			Association["noisevector" -> {0, .01}], 
			Association["noisevector" -> {0, .1}], 
			Association["noisevector" -> {0, 0.5}]}] // displayreportastable//First//(#[[All, {1, 2, 4,9, 5, 6,7,8}]]&)}
\end{TBL}
\begin{center}
	\begin{small}
		\begin{table}
				\input{tables/noisyDeltadprime10edited}
			\medskip
			\caption{Performance for the function in Figure \ref{noisyDeltadprime10images}.}
			\label{noisyDeltadprime10table}
		\end{table}		
	\end{small}
\end{center}

\begin{figure}
	\subfloat[]{\includegraphics[width = .25\textwidth]{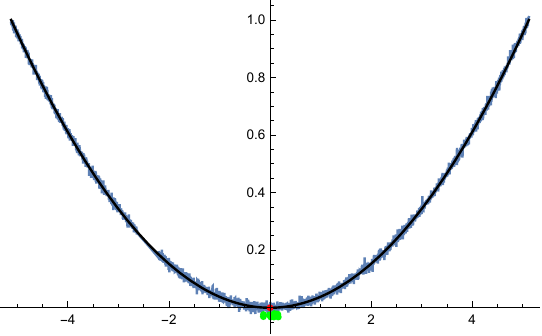}} 
	\hspace{20pt}
	\subfloat[]{\includegraphics[width = .25\textwidth]{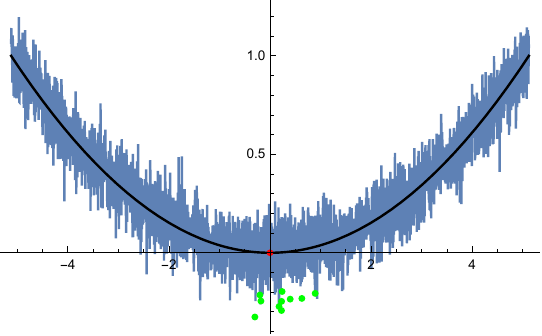}}
	\hspace{20pt}
	\subfloat[]{\includegraphics[width = .25 \textwidth]{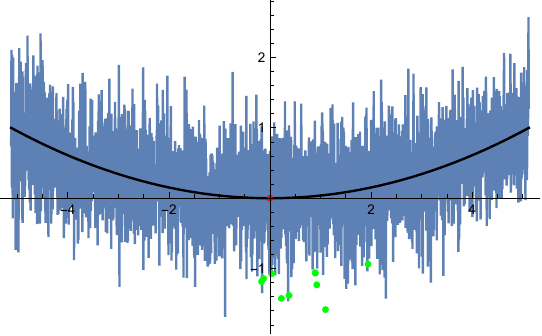}}
	\caption{Noisy versions of $f(x)=x^2$ with 1\%, 10 \% and 50\% noise.
		Here, $f$ is scaled so that the domain
		amplitude is $1$. In red, the global minimum of $f$, in green, 10 realizations of the minimum of the noisy version of $f$.}
	\label{noisyDeJongimages}
\end{figure}

\begin{figure}
	\subfloat[]{\includegraphics[width = .25\textwidth]{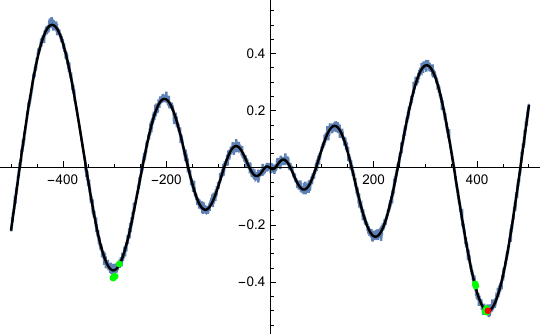}} 
	\hspace{20pt}
	\subfloat[]{\includegraphics[width = .25\textwidth]{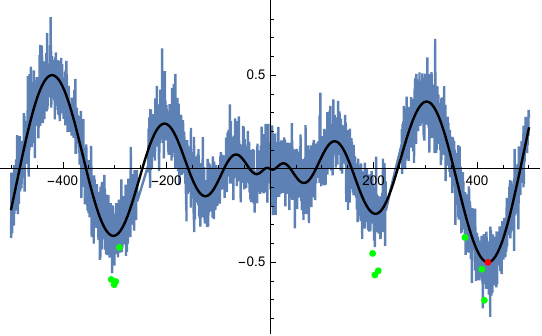}}
	\hspace{20pt}
	\subfloat[]{\includegraphics[width = .25 \textwidth]{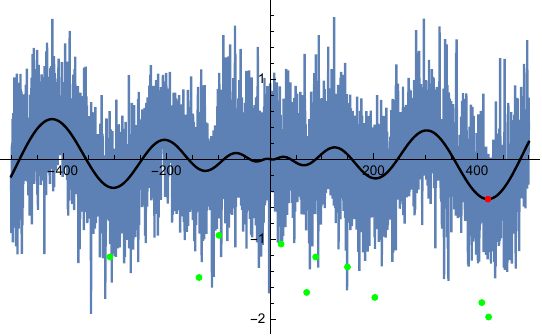}}
	\caption{Noisy versions of $f(x)=-x \sin \left(\sqrt{| x| }\right)$ with 1\%, 10 \% and 50\% noise. Here, $f$ is scaled so that the domain
		amplitude is $1$.  In red, the global minimum of $f$, in green, 10 realizations of the minimum of the noisy version of $f$.}
	\label{noisySchwefelimages}
\end{figure}

\begin{figure}
	\subfloat[]{\includegraphics[width = .25\textwidth]{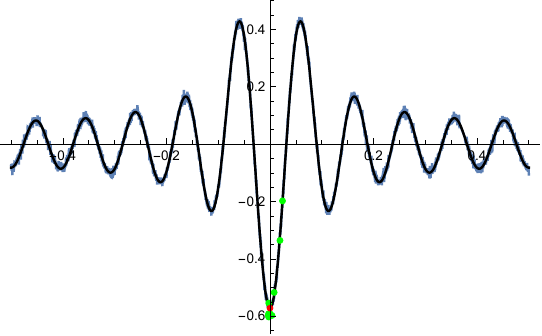}} 
	\hspace{20pt}
	\subfloat[]{\includegraphics[width = .25\textwidth]{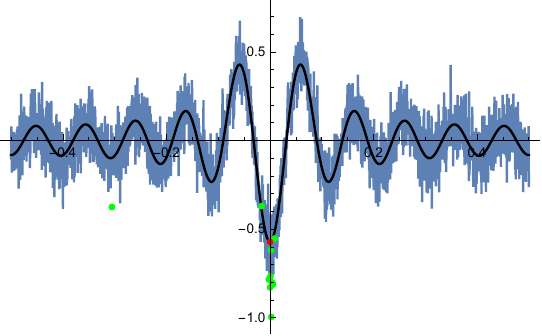}}
	\hspace{20pt}
	\subfloat[]{\includegraphics[width = .25 \textwidth]{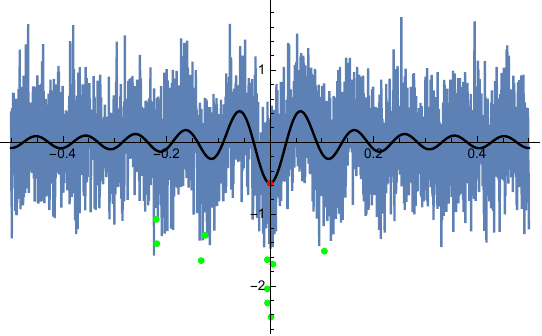}}
	\caption{Noisy versions of $f(x)=-\sum_{k=1}^10 (2\pi k)^2 \cos(2 \pi k)$ with 1\%, 10 \% and 50\% noise. Here, $f$ is scaled so that the domain
		amplitude is $1$.  In red, the global minimum of $f$, in green, 10 realizations of the minimum of the noisy version of $f$.}
	\label{noisyDeltadprime10images}
\end{figure}

\section{Conclusions and future work}
\label{extensionsec}

Our algorithm outperforms standard algorithms in a broad class of functions both in terms of function evaluations and success rate. This performance is consistently good
across all classes tested, which ranges from easy optimization problems (eg smooth, uniformly convex functions) to functions that challenge standard
algorithms or require a substantial number of function evaluations (eg non-convex, non-smooth multimodal functions), 
see Appendix \ref{breakdown}.
While the current implementation is in Mathematica, we are developing a Python implementation of our code to give access to a broader audience.

We plan to extend our work to the
higher-dimensional case.
In this case,  there are multiple strategies to consider.  
First
approaches include considering Gaussians with scalar ($\sigma I$) or
diagonal covariances.  For general covariance ($\Sigma$), the role of natural gradients (see Section \ref{prior}) may be crucial, as the time scales can significantly vary across the principal directions of $\Sigma$. 
Given the complexity of multidimensional
geometry, we are interested in exploring covariance adaptation
similar to CMA-ES (see Section \ref{prior}).
Moreover, calculating the quadratic approximation using least squares may be infeasible in high dimensions. 
 As a result, alternative formulas for the quadratic approximation $q$ and variance reduction methods may be essential. 
Additionally, creating strategies to use the
estimate $q_{j-1}$ for better estimates of $q_j$ may prove
advantageous. On the more theoretical side, it would be interesting to investigate further global convergence properties that explain the high success rate of our algorithm.

%

\appendix

\section{Integral inequalities for homogeneous functions}
\label{homogeneousA}

Here, we present various results and integral inequalities 
on homogeneous functions
that simplify various bounds in our proofs.  A function $g:\Rr^n\to \Rr$ is homogeneous of degree $\alpha$ if  $g(\lambda x)  = \lambda^{\alpha} g(x)$ for all $\lambda \in \Rr$.
We say that $g$ is positively homogeneous if the preceding 
identity holds for all $\lambda>0$. 
\begin{lem}
	\label{pseudonorm}
	Let $h\in C(\Rr^n)$ satisfy:
	\begin{enumerate}
		\item $h \geq 0;$
		\item if $h(x) = 0$, then $x=0$;
		\item $h$ is positively 1-homogeneous;
		\item the set  $\{x\in \Rr^n: h(x)=1\}$ is compact.
	\end{enumerate}
	Then, if $g\in C(\Rr^n)$ is positively homogeneous of degree $\alpha>0$, 
	there exists a positive constant depending only on $h$ such that
	\begin{equation}
		\label{equivnorms}
		|g(x)| \leq C \left[h(x)\right]^{\alpha}.
	\end{equation}
\end{lem}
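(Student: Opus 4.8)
The plan is to prove the bound $|g(x)| \leq C[h(x)]^\alpha$ by a standard compactness-plus-homogeneity argument. First, I would handle the point $x=0$: since $g$ is positively homogeneous of degree $\alpha>0$, taking $\lambda \to 0^+$ in $g(\lambda x) = \lambda^\alpha g(x)$ forces $g(0)=0$, and likewise $h(0)=0$, so the inequality holds trivially (as $0 \leq 0$) there. For $x \neq 0$, the idea is to normalize using $h$. Because $h$ is continuous, nonnegative, positively $1$-homogeneous, and vanishes only at the origin, its unit level set $S = \{y : h(y) = 1\}$ is nonempty and, by hypothesis (4), compact. Moreover, for any $x \neq 0$ the scalar $h(x) > 0$ (if $h(x)=0$ then $x=0$ by hypothesis (2)), so the rescaled point $y := x / h(x)$ satisfies $h(y) = h(x)/h(x) = 1$ by $1$-homogeneity, i.e. $y \in S$.

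Next I would set $C := \max_{y \in S} |g(y)|$, which is finite because $g$ is continuous and $S$ is compact. Then, using positive homogeneity of $g$ with $\lambda = h(x) > 0$,
\[
|g(x)| = |g(h(x)\, y)| = [h(x)]^\alpha\, |g(y)| \leq [h(x)]^\alpha\, C,
\]
which is exactly \eqref{equivnorms}. This constant $C$ depends only on $g$ and $h$ (through the set $S$), as claimed; if one insists the constant depend only on $h$, one absorbs the finite factor $\max_S|g|$ appropriately, or states the dependence on $g$ explicitly — the essential content is that no dependence on $x$ remains.

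The argument has no real obstacle; the only point requiring a little care is confirming that $S$ is nonempty (so that $C$ is a maximum over a nonempty set and the normalization $y = x/h(x)$ actually lands in $S$). This follows since for any $x \neq 0$ we have $h(x) > 0$ and then $x/h(x) \in S$, so $S \neq \emptyset$ as soon as $\Rr^n \neq \{0\}$, i.e. $n \geq 1$. One should also note that hypothesis (4), compactness of the unit level set, is what rules out degenerate $h$ for which $g/h^\alpha$ could blow up along a direction; without it the maximum defining $C$ need not be attained or finite. I would present the proof in this order: dispose of $x=0$, establish $S$ nonempty and compact and $h(x)>0$ for $x \neq 0$, define $C = \max_S |g|$, then conclude by rescaling.
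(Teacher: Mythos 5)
Your proof is correct and follows essentially the same route as the paper's: dispose of $x=0$ via homogeneity, note $h(x)>0$ for $x\neq 0$, take $C=\sup_{h=1}|g|$ (finite by continuity of $g$ and compactness of the level set), and conclude by rescaling with $\lambda=h(x)$. Your side remark that the constant actually depends on $g$ as well as $h$ is a fair observation about the statement's phrasing, but the argument itself matches the paper's.
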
	
\begin{proof} 
	It is
	sufficient to check \eqref{equivnorms} for $x\neq 0$ as
	homogeneity ensures $g(0)=h(0)=0$.
	Since $g$ is continuous and the set  where $h(x)=1$
	is compact, we have
	\[
	\sup_{h(x) =1} |g(x)| = C < \infty.
	\]
	Moreover, from the $\alpha$-homogeneity of $g$, we have
	\[
	|g(x)| = \left|g\left(h(x)\frac{x}{h(x)}\right) \right| = \left[h(x)\right]^{\alpha} \left|g\left(\frac{x}{h(x)}\right) \right| \leq C  \left[h(x)\right]^{\alpha},
	\]
	since $|g(x/h(x))|$ is bounded by $C$ because $h(x/h(x))=1$.
\end{proof}

Note that if $h:\Rr^n\to \Rr$ is a norm, then it satisfies the conditions in the lemma. 

\begin{lem}
	\label{normie}
	For $x=(x_1,\hdots,x_n)\in \Rr^n$ and any norm $\|\cdot\|$ in $\Rr^n$, we have
	\begin{equation}
		\label{normandcoord}
		\|x\|^{\alpha} \leq C_{\alpha}\left(|x_1|^{\alpha} +\hdots +|x_n|^{\alpha}  \right),
	\end{equation}
	for any real positive $\alpha$.
\end{lem}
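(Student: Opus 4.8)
The plan is to deduce this directly from Lemma \ref{pseudonorm}. Set $g(x)=\|x\|^{\alpha}$; since every norm is continuous and $t\mapsto t^{\alpha}$ is continuous on $[0,\infty)$, $g$ is continuous, and it is positively homogeneous of degree $\alpha>0$. Set
\[
h(x)=\left(|x_1|^{\alpha}+\cdots+|x_n|^{\alpha}\right)^{1/\alpha}.
\]
First I would check that $h$ satisfies the four hypotheses of Lemma \ref{pseudonorm}: it is continuous (composition of the continuous maps $x_i\mapsto |x_i|^{\alpha}$, summation, and $t\mapsto t^{1/\alpha}$), nonnegative, vanishes only at $x=0$, and is positively $1$-homogeneous because $h(\lambda x)=\bigl(\lambda^{\alpha}\sum_i|x_i|^{\alpha}\bigr)^{1/\alpha}=\lambda\,h(x)$ for $\lambda>0$. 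Finally, the set $\{x:h(x)=1\}=\{x:\sum_i|x_i|^{\alpha}=1\}$ is closed, being the preimage of a point under a continuous map, and bounded, since $\sum_i|x_i|^{\alpha}=1$ forces $|x_i|\le 1$ for each $i$; hence it is compact. Applying Lemma \ref{pseudonorm} to this choice of $g$ and $h$ gives a constant $C_{\alpha}>0$, depending only on $h$ (equivalently, only on $n$ and $\alpha$), such that
\[
\|x\|^{\alpha}=g(x)=|g(x)|\le C_{\alpha}\,[h(x)]^{\alpha}=C_{\alpha}\left(|x_1|^{\alpha}+\cdots+|x_n|^{\alpha}\right),
\]
which is \eqref{normandcoord}.

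There is essentially no obstacle here; the only point worth flagging is that when $0<\alpha<1$ the function $h$ is \emph{not} a norm (it fails the triangle inequality), so one cannot simply invoke the equivalence of norms on $\mathbb{R}^n$. This is precisely why Lemma \ref{pseudonorm} is stated for positively $1$-homogeneous functions with compact unit level set rather than for norms: the compactness argument above still goes through verbatim in that regime, so the same proof covers all $\alpha>0$ uniformly.

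If a self-contained argument is preferred over citing Lemma \ref{pseudonorm}, one can instead combine the equivalence of norms in the form $\|x\|\le C\sum_i|x_i|$ with the elementary inequality $\bigl(\sum_i a_i\bigr)^{\alpha}\le c_{n,\alpha}\sum_i a_i^{\alpha}$ for $a_i\ge 0$ (with $c_{n,\alpha}=n^{\alpha-1}$ for $\alpha\ge 1$ by convexity of $t\mapsto t^{\alpha}$, and $c_{n,\alpha}=1$ for $0<\alpha<1$ by subadditivity of $t\mapsto t^{\alpha}$), raising the first inequality to the power $\alpha$ and substituting. Either route yields the claimed bound with a constant depending only on $n$ and $\alpha$.
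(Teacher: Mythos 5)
Your proof is correct and follows the same route as the paper: apply Lemma \ref{pseudonorm} with $g(x)=\|x\|^{\alpha}$ and $h(x)=\left(|x_1|^{\alpha}+\cdots+|x_n|^{\alpha}\right)^{1/\alpha}$, after checking that $h$ satisfies its hypotheses. Your verification is simply more explicit than the paper's, and the remarks on the case $0<\alpha<1$ and the alternative elementary argument are correct but not needed.
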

\begin{proof}
	Since any
	norm is homogeneous of degree 1,
 $g(x)= \|x\|^{\alpha}$ is homogeneous of degree $\alpha$. Moreover, $ h(x)=\left(|x_1|^{\alpha} +\hdots +|x_n|^{\alpha}  \right)^{1/\alpha}$ verifies all the properties in  Lemma \ref{pseudonorm}. Then, we use \eqref{equivnorms} to find
	\[
	\|x\|^{\alpha} \leq C_{\alpha} \left(|x_1|^{\alpha} +\hdots +|x_n|^{\alpha}  \right).\qedhere
	\]
\end{proof}

As a result of the preceding two lemmas, we obtain the following estimates
for integrals with respect to Gaussians of homogeneous functions. 

\begin{lem}
	\label{hplem}
	Let $g\in C(\Rr^n)$ be positively homogeneous of degree $\alpha$. Then,
	\[\int |g(x_1, \hdots x_n)| \Gamma_{\mu,\sigma}(x_1)\hdots \Gamma_{\mu,\sigma}(x_n)dx_1\hdots dx_n=O(|\mu|^{\alpha}+\sigma^{\alpha}).
	\] 
\end{lem}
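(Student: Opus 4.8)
The plan is to reduce the multidimensional integral to a one-dimensional computation via the two elementary lemmas already established, namely Lemma \ref{pseudonorm} and Lemma \ref{normie}. First I would invoke Lemma \ref{pseudonorm} with the auxiliary function $h(x) = \bigl(|x_1|^\alpha + \cdots + |x_n|^\alpha\bigr)^{1/\alpha}$, which is continuous, nonnegative, vanishes only at the origin, is positively $1$-homogeneous, and has compact unit level set; this yields a constant $C$ depending only on $n$ and $\alpha$ such that
\[
|g(x_1,\ldots,x_n)| \leq C\bigl(|x_1|^\alpha + \cdots + |x_n|^\alpha\bigr)
\]
for all $(x_1,\ldots,x_n)\in\Rr^n$. (Alternatively one can apply Lemma \ref{normie} directly after bounding $|g|$ by a multiple of $\|x\|^\alpha$ for any fixed norm, using $\alpha$-homogeneity and compactness of the unit sphere; either route gives the same pointwise bound.)

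Next I would integrate this pointwise bound against the product Gaussian measure. Since $\Gamma_{\mu,\sigma}$ is a probability density, integrating out all variables except $x_k$ in the $k$-th term leaves
\[
\int_{\Rr^n} |x_k|^\alpha\, \Gamma_{\mu,\sigma}(x_1)\cdots\Gamma_{\mu,\sigma}(x_n)\,dx_1\cdots dx_n = \int_{\Rr} |x|^\alpha\, \Gamma_{\mu,\sigma}(x)\,dx,
\]
and there are $n$ such identical terms. So the whole matter comes down to estimating the single absolute moment $\int_{\Rr}|x|^\alpha \Gamma_{\mu,\sigma}(x)\,dx$. Writing $x = \mu + \sigma y$ with $y$ standard normal and using $|\mu+\sigma y|^\alpha \leq C_\alpha(|\mu|^\alpha + \sigma^\alpha |y|^\alpha)$ (again a homogeneous-function bound, or just convexity/subadditivity considerations for $\alpha\le 1$ versus $\alpha>1$), one gets
\[
\int_{\Rr}|x|^\alpha \Gamma_{\mu,\sigma}(x)\,dx \leq C_\alpha\Bigl(|\mu|^\alpha + \sigma^\alpha \int_{\Rr}|y|^\alpha \Gamma_{0,1}(y)\,dy\Bigr) = O(|\mu|^\alpha + \sigma^\alpha),
\]
because the standard Gaussian absolute moment $\int|y|^\alpha\Gamma_{0,1}(y)\,dy$ is a finite constant depending only on $\alpha$. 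Combining the three displays gives the claimed $O(|\mu|^\alpha+\sigma^\alpha)$ bound with a constant depending only on $n$ and $\alpha$.

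There is essentially no hard part here — the statement is a convenience lemma and everything follows by chaining the homogeneity estimates with Fubini and finiteness of Gaussian moments. The only place requiring a small amount of care is making sure the pointwise bound on $|g|$ is legitimate at the origin (handled by homogeneity forcing $g(0)=0$, exactly as in the proof of Lemma \ref{pseudonorm}) and tracking that every constant depends only on $n$ and $\alpha$ and not on $\mu$ or $\sigma$; this is what makes the lemma usable uniformly across iterations in the convergence arguments of Section \ref{tsa}.
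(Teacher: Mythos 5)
Your proof is correct and follows essentially the same route as the paper's: both reduce the claim to one-dimensional Gaussian absolute moments via the homogeneity bounds of Lemmas \ref{pseudonorm} and \ref{normie}, a shift by $\mu$, a rescaling by $\sigma$, and the subadditivity-up-to-constants of $t\mapsto t^\alpha$. The only difference is cosmetic — you split into coordinates at the start and work one-dimensionally, whereas the paper keeps the argument in $\Rr^n$ with norms and splits into coordinates at the end.
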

\begin{proof}
	We begin by changing variables $x_i \rightarrow y_i=x_i-\mu$ in the given integral.  That is, we write $y=x-m$, with $y,m\in\Rr^n$ and $m=(\mu,\hdots,\mu)$. Accordingly, 
	\begin{align*}
		\int |g(x)| \Gamma_{\mu,\sigma}(x_1)\hdots \Gamma_{\mu,\sigma}(x_n)dx&=
		\int |g(m+y)| \Gamma_{0,\sigma}(y_1)\hdots \Gamma_{0,\sigma}(y_n)dy\\
		& \leq \int C_1 \|m+y\|^{\alpha} \Gamma_{0,\sigma}(y_1)\hdots \Gamma_{0,\sigma}(y_n)dy,
	\end{align*}
	where we used \eqref{equivnorms} with $h(x) = \|x\|$ and $\|\cdot \|$ representing a fixed norm in $\Rr^n$.
	
	Now, using the triangular inequality,  we have
	\[
	\|m+y\|^{\alpha}  \leq \left(\|m\|+\|y\|\right)^{\alpha}= \left[\|\left(\|m\|,\|y\|\right)\|_{\ell^1}\right]^{\alpha} \leq C_2\left(\|m\|^{\alpha} + \|y\|^{\alpha}\right),
	\]
	where we used \eqref{normandcoord} in the last step. Therefore,
	\begin{align*}
		\int |g(x)| \Gamma_{\mu,\sigma}(x_1)\hdots \Gamma_{\mu,\sigma}(x_n)dx
		& \leq 	\int C_3\left(\|m\|^{\alpha} + \|y\|^{\alpha}\right) \Gamma_{0,\sigma}(y_1)\hdots \Gamma_{0,\sigma}(y_n)dy\\
		& \leq C_4 |\mu|^{\alpha} + C_3 \int \|y\|^{\alpha}\Gamma_{0,\sigma}(y_1)\hdots \Gamma_{0,\sigma}(y_n)dy,
	\end{align*}
	since \eqref{normandcoord} results in $\|m\|^{\alpha}\leq C |\mu|^{\alpha}$. Finally, by changing variables again, $y = \sigma z$,  we transform the Gaussians into standard Gaussian distributions. Thus,
	\begin{align*}
		\int |g(x)| \Gamma_{\mu,\sigma}(x_1)\hdots \Gamma_{\mu,\sigma}(x_n)dx & \leq C_4 |\mu|^{\alpha} + C_5 \sigma^{\alpha}\int \|z\|^{\alpha}\Gamma(z_1)\hdots \Gamma(z_n)dz\\
		& \leq C_4 |\mu|^{\alpha} + C_6 \sigma^{\alpha}\int \left(|z_1|^{\alpha} +\hdots +|z_n|^{\alpha}  \right)\Gamma(z_1)\hdots \Gamma(z_n)dz\\
		& \leq C_4 |\mu|^{\alpha} + C_7\sigma^{\alpha}\\
		& = O(|\mu|^{\alpha} + \sigma^{\alpha}),
	\end{align*}
	because $|z_i|^{\alpha}$ are integrable with respect to $\Gamma(z_i)$.
\end{proof}

\section{Test functions}
\label{testfunctions}

Here, we present the test functions used. These functions were selected to represent attributes and challenges relevant to optimization. The considered attributes and functions count per attribute
are shown in Table \ref{attributes}. 
 Note that each function can possess multiple attributes; for instance, uniformly convex functions are both strictly convex and convex. As a result, some functions belong to more than one of the individual classes we examine. 
 The plots of the functions organized by classes are displayed in figures 
	\ref{niceconvex}-\ref{discontinuous}.  In these figures, as in the following ones, the red dot represents the minimum of the function. 
	Table \ref{funlist} lists the
	corresponding expressions. To make the table easy to read, we display the unnormalized versions of the functions; in our experiments, all functions were multiplied by a factor so that they have oscillation 1. 
	
\begin{TBL}
	{"attributes", Prepend[{#, Length@selecttestfunctions[#]} & /@ fattributes[] ,{"attribute","number of functions"}]}
\end{TBL}
\begin{center}
	\begin{small}
		\begin{table}
			\input{tables/attributesedited}			
			\medskip
			\caption{Function count per attribute.}
			\label{attributes}
		\end{table}
	\end{small}
\end{center}

\begin{center}
	\begin{small}
\begin{table}
	\label{funlist}
	\begin{tabular}{|l|l|l|}
		\hline
		\textbf{Figure} & \textbf{Expression} & \textbf{Range} \\				
		\hline
		\ref{niceconvex} (A) & $x^2$ & $-5.12 \leq x \leq 5.12$ \\
		\hline
		\ref{niceconvex} (B) & $(-5 + 24x - 16x^2) e^{-x}$ & $1.9 \leq x \leq 3.9$ \\
		\hline
		\ref{niceconvex} (C)& $-x^{\frac{2}{3}} - (1 - x^2)^{\frac{1}{3}}$ & $0.001 \leq x \leq 0.99$ \\
		\hline		
		\ref{niceconvex} (D)& $1.25 x^2 + 0.0625 x^4$ & $-5 \leq x \leq 10$ \\
		\hline	
		\ref{niceconvex} (E) & $x^8$ & $-2 \leq x \leq 2$ \\
		\hline		
		\ref{nsconvex} (A), \ref{discontinuous} (E)  &$ \frac{1}{1 - x} + \frac{1}{x}$ & $0.01 \leq x \leq 0.99$ \\
		\hline		
		\ref{nsconvex} (B) & $|0.5 - x|$ & $-2 \leq x \leq 2$ \\
		\hline	
		\ref{bconvex} (A)& $ x$ & $-3 \leq x \leq 3$ \\
		\hline
		\ref{bconvex} (B)& $0$ & $-3 \leq x \leq 3$ \\
		\hline		
		\ref{ncsu} (A)
		& $1 - \cos(x^5)$ & $-\pi \leq x \leq \pi$ \\
		\hline			
		\ref{ncsu} (B)&$-\sin(x)\sin^{20}(x^2/\pi)$&$0\leq x\leq\pi$\\
		\hline
		\ref{ncsu} (C)
		 & $
		\begin{cases} 
			(x - 2)^2 & \text{if } x < 3 \\
			2 \log(x - 2) + 1 & \text{otherwise} 
		\end{cases}$ & $0 \leq x \leq 6$ \\
		\hline		
		\ref{ncnsu} (A) & $\sqrt{|x|}$ & $-3 \leq x \leq 2$ \\
		\hline		
		 \ref{ncnsu} (B), \ref{discontinuous} (G)  & $\begin{cases} \frac{1}{2}|x - 5| & \text{if } |x-5| < 1 \\ 1 & \text{otherwise} \end{cases}$ & $0 \leq x \leq 10$ \\
		\hline				
	    \ref{ncsmA} (A)& $-\sum_{k=1}^{10} \cos[(2\pi k x)$ & $-0.5 \leq x \leq 0.5$ \\
		\hline
		\ref{ncsmA} (B)& $-\sum_{k=1}^{10} 4 \pi^2 k^2 \cos(2 \pi k x)$ & $-0.5 \leq x \leq 0.5$ \\
		\hline
		\ref{ncsmA} (C) & $\sum_{k=1}^{10} 2 \pi k \sin(2 \pi k x)$ & $-0.5 \leq x \leq 0.5$ \\
		\hline
		\ref{ncsmA} (D) & $-x^2 + x^4$ & $-2 \leq x \leq 2$ \\
		\hline
		\ref{ncsmA} (E)& $- (2 - 6x)^2 \sin(4 - 12x)$ & $0 \leq x \leq 1$ \\
		\hline
		\ref{ncsmA} (F)& $1 + \frac{x^2}{4000} - \cos(x)$ & $-600 \leq x \leq 600$ \\
		\hline
		\ref{ncsmB} (A) & $ x^2 \sin^2(\frac{1}{x})$ & $-3 \leq x \leq 2$ \\
		\hline
		\ref{ncsmB} (B) & $ \sin(x) + \sin(3.33333x)$ & $-2.7 \leq x \leq 7.5$ \\
		\hline
		\ref{ncsmB} (C) & $\sum_{j=1}^6 j \sin (j +(j+1) x)$
		& $-2.7 \leq x \leq 7.5$ \\
		\hline	
		\ref{ncsmB} (D) & $(-1.4 + 3x) \sin(18x)$ & $0 \leq x \leq 1.2$ \\
		\hline
		\ref{ncsmB} (E) & $ e^{-x^2} (-x - \sin(x))$ & $-10 \leq x \leq 10$ \\
		\hline
		\ref{ncsmB} (F) & $ 3 - 0.84x + \log(x) + \sin(x) + \sin(\frac{10x}{3})$ & $2.7 \leq x \leq 7.5$ \\
		\hline
		\ref{ncsmC} (A) & $-\sum_{k=1}^{6} k \cos((k+1)x+k)$ & $-10 \leq x \leq 10$ \\
		\hline
		\ref{ncsmC} (B) & $\sin(\frac{2x}{3}) + \sin(x)$ & $3.1 \leq x \leq 20.4$ \\
		\hline
		\ref{ncsmC} (C) & $-x \sin(x)$ & $0 \leq x \leq 10$ \\
		\hline
		\ref{ncsmC} (D) & $2 \cos(x) + \cos(2x)$ & $-\frac{\pi}{2} \leq x \leq 2\pi$ \\
		\hline
		\ref{ncsmC} (E) & $\cos(x)^3 + \sin(x)^3$ & $0 \leq x \leq 2\pi$ \\
		\hline
		\ref{ncsmC} (F) & $-e^{-x} \sin(2\pi x)$ & $0 \leq x \leq 4$ \\
		\hline
		\ref{ncsmD} (A) & $\frac{6 - 5x + x^2)}{1 + x^2}$ & $-5 \leq x \leq 5$ \\
		\hline
		\ref{ncsmD} (B)  & $e^{-x^2} (-x + \sin(x))$ & $-10 \leq x \leq 10$ \\
		\hline
		\ref{ncsmD} (C)  & $x \cos(2x) + x \sin(x)$ & $0 \leq x \leq 10$ \\
		\hline
		\ref{ncsmD} (D)  & $ e^{-3x} - \sin^3(x)$ & $0 \leq x \leq 20$ \\
		\hline
		\ref{ncsmD} (E)  & $-x \sin(\sqrt{|x|})$ & $-500 \leq x \leq 500$ \\
		\hline
		\ref{ncsmD} (F)  & $x^2 - \cos(10x)$ & $-3 \leq x \leq 3$ \\
		\hline
		\ref{ncsmD} (G)  & $\frac{x}{4} - x^2 + x^4$ & $-1.5 \leq x \leq 1.5$ \\
		\hline
		\ref{ncnsm} (A) & $ x^2 + \sin^2(\frac{1}{x})$ & $-2 \leq x \leq 3$ \\
		\hline
		\ref{ncnsm} (B)  & $|x|\Pi_{j=1}^5|x-(-1)^j j/10|^{1/2}$ & $-1 \leq x \leq 1$ \\
		\hline
		\ref{ncnsm} (C), \ref{ncbm} (D), \ref{discontinuous} (A)  & $\lfloor5 (\sin^2(2x) + \sin^2(5x))\rfloor$ & $0 \leq x \leq \pi$ \\
		\hline
		\ref{ncnsm} (D), \ref{discontinuous} (B)    & $ x + \frac{1}{5} \lfloor-5 x^2\rfloor$ & $0 \leq x \leq 2$ \\
		\hline
		\ref{ncnsm} (E), \ref{discontinuous} (D)   & $ \lfloor5 x^2\rfloor$ & $-1 \leq x \leq 2$ \\
		\hline
		\ref{ncnsm} (F), \ref{discontinuous} (F)   & $\begin{cases} 0 & \text{if } |x-5| < 1 \\ 1 & \text{otherwise} \end{cases}$ & $0 \leq x \leq 10$ \\
		\hline
		\ref{ncbm} (A) 
		 & $x - x^2 - 0.01 x^4$ & $-3 \leq x \leq 3$ \\
		\hline
		\ref{ncbm} (B)  & $-x - x^2$ & $-3 \leq x \leq 3$ \\
		\hline
		\ref{ncbm} (C)  & $ -x^2 - 0.01 x^)$ & $-3 \leq x \leq 3$ \\
		\hline
		\ref{ncbm} (E), \ref{discontinuous} (C)  & $-x + \frac{1}{5} \lfloor-5 x^2\rfloor$ & $0 \leq x \leq 2$ \\
		\hline
		\ref{ncbm} (F)  & $-|1 + x|$ & $-2 \leq x \leq 2$ \\
		\hline		
	\end{tabular}
	\caption{Unnormalized test functions}
\end{table}
	\end{small}
\end{center}
	
\section{Performance breakdown}
\label{breakdown}

This appendix evaluates our algorithm's performance on various function classes. The results in this appendix demonstrate that our algorithm's efficiency consistently surpasses competing algorithms across all classes, as shown by the values $\Pi_{100}$ and $N_S$.  Moreover, the performance, as measured by the values of $\Pi$ or $\Pi_{100}$, remains relatively stable, ranging from 89 to 100\% and 49 to 100\%, respectively.
Competing algorithms perform substantially differently across the different classes; for example, the values of $\Pi_{100}$ vary from less than 10\% to 100\%. 
The number of function evaluations used by our algorithm is reasonably independent of the function class being minimized, ranging between 61 and 440.  Larger values correspond to degenerate functions (e.g., linear) or discontinuous functions.

\subsection{Convex  functions}
\label{convexres}

Our algorithm and all competing  algorithms consistently find a global minimum for convex functions. In all instances, our algorithm requires fewer function evaluations than competing algorithms. Among these, the best is the Nelder Mead algorithm. Our algorithm performs exceptionally well on uniformly convex and non-smooth convex functions, significantly outperforming Nelder Mead. For degenerate functions, that is,  either strictly but not uniformly convex or linear functions, our algorithm is slightly better than Nelder Mead. Our algorithm also requires substantially fewer function evaluations than Random Search, Differential Evolution, and Simulated Annealing. 
The following sections provide a detailed analysis of 
 the algorithm's behavior in the convex case.  

\subsubsection{Smooth, strictly convex functions}

The algorithm performs 
 exceptionally well for smooth, strictly convex functions with a single interior minimum, such as those depicted in Figure \ref{niceconvex}. The corresponding numerical results are presented in Tables \ref{niceconvextable} (uniformly convex) and \ref{niceconvextableB} (strictly but not uniformly convex). 
In particular, for uniformly convex functions, our algorithm outperforms the best built-in algorithm for this class of functions (Nelder Mead) by a factor of 6. For strictly but not uniformly convex functions, the number of function evaluations is similar to Nelder Mead. It is worth noting that, unlike our algorithm,  the Nelder Mead algorithm is invariant under composition with monotone functions, so its performance is similar for both $x^2$ (Figure \ref{niceconvex} (A)) and $x^8$ (Figure \ref{niceconvex} (E)). As a result, it is not sensitive to high-order degeneracy of a minimum.

\begin{figure}
	\subfloat[]{\includegraphics[width = .25\textwidth]{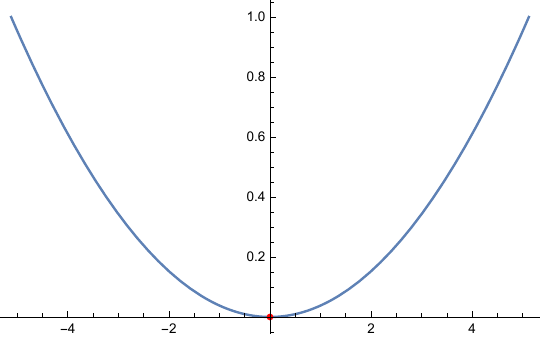}} 
	\hspace{20pt}
	\subfloat[]{\includegraphics[width = .25\textwidth]{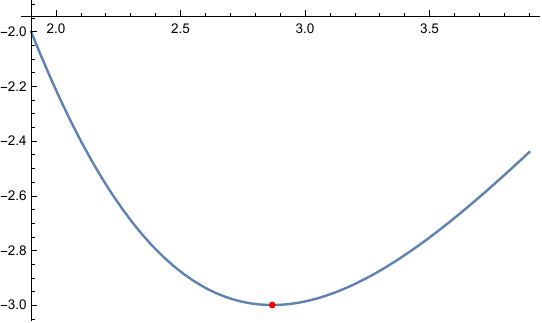}}
	\hspace{20pt}
	\subfloat[]{\includegraphics[width = .25 \textwidth]{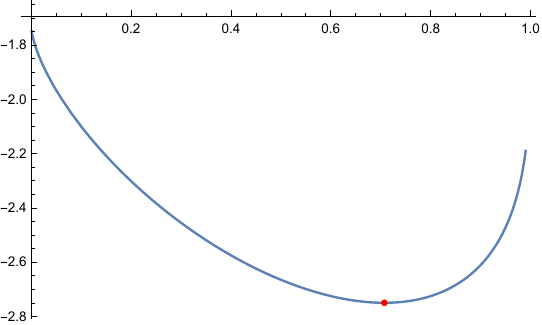}}\\
	\subfloat[]{\includegraphics[width =.25 \textwidth]{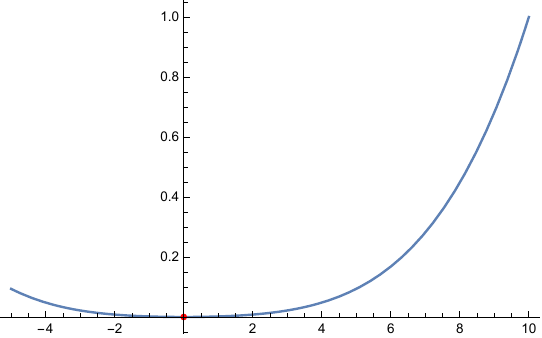}} 
	\hspace{20pt}
	\subfloat[]{\includegraphics[width = .25\textwidth]{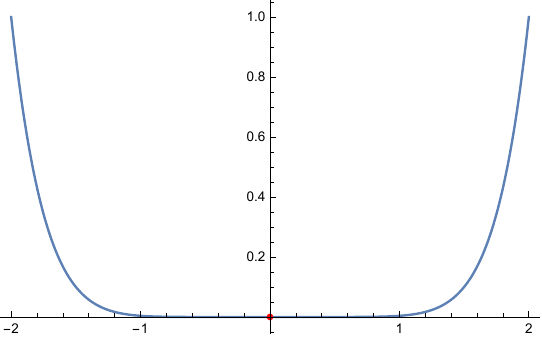}}
	\caption{Smooth strictly convex functions with interior minima. (A)-(D) are uniformly convex and (E) is strictly but not uniformly convex.}
	\label{niceconvex}
\end{figure}
\begin{TBL}
	{"uniformlyconvexresults",  (normalizeassociation/@combinedreport[{"uniformly convex","interior min", "unimodal", "smooth"}//selecttestfunctions]) //displayreportastable//First//#[[All, {1, 2, 4, 5, 6,7, 8}]]&}
\end{TBL}
\begin{center}
	\begin{small}
		\begin{table}
			\input{tables/uniformlyconvexresultsedited}
			\medskip
			\caption{{Smooth, uniformly convex functions with interior minima, Figure \ref{niceconvex} (A)-(D).}}
			\label{niceconvextable}
		\end{table}		
	\end{small}
\end{center}

\begin{TBL}
	{"highpowerconvexresults",  (normalizeassociation/@combinedreport[{"High power"}//selecttestfunctions]) //displayreportastable//First//#[[All, {1, 2, 4, 5, 6,7, 8}]]&}
\end{TBL}
\begin{center}
	\begin{small}
		\begin{table}
						\input{tables/highpowerconvexresultsedited}
						
			\medskip			
			\caption{{Smooth, strictly convex but not uniformly functions with interior minima,  Figure \ref{niceconvex} (E).}}
			\label{niceconvextableB}
		\end{table}		
	\end{small}
\end{center}


\subsubsection{Non-smooth unimodal convex functions}

For non-smooth, convex functions with an interior minimum, we considered examples such as those depicted in Figure \ref{nsconvex}. This includes a function with a singularity at the boundary (Figure \ref{nsconvex} (A)) and one with an interior singularity (Figure \ref{nsconvex} (B)). The corresponding numerical results are presented in Table \ref{tnsconvex}.

In this case, our algorithm outperforms substantially the competing algorithms significantly. 
Our algorithm requires ten times fewer function evaluations than Nelder Mead,  the
best-performing among the competing algorithms.

\begin{figure}
	\subfloat[]{\includegraphics[width = .25\textwidth]{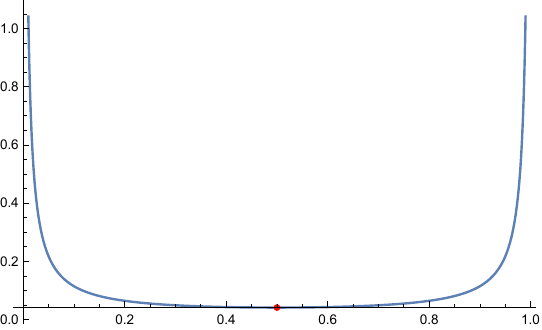}} 
	\hspace{20pt}
	\subfloat[]{\includegraphics[width = .25\textwidth]{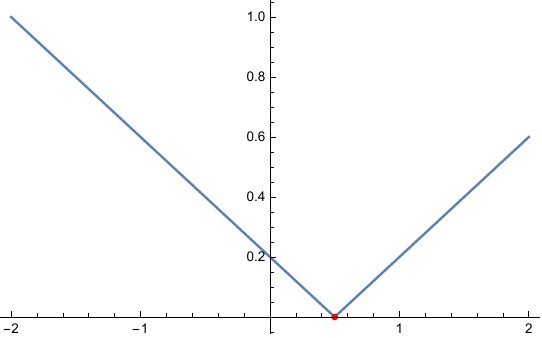}}
	\caption{Non-smooth, unimodal convex functions with interior minima.}
	\label{nsconvex}
\end{figure}
\begin{TBL}
	{"nsmoothconvexresults",normalizeassociation/@combinedreport[{ "non-smooth", "convex"}//selecttestfunctions] // displayreportastable//First//#[[All, {1, 2, 4, 5, 6, 7,8}]]&}
\end{TBL}
\begin{center}
	\begin{small}
		\begin{table}
			\input{tables/nsmoothconvexresultsedited}
			
			\medskip
			\caption{Non-smooth unimodal convex functions with interior minima, Figure \ref{nsconvex}.}
			\label{tnsconvex}
		\end{table}		
	\end{small}
\end{center}			
			
			\subsubsection{Linear functions}
			
The algorithm performs well for linear convex functions with a boundary minimum. Our test cases include those depicted in Figure \ref{bconvex}. The corresponding numerical results are presented in Table \ref{tbconvex}. In this case, our algorithm requires twice as few function evaluations as Nelder Mead. A decrease in performance compared to the strictly convex case is expected because these functions are linear, resulting in no contraction in $\sigma$.
			
			\begin{figure}
				\subfloat[]{\includegraphics[width = .25\textwidth]{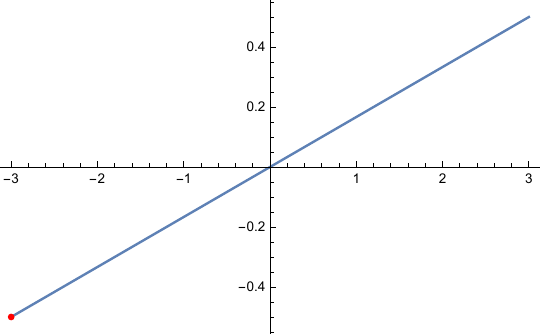}} 
				\hspace{20pt}
				\subfloat[]{\includegraphics[width = .25\textwidth]{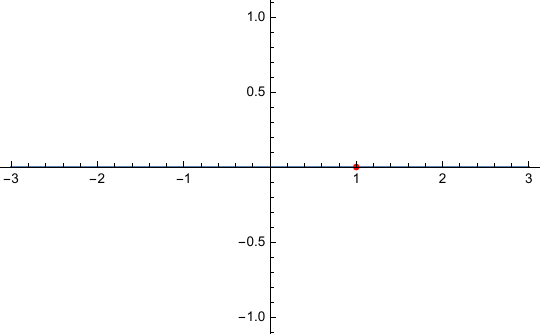}}
				\caption{Convex functions with a boundary minimum.}
				\label{bconvex}
			\end{figure}
			\begin{TBL}
				{"bmmoothconvexresults",normalizeassociation/@combinedreport[ {"boundary min", "convex"}//selecttestfunctions] // displayreportastable//First//#[[All, {1, 2, 4, 5, 6, 7,8}]]&}
			\end{TBL}
			\begin{center}
				\begin{small}
					\begin{table}
						\input{tables/bmmoothconvexresultsedited}
						
						\medskip
						\caption{Convex functions with a boundary minimum, Figure \ref{bconvex}.}
						\label{tbconvex}
					\end{table}		
				\end{small}
			\end{center}
			

			\subsection{Non-convex unimodal functions}
			
In the case of non-convex, unimodal functions, our algorithm, Differential Evolution, and Simulated Annealing consistently locate the minimum. The other two algorithms find the minimum in nearly all instances. However, our algorithm consistently requires fewer function evaluations than the other algorithms, ranging between 14 and 63 fewer function evaluations.

			\subsubsection{Non-convex smooth unimodal functions}
			
For non-convex smooth unimodal functions, displayed in Figure \ref{ncsu}, the corresponding numerical results are presented in Table \ref{tncsu}. Our algorithm demonstrates exceptional performance, finding all minima while requiring few function evaluations. Random Search and Simulated Annealing locate almost all minima (note that the result $\Pi=1$ for Simulated Annealing is due to round-off, resulting in a $\Pi_{100}<1$ value), but they require 63 and 14 additional function evaluations, respectively. Nelder Mead and Differential Evolution successfully identify all minima; however, they require 34 and 58 more function evaluations, respectively.

			\begin{figure}
				\subfloat[]{\includegraphics[width = .25\textwidth]{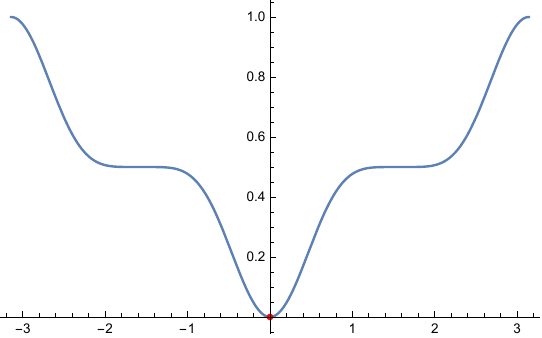}} 
				\hspace{20pt}
				\subfloat[]{\includegraphics[width = .25\textwidth]{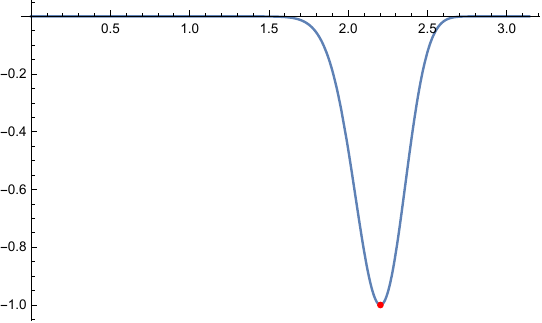}}
				\hspace{20pt}
				\subfloat[]{\includegraphics[width = .25 \textwidth]{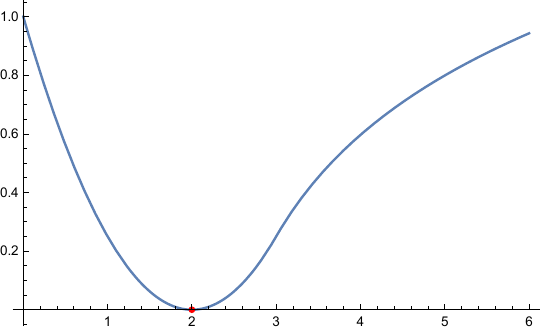}}
				\caption{Non-convex smooth unimodal}
				\label{ncsu}
			\end{figure}
			\begin{TBL}
				{"smoothunimodalresults",normalizeassociation/@combinedreport[{"unimodal", "non-convex", "interior min", "smooth"}//selecttestfunctions] // displayreportastable//First//#[[All, {1, 2, 4, 5, 6, 7,8}]]&}
			\end{TBL}
			\begin{center}
				\begin{small}
					\begin{table}
						\input{tables/smoothunimodalresultsedited}
						
						\medskip
						\caption{Non-convex smooth unimodal, Figure \ref{ncsu}.}
						\label{tncsu}	
					\end{table}		
				\end{small}
			\end{center}				
					
					\subsubsection{Non-convex non-smooth unimodal functions }
					
We incorporate  non-convex, non-smooth unimodal functions into our test cases, as illustrated in Figure \ref{ncnsu}. The corresponding numerical results can be found in Table \ref{tncnsu}. Our algorithm successfully locates the minimum. Among the competing algorithms, only Differential Evolution and Simulated Annealing manage to find the minimum, but at the expense of 41 and 23 times more function evaluations, respectively.

					\begin{figure}
						\subfloat[]{\includegraphics[width = .25\textwidth]{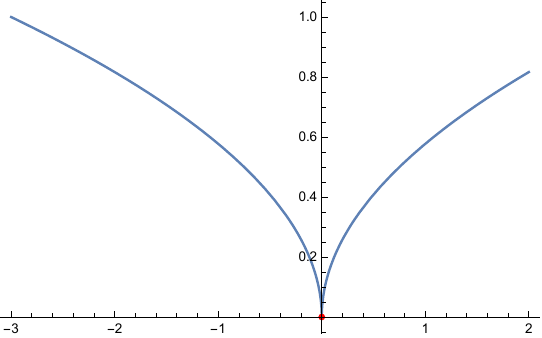}} 
						\hspace{20pt}
						\subfloat[]{\includegraphics[width = .25\textwidth]{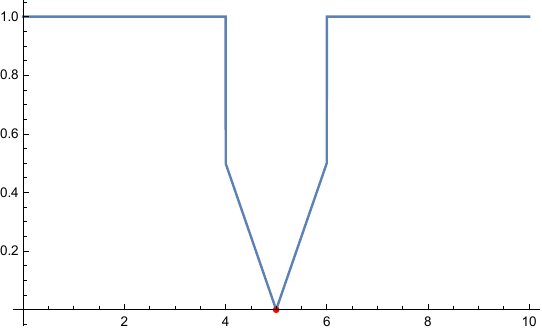}}
						\caption{Non-convex non-smooth unimodal.}
						\label{ncnsu}
					\end{figure}
					\begin{TBL}
						{"nonsmoothunimodalresults",normalizeassociation/@combinedreport[{"unimodal", "non-convex", "interior min", "non-smooth"}//selecttestfunctions] // displayreportastable//First//#[[All, {1, 2, 4, 5, 6, 7,8}]]&}
					\end{TBL}
					\begin{center}
						\begin{small}
							\begin{table}
								\input{tables/nonsmoothunimodalresultsedited}
								
								\medskip
								\caption{Non-convex non-smooth unimodal, Figure \ref{ncnsu}.}
								\label{tncnsu}				
							\end{table}		
						\end{small}
					\end{center}				

							\subsection{Non-convex multimodal functions}

Our algorithm exhibits the highest performance for both  non-convex smooth and non-convex  non-smooth multimodal functions, as can be seen in the $\Pi_{100}$ and $N_s$ of the columns in Table \ref{tncsm} and Table \ref{tncnsm}.
Although Differential Evolution discovers a larger fraction of minima, it does so at the expense of approximately 25 times more function evaluations. 
							
							\subsubsection{Non-convex smooth multimodal functions}

Regarding non-convex, smooth multimodal functions, we include examples of highly oscillatory functions (Figures \ref{ncsmA}(A)-(C)), bimodal functions  (Figures \ref{ncsmA}(D) and \ref{ncsmD}(G)), oscillatory perturbations of convex functions with high-frequency and low-amplitude (Figure \ref{ncsmA}(F)), low-frequency and larger amplitude (Figure \ref{ncsmD}(F)), and differentiable functions with infinitely many global minima (Figure \ref{ncsmB}(A)). These test functions are represented in Figures \ref{ncsmA}, \ref{ncsmB}, \ref{ncsmC}, and \ref{ncsmD}. The corresponding numerical results are displayed in Table \ref{tncsm}. While no algorithm finds all minima, the best algorithm in terms of probability of finding a minimum is Differential Evolution. However, Differential Evolution uses 25 times more function evaluations than our algorithm, which is reflected in the efficiency index $\Pi_{100}$  and $N_s$.
 Therefore,  based on these criteria the proposed algorithm shows the best performance.
							
							\begin{figure}
								\subfloat[]{\includegraphics[width = .25\textwidth]{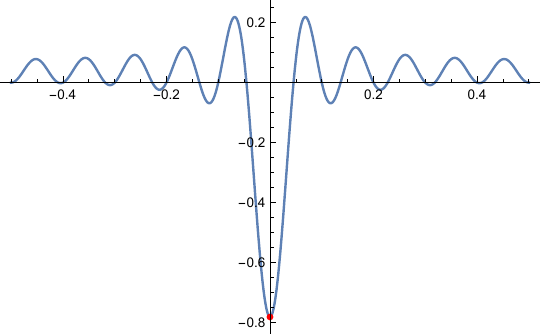}} 
								\hspace{20pt}
								\subfloat[]{\includegraphics[width = .25\textwidth]{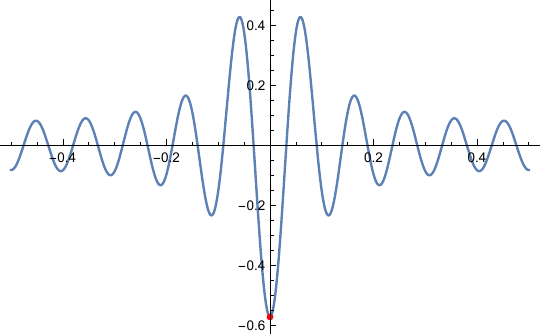}}
								\hspace{20pt}
								\subfloat[]{\includegraphics[width = .25 \textwidth]{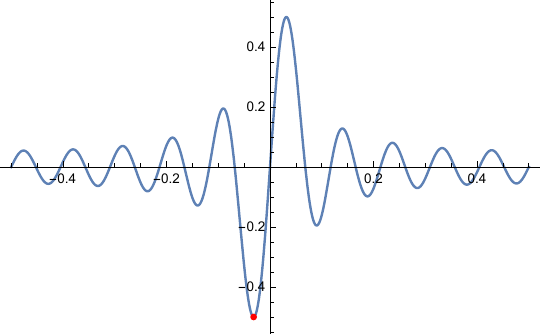}}\\
								\subfloat[]{\includegraphics[width = .25\textwidth]{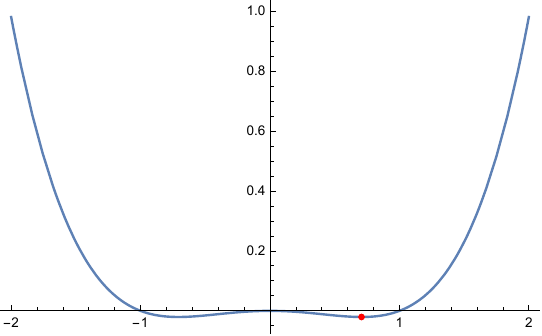}} 
								\hspace{20pt}
								\subfloat[]{\includegraphics[width = .25\textwidth]{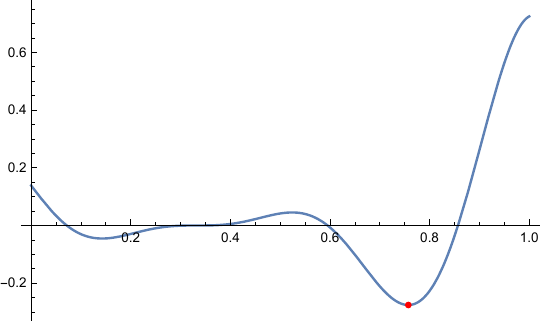}}
								\hspace{20pt}
								\subfloat[]{\includegraphics[width = .25 \textwidth]{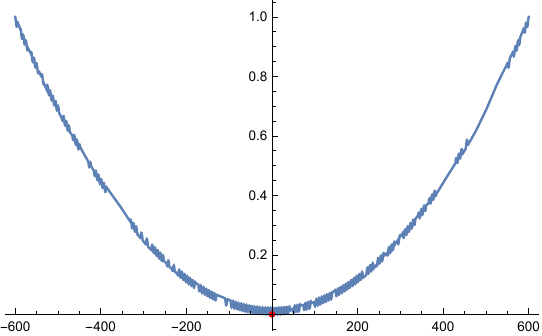}}
								\caption{Non-convex smooth multimodal - I.}
								\label{ncsmA}
							\end{figure}	
							\begin{figure}		
								\subfloat[]{\includegraphics[width = .25\textwidth]{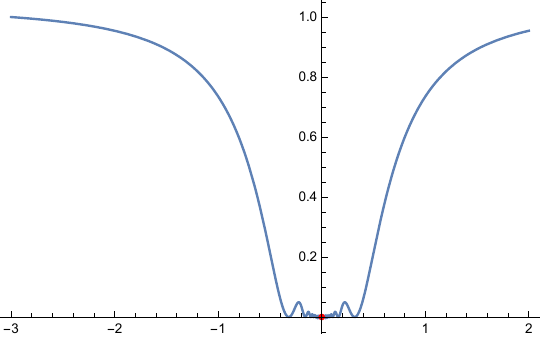}} 
								\hspace{20pt}
								\subfloat[]{\includegraphics[width = .25\textwidth]{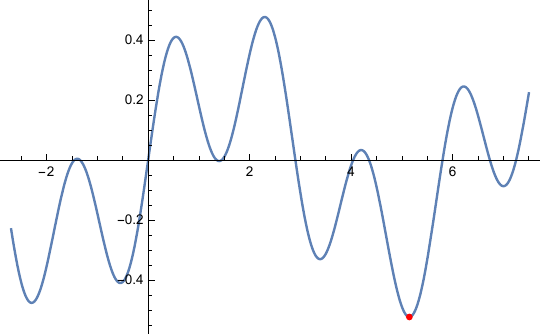}}
								\hspace{20pt}
								\subfloat[]{\includegraphics[width = .25 \textwidth]{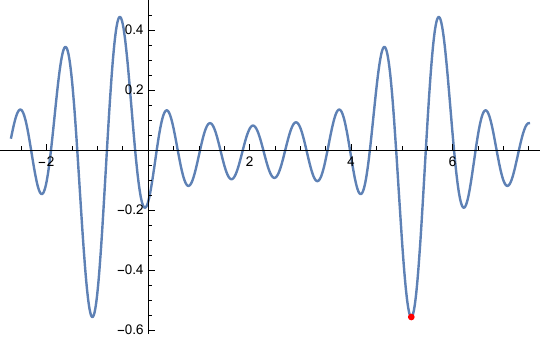}}\\
								\subfloat[]{\includegraphics[width = .25\textwidth]{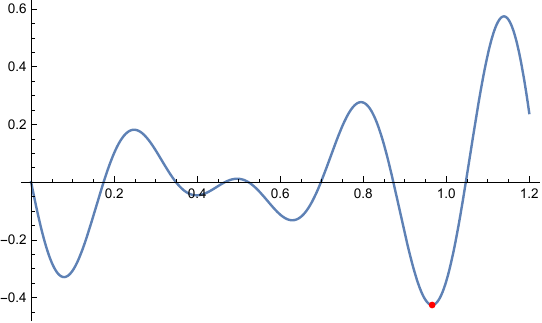}} 
								\hspace{20pt}
								\subfloat[]{\includegraphics[width = .25\textwidth]{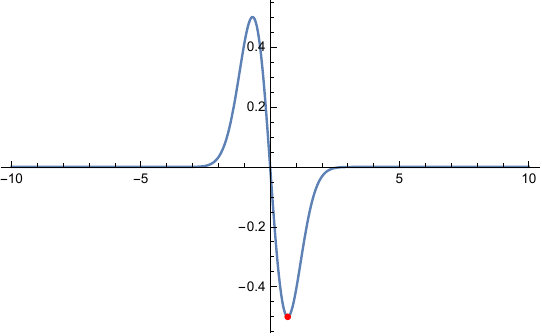}}
								\hspace{20pt}
								\subfloat[]{\includegraphics[width = .25 \textwidth]{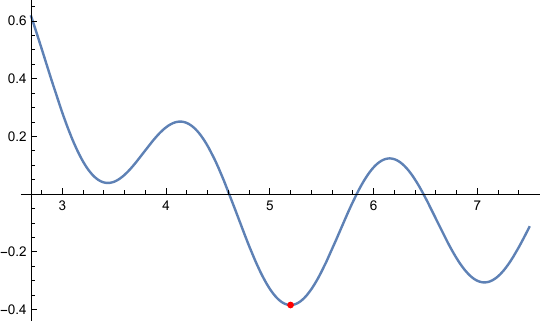}}
								\caption{Non-convex smooth multimodal - II.}
								\label{ncsmB}
							\end{figure}	
							\begin{figure}	
								\subfloat[]{\includegraphics[width = .25\textwidth]{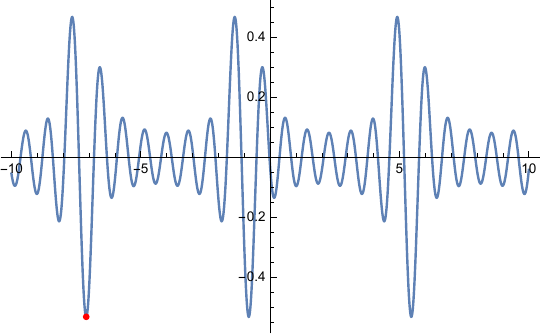}} 
								\hspace{20pt}
								\subfloat[]{\includegraphics[width = .25\textwidth]{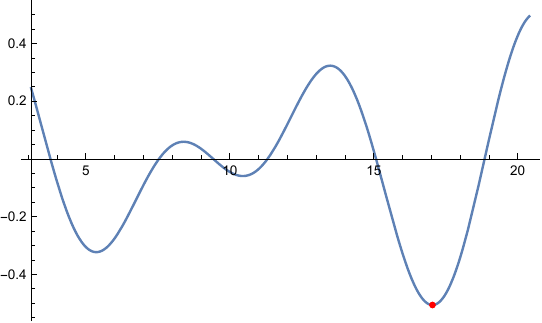}}
								\hspace{20pt}
								\subfloat[]{\includegraphics[width = .25 \textwidth]{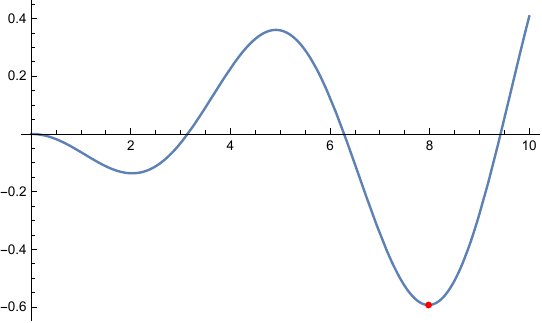}}\\
								\subfloat[]{\includegraphics[width = .25\textwidth]{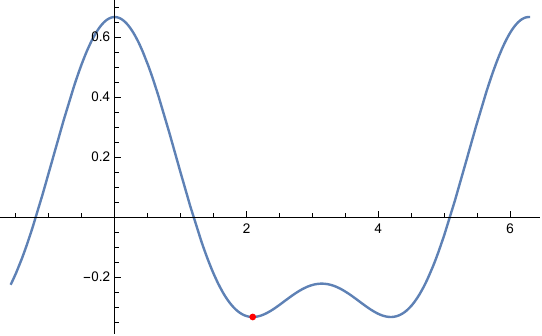}} 
								\hspace{20pt}
								\subfloat[]{\includegraphics[width = .25\textwidth]{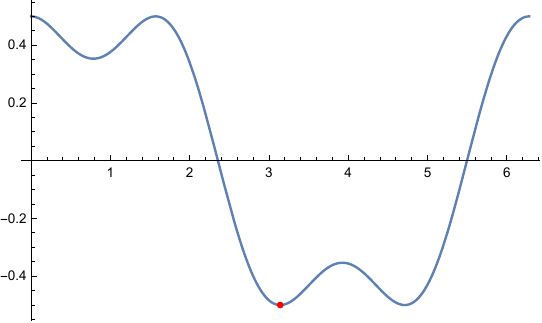}}
								\hspace{20pt}
								\subfloat[]{\includegraphics[width = .25 \textwidth]{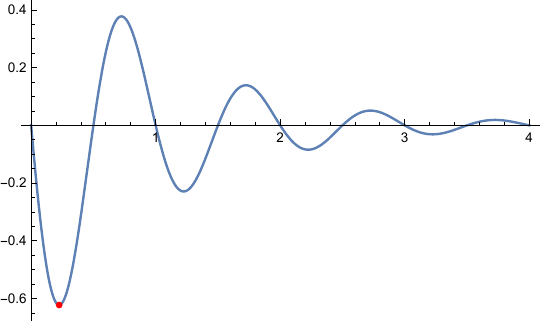}}
								\caption{Non-convex smooth multimodal - III.}
								\label{ncsmC}
							\end{figure}	
							\begin{figure}	
								\subfloat[]{\includegraphics[width = .25\textwidth]{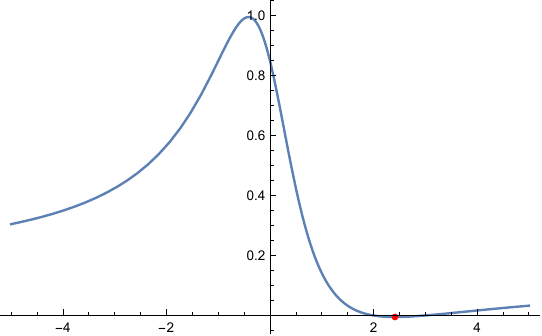}} 
								\hspace{20pt}
								\subfloat[]{\includegraphics[width = .25\textwidth]{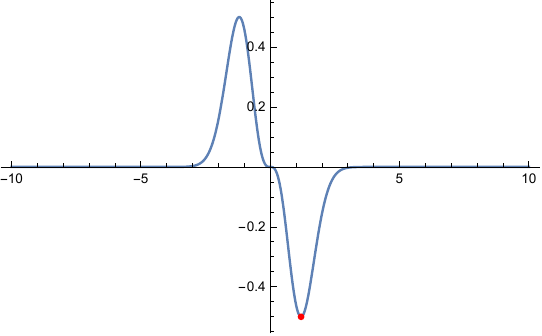}}
								\hspace{20pt}
								\subfloat[]{\includegraphics[width = .25 \textwidth]{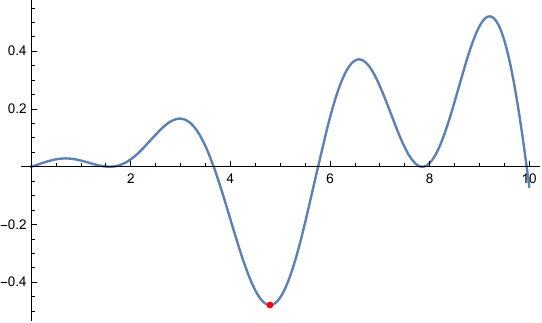}}\\
								\subfloat[]{\includegraphics[width = .25\textwidth]{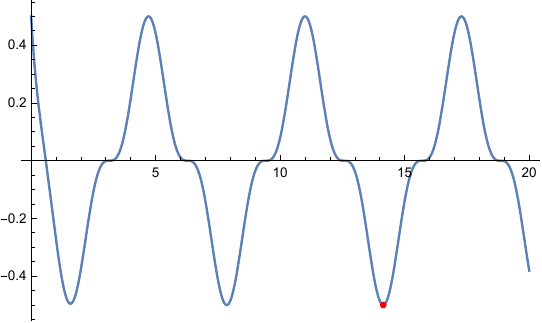}} 
								\hspace{20pt}
								\subfloat[]{\includegraphics[width = .25\textwidth]{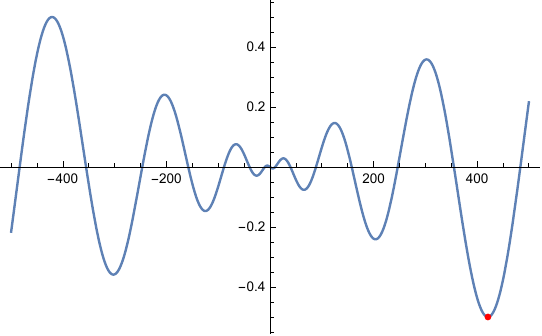}}\\
								\subfloat[]{\includegraphics[width = .25 \textwidth]{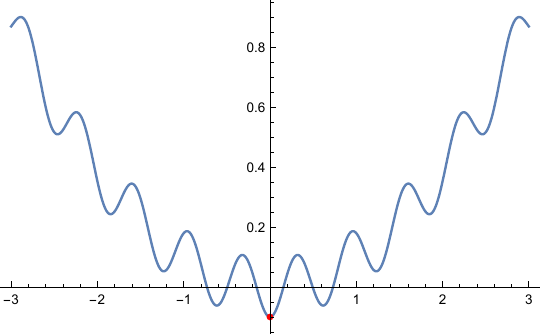}}
								\hspace{20pt}
								\subfloat[]{\includegraphics[width = .25\textwidth]{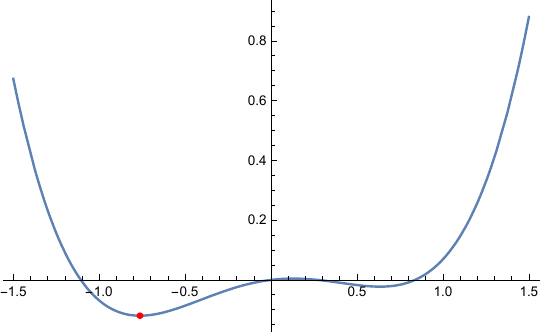}} 
								\caption{Non-convex smooth multimodal - IV.}
								\label{ncsmD}
							\end{figure}
							\begin{TBL}
								{"smoothmultimodalresults",normalizeassociation/@combinedreport[{"multimodal", "interior min", "non-convex", "smooth"}//selecttestfunctions] // displayreportastable//First//#[[All, {1, 2, 4, 5, 6,7, 8}]]&}
							\end{TBL}
							\begin{center}
								\begin{small}
									\begin{table}
										\input{tables/smoothmultimodalresultsedited}
										
										\medskip
										\caption{Non-convex smooth multimodal, Figures \ref{ncsmA}-\ref{ncsmD}.}
										\label{tncsm}				
									\end{table}		
								\end{small}
							\end{center}
									%
									
										\subsubsection{Non-convex non-smooth multimodal functions}

Regarding non-convex, non-smooth, multimodal functions, we include examples with an infinite number of local minima (Figure \ref{ncnsm}(A)), cusps  (Figure \ref{ncnsm}(B)), and various examples of discontinuous multimodal functions  (Figures \ref{ncnsm}(C)-(F)). The corresponding numerical results are presented in Table \ref{tncnsm}. Similar to the case of non-convex smooth multimodal functions, Differential Evolution is the algorithm that finds the highest fraction of minima but at the cost of using 26 times more function evaluations. Consequently, our algorithm  has the best performance also for this class of functions.

									\begin{figure}
										\subfloat[]{\includegraphics[width = .25\textwidth]{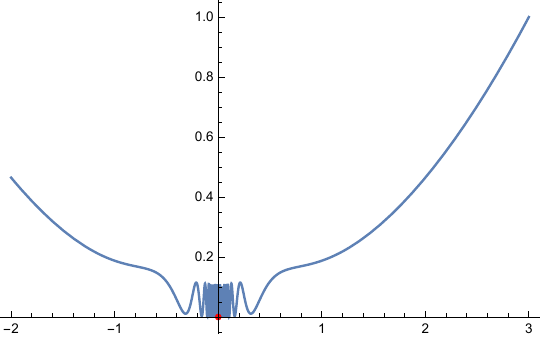}} 
										\hspace{20pt}
										\subfloat[]{\includegraphics[width = .25\textwidth]{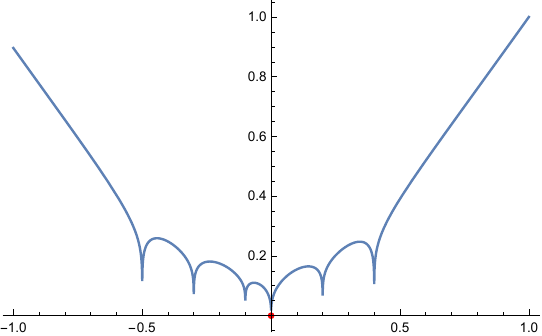}}
										\hspace{20pt}
										\subfloat[]{\includegraphics[width = .25 \textwidth]{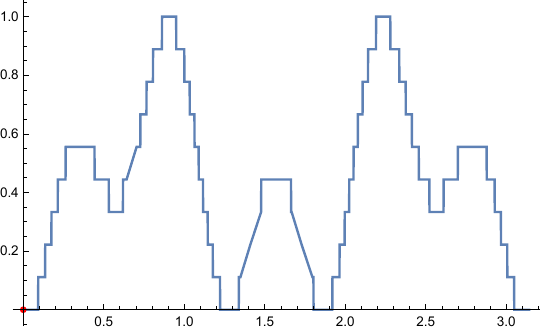}}\\
										\subfloat[]{\includegraphics[width = .25\textwidth]{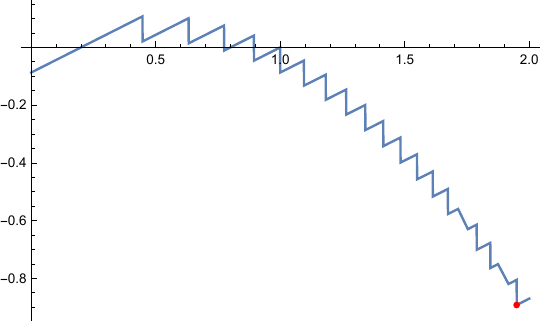}} 
										\hspace{20pt}
										\subfloat[]{\includegraphics[width = .25\textwidth]{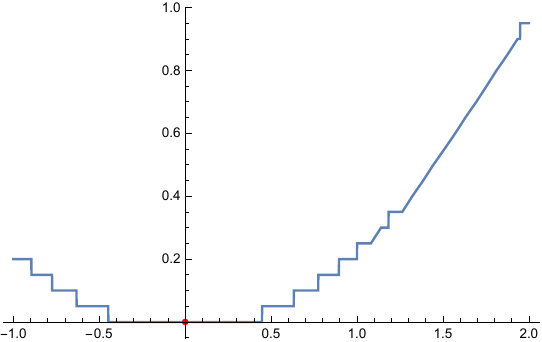}}
										\hspace{20pt}
										\subfloat[]{\includegraphics[width = .25 \textwidth]{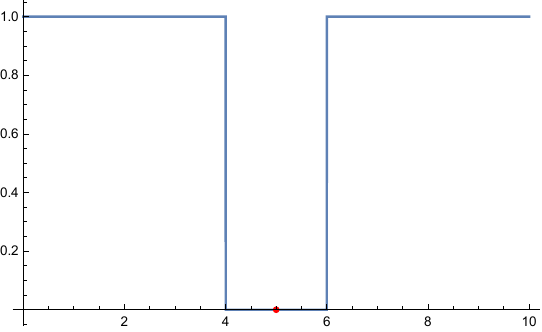}}\\
										\caption{Non-convex non-smooth multimodal.}
										\label{ncnsm}
									\end{figure}
									\begin{TBL}
										{"nsmoothmultimodalresults",normalizeassociation/@combinedreport[{"multimodal", "interior min", "non-convex", "non-smooth"}//selecttestfunctions] // displayreportastable//First//#[[All, {1, 2, 4, 5, 6,7, 8}]]&}
									\end{TBL}
									\begin{center}
										\begin{small}
											\begin{table}
												\input{tables/nsmoothmultimodalresultsedited}
												
												\medskip
												\caption{Non-convex non-smooth multimodal, Figure \ref{ncnsm}.}
												\label{tncnsm}				
											\end{table}		
										\end{small}
									\end{center}

											\subsection{Non-convex boundary minima}
											
We consider several examples of non-convex functions with boundary minima, including concave functions (Figures \ref{ncbm}(A)-(C)), discontinuous functions  (Figures \ref{ncbm}(D)-(E)), and a concave piecewise linear function  (Figure \ref{ncbm}(F)). The corresponding numerical results are displayed in Table \ref{tncbm}. While none of the algorithms achieves to find the minimum in all runs (the values $\Pi=1$ for Random Search and Differential Evolution are due to round-off), our algorithm exhibits the highest efficiency index  $\Pi_{100}$ and the smallest value $N_s$. Random Search and Differential Evolution, the best algorithms in terms of $\Pi$, require 47 and 23 times more function evaluations, respectively.

											\begin{figure}
												\subfloat[]{\includegraphics[width = .25\textwidth]{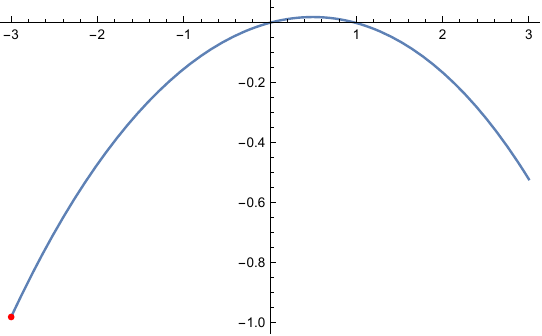}} 
												\hspace{20pt}
												\subfloat[]{\includegraphics[width = .25\textwidth]{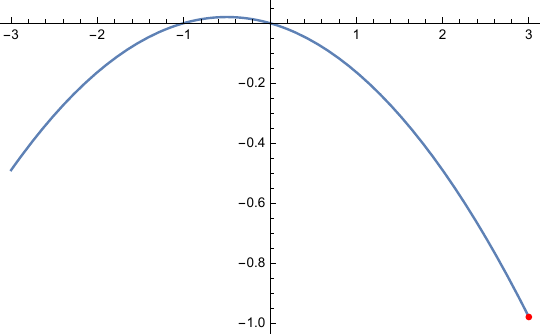}}
												\hspace{20pt}
												\subfloat[]{\includegraphics[width = .25 \textwidth]{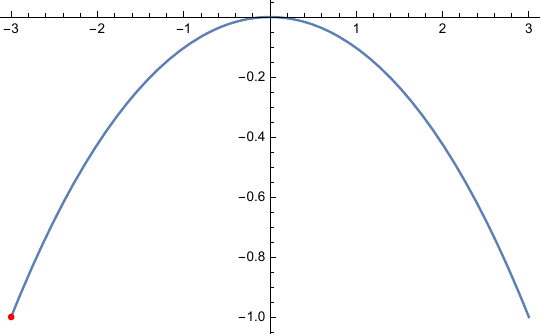}}\\
												\subfloat[]{\includegraphics[width = .25\textwidth]{figures/Quantum.pdf}} 
												\hspace{20pt}
												\subfloat[]{\includegraphics[width = .25\textwidth]{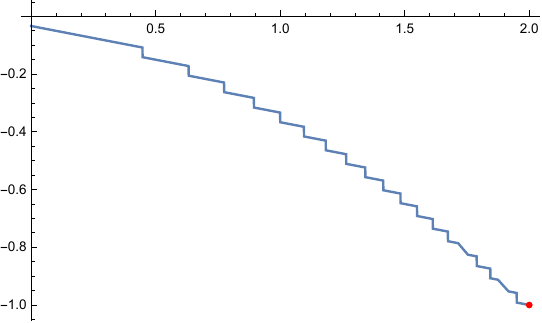}}
												\hspace{20pt}
												\subfloat[]{\includegraphics[width = .25 \textwidth]{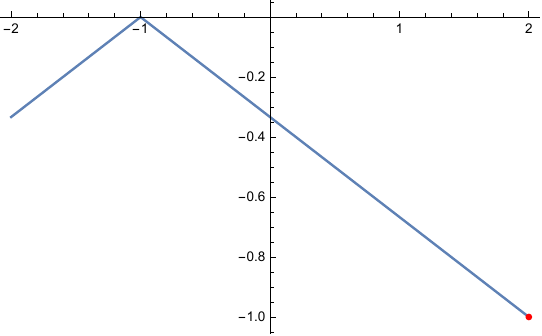}}
												\caption{Non-convex boundary minimum.}
												\label{ncbm}
											\end{figure}
											\begin{TBL}
												{"boundaryresults",normalizeassociation/@combinedreport[{"non-convex", "boundary min"}//selecttestfunctions] // displayreportastable//First//#[[All, {1, 2, 4, 5, 6,7, 8}]]&}
											\end{TBL}
											\begin{center}
												\begin{small}
													\begin{table}
														\input{tables/boundaryresultsedited}
														
														\medskip
														\caption{Non-convex boundary minimum, Figure \ref{ncbm}.}
														\label{tncbm}
													\end{table}		
												\end{small}
											\end{center}

													\subsection{Discontinuous functions}
													\label{discontinuousres}
Lastly, we test our algorithm on discontinuous functions (Figure \ref{discontinuous}).
This class of functions includes piecewise constant functions (Figure \ref{discontinuous} (A), (D), and (F)), a function whose
gradient is almost everywhere negative, but the minimum is situated near the right side of the boundary (Figure \ref{discontinuous} (B)), and a function with
a singularity at the boundary (Figure \ref{discontinuous} (E))
The numerical results are presented in Table \ref{tdiscontinuous}. Although Differential Evolution locates all minima, our algorithm is a close second, finding 98\% of the minima while utilizing 24 times fewer function evaluations.

													\begin{figure}
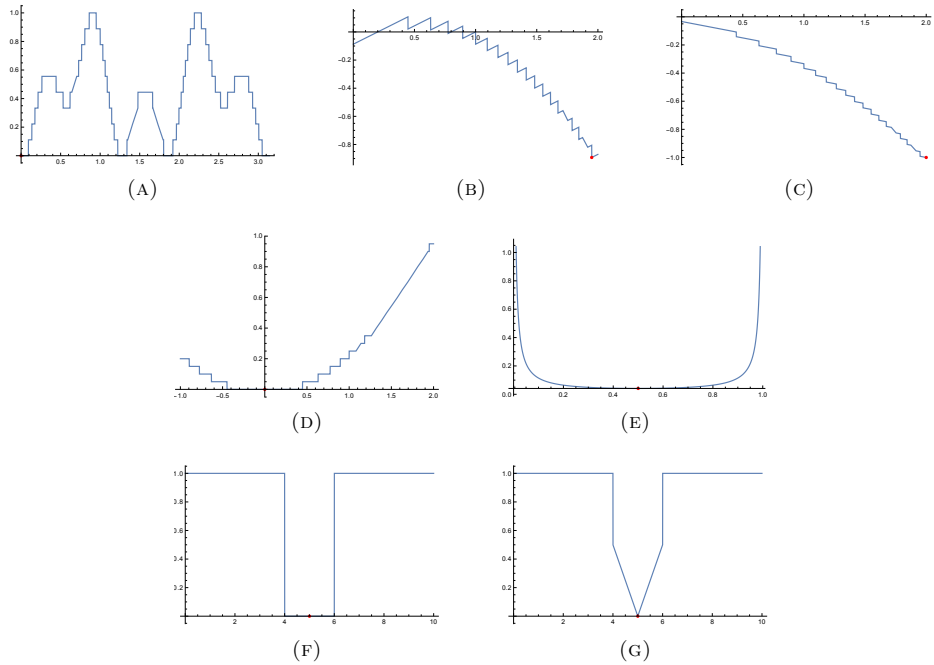

														\subfloat[]{\includegraphics[width = .25\textwidth]{figures/Quantum.pdf}} 
														\hspace{20pt}
														\subfloat[]{\includegraphics[width = .25\textwidth]{figures/QuantumconcaveA.pdf}}
														\hspace{20pt}
														\subfloat[]{\includegraphics[width = .25 \textwidth]{figures/QuantumconcaveB.pdf}}\\
														\subfloat[]{\includegraphics[width = .25\textwidth]{figures/Quantumconvex.pdf}} 
														\hspace{20pt}
														\subfloat[]{\includegraphics[width = .25\textwidth]{figures/Singular.pdf}}\\
														\subfloat[]{\includegraphics[width = .25 \textwidth]{figures/Step.pdf}}
														\hspace{20pt}
														\subfloat[]{\includegraphics[width = .25\textwidth]{figures/VStep.pdf}} 
														\caption{Discontinuous functions.}
														\label{discontinuous}
													\end{figure}
													\begin{TBL}
														{"discontinuousresults", normalizeassociation/@combinedreport["discontinuous"//selecttestfunctions] // displayreportastable//First//#[[All, {1, 2, 4, 5, 6,7, 8}]]&}
													\end{TBL}
													\begin{center}
														\begin{small}
															\begin{table}
																\input{tables/discontinuousresultsedited}
																
																\medskip
																\caption{Discontinuous functions, Figure \ref{discontinuous}.}
																\label{tdiscontinuous}
															\end{table}		
														\end{small}
													\end{center}

\bibliographystyle{alpha}
\bibliography{optimization}

\newcommand{\etalchar}[1]{$^{#1}$}
\def\cprime{$'$}
\begin{thebibliography}{DJWW15}

\bibitem[AD98]{amari1998natural}
Shun-Ichi Amari and Scott~C Douglas.
\newblock Why natural gradient?
\newblock In {\em Proceedings of the 1998 IEEE International Conference on
  Acoustics, Speech and Signal Processing, ICASSP98 (Cat. No. 98CH36181)},
  volume~2, pages 1213--1216. IEEE, 1998.

\bibitem[ANOK10]{akimoto2010bidirectional}
Youhei Akimoto, Yuichi Nagata, Isao Ono, and Shigenobu Kobayashi.
\newblock Bidirectional relation between cma evolution strategies and natural
  evolution strategies.
\newblock In {\em International conference on parallel problem solving from
  nature}, pages 154--163. Springer, 2010.

\bibitem[Bar57]{barricelli1957symbiogenetic}
Nils~Aall Barricelli.
\newblock {\em Symbiogenetic evolution processes realized by artificial
  methods}.
\newblock 1957.

\bibitem[BB12]{JMLR:v13:bergstra12a}
James Bergstra and Yoshua Bengio.
\newblock Random search for hyper-parameter optimization.
\newblock {\em Journal of Machine Learning Research}, 13(10):281--305, 2012.

\bibitem[Ber00]{berny2000selection}
Arnaud Berny.
\newblock Selection and reinforcement learning for combinatorial optimization.
\newblock In {\em International Conference on Parallel Problem Solving from
  Nature}, pages 601--610. Springer, 2000.

\bibitem[BS02]{beyer2002evolution}
Hans-Georg Beyer and Hans-Paul Schwefel.
\newblock Evolution strategies--a comprehensive introduction.
\newblock {\em Natural computing}, 1(1):3--52, 2002.

\bibitem[BSS87]{byrd1987trust}
Richard~H Byrd, Robert~B Schnabel, and Gerald~A Shultz.
\newblock A trust region algorithm for nonlinearly constrained optimization.
\newblock {\em SIAM Journal on Numerical Analysis}, 24(5):1152--1170, 1987.

\bibitem[BT00]{bosman2000expanding}
Peter~AN Bosman and Dirk Thierens.
\newblock Expanding from discrete to continuous estimation of distribution
  algorithms.
\newblock In {\em International Conference on Parallel Problem Solving from
  Nature}, pages 767--776. Springer, 2000.

\bibitem[COO{\etalchar{+}}18]{chaudhari2018deep}
Pratik Chaudhari, Adam Oberman, Stanley Osher, Stefano Soatto, and Guillaume
  Carlier.
\newblock Deep relaxation: partial differential equations for optimizing deep
  neural networks.
\newblock {\em Research in the Mathematical Sciences}, 5(3):1--30, 2018.

\bibitem[CSV09]{conn2009introduction}
Andrew~R Conn, Katya Scheinberg, and Luis~N Vicente.
\newblock {\em Introduction to derivative-free optimization}.
\newblock SIAM, 2009.

\bibitem[DGV15a]{LNV20151}
Y.~Diouane, S.~Gratton, and L.~N. Vicente.
\newblock Globally convergent evolution strategies.
\newblock {\em Mathematical Programming}, 152(1):467--490, 2015.

\bibitem[DGV15b]{LNV20152}
Y.~Diouane, S.~Gratton, and L.~N. Vicente.
\newblock Globally convergent evolution strategies for constrained
  optimization.
\newblock {\em Computational Optimization and Applications}, 62(2):323--346,
  2015.

\bibitem[DJWW15]{duchi2015optimal}
John~C Duchi, Michael~I Jordan, Martin~J Wainwright, and Andre Wibisono.
\newblock Optimal rates for zero-order convex optimization: The power of two
  function evaluations.
\newblock {\em IEEE Transactions on Information Theory}, 61(5):2788--2806,
  2015.

\bibitem[DLT03]{MR2048154}
Elizabeth~D. Dolan, Robert~Michael Lewis, and Virginia Torczon.
\newblock On the local convergence of pattern search.
\newblock {\em SIAM J. Optim.}, 14(2):567--583, 2003.

\bibitem[DMS16]{DAS20161}
Swagatam Das, Sankha~Subhra Mullick, and P.N. Suganthan.
\newblock Recent advances in differential evolution - an updated survey.
\newblock {\em Swarm and Evolutionary Computation}, 27:1--30, 2016.

\bibitem[DOM{\etalchar{+}}19]{DELSER2019220}
Javier {Del Ser}, Eneko Osaba, Daniel Molina, Xin-She Yang, Sancho
  Salcedo-Sanz, David Camacho, Swagatam Das, Ponnuthurai~N. Suganthan,
  Carlos~A. {Coello Coello}, and Francisco Herrera.
\newblock Bio-inspired computation: Where we stand and what's next.
\newblock {\em Swarm and Evolutionary Computation}, 48:220--250, 2019.

\bibitem[Gar22]{garnett_bayesoptbook_2022}
Roman Garnett.
\newblock {\em {Bayesian Optimization}}.
\newblock Cambridge University Press, 2022.
\newblock in preparation.

\bibitem[GDVS12]{Ghosh2012107}
Sayan Ghosh, Swagatam Das, Athanasios~V. Vasilakos, and Kaushik Suresh.
\newblock On convergence of differential evolution over a class of continuous
  functions with unique global optimum.
\newblock {\em IEEE Transactions on Systems, Man, and Cybernetics, Part B:
  Cybernetics}, 42(1):107 -- 124, 2012.
\newblock Cited by: 82.

\bibitem[GL13]{ghadimi2013stochastic}
Saeed Ghadimi and Guanghui Lan.
\newblock Stochastic first-and zeroth-order methods for nonconvex stochastic
  programming.
\newblock {\em SIAM Journal on Optimization}, 23(4):2341--2368, 2013.

\bibitem[Gla04]{MR1999614}
Paul Glasserman.
\newblock {\em Monte {C}arlo methods in financial engineering}, volume~53 of
  {\em Applications of Mathematics (New York)}.
\newblock Springer-Verlag, New York, 2004.
\newblock Stochastic Modelling and Applied Probability.

\bibitem[Gol89]{goldberg1989genetic}
David~E Goldberg.
\newblock Genetic algorithm in search optimization and machine learning.
\newblock {\em Addison Wesley}, 1989.

\bibitem[GSY{\etalchar{+}}10]{glasmachers2010exponential}
Tobias Glasmachers, Tom Schaul, Sun Yi, Daan Wierstra, and J{\"u}rgen
  Schmidhuber.
\newblock Exponential natural evolution strategies.
\newblock In {\em Proceedings of the 12th annual conference on Genetic and
  evolutionary computation}, pages 393--400, 2010.

\bibitem[Han06]{hansen2006cma}
Nikolaus Hansen.
\newblock The cma evolution strategy: a comparing review.
\newblock {\em Towards a new evolutionary computation: Advances in the
  estimation of distribution algorithms}, pages 75--102, 2006.

\bibitem[HJ61]{HookeJeeves}
Robert Hooke and T.~A. Jeeves.
\newblock " direct search" solution of numerical and statistical problems.
\newblock {\em J. ACM}, 8(2):212--229, apr 1961.

\bibitem[HN99]{MR1707795}
Waltraud Huyer and Arnold Neumaier.
\newblock Global optimization by multilevel coordinate search.
\newblock {\em J. Global Optim.}, 14(4):331--355, 1999.

\bibitem[HO96]{hansen1996adapting}
Nikolaus Hansen and Andreas Ostermeier.
\newblock Adapting arbitrary normal mutation distributions in evolution
  strategies: The covariance matrix adaptation.
\newblock In {\em Proceedings of IEEE international conference on evolutionary
  computation}, pages 312--317. IEEE, 1996.

\bibitem[HO01]{hansen2001completely}
Nikolaus Hansen and Andreas Ostermeier.
\newblock Completely derandomized self-adaptation in evolution strategies.
\newblock {\em Evolutionary computation}, 9(2):159--195, 2001.

\bibitem[Jh75]{jh1975adaptation}
Holland Jh.
\newblock Adaptation in natural and artificial systems.
\newblock {\em Ann Arbor}, 1975.

\bibitem[KE95]{488968}
J.~Kennedy and R.~Eberhart.
\newblock Particle swarm optimization.
\newblock In {\em Proceedings of ICNN'95 - International Conference on Neural
  Networks}, volume~4, pages 1942--1948 vol.4, 1995.

\bibitem[KGJV83]{kirkpatrick1983optimization}
Scott Kirkpatrick, C~Daniel Gelatt~Jr, and Mario~P Vecchi.
\newblock Optimization by simulated annealing.
\newblock {\em science}, 220(4598):671--680, 1983.

\bibitem[KSV79]{khachaturyan1979statistical}
A~Khachaturyan, S~Semenovskaya, and B~Vainstein.
\newblock Statistical-thermodynamic approach to determination of structure
  amplitude phases.
\newblock {\em Sov. Phys. Crystallography}, 24(5):519--524, 1979.

\bibitem[Lag96]{lagrias1996convergence}
JC~Lagrias.
\newblock Convergence properties of the nelder-mead simplex algorithm in low
  dimensions.
\newblock {\em AT\&T Bell Laboratories}, 1996.

\bibitem[LJ73]{LuusJaakola}
Rein Luus and T.~H.~I. Jaakola.
\newblock Optimization by direct search and systematic reduction of the size of
  search region.
\newblock {\em AIChE Journal}, 19(4):760--766, 1973.

\bibitem[LL01]{larranaga2001estimation}
Pedro Larra{\~n}aga and Jose~A Lozano.
\newblock {\em Estimation of distribution algorithms: A new tool for
  evolutionary computation}, volume~2.
\newblock Springer Science \& Business Media, 2001.

\bibitem[LMW19]{larson2019derivative}
Jeffrey Larson, Matt Menickelly, and Stefan~M Wild.
\newblock Derivative-free optimization methods.
\newblock {\em Acta Numerica}, 28:287--404, 2019.

\bibitem[LS13]{locatelli2013global}
Marco Locatelli and Fabio Schoen.
\newblock {\em Global optimization: theory, algorithms, and applications}.
\newblock SIAM, 2013.

\bibitem[LS21]{LOCATELLI2021100012}
Marco Locatelli and Fabio Schoen.
\newblock (global) optimization: Historical notes and recent developments.
\newblock {\em EURO Journal on Computational Optimization}, 9:100012, 2021.

\bibitem[LTZ22]{lu2022swarmbased}
Jingcheng Lu, Eitan Tadmor, and Anil Zenginoglu.
\newblock Swarm-based gradient descent method for non-convex optimization,
  2022.

\bibitem[LV15]{LocVasile2015}
Marco Locatelli and Massimiliano Vasile.
\newblock (non) convergence results for the differential evolution method.
\newblock {\em Optimization Letters}, 9(3):413--425, 2015.

\bibitem[M{\etalchar{+}}65]{matyas1965random}
J~Matyas et~al.
\newblock Random optimization.
\newblock {\em Automation and Remote control}, 26(2):246--253, 1965.

\bibitem[McK98]{mckinnon1998convergence}
Ken~IM McKinnon.
\newblock Convergence of the nelder--mead simplex method to a nonstationary
  point.
\newblock {\em SIAM Journal on optimization}, 9(1):148--158, 1998.

\bibitem[MLZ{\etalchar{+}}19]{MA2019}
Xiaoliang Ma, Xiaodong Li, Qingfu Zhang, Ke~Tang, Zhengping Liang, Weixin Xie,
  and Zexuan Zhu.
\newblock A survey on cooperative co-evolutionary algorithms.
\newblock {\em IEEE Transactions on Evolutionary Computation}, 23(3):421--441,
  2019.

\bibitem[Moc75]{Mockus1975166}
J.~Mockus.
\newblock On bayesian methods of optimization.
\newblock {\em Towards Global Optimization}, pages 166 -- 181, 1975.
\newblock Cited by: 12.

\bibitem[MS83]{more1983computing}
Jorge~J Mor{\'e} and Danny~C Sorensen.
\newblock Computing a trust region step.
\newblock {\em SIAM Journal on scientific and statistical computing},
  4(3):553--572, 1983.

\bibitem[MS21]{MANSUETO2021107849}
Pierluigi Mansueto and Fabio Schoen.
\newblock Memetic differential evolution methods for clustering problems.
\newblock {\em Pattern Recognition}, 114:107849, 2021.

\bibitem[MTZ78]{mockus1978application}
Jonas Mockus, Vytautas Tiesis, and Antanas Zilinskas.
\newblock The application of bayesian methods for seeking the extremum.
\newblock {\em Towards global optimization}, 2(117-129):2, 1978.

\bibitem[NM65]{MR3363409}
J.~A. Nelder and R.~Mead.
\newblock A simplex method for function minimization.
\newblock {\em Comput. J.}, 7(4):308--313, 1965.

\bibitem[NS17]{nesterov2017random}
Yurii Nesterov and Vladimir Spokoiny.
\newblock Random gradient-free minimization of convex functions.
\newblock {\em Foundations of Computational Mathematics}, 17(2):527--566, 2017.

\bibitem[OHF22]{osher2022hamilton}
Stanley Osher, Howard Heaton, and Samy~Wu Fung.
\newblock A hamilton-jacobi-based proximal operator.
\newblock {\em arXiv preprint arXiv:2211.12997}, 2022.

\bibitem[PGL02]{pelikan2002survey}
Martin Pelikan, David~E Goldberg, and Fernando~G Lobo.
\newblock A survey of optimization by building and using probabilistic models.
\newblock {\em Computational optimization and applications}, 21(1):5--20, 2002.

\bibitem[Pin70]{pincus}
Martin Pincus.
\newblock Letter to the editor - a monte carlo method for the approximate
  solution of certain types of constrained optimization problems.
\newblock {\em Operations Research}, 18(6):1225--1228, 1970.

\bibitem[Pow73]{MR321541}
M.~J.~D. Powell.
\newblock On search directions for minimization algorithms.
\newblock {\em Math. Programming}, 4:193--201, 1973.

\bibitem[PSL06]{price2006differential}
Kenneth Price, Rainer~M Storn, and Jouni~A Lampinen.
\newblock {\em Differential evolution: a practical approach to global
  optimization}.
\newblock Springer Science \& Business Media, 2006.

\bibitem[Ras63]{rastrigin1963convergence}
LA~Rastrigin.
\newblock The convergence of the random search method in the extremal control
  of a many parameter system.
\newblock {\em Automaton \& Remote Control}, 24:1337--1342, 1963.

\bibitem[Rec73]{rechenberg1973evolutionsstrategie}
Ingo Rechenberg.
\newblock Evolutionsstrategie.
\newblock {\em Optimierung technischer Systeme nach Prinzipien derbiologischen
  Evolution}, 1973.

\bibitem[RK04]{rubinstein2004cross}
Reuven~Y Rubinstein and Dirk~P Kroese.
\newblock {\em The cross-entropy method: a unified approach to combinatorial
  optimization, Monte-Carlo simulation, and machine learning}, volume 133.
\newblock Springer, 2004.

\bibitem[RS13]{MR3070154}
Luis~Miguel Rios and Nikolaos~V. Sahinidis.
\newblock Derivative-free optimization: a review of algorithms and comparison
  of software implementations.
\newblock {\em J. Global Optim.}, 56(3):1247--1293, 2013.

\bibitem[Sch77]{schwefel1977evolutionsstrategien}
Hans-Paul Schwefel.
\newblock Evolutionsstrategien f{\"u}r die numerische optimierung.
\newblock In {\em Numerische Optimierung von Computer-Modellen mittels der
  Evolutionsstrategie}, pages 123--176. Springer, 1977.

\bibitem[SE98]{699146}
Y.~Shi and R.~Eberhart.
\newblock A modified particle swarm optimizer.
\newblock In {\em 1998 IEEE International Conference on Evolutionary
  Computation Proceedings. IEEE World Congress on Computational Intelligence
  (Cat. No.98TH8360)}, pages 69--73, 1998.

\bibitem[SP97]{storn1997differential}
Rainer Storn and Kenneth Price.
\newblock Differential evolution--a simple and efficient heuristic for global
  optimization over continuous spaces.
\newblock {\em Journal of global optimization}, 11(4):341--359, 1997.

\bibitem[SSB85]{shultz1985family}
Gerald~A Shultz, Robert~B Schnabel, and Richard~H Byrd.
\newblock A family of trust-region-based algorithms for unconstrained
  minimization with strong global convergence properties.
\newblock {\em SIAM Journal on Numerical analysis}, 22(1):47--67, 1985.

\bibitem[ST21]{schoen2021efficient}
Fabio Schoen and Luca Tigli.
\newblock Efficient large scale global optimization through clustering-based
  population methods.
\newblock {\em Computers \& Operations Research}, 127:105165, 2021.

\bibitem[SWSS09]{sun2009efficient}
Yi~Sun, Daan Wierstra, Tom Schaul, and J{\"u}rgen Schmidhuber.
\newblock Efficient natural evolution strategies.
\newblock In {\em Proceedings of the 11th Annual conference on Genetic and
  evolutionary computation}, pages 539--546, 2009.

\bibitem[Tor97]{MR1430554}
Virginia Torczon.
\newblock On the convergence of pattern search algorithms.
\newblock {\em SIAM J. Optim.}, 7(1):1--25, 1997.

\bibitem[VS22]{vardhan2022tackling}
Harsh Vardhan and Sebastian~U. Stich.
\newblock Tackling benign nonconvexity with smoothing and stochastic gradients,
  2022.

\bibitem[WSG{\etalchar{+}}14]{wierstra2014natural}
Daan Wierstra, Tom Schaul, Tobias Glasmachers, Yi~Sun, Jan Peters, and
  J{\"u}rgen Schmidhuber.
\newblock Natural evolution strategies.
\newblock {\em The Journal of Machine Learning Research}, 15(1):949--980, 2014.

\bibitem[YS20]{YANG2020295}
Li~Yang and Abdallah Shami.
\newblock On hyperparameter optimization of machine learning algorithms: Theory
  and practice.
\newblock {\em Neurocomputing}, 415:295--316, 2020.

\end{thebibliography}

\end{document}